\theoremstyle{plain}
\newtheorem{theorem}{Theorem}[section]
\newtheorem{lemma}[theorem]{Lemma}
\newtheorem{proposition}[theorem]{Proposition}
\newtheorem{corollary}[theorem]{Corollary}
\newtheorem{definition}[theorem]{Definition}
\newtheorem{conjecture}[theorem]{Conjecture}
\theoremstyle{remark}
\newtheorem{remark}[theorem]{Remark}
\numberwithin{equation}{section}
\numberwithin{paragraph}{section}
\newtheorem{conjecture*}[theorem]{Conjecture}
\DeclareMathOperator{\Frac}{Frac}
\DeclareMathOperator{\Hom}{Hom}
\DeclareMathOperator{\rank}{rank}
\DeclareMathOperator{\rec}{rec}
\DeclareMathOperator{\Iw}{Iw}
\DeclareMathOperator{\Gal}{Gal}
\DeclareMathOperator{\Aut}{Aut}
\DeclareMathOperator{\Supp}{Supp}
\DeclareMathOperator{\ad}{ad}
\DeclareMathOperator{\Lie}{Lie}
\DeclareMathOperator{\Ind}{Ind}
\DeclareMathOperator{\tr}{tr}
\DeclareMathOperator{\diag}{diag}
\DeclareMathOperator{\End}{End}
\DeclareMathOperator{\depth}{depth}
\DeclareMathOperator{\Frob}{Frob}
\DeclareMathOperator{\im}{im}
\DeclareMathOperator{\Spec}{Spec}
\renewcommand{\)}{\textup{)}}
\newcommand{\bl}{{\boldsymbol{\lambda}}}
\newcommand{\cA}{{\mathcal A}}
\newcommand{\cB}{{\mathcal B}}
\newcommand{\cD}{{\mathcal D}}
\newcommand{\cH}{{\mathcal H}}
\newcommand{\cJ}{{\mathcal J}}
\newcommand{\cK}{{\mathcal K}}
\newcommand{\cL}{{\mathcal L}}
\newcommand{\cO}{{\mathcal O}}
\newcommand{\cS}{{\mathcal S}}
\newcommand{\cT}{{\mathcal T}}
\newcommand{\fra}{{\mathfrak a}}
\newcommand{\frg}{{\mathfrak g}}
\newcommand{\ffrm}{{\mathfrak m}}
\newcommand{\frn}{{\mathfrak n}}
\newcommand{\bbA}{{\mathbb A}}
\newcommand{\bbC}{{\mathbb C}}
\newcommand{\bbF}{{\mathbb F}}
\newcommand{\bbG}{{\mathbb G}}
\newcommand{\bbQ}{{\mathbb Q}}
\newcommand{\bbR}{{\mathbb R}}
\newcommand{\bbT}{{\mathbb T}}
\newcommand{\bbZ}{{\mathbb Z}}
\newcommand{\GL}{\mathrm{GL}}
\newcommand{\PGL}{\mathrm{PGL}}
\newcommand{\CNL}{\mathrm{CNL}}
\newcommand{\Sets}{\mathrm{Sets}}
\newcommand{\plim}{\mathop{\varprojlim}\limits}
\newcommand{\Fil}{\mathrm{Fil}}
\newcommand{\gr}{\mathrm{gr}}
\newcommand{\Ann}{\mathrm{Ann}}
\newcommand{\Art}{\mathrm{Art}}
\title{Potential automorphy and the Leopoldt conjecture}
\author{Chandrashekhar Khare and Jack A. Thorne}
\begin{document}

\maketitle
\begin{abstract}
We study in this paper Hida's  $p$-adic Hecke algebra for $\GL_n$ over a CM field $F$. Hida has made a conjecture about the dimension of  these Hecke algebras, which he calls  the non-abelian Leopoldt conjecture, and shown that his conjecture in the case $F = \bbQ$ implies the classical Leopoldt conjecture for a number field  $K$ of degree $n$ over $\bbQ$, if one assumes  further the existence of automorphic induction of characters for the extension $K/\bbQ$. 

We study Hida's conjecture using the automorphy lifting techniques adapted to the $\GL_n$ setting by Calegari--Geraghty. We prove an automorphy lifting result in this setting, conditional on existence and local-global compatibility of Galois representations arising from torsion classes in the cohomology of the corresponding symmetric manifolds. Under the same conditions  we show that one  can deduce the classical (abelian) Leopoldt conjectures  for a totally real number field $K$ and a prime $p$ using Hida's non-abelian Leopoldt conjecture for  $p$-adic Hecke algebra for $GL_n$ over CM fields without needing to assume automorphic induction of characters for the extension $K/\bbQ$. For this methods of potential automorphy results are used.  
\end{abstract}
\tableofcontents

\section{Introduction} 

Let $F$ be a number field, and let $p$ be a prime. If $U$ is a sufficiently small open compact subgroup of $\GL_n(\bbA_F^\infty)$, then the double quotient
\[ X_U = \GL_n(F) \backslash \GL_n(\bbA) / ( U \times K_\infty Z_\infty ) \]
has a natural structure of smooth manifold. (We write $G_\infty = \GL_n(F \otimes_\bbQ \bbR)$, $K_\infty \subset G_\infty$ for a choice of maximal compact subgroup, and $Z_\infty$ for the center of $G_\infty$.) This paper is an exploration of the ordinary part of the $p$-adic completed homology groups of the manifolds $X_U$. More precisely, if $U$ factors as $U = U^p U_p$, then for each integer $c \geq 1$, we define 
\[ U^p(c) = U^p \cdot \prod_{v | p} I_v(c, c), \]
where $I_v(c, c) \subset \GL_n(\cO_{F_v})$ is the subgroup of matrices which, modulo $\varpi_v^c$, are upper-triangular  with constant diagonal entry (see Equation (\ref{eqn_definition_of_iwahori})). Then the manifolds $X_{U^p(c)}$ fit into a tower
\[ \dots \to X_{U^p(c)} \to \dots \to X_{U^p(2)} \to X_{U^p(1)}, \]
and there is a corresponding tower of homology groups (which are finite $\bbZ_p$-modules):
\[ \dots \to H_\ast(X_{U^p(c)}, \bbZ_p) \to \dots \to H_\ast(X_{U^p(2)}, \bbZ_p) \to H_\ast(X_{U^p(1)}, \bbZ_p). \]
There is a Hecke operator $\mathbf{U}_p$ which acts compatibly on the whole tower of homology groups, and we define 
\[ H_\ast(X_{U^p(c)})_\text{ord} \subset H_\ast(X_{U^p(c)}) \]
to be the ordinary part of $H_\ast(X_{U^p(c)}, \bbZ_p)$ with respect to $\mathbf{U}_p$, i.e.\ the maximal direct summand $\bbZ_p$-module on which $\mathbf{U}_p$ acts invertibly. We then define
\[ H^\ast_\text{ord}(U) = \plim_c H_{d-\ast}(X_{U^p(c)}, \bbZ_p)_\text{ord}, \]
where $d = \dim X_U$. These are Hida's ordinary cohomology groups. They have a natural structure of finite $\Lambda$-module, where $\Lambda = \bbZ_p \llbracket (\cO_F \otimes \bbZ_p)^\times(p)^{n-1} \rrbracket$ is the completed group ring of the pro-$p$ part of the $p$-adic points of a maximal torus of $\PGL_n(F \otimes_\bbQ \bbQ_p)$.

A basic question is: what is the dimension of the finite $\Lambda$-module $H^\ast_\text{ord}(U)$? If $F$ is totally real and $n = 2$, then $X_U$ can be given the structure of Hermitian locally symmetric space, and one can show that $H^\ast_\text{ord}(U)$ is a faithful $\Lambda$-module. In general, however, one expects that it should have positive codimension. This question was first explored by Hida, who suggested that this codimension should be equal to the `defect'
\begin{equation}\label{eqn_intro_defect} l_0 = \rank G_\infty - \rank Z_\infty K_\infty. 
\end{equation}
Hida called this conjecture the `non-abelian Leopoldt conjecture', and gave various pieces of evidence for it \cite{Hid98}. In particular, he showed that this conjecture implies the classical Leopoldt conjecture over totally real fields, if one assumes in addition the existence of automorphic induction for Hecke characters.

In this paper, we explore Hida's conjecture using Galois deformation theory. The merit of this point of view is already discussed in \cite[Introduction]{Hid98}; conjecturally, one can identify Hecke algebras which act on the groups $H^\ast_\text{ord}(U)$ with suitable Galois deformation rings, and a calculation which goes back essentially to Mazur \cite{Maz89} gives the expected dimension of these deformation rings, which in turn implies the formula (\ref{eqn_intro_defect}) for the codimension of the cohomology as $\Lambda$-module. 

We take as our starting point the work of Calegari--Geraghty \cite{Cal13} showing how to make the Taylor--Wiles method work in the `positive defect' context, assuming the existence of Galois representations associated to torsion classes in the cohomology of the manifolds $X_U$. Using similar methods, and a similar assumption about the existence of Galois representations, we prove an $R = \bbT$ theorem for Hida's ordinary completed cohomology groups.

Assume now that $U = \prod_v U_v$ is a product. Let $S$ be a finite set of finite places of $F$, containing the places dividing $p$, and such that for each $v \not\in S$, $U_v = \GL_n(\cO_{F_v})$. Then the unramified Hecke algebra $\bbT^{S, \text{univ}} = \Lambda [ \{ T_v^i \}_{v \not\in S}]$ acts on the groups $H^\ast_\text{ord}(U)$, and we write $\bbT^S_\text{ord}(U)$ for the quotient which acts faithfully. It is a finite $\Lambda$-algebra. One expects that for each maximal ideal $\ffrm \subset \bbT^S_\text{ord}(U)$, there should exist a continuous semi-simple representation
\[ \overline{\rho}_\ffrm : G_{F, S} \to \GL_n(\bbT^S_\text{ord}(U)/\ffrm) \]
which is characterized uniquely up to isomorphism by a condition on the characteristic polynomials of Frobenius elements at finite places $v \not\in S$; this generalizes the well-known relation $\tr \rho(\Frob_p) = a_p(f)$ satisfied by the Galois representations associated to elliptic modular forms by Deligne. If $\ffrm$ is such a maximal ideal and $\overline{\rho}_\ffrm$ is absolutely irreducible, then we say that $\ffrm$ is non-Eisenstein. We in fact study the localized cohomology $H^\ast_\text{ord}(U)_\ffrm$, which is finite over $\Lambda$ and a faithful $\bbT^S_\text{ord}(U)_\ffrm$-module. (If $F$ is an imaginary CM or totally real field, then the existence of $\overline{\rho}_\ffrm$ can be deduced from recent work of Scholze \cite{Sch13}.)

We can now state a more precise version of Hida's conjecture:
\begin{conjecture}\label{conj_intro_hida_conjecture}
Let $\ffrm \subset \bbT^S_\text{ord}(U)$ be a non-Eisenstein maximal ideal. Then $\dim_\Lambda H^\ast_\text{ord}(U)_\ffrm = \dim \Lambda - l_0$.
\end{conjecture}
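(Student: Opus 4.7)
The plan is to combine a Mazur-style Galois Euler characteristic computation with the positive-defect Taylor--Wiles patching machinery developed by Calegari--Geraghty, adapted to Hida's ordinary cohomology. The first step is to set up an ordinary Galois deformation problem $\cS_\text{ord}$ lifting $\overline{\rho}_\ffrm$: deformations are unramified outside $S$, and at each place $v \mid p$ are required to satisfy an ordinary condition in which the diagonal characters on the inertia subgroup deform freely along $\Lambda$, matching the Iwahori level structure that defines $H^\ast_\text{ord}(U)_\ffrm$. A standard Poitou--Tate/Wiles global Euler characteristic computation then produces $\dim R_{\cS_\text{ord}} \ge \dim \Lambda - l_0$, with the defect (\ref{eqn_intro_defect}) appearing as the contribution of the archimedean places to the adjoint Galois cohomology; this is the Galois-theoretic incarnation of $l_0$ that one must match with the codimension of the automorphic cohomology.

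Next I would run the Calegari--Geraghty patching argument in the ordinary setting. Auxiliary Taylor--Wiles sets $Q_N$ of a fixed size $q$ are chosen so as to absorb the global deformations, and patching along a compatible system of Iwahori-like level structures at $Q_N$ yields a formally smooth power series ring $S_\infty = \Lambda \llbracket X_1, \dots, X_q \rrbracket$, a Noetherian quotient $R_\infty$ of $S_\infty$ with $\dim R_\infty \le \dim S_\infty - l_0$, and a patched completed cohomology module $M_\infty$ over $R_\infty$ fitting into an identification $M_\infty / (X_1, \dots, X_q) M_\infty \cong H^\ast_\text{ord}(U)_\ffrm$ (up to the standard homological shift). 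The key Calegari--Geraghty input on the automorphic side is the matching lower bound $\depth_{S_\infty} M_\infty \ge \dim S_\infty - l_0$, which follows from the boundedness of the complex computing the patched cohomology and its concentration in $l_0 + 1$ consecutive degrees.

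Comparing the two estimates forces $M_\infty$ to be a maximal Cohen--Macaulay $R_\infty$-module, $R_\infty$ to act faithfully on it (the $R = \bbT$ theorem announced in the abstract), and the sequence $X_1, \dots, X_q$ to be $M_\infty$-regular. Quotienting by this regular sequence then gives
\[ \dim_\Lambda H^\ast_\text{ord}(U)_\ffrm = \dim_\Lambda (M_\infty / (X_1, \dots, X_q) M_\infty) = \dim R_\infty - q = \dim \Lambda - l_0, \]
which is the claimed equality.

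The main obstacle is not the commutative-algebra endgame but the input needed to even set up the patching: the construction of $R_\infty$ and its map to $\bbT^S_\text{ord}(U)_\ffrm$ rests on having Galois representations $\rho_\ffrm$ attached to torsion Hecke eigenclasses in $H^\ast_\text{ord}(U)_\ffrm$ and knowing that they satisfy local-global compatibility at the places above $p$ in a sufficiently precise ordinary form, uniformly as $c$ grows in the Iwahori tower. Scholze's work \cite{Sch13} constructs the residual representations when $F$ is CM or totally real, but the detailed ordinary local-global compatibility in mixed characteristic at Iwahori level remains largely conjectural. A closely related secondary difficulty is the concentration-of-cohomology hypothesis driving the depth bound: one must show that $H^\ast_\text{ord}(U)_\ffrm$ vanishes outside a window of $l_0 + 1$ consecutive degrees, which is itself a non-trivial consequence of the hypothesized properties of $\rho_\ffrm$ and is the second conditional input flagged in the abstract.
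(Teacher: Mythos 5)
The statement you are asked to address is labelled \emph{Conjecture}~\ref{conj_intro_hida_conjecture} for a reason: the paper does not prove it, and in fact treats it throughout as a hypothesis to be combined with Conjectures~\ref{conj_existence_of_galois_combined} and~\ref{conj_local_global_compatibility_at_taylor_wiles_primes} to deduce the classical Leopoldt conjecture (Theorem~\ref{thm_intro_nal_implies_l}). Your write-up is not a proof of the conjecture; it is, at best, a reduction of it to two further unproven conjectures, and one of those is essentially equivalent in difficulty.

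Concretely, the patching strategy you outline is the one carried out in Theorem~\ref{thm_r_equals_t}. Under the hypotheses of that theorem (which already require Conjectures~\ref{conj_existence_of_galois_combined} and~\ref{conj_local_global_compatibility_at_taylor_wiles_primes}, a totally odd residual representation with enormous image that is \emph{trivial} at each $v\mid p$, $[F_v:\bbQ_p]>1+n(n-1)/2$, and no nontrivial $p$-th roots of unity in $F_v$), part~1 establishes that $R_\cS$ acts nearly faithfully on $H^\ast(F^\bullet_\ffrm)$ \emph{provided one further assumes the integral concentration hypothesis} (CG): $H_i(X_U,k)_\ffrm=0$ for $i\notin[q_0,q_0+l_0]$. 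That hypothesis is precisely what makes Lemma~\ref{lem_dimension_criterion_for_exactness} applicable to the patched complex, and hence what yields the depth bound $\depth_{S_\infty}M_\infty=\dim S_\infty-l_0$ that forces $\mathbf X$ to be $M_\infty$-regular and pins down $\dim_\Lambda H^\ast_\text{ord}(U)_\ffrm$. Without (CG), the patching argument only gives $\dim_{S_\infty}M_\infty\le\dim R_\infty$; it does not give the matching lower bound nor the regularity of $\mathbf X$, and the dimension of $H^\ast_\text{ord}(U)_\ffrm$ is not determined.

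The gap, therefore, is that (CG) is itself a deep open conjecture about torsion in the cohomology of arithmetic manifolds with positive defect, of comparable strength to Conjecture~\ref{conj_intro_hida_conjecture}. The paper acknowledges this implicitly: part~2 of Theorem~\ref{thm_r_equals_t} is exactly the workaround, replacing (CG) by inverting $p$ and using Theorem~\ref{thm_only_cuspidal_cohomology_survives_localization} (which proves concentration \emph{after} tensoring with $E$, a much weaker and in fact unconditional statement when $F$ is CM). But that localized version only controls $R_{\cS,(\wp_\bl)}$ and $H^\ast(F^\bullet_{\ffrm,(\wp_\bl)})$; it gives $\dim R_{\cS,(\wp_\bl)}=\dim_{\Lambda_{(\wp_\bl)}}H^\ast(F^\bullet_{\ffrm,(\wp_\bl)})$ (Corollary~\ref{cor_dimension_of_r_bounded_by_t}) without telling you what that common dimension is. In the proof of Theorem~\ref{thm_nal_implies_al} the paper then \emph{uses} Conjecture~\ref{conj_intro_hida_conjecture} to supply the value $\dim\Lambda-l_0$. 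So the logic runs the opposite direction from yours: the conjecture is an input, not an output. Claiming to prove it via (CG) simply trades one conjecture for another, and the paper's careful avoidance of the integral form of (CG) in part~2 is there precisely because its authors do not regard (CG) as available.

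A secondary point: even granting your conditional inputs, your account of the lower bound $\dim R_{\cS_\text{ord}}\ge\dim\Lambda-l_0$ from the Euler characteristic needs the oddness of $\overline\rho_\ffrm$ (so that the archimedean $h^0(F_v,\ad^0\overline\rho)$ terms in Proposition~\ref{prop_presenting_global_deformation_ring} have the right size) and needs to know that the ordinary local deformation rings $R_v^\triangle$ at $v\mid p$ have the expected dimension, which is exactly what the restrictive local hypotheses on $\overline\rho_\ffrm|_{G_{F_v}}$ and $[F_v:\bbQ_p]$ in Proposition~\ref{prop_ordinary_lifting_rings} are buying you. As stated, Conjecture~\ref{conj_intro_hida_conjecture} is asserted for \emph{any} non-Eisenstein $\ffrm$, with none of these auxiliary hypotheses, so your argument, even if completed, would only address a restricted version.
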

 We think of this conjecture as a `non-abelian analogue' of the classical Leopoldt conjecture. It is worth remarking that in the case $l_0 > 0$, one always has the trivial bound $\dim_\Lambda H^\ast_\text{ord}(U)_\ffrm < \dim \Lambda$ (Corollary \ref{cor_cohomology_is_torsion_lambda_module}). 

We can now state our main result.
\begin{theorem}\label{thm_intro_nal_implies_l}
Let $K/\bbQ$ be a Galois totally real number field with $[K : \bbQ] = n$ and $K \cap \bbQ(\zeta_p) = \bbQ$, and suppose that $p > n$. Assume Conjecture \ref{conj_intro_hida_conjecture} for all imaginary CM extensions $E/\bbQ$, together with Conjectures \ref{conj_existence_of_galois_combined} and \ref{conj_local_global_compatibility_at_taylor_wiles_primes} below. Then Leopoldt's conjecture holds for the pair $(K, p)$: in other words, the only $\bbZ_p$-extension of $K$ unramified outside $p$ is the cyclotomic one.
\end{theorem}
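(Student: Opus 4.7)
The plan is to follow Hida's original reduction of classical Leopoldt to his non-abelian version, but to replace his use of automorphic induction from $K$ to $\bbQ$ (which is unknown when $K/\bbQ$ is non-solvable) by a potential automorphy argument producing an automorphic representation over a CM field; Conjecture \ref{conj_intro_hida_conjecture} will then be applied to Hida's Hecke algebra for $\GL_n$ over that CM field rather than over $\bbQ$.

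First, pick an auxiliary imaginary quadratic field $F_0$ in which $p$ splits and which is linearly disjoint from $K \cdot \bbQ(\zeta_p)$, and set $F = K F_0$. Then $F$ is a CM field with maximal totally real subfield $K$, and $F/F_0$ is Galois of degree $n$ with $\Gal(F/F_0) \cong \Gal(K/\bbQ)$. Construct a $p$-adic algebraic Hecke character $\chi : G_F \to \overline{\bbZ}_p^\times$ so that the $n$-dimensional induced representation $\rho = \Ind_{G_F}^{G_{F_0}} \chi$ is regular, essentially conjugate self-dual, ordinary at each place of $F_0$ above $p$, and has large residual image. Apply the potential automorphy theorems of Barnet-Lamb--Gee--Geraghty--Taylor to produce a soluble CM extension $F_0'/F_0$, with appropriate linear disjointness from the splitting field of $\overline{\rho}$, such that $\rho|_{G_{F_0'}}$ comes from a RACSDC automorphic representation $\Pi$ of $\GL_n(\bbA_{F_0'})$. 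Since $\chi$ was chosen ordinary at $p$, the representation $\Pi$ contributes to Hida's ordinary cohomology $H^\ast_\text{ord}(U)$ for a suitable level subgroup $U \subset \GL_n(\bbA_{F_0'}^\infty)$, cutting out a non-Eisenstein maximal ideal $\ffrm \subset \bbT^S_\text{ord}(U)$.

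Next, invoke the $R = \bbT$ theorem proved in this paper (conditional on Conjectures \ref{conj_existence_of_galois_combined} and \ref{conj_local_global_compatibility_at_taylor_wiles_primes}) to identify the ordinary Galois deformation ring $R$ of $\overline{\rho}|_{G_{F_0'}}$ with $\bbT^S_\text{ord}(U)_\ffrm$, and thereby translate Conjecture \ref{conj_intro_hida_conjecture} applied to the CM field $F_0'$ into a sharp upper bound on $\dim R$ in terms of $\dim \Lambda$ and the defect $l_0(F_0')$. On the other hand, because $\overline{\rho}$ is induced from a character of $G_F$, a Mazur-style Galois cohomology computation expresses $\dim R$ as a main term matching the upper bound above plus exactly $\delta(K \cdot F_0'^+, p)$, the classical Leopoldt defect of the totally real field $K \cdot F_0'^+$ at $p$. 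Comparing the two bounds forces $\delta(K \cdot F_0'^+, p) = 0$; since Leopoldt for a totally real field descends to any subfield, this yields Leopoldt for $(K, p)$.

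The main obstacle is the careful bookkeeping required in the final step: one must arrange the choice of $F_0$, $\chi$, and the potential automorphy extension $F_0'/F_0$ so that (a) $\overline{\rho}|_{G_{F_0'}}$ has large enough residual image to run both potential automorphy and the Calegari--Geraghty patching underlying the $R = \bbT$ theorem, (b) the ordinary local deformation problems at primes above $p$ are well-behaved, and (c) the discrepancy between Mazur's dimension formula for $R$ and Hida's predicted dimension is exactly $\delta(K \cdot F_0'^+, p)$, uncontaminated by defects coming from other auxiliary number fields appearing in the construction. All other ingredients — Arthur--Clozel solvable base change, potential automorphy, Mazur's local Galois cohomology, and classical Iwasawa theory over CM fields — enter as standard tools.
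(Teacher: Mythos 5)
Your proposal matches the paper's argument in all essential respects: choose an auxiliary CM field, induce a character of $G_{K\cdot(\text{CM})}$ to an $n$-dimensional representation with enormous image, apply potential automorphy via \cite{Bar14} to make the induction automorphic over a further CM extension, invoke the conditional $R=\bbT$ theorem to bound $\dim R$ by the dimension of the ordinary Hecke algebra, pin that dimension down via the non-abelian Leopoldt conjecture, and compare with a deformation-theoretic lower bound on $\dim R$ that isolates a classical Leopoldt defect. Two small corrections: the potential automorphy theorems of BLGGT produce a finite CM extension, not a soluble one, so Arthur--Clozel solvable base change plays no role here; and the paper avoids your final descent step by embedding $\cO\llbracket \Gal(M/K)\rrbracket$ directly into the deformation ring, where $M/K$ is the $\bbZ_p^{1+\delta_K}$-extension of $K$ unramified outside $p$, so the lower bound on $\dim R$ already isolates $\delta_K$ rather than $\delta(K F_0'^+, p)$. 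Your version also works, since the Leopoldt kernel for $K$ injects into that for any overfield, but the paper's bookkeeping is slightly more direct.
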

More informally, the non-abelian Leopoldt conjecture of Hida implies the abelian Leopoldt conjecture. We do not attempt to state precisely Conjectures \ref{conj_existence_of_galois_combined} and \ref{conj_local_global_compatibility_at_taylor_wiles_primes} here, noting only that they assert the existence of suitable Galois representations with coefficients $\bbT^S_\text{ord}(U)_\ffrm$, as well as a kind of local-global compatibility, both at places dividing $p$ (at which the Galois representations should be ordinary in the usual sense) and at Taylor--Wiles places. 

Our strategy for proving Theorem \ref{thm_intro_nal_implies_l} is to prove an $R = \bbT$ type result over general fields $F$, using the techniques of Calegari--Geraghty (Theorem \ref{thm_r_equals_t}). This theorem uses Conjectures \ref{conj_existence_of_galois_combined} and \ref{conj_local_global_compatibility_at_taylor_wiles_primes} in an essential way, but does not use Conjecture \ref{conj_intro_hida_conjecture}. To get information about the Leopoldt conjecture over $K$, we choose a character $\overline{\chi} : G_K \to \overline{\bbF}_p^\times$ and look at $\overline{\rho} = \Ind_K^\bbQ \overline{\chi}$. For well-chosen $\overline{\chi}$, the main results of \cite{Bar14} imply that $\overline{\rho}|_{G_E}$ becomes automorphic for some choice of CM extension $E/K$. (In other words, $\overline{\rho}$ is potentially automorphic.) Deformations of the character $\overline{\chi}$ give rise to deformations of $\overline{\rho}$. Our $R = \bbT$ type result then shows that Conjecture \ref{conj_intro_hida_conjecture} implies a bound on the dimension of the associated deformation ring of $\overline{\rho}$, and this in turns gives a bound on the deformation space of the character $\overline{\chi}$, implying the Leopoldt conjecture for the pair $(K, p)$.

The proof of our $R = \bbT$ type result includes two new observations that we mention here. The first is that one can construct Taylor--Wiles systems in this context using Iwahori level subgroups, if the image of the residual representation $\overline{\rho}_\ffrm$ is sufficiently large (namely, if $\overline{\rho}_\ffrm$ satisfies the conditions of Definition \ref{def_huge_image} below and therefore has what we call `enormous image'). The important point here is to show that the cohomology groups behave well during passage from `level $U$' to `level $U_0(Q)$'; see \S \ref{sec_auxiliary_primes}. The second is that one can get information about the ring $R[1/p]$ without assuming that the cohomology groups $H^\ast_\text{ord}(U)$ vanish outside of degrees in the so-called `middle range' $[q_0, q_0 + l_0]$; for our purposes, it is sufficient to know this after inverting $p$, and we can prove this unconditionally in certain cases (see Theorem \ref{thm_only_cuspidal_cohomology_survives_localization}).

We now describe in more detail the organization of this paper. Section \ref{sec_commutative_algebra} is of a preliminary nature. We find it convenient to work mostly in the derived category $\mathbf{D}(\Lambda)$ of $\Lambda$-modules, and an important point is that if $C^\bullet$ is a bounded above, perfect complex of $\Lambda$-modules and $t \in \End_{\mathbf{D}(\Lambda)}(C^\bullet)$ is an endomorphism, then it makes sense to form the ordinary part of $C^\bullet$ with respect to $t$ in $\mathbf{D}(\Lambda)$ (and not just at the level of cohomology groups). In Section \ref{sec_abstract_patching}, we prove a version of the patching criterion of Calegari--Geraghty. Since we eventually view our Hecke algebras as rings of endomorphisms in $\mathbf{D}(\Lambda)$, some modifications are required.

In Section \ref{sec_galois_deformation_theory}, we recall the basics of Galois deformation theory for representations with fixed determinant. The only new material here is the definition of a representation with `enormous image', and some remarks on how to construct such representations by inducing characters from finite extensions. In Section \ref{sec_q_adic_hecke_algebras}, we make some remarks about the Iwahori-Hecke algebra of $\GL_n(F_v)$ in characteristic $p$, when $q_v \equiv 1 \text{ mod }p$. These remarks are useful when we need to study the properties of the ordinary completed cohomology groups of Section \ref{sec_ordinary_completed_cohomology} with respect to the Taylor--Wiles systems constructed in Section \ref{sec_galois_deformation_theory}.

In Section \ref{sec_ordinary_completed_cohomology} we come to the manifolds $X_U$ and their ordinary completed cohomology groups. We first reprove a number of results of Hida in this context. Since we again wish to work in  $\mathbf{D}(\Lambda)$, we prove derived versions of Hida's control theorem and Hida's independence of weight theorem. We then prove our main $R = \bbT$ type result, conditionally on our conjectures about existence and local-global compatibility of Galois representations with coefficients in $\bbT^S_\text{ord}(U)_\ffrm$. Finally, in Section \ref{sec_potential_automorphy_and_leopoldt}, we use these results to prove our main result, Theorem \ref{thm_intro_nal_implies_l} above.

\subsection{Acknowledgments}

We thank Toby Gee, Fabian Januszewski, and James Newton for useful comments. In the period during which this research was conducted, Jack Thorne served as a Clay Research Fellow.
Chandrashekhar Khare was supported by NSF grant DMS-1161671 and by a Humboldt Research Award, and thanks the Tata Instititute of Fundamental Research, Mumbai for hospitality during the period in which  most of the work on this paper was done.

\subsection{Notation}

A base number field $F$ having been fixed, we will make the following additional choices. We fix an algebraic closure $\overline{F}$ of $F$, and for each place $v$ of $F$ algebraic closures $\overline{F}_v$ of the completion $F_v$ at $v$. We also fix embeddings $\overline{F} \hookrightarrow \overline{F}_v$ extending the natural embeddings $F \hookrightarrow F_v$. Then the absolute Galois group $G_F = \Gal(\overline{F}/F)$ of $F$ is defined, and we get embeddings of decomposition groups $G_{F_v} \hookrightarrow G_F$, by restriction.  If $v$ is a finite place of $F$, then we write $I_{F_v} \subset G_{F_v}$ for the inertia group, and $\Frob_v \in G_{F_v}/I_{F_v}$ for a \emph{geometric} Frobenius element. We write $\bbA_F$ for the adele ring of $F$. If $F$ is an imaginary CM field, then we will use the notation $F^+$ to denote the maximal totally real subfield of $F$, and $\delta_{F/F^+} : \Gal(F/F^+) \to \{ \pm 1 \}$ for the unique non-trivial character. The local Artin maps $\Art_{F_v}$ are normalized to send uniformizers to geometric Frobenius elements.

Let $p$ be a prime. We will generally fix an algebraic closure $\overline{\bbQ}_p$ of $\bbQ_p$. We refer to a finite extension $E/\bbQ_p$ inside $\overline{\bbQ}_p$ as a coefficient field, and write $\cO$ for its ring of integers, $\lambda \subset \cO$ for its maximal ideal, and $k = \cO/\lambda$ for its residue field. If $R$ is a complete Noetherian local $\cO$-algebra with residue field $k$, then we write $\mathrm{CNL}_R$ for the category of complete Noetherian local $R$-algebras with residue field $k$. If $A$ is a finite $\widehat{\bbZ}$-module (equivalently: a topologically finitely generated abelian profinite group), then we write $A(p) = A \otimes_{\widehat{\bbZ}} \bbZ_p$ for its maximal pro-$p$ quotient. If $R$ is any local ring, then we write $\ffrm_R$ for its maximal ideal.

If $F$ is a number field, then we write $\epsilon : G_F \to \bbZ_p^\times$ for the $p$-adic cyclotomic character. The prime $p$ will always be clear from the context, so we omit it from the notation. If $v$ is a place of $F$ dividing $p$, $\tau : F_v \hookrightarrow \overline{\bbQ}_p$ is a continuous embedding, and $\rho : G_{F_v} \to \GL_n(\overline{\bbQ}_p)$ is a continuous representation which is de Rham, then we write $\mathrm{HT}_\tau(\rho)$ for the multiset of $\tau$-Hodge--Tate weights of $\rho$, taken with multiplicity. We use the normalization with $\mathrm{HT}_\tau(\epsilon) = -1$. Similarly, if $\rho : G_F \to \GL_n(\overline{\bbQ}_p)$ is a continuous representation and $\rho|_{G_{F_v}}$ is de Rham for each place $v|p$ of $F$, then for each embedding $\tau : F \hookrightarrow \overline{\bbQ}_p$ we define  $\mathrm{HT}_\tau(\rho) = \mathrm{HT}_\tau(\rho|_{G_{F_v}})$, where $v$ is the $p$-adic place of $F$ induced by $\tau$.

If $F$ is a number field and $\pi$ is a cuspidal automorphic representation of $\GL_n(\bbA_F)$, we say that $\pi$ is regular algebraic if it satisfies the integrality condition on the Langlands parameter of $\pi_\infty$ given in \cite[Definition 2.1]{Clo13}. If $\iota : \overline{\bbQ}_p \cong \bbC$ is an isomorphism, we say that $\pi$ is $\iota$-ordinary if it is regular algebraic and further satisfies the condition of \cite[Definition 5.1.2]{Ger09}. If $\rho : G_F \to \GL_n(\overline{\bbQ}_p)$ is a continuous representation, unramified at all but finitely many places of $F$, we say that $\rho$ is automorphic if there exists a cuspidal, regular algebraic automorphic representation $\Pi$ of $\GL_n(\bbA_F)$ such that $\mathrm{WD}(\rho|_{G_{F_v}})^\text{F-ss} \cong \rec_{F_v}^T(\iota\Pi_v)$ at all but finitely many finite places $v$ of $F$ at which both $\rho$ and $\Pi$ are unramified. (Here we use $\mathrm{WD}$ to denote the associated Weil--Deligne representation of $\rho$, and $\rec_{F_v}^T$ to denote the arithmetically normalized local Langlands correspondence for $\GL_n(F_v)$; see \cite[\S 2.1]{Clo13} for more details.) 

If $E$ is a coefficient field with residue field $k$ and $M$ is a discrete $k[G_F]$-module, finite-dimensional as $k$-vector space, then we write $H^i(F, M) = H^i(G_F, M)$ for the continuous group cohomology groups. We write $h^i(F, M) = \dim_k H^i(F, M)$ and $\chi(F, M) = \sum_{i = 0}^\infty (-1)^i h^i(F, M)$, when these make sense. We write $F(M)$ for the extension cut out by $M$ (i.e.\ the fixed field inside $\overline{F}$ of the group $\mathrm{ker}( G_F \to \Aut_k(M))$). We use similar notation when $M$ is a discrete $k[G_{F_v}]$-module of finite dimension over $k$. 

\section{Preliminaries in commutative algebra}\label{sec_commutative_algebra}

\subsection{Change of coefficients}

Let $R$ be a ring, and let $C^\bullet$ be a complex of $R$-modules. (With some exceptions, we will usually consider cochain complexes $C^\bullet$ rather than chain complexes $C_\bullet$. All rings will be commutative.) We say that $C^\bullet$ is bounded if $C^i = 0$ for all but finitely many $i$. If $C^\bullet, D^\bullet$ are complexes of $R$-modules, then we write $\Hom_R(C^\bullet, D^\bullet)$ for the set of morphisms $f : C^\bullet \to D^\bullet$ of complexes, i.e.\ collections $f = (f^i)_{i \in \bbZ}$ of morphisms $f^i : C^i \to D^i$ such that $d f^i = f^{i+1} d$ for each $i \in \bbZ$. We write $\End_R(C^\bullet) = \Hom_R(C^\bullet, C^\bullet)$. If $n$ is an integer and $C^\bullet$ is a complex of $R$-modules, then we define its truncations
\[ (\tau_{\leq n} C^\bullet)^i = \left\{ \begin{array}{cc} C^i & i \leq n \\ \ker d_i & i = n \\ 0 & i > n, \end{array} \right. \]
and $\tau_{> n} C^\bullet = C^\bullet / \tau_{\leq n} C^\bullet$.
There is a natural morphism of complexes $\tau_{\leq n} C^\bullet \to C^\bullet$. If $i > n$ then $H^i(\tau_{\leq n} C^\bullet) = 0$, while if $i \leq n$ then this morphism induces an isomorphism $H^i(\tau_{\leq n} C^\bullet) \cong H^i(C^\bullet)$. Similar remarks apply to $\tau_{> n} C^\bullet$.

Let $M$ be an $R$-module, and let $\mathbf{x} = (x_1, \dots, x_n)$ be an ordered tuple of elements of $R$. We say that the sequence $\mathbf{x}$ is $M$-regular if it satisfies the following two conditions:
\begin{enumerate}
\item For each $i = 1, \dots, n$, the element $x_i$ is not a zero-divisor on $M/(x_1, \dots, x_{i-1})$.
\item The module $M/(x_1, \dots, x_n)$ is not 0.
\end{enumerate}
As is well-known, if $R$ is a Noetherian local ring and $\mathbf{x}$ is an $M$-regular sequence, then any permutation of $\mathbf{x}$ is also an $M$-regular sequence.
\begin{proposition}\label{prop_change_of_coefficients_spectral_sequence}
Let $R$ be a ring, $M$ an $R$-module, and let $C^\bullet$ be a bounded above complex of free $R$-modules. Then there is a spectral sequence:
\begin{gather}\label{eqn_change_of_coefficient_spectral_sequence} 
E_2^{p, q} = \mathrm{Tor}_{-p}^R( H^q(C^\bullet), M) \Rightarrow H^{p+q}(C^\bullet \otimes_R M) \\ d_2 : E_2^{p, q} \to E_2^{p+2, q-1}. 
\end{gather}
\end{proposition}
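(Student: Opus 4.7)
The plan is to construct the spectral sequence in the standard way, as one of the two hypercohomology spectral sequences of a suitable first-quadrant (up to reflection) double complex.

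First I would fix a projective resolution $P_\bullet \to M$ of $M$ by $R$-modules, so that $P_i = 0$ for $i < 0$ and the augmentation $P_0 \to M$ makes the complex a quasi-isomorphism of chain complexes. I would then form the double complex $K^{\bullet, \bullet}$ with $K^{p,q} = C^q \otimes_R P_{-p}$, placed in cohomological bidegree $(p,q)$, so that the horizontal differential $d_h \colon K^{p,q} \to K^{p+1, q}$ is (up to the usual Koszul sign) $1_{C^q} \otimes \partial_P$, and the vertical differential $d_v \colon K^{p,q} \to K^{p, q+1}$ is $d_C \otimes 1_{P_{-p}}$. Note that $K^{p,q}$ is supported in $p \leq 0$ and (since $C^\bullet$ is bounded above) in $q \leq N$ for some $N$, so in each total degree $n$ only finitely many $K^{p,q}$ with $p+q = n$ are nonzero. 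Hence the total complex $\mathrm{Tot}(K)^n = \bigoplus_{p+q=n} K^{p,q}$ is well defined and its column filtration is bounded in each total degree, ensuring strong convergence of both associated spectral sequences.

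Next I would run the two spectral sequences. Filtering by rows (taking horizontal cohomology first): for each $q$, the complex $K^{\bullet, q} = C^q \otimes_R P_\bullet$ has cohomology $C^q \otimes_R M$ concentrated in column $p = 0$, because $C^q$ is free and $P_\bullet \to M$ is a resolution. Thus this spectral sequence collapses on the line $p = 0$ at $E_2$, giving $E_2^{0, q} = H^q(C^\bullet \otimes_R M)$ and identifying the abutment with $H^{p+q}(C^\bullet \otimes_R M)$. Filtering by columns (taking vertical cohomology first): for each $p$, the complex $K^{p, \bullet} = C^\bullet \otimes_R P_{-p}$ has $E_1^{p, q} = H^q(C^\bullet) \otimes_R P_{-p}$, again because $P_{-p}$ is flat. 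The $E_2$ page is then the horizontal cohomology of $H^q(C^\bullet) \otimes_R P_\bullet$, which by definition is $\mathrm{Tor}_{-p}^R(H^q(C^\bullet), M)$. The differential $d_2$ has the bidegree $(2, -1)$ dictated by the column filtration. Since both spectral sequences have the same abutment, we obtain the displayed spectral sequence (\ref{eqn_change_of_coefficient_spectral_sequence}).

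There is no real obstacle here; the only care required is in the bookkeeping: checking that the double complex is supported in a quadrant that ensures term-wise finiteness of $\mathrm{Tot}(K)^n$, which makes the two filtrations (by rows and by columns) bounded in each total degree so that both spectral sequences strongly converge to the cohomology of $\mathrm{Tot}(K)$. The flatness inputs used to simplify the $E_1$ pages are exactly the hypotheses of the proposition: freeness (hence flatness) of the terms $C^q$ on the one hand, and projectivity of the terms $P_{-p}$ on the other.
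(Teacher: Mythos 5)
Your proof is correct and is the standard double-complex (hyperhomology) argument for the universal coefficient spectral sequence; the paper itself simply cites \cite[Theorem 5.6.4]{Wei94} for this result, and the argument you spell out is precisely the one given there. The bookkeeping on the support of $K^{\bullet,\bullet}$ (namely $p\leq 0$, $q\leq N$, hence finitely many terms in each total degree and bounded filtrations) is exactly what the ``bounded above'' hypothesis is for, and you have handled it correctly.
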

\begin{proof}
See \cite[Theorem 5.6.4]{Wei94}.
\end{proof}
\begin{corollary}\label{cor_regular_sequence_at_infinity}
Let $R$ be a Noetherian local ring, and let $C^\bullet$ be a bounded complex of finite free $R$-modules, concentrated in degrees $[a, b]$. Let $\mathbf{x} = (x_1, \dots, x_n)$ be an $R$-regular sequence. Suppose that $H^i(C^\bullet \otimes_R R/(\mathbf{x})) = 0$ if $i \neq b$. Then $H^i(C^\bullet) = 0$ if $i \neq b$, and there is a canonical isomorphism
\begin{equation}\label{eqn_isomorphism_in_degree_zero} H^b(C^\bullet) \otimes_R R/(\mathbf{x}) \cong H^b(C^\bullet \otimes_R R/(\mathbf{x})). 
\end{equation}
Moreover, if $H^b(C^\bullet \otimes_R R/(\mathbf{x})) \neq 0$, then $\mathbf{x}$ is a $H^b(C^\bullet)$-regular sequence.
\end{corollary}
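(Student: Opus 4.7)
The plan is to reduce to the case $n=1$ and then iterate. In the case $n=1$, tensoring the exact sequence $0 \to R \xrightarrow{x_1} R \to R/(x_1) \to 0$ with the complex $C^\bullet$ of free $R$-modules gives a term-wise short exact sequence of complexes, whose associated long exact sequence in cohomology reads
\[ \cdots \to H^{i-1}(C^\bullet \otimes_R R/(x_1)) \to H^i(C^\bullet) \xrightarrow{x_1} H^i(C^\bullet) \to H^i(C^\bullet \otimes_R R/(x_1)) \to \cdots. \]
For $i > b$, $H^i(C^\bullet)$ vanishes by concentration of $C^\bullet$ in $[a,b]$. For $i < b$, the hypothesis forces $H^i(C^\bullet \otimes_R R/(x_1)) = 0$, so $x_1$ acts surjectively on the finitely generated $R$-module $H^i(C^\bullet)$; since $x_1 \in \ffrm_R$, Nakayama gives $H^i(C^\bullet) = 0$. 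For $i = b$, vanishing of $H^{b-1}(C^\bullet \otimes_R R/(x_1))$ makes $x_1$ a nonzerodivisor on $H^b(C^\bullet)$, and the surjection $H^b(C^\bullet) \twoheadrightarrow H^b(C^\bullet \otimes_R R/(x_1))$ identifies the quotient $H^b(C^\bullet)/x_1 H^b(C^\bullet)$ with $H^b(C^\bullet \otimes_R R/(x_1))$.

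For general $n$, set $R_k = R/(x_1, \ldots, x_k)$ and $C^\bullet_k = C^\bullet \otimes_R R_k$. Each $C^\bullet_k$ is a bounded complex of finite free $R_k$-modules, the ring $R_k$ is Noetherian local, and the image of $x_{k+1}$ is a nonzerodivisor in $R_k$ lying in its maximal ideal. I then run a downward induction on $k$ from $n$ to $0$, showing at each stage that $H^i(C^\bullet_k) = 0$ for $i \neq b$, that $x_{k+1}$ is a nonzerodivisor on $H^b(C^\bullet_k)$, and that $H^b(C^\bullet_k)/x_{k+1} H^b(C^\bullet_k) \cong H^b(C^\bullet_{k+1})$. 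The base case $k = n$ is the hypothesis, and the inductive step is precisely the $n = 1$ case applied to the ring $R_k$, the complex $C^\bullet_k$, and the element $x_{k+1}$. Specializing to $k = 0$ yields the vanishing $H^i(C^\bullet) = 0$ for $i \neq b$.

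Composing the isomorphisms $H^b(C^\bullet_k)/x_{k+1} H^b(C^\bullet_k) \cong H^b(C^\bullet_{k+1})$ for $k = 0, \ldots, n-1$ produces the canonical isomorphism (\ref{eqn_isomorphism_in_degree_zero}) and identifies $H^b(C^\bullet_k)$ with $H^b(C^\bullet)/(x_1, \ldots, x_k)H^b(C^\bullet)$ for every $k$. The nonzerodivisor statements from the inductive step then translate directly to the assertion that each $x_{k+1}$ is a nonzerodivisor on $H^b(C^\bullet)/(x_1, \ldots, x_k)H^b(C^\bullet)$. Under the extra hypothesis $H^b(C^\bullet \otimes_R R/(\mathbf{x})) \neq 0$, the last requirement $H^b(C^\bullet)/(\mathbf{x}) H^b(C^\bullet) \neq 0$ in the definition of a regular sequence is immediate, completing the argument. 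The proof is essentially a bookkeeping exercise; the only point requiring attention is checking at each inductive step that the hypotheses of the base case persist after reduction, in particular that $x_{k+1} \in \ffrm_{R_k}$ so that Nakayama applies to $R_k$-modules.
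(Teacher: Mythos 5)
Your proof is correct and takes essentially the same approach as the paper: both reduce to the case $n=1$ and iterate. The paper extracts from the degeneration of the change-of-coefficients spectral sequence the short exact sequences $0 \to H^q(C^\bullet)/x_1 \to H^q(C^\bullet \otimes R/(x_1)) \to H^{q+1}(C^\bullet)[x_1] \to 0$, which are exactly the kernel/cokernel data you read off from the long exact sequence associated to $0 \to C^\bullet \xrightarrow{x_1} C^\bullet \to C^\bullet \otimes_R R/(x_1) \to 0$, so the two arguments are the same computation in different packaging.
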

\begin{proof}
After shifting, we can assume $b = 0$. By induction, it is enough to treat the case $n = 1$. The spectral sequence (\ref{eqn_change_of_coefficient_spectral_sequence}) then degenerates, and we obtain short exact sequences:
\[ 0 \to H^{q}(C^\bullet) \otimes_R R/(x_1) \to H^{q}(C^\bullet \otimes_R R/(x_1)) \to \mathrm{Tor}_1^R(H^{q+1}(C^\bullet), R/(x_1)) \to 0. \]
If $q  \neq 0$, then the middle term is 0, and hence $H^q(C^\bullet)$ is 0, by Nakayama's lemma. Looking at this sequence when $q = 0$, we obtain the isomorphism (\ref{eqn_isomorphism_in_degree_zero}). If $H^0(C^\bullet \otimes_R R / (x_1)) \neq 0$, then this isomorphism shows that $H^0(C^\bullet) \neq 0$. Looking at the short exact sequence when $q = -1$, we find that multiplication by $x_1$ is injective on $H^0(C^\bullet)$, which shows that if $H^0(C^\bullet \otimes_R R / (x_1)) \neq 0$ then the sequence $\mathbf{x}$ is $H^0(C^\bullet)$-regular, as required.
\end{proof}

\subsection{Minimal complexes and the derived category}

Let $R$ be a Noetherian local ring, and let $C^\bullet,$ $D^\bullet$ be complexes of $R$-modules. We recall that a morphism of complexes $f^\bullet : C^\bullet \to D^\bullet$ is said to be a quasi-isomorphism if the induced maps $H^i(C^\bullet) \to H^i(D^\bullet)$ are all isomorphisms. Two morphisms $f, g : C^\bullet \to D^\bullet$ are said to be homotopy equivalent if there is a graded morphism of $R$-modules $s : C^\bullet \to D^\bullet[1]$ (not necessarily a morphism of complexes) such that $f - g = sd + ds$. In this case, $f$ and $g$ induce the same maps on cohomology.

We say that a complex $C^\bullet$ is good if it is bounded, and each $C^i$ is a projective finite $R$-module. (Thus the perfect complexes are, by definition, exactly the ones which are quasi-isomorphic to a good complex.) We say that the complex $C^\bullet$ is minimal if it is good and the differentials in $C^\bullet \otimes_R R/\ffrm_R$ are all 0. 
\begin{lemma}\label{lem_perfect_and_minimal_complexes}
Let $R$ be a Noetherian local ring, and let $C^\bullet$ be a complex of $R$-modules. 
\begin{enumerate}
\item Suppose that $C^\bullet$ is good. Then there exists a minimal complex $F^\bullet$ and a quasi-isomorphism $f : F^\bullet \to C^\bullet$. 
\item Suppose that $C^\bullet$ is good (resp. minimal). Then for any proper ideal $I \subset R$, $C^\bullet \otimes_R R/I$ is a good (resp. minimal) complex of $R/I$-modules.
\end{enumerate}
\end{lemma}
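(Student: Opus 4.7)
The plan is to handle part (2) by direct verification and part (1) by an inductive argument that peels off contractible two-term summands whenever some differential fails to vanish modulo $\ffrm_R$.

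For part (2), note that if $C^i$ is a finite projective $R$-module then $C^i\otimes_R R/I$ is finite projective over $R/I$, and boundedness is preserved, so $C^\bullet \otimes_R R/I$ is good whenever $C^\bullet$ is. For the minimal case, I would use that since $I$ is proper we have $I\subset \ffrm_R$, so $\ffrm_{R/I}=\ffrm_R/I$ and the surjection $R/I\twoheadrightarrow R/\ffrm_R$ factors the reduction maps. Therefore the differentials of $(C^\bullet\otimes_R R/I)\otimes_{R/I} R/\ffrm_{R/I} = C^\bullet \otimes_R R/\ffrm_R$ are zero by minimality of $C^\bullet$, and by Nakayama (applied to the finitely generated $R/I$-modules of differentials) this forces the differentials of $C^\bullet \otimes_R R/I$ to already be zero modulo $\ffrm_{R/I}$.

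For part (1), since $R$ is local Noetherian, each $C^i$ is finite free. I would induct on the total rank $N(C^\bullet) = \sum_i \mathrm{rank}_R C^i$; the case $N=0$ is vacuous. If every differential vanishes modulo $\ffrm_R$ then $C^\bullet$ is already minimal and we take $F^\bullet = C^\bullet$. Otherwise, pick $i$ and $x\in C^i$ with $y:=d^i(x)\notin \ffrm_R C^{i+1}$; then $x\notin \ffrm_R C^i$ as well, and by Nakayama we may extend $x$ and $y$ to bases $x, x_1,\ldots,x_{m-1}$ of $C^i$ and $y, y_1,\ldots,y_{n-1}$ of $C^{i+1}$. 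Writing $d^i(x_j)=a_j y + \sum_k b_{jk} y_k$ and replacing $x_j$ by $x_j' = x_j - a_j x$ gives a new basis of $C^i$ on which $d^i$ maps into the $R$-span $B^{i+1}$ of $y_1,\ldots,y_{n-1}$. I then set $A^\bullet$ to be the subcomplex $0\to Rx\xrightarrow{\sim} Ry\to 0$ in degrees $i,i+1$, and $B^\bullet$ to agree with $C^\bullet$ outside degrees $i,i+1$, with $B^i = R x_1'\oplus \cdots \oplus Rx_{m-1}'$ and $B^{i+1}$ as above. The relation $d^{i+1}(y)=d^{i+1}d^i(x)=0$ and a dual argument using $d^i d^{i-1}=0$ show that the coefficient of $y$ in any $d^{i+1}$-image lies in $B^{i+1}$ and that $d^{i-1}(C^{i-1})\subset B^i$, so both $A^\bullet$ and $B^\bullet$ are subcomplexes and $C^\bullet = A^\bullet \oplus B^\bullet$ as complexes. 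Since $A^\bullet$ is contractible, the projection $C^\bullet \to B^\bullet$ is a quasi-isomorphism, and $N(B^\bullet)=N(C^\bullet)-2$, so induction applied to $B^\bullet$ produces a minimal $F^\bullet$ with a quasi-isomorphism $F^\bullet \to B^\bullet \to C^\bullet$, which after composing with a homotopy inverse (or by lifting through the split surjection) gives the required map into $C^\bullet$.

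The main obstacle is the basis modification in the splitting step: one has to use $d\circ d=0$ in both adjacent degrees to arrange that the two-dimensional acyclic piece $A^\bullet$ sits inside $C^\bullet$ as a genuine direct summand \emph{of complexes}, not merely of graded modules. Once this is done cleanly, the induction is immediate and part (2) for minimality follows by the Nakayama reduction described above.
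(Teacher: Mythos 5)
Your proposal is correct and follows essentially the same route as the paper's proof: locate an index $i$ where the differential is nonzero modulo $\ffrm_R$, change basis to exhibit a unit entry, split off a contractible two-term piece, and induct on total rank (the paper phrases this step as passing to a quasi-isomorphic subcomplex $D^\bullet$ via a matrix normal form, you as a direct-sum decomposition $C^\bullet = A^\bullet \oplus B^\bullet$, which amount to the same thing). One small remark: in your part (2) the minimal case needs no Nakayama at all, since once you observe $(C^\bullet \otimes_R R/I) \otimes_{R/I} R/\ffrm_{R/I} = C^\bullet \otimes_R R/\ffrm_R$, minimality of $C^\bullet \otimes_R R/I$ is immediate from the definition.
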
 
\begin{proof}
For the first part, suppose that the differential $d_i : C^i \to C^{i+1}$ satisfies $d_i \otimes_R R/\ffrm_R \neq 0$. After choosing isomorphisms $C^i \cong R^m$, $C^{i+1} \cong R^{n}$, we can assume that $d_i$ is given by a matrix
\[ d_i = \left( \begin{array}{cccc} 1 & 0 & \dots & 0 \\
0 & \ast & \dots & \ast \\
\vdots & \vdots & & \vdots \\
0 & \ast & \dots & \ast \end{array} \right). \]
Let $D^\bullet$ be the complex defined by $D^j = C^j$ if $j \not\in \{ i, i+1 \}$, $D^i = 0 \oplus R^{m-1} \subset R^m$, and $D^{i+1} = 0 \oplus R^{n-1} \subset R^n$. It is clear from the above matrix that the differentials of $C^\bullet$ leave $D^\bullet$ invariant, and the inclusion $D^\bullet \to C^\bullet$ is a quasi-isomorphism. The result now follows by induction. The second part of the lemma is immediate. This completes the proof.
\end{proof}
We also make the following definitions.
\begin{itemize}
\item $\mathbf{K}(R)$ is the category of complexes of $R$-modules, with morphisms taken modulo homotopy equivalence. $\mathbf{K}(R)^-$ is the full subcategory of $\mathbf{K}(R)$ consisting of complexes which are bounded above. 
\item $\mathbf{D}(R)$ is the derived category of complexes of $R$-modules, defined by formally inverting all quasi-isomorphisms in $\mathbf{K}(R)$,  and $\mathbf{D}(R)^-$ is its full subcategory consisting of complexes which are bounded above. (See \cite[Definition 10.3.1]{Wei94}.) Equivalently, $\mathbf{D}(R)^-$ is the localization of $\mathbf{K}(R)^-$ at the class of quasi-isomorphisms. (See \cite[Example 10.3.15]{Wei94}.)
\item $\mathbf{K}(R)^{-, \text{proj}}$ is the full subcategory of $\mathbf{K}(R)^-$ whose objects are the complexes of projective $R$-modules.
\end{itemize}
The truncation functors $\tau_{\leq n}$ and $\tau_{> n}$ preserve quasi-isomorphisms, so induce functors $\tau_{\leq n} : \mathbf{D}(R) \to \mathbf{D}(R)$ and $\tau_{> n} : \mathbf{D}(R) \to \mathbf{D}(R)$.
\begin{proposition}\label{prop_derived_category_and_projective_complexes}
Let $R$ be a Noetherian local ring.
\begin{enumerate}
\item Let $C^\bullet \in \mathbf{K}(R)^{-, \text{proj}}$ and $D^\bullet \in \mathbf{D}(R)$. Then $\Hom_{\mathbf{K}(R)}(C^\bullet, D^\bullet) = \Hom_{\mathbf{D}(R)}(C^\bullet, D^\bullet)$.
\item The functor $\mathbf{K}(R)^{-, \text{proj}} \to \mathbf{D}(R)^-$ is an equivalence of categories.
\item Let $C^\bullet, D^\bullet \in \mathbf{K}(R)^{-, \text{proj}}$, and let $f : C^\bullet \to D^\bullet$ be a quasi-isomorphism. Then there exists a quasi-isomorphism $g : D^\bullet \to C^\bullet$ such that each of $fg$, $gf$ is homotopy equivalent to the identity.
\end{enumerate}
\end{proposition}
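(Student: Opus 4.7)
The plan is to derive all three parts from one technical lemma: if $P^\bullet \in \mathbf{K}(R)^{-,\text{proj}}$ and $A^\bullet$ is any acyclic complex of $R$-modules, then $\Hom_{\mathbf{K}(R)}(P^\bullet, A^\bullet) = 0$. I would prove this by constructing, for a given chain map $f : P^\bullet \to A^\bullet$, a null-homotopy $s^i : P^i \to A^{i-1}$ by descending induction on $i$. The bounded-above hypothesis on $P^\bullet$ seeds the induction (set $s^i = 0$ for $i$ large), and the inductive step uses the acyclicity of $A^\bullet$ (so $\ker d_A^i = \im d_A^{i-1}$) combined with the projectivity of $P^i$ to lift $f^i - s^{i+1} d^i$ through the surjection $A^{i-1} \twoheadrightarrow \im d_A^{i-1}$.

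For part (1), I would use the calculus of fractions: a morphism $C^\bullet \to D^\bullet$ in $\mathbf{D}(R)$ is represented by a roof $C^\bullet \xleftarrow{s} E^\bullet \xrightarrow{f} D^\bullet$ in which $s$ is a quasi-isomorphism, hence $\mathrm{Cone}(s)$ is acyclic. Applying $\Hom_{\mathbf{K}(R)}(C^\bullet, -)$ to the distinguished triangle $E^\bullet \to C^\bullet \to \mathrm{Cone}(s) \to E^\bullet[1]$ and invoking the lemma for $C^\bullet$ (and its shift) paired with $\mathrm{Cone}(s)$ produces a section $t : C^\bullet \to E^\bullet$ of $s$ in $\mathbf{K}(R)$. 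The original roof is then equivalent to the plain chain map $f \circ t$, proving surjectivity; a parallel argument, applied to a chain map $f : C^\bullet \to D^\bullet$ and a quasi-isomorphism witnessing that it vanishes in $\mathbf{D}(R)$, gives injectivity.

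Parts (2) and (3) follow quickly. For (2), full faithfulness is part (1), while essential surjectivity reduces to producing, for each $X^\bullet \in \mathbf{D}(R)^-$, a quasi-isomorphism $P^\bullet \to X^\bullet$ with $P^\bullet$ a bounded-above complex of free $R$-modules; this is the standard projective resolution, built by descending induction from the top nonvanishing cohomology degree of $X^\bullet$, at each step choosing a free $R$-module surjecting onto the appropriate cycles. For (3), the quasi-isomorphism $f$ is invertible in $\mathbf{D}(R)$, so by (1) its inverse lifts to an honest chain map $g : D^\bullet \to C^\bullet$ (automatically itself a quasi-isomorphism). The identities $gf = \mathrm{id}_{C^\bullet}$ and $fg = \mathrm{id}_{D^\bullet}$ hold in $\mathbf{D}(R)$, and one application of (1) promotes them to equalities in $\mathbf{K}(R)$, i.e., to the desired homotopy equivalences.

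No step is a real obstacle; the proposition collects foundational facts about the derived category of a ring with enough projectives, treated for instance in \cite[\S10.4]{Wei94}. The only point requiring mild care is ensuring that the bounded-above hypothesis on the projective complexes is used properly both to seed the descending induction producing null-homotopies and to construct projective resolutions of objects in $\mathbf{D}(R)^-$; this is where the restriction to $\mathbf{K}(R)^{-,\text{proj}}$, as opposed to all complexes of projectives, enters essentially.
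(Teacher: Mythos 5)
Your proof is correct and fills in exactly the arguments that the paper cites from Weibel (Corollary 10.4.7 and Theorem 10.4.8 of \cite{Wei94}); the key lemma about null-homotopies from bounded-above projective complexes into acyclic complexes, the roof argument for part (1), the projective resolution for essential surjectivity in (2), and the lift of $f^{-1}$ for (3) are all the standard content behind those citations. The paper simply cites the textbook, while you unpack the proof; the approach is the same.
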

\begin{proof}
The first part is \cite[Corollary 10.4.7]{Wei94}. The second part is \cite[Theorem 10.4.8]{Wei94}. The third part follows immediately from the second.
\end{proof}
\begin{lemma}\label{lem_nilpotents_in_derived_category}
Let $R$ be a ring.
\begin{enumerate}
\item Let $A^\bullet \to B^\bullet \to C^\bullet \to A^\bullet[1]$ be an exact triangle in $\mathbf{D}(R)$, and suppose given a morphism of exact triangles
\[ \xymatrix{ A^\bullet \ar[d]^{f_A} \ar[r] & B^\bullet \ar[d]^{f_B} \ar[r] & C^\bullet \ar[d]^{f_C} \ar[r] & A^\bullet[1] \ar[d]^{f_A[1]} \\
A^\bullet \ar[r] & B^\bullet  \ar[r] & C^\bullet \ar[r] & A^\bullet[1].} \]
Suppose that there are integers $n_A, n_C \geq 1$ such that $f_A^{n_A} = 0$ and $f_C^{n_C} = 0$. Then $f_B^{n_A + n_C} = 0$.
\item Let $C^\bullet$ be a complex of $R$-modules which is concentrated in degrees $[0, d]$. Let $f \in \End_{\mathbf{D}(R)}(C^\bullet)$, and suppose that $H^\ast(f) = 0$. Then $f^{d+1} = 0$ in $\End_{\mathbf{D}(R)}(C^\bullet)$.
\end{enumerate} 
\end{lemma}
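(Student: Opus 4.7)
My plan is to treat part (1) first as a standard triangulated-category argument, and then deduce part (2) by induction on $d$ using truncation triangles.

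For part (1), the starting observation is that in any triangulated category, if we have a morphism of exact triangles in which the endomorphism $f_A$ of $A^\bullet$ vanishes, then $f_B \circ (A^\bullet \to B^\bullet) = (A^\bullet \to B^\bullet) \circ f_A = 0$, so $f_B$ factors through the map $B^\bullet \to C^\bullet$; symmetrically, if $f_C = 0$, then $(B^\bullet \to C^\bullet) \circ f_B = 0$, so $f_B$ factors through $A^\bullet \to B^\bullet$. Applying these observations to the morphism of triangles obtained by iterating the given one $n_A$ (resp. $n_C$) times, I would conclude that $f_B^{n_A}$ factors as $g \circ (B^\bullet \to C^\bullet)$ for some $g : C^\bullet \to B^\bullet$, and $f_B^{n_C}$ factors as $(A^\bullet \to B^\bullet) \circ h$ for some $h : B^\bullet \to A^\bullet$. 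Composing, we obtain
\[ f_B^{n_A + n_C} = f_B^{n_A} \circ f_B^{n_C} = g \circ (B^\bullet \to C^\bullet) \circ (A^\bullet \to B^\bullet) \circ h, \]
and the middle composition vanishes because it is the composition of two consecutive arrows in an exact triangle.

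For part (2), I would proceed by induction on $d$. When $d = 0$, the complex $C^\bullet$ is quasi-isomorphic to $H^0(C^\bullet)[0]$, so $f$ is determined by $H^0(f) = 0$, giving $f = 0$ in $\End_{\mathbf{D}(R)}(C^\bullet)$. For the inductive step, I would use the truncation triangle
\[ \tau_{\leq d-1} C^\bullet \to C^\bullet \to \tau_{> d-1} C^\bullet \to (\tau_{\leq d-1} C^\bullet)[1]. \]
Since $\tau_{\leq n}$ and $\tau_{> n}$ are functorial on $\mathbf{D}(R)$, the endomorphism $f$ induces compatible endomorphisms $f_{\leq}$ and $f_{>}$ of the two outer terms, giving a morphism of exact triangles to which part (1) applies. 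The complex $\tau_{> d-1} C^\bullet$ is quasi-isomorphic to $H^d(C^\bullet)[-d]$, so $f_{>}$ is determined by $H^d(f) = 0$ and hence $f_{>} = 0$; the complex $\tau_{\leq d-1} C^\bullet$ is quasi-isomorphic to a complex concentrated in degrees $[0, d-1]$ with $H^\ast(f_{\leq}) = 0$, so by induction $f_{\leq}^d = 0$. Part (1) then gives $f^{d+1} = 0$.

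The only substantive point to verify carefully is the claim in part (1) that $f_B$ factors through $B^\bullet \to C^\bullet$ when $f_A = 0$; this uses the fact that the triangulated structure on $\mathbf{D}(R)$ gives long exact sequences upon applying $\Hom_{\mathbf{D}(R)}(-, B^\bullet)$, so that the vanishing of the composition $A^\bullet \to B^\bullet \xrightarrow{f_B} B^\bullet$ produces the required lift. I expect no serious obstacle — both parts reduce to formal manipulations in the triangulated category together with the elementary observation that truncations are functorial.
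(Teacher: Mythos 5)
Your proposal is correct and takes essentially the same route as the paper: part (1) rests on the long exact $\Hom$-sequences attached to the triangle, and part (2) on induction via truncation triangles combined with part (1). The only cosmetic difference is that in part (1) you factor both $f_B^{n_A}$ (through $B^\bullet \to C^\bullet$) and $f_B^{n_C}$ (through $A^\bullet \to B^\bullet$) and kill the composite using $(B^\bullet \to C^\bullet)\circ(A^\bullet \to B^\bullet)=0$, whereas the paper lifts only $f_B^{n_C}$ to a map $g:B^\bullet\to A^\bullet$ and then pushes $f_B^{n_A}$ across the commuting square to $f_A^{n_A}g=0$; both are the same $\Hom$-exact-sequence trick.
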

\begin{proof}
For the first part, we observe that there is an exact sequence
\[ \xymatrix@1{ \Hom_{\mathbf{D}(R)}(B^\bullet, A^\bullet) \ar[r] & \Hom_{\mathbf{D}(R)}(B^\bullet, B^\bullet) \ar[r] & \Hom_{\mathbf{D}(R)}(B^\bullet, C^\bullet).} \]
The image of $f_B^{n_C}$ in the final term is 0, so $f_B^{n_C}$ lifts to $g \in \Hom_{\mathbf{D}(R)}(B^\bullet, A^\bullet)$. This implies that $f_B^{n_A + n_C}$, which is the image of $f_A^{n_A} g$, is also 0.

For the second part, we show by induction on $i \geq 0$ that $f^{i+1} = 0$ on the truncated complex $\tau_{\leq i}C^\bullet$. The case $i = 0$ is immediate (since $\tau_{\leq 0} C^\bullet \cong H^0(C^\bullet)$ in $\mathbf{D}(R)$). In general, we have an exact triangle in $\mathbf{D}(R)$:
\[ \xymatrix@1{ \tau_{\leq i} C^\bullet \ar[r] & \tau_{\leq i+1} C^\bullet \ar[r] & H^{i+1}(C^\bullet)[-(i+1)] \ar[r] & \tau_{\leq i} C^\bullet [1]. } \]
The result therefore follows from the first part of the lemma.
\end{proof}

\begin{lemma}\label{lem_quasi_isomorphism_of_perfect_complexes}
Let $R$ be a Noetherian local ring.
\begin{enumerate} \item Let $C^\bullet$ be a complex of projective $R$-modules concentrated in degrees $[a, b]$ such that $H^\ast(C^\bullet)$ is a finitely generated $R$-module. Then there exists a good complex $D^\bullet$ of $R$-modules concentrated in degrees $[a, b]$ and a quasi-isomorphism $f : D^\bullet \to C^\bullet$.
\item Let $f : C^\bullet \to D^\bullet$ be a quasi-isomorphism of bounded above complexes of projective $R$-modules. Then for any $R$-algebra $S$, the map $f \otimes 1 : C^\bullet \otimes_R S \to D^\bullet \otimes_R S$ is a quasi-isomorphism.
\end{enumerate}
\end{lemma}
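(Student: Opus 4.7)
The plan for part (1) is to first obtain a quasi-isomorphism $P^\bullet \to C^\bullet$, where $P^\bullet$ is a bounded-above complex of finite free $R$-modules with $P^i = 0$ for $i > b$. Such a $P^\bullet$ exists by a standard downward induction from degree $b$, using the finite generation of $H^\ast(C^\bullet)$ to choose finite free modules and the projectivity of each $C^i$ to lift the required maps. Now define $D^\bullet$ by $D^i = P^i$ for $a < i \leq b$, $D^a = P^a / \im(d^{a-1}_P)$, and $D^i = 0$ otherwise. Since $H^i(P^\bullet) = H^i(C^\bullet) = 0$ for $i < a$, the natural surjection $P^\bullet \to D^\bullet$ is a quasi-isomorphism. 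Assuming $D^a$ is finite projective, $D^\bullet$ lies in $\mathbf{K}(R)^{-,\mathrm{proj}}$, and Proposition \ref{prop_derived_category_and_projective_complexes} then produces an actual morphism of complexes $D^\bullet \to C^\bullet$ representing the composite isomorphism $D^\bullet \simeq P^\bullet \to C^\bullet$ in $\mathbf{D}(R)$, which is the desired quasi-isomorphism.

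The core of the argument, and the main obstacle, is to prove that $D^a = P^a / \im(d^{a-1}_P)$ is finite projective. It is visibly finitely generated, so over the Noetherian ring $R$ it suffices to show flatness. The complex $\cdots \to P^{a-2} \to P^{a-1} \to P^a$, placed in degrees $\leq a$, is a free resolution of $D^a$ concentrated in degree $a$, so for $i \geq 1$ and any $R$-module $M$ one has
\[ \mathrm{Tor}_i^R(D^a, M) = H^{a-i}(P^\bullet \otimes_R M), \]
since truncation above degree $a$ does not affect cohomology in degrees $< a$. Because $P^\bullet$ has flat terms and is quasi-isomorphic to $C^\bullet$, we have $P^\bullet \otimes_R M \simeq P^\bullet \otimes^L_R M \simeq C^\bullet \otimes^L_R M$ in $\mathbf{D}(R)$; and because $C^\bullet$ itself consists of flat modules concentrated in $[a,b]$, the derived tensor product $C^\bullet \otimes^L_R M$ coincides with the underived $C^\bullet \otimes_R M$, which is again concentrated in degrees $[a,b]$. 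Hence $H^{a-i}(P^\bullet \otimes_R M) = 0$ for $i \geq 1$, and $D^a$ is flat. The essential input is the flatness of the terms of $C^\bullet$ itself, not merely of the resolution $P^\bullet$.

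For part (2), the plan is a standard mapping-cone argument. Let $M^\bullet = \mathrm{cone}(f)$; this is a bounded-above complex of projective $R$-modules, and it is acyclic since $f$ is a quasi-isomorphism. Any such complex is null-homotopic: a contracting homotopy $s$ with $ds + sd = \mathrm{id}_{M^\bullet}$ can be constructed by downward induction, using at each degree the projectivity of $M^i$ to split the surjection provided by acyclicity. Tensoring $s$ with $\mathrm{id}_S$ then yields a contracting homotopy for $M^\bullet \otimes_R S$, showing this complex is acyclic. Since $M^\bullet \otimes_R S$ is canonically identified with $\mathrm{cone}(f \otimes 1)$, this proves that $f \otimes 1$ is a quasi-isomorphism.
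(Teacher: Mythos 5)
Both parts are correct. For part (1), the paper simply cites Mumford (Abelian Varieties, Ch.~II, \S 5, Lemma 1), and what you wrote is essentially an unpacking of that argument: truncate a finite free resolution $P^\bullet$ at degree $a$ and show the last cokernel $D^a$ is flat (hence projective, being finite over a Noetherian ring) via the vanishing of $\mathrm{Tor}_i^R(D^a, M) = H^{a-i}(P^\bullet \otimes_R M) = H^{a-i}(C^\bullet \otimes_R M)$ for $i \geq 1$, which uses that $C^\bullet$ is itself a bounded complex of flat modules concentrated in $[a,b]$. You correctly identify this as the crucial input. For part (2), you take a genuinely different route from the paper's suggestion. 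The paper points to the hyper-Tor spectral sequence of Proposition~\ref{prop_change_of_coefficients_spectral_sequence} together with the comparison theorem for spectral sequences (Weibel 5.2.4): a quasi-isomorphism induces an isomorphism on $E_2$-pages $\mathrm{Tor}_{-p}^R(H^q(-), S)$ and hence on abutments. You instead observe that $\mathrm{cone}(f)$ is an acyclic bounded-above complex of projectives, hence contractible (by the usual downward induction lifting against the projectives), and that a contracting homotopy survives $\otimes_R S$; since the cone commutes with base change this gives the result. Your route is more elementary, avoids spectral sequences, and in fact gives the stronger conclusion that $f \otimes 1$ is a chain homotopy equivalence. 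It could also be shortened by invoking the paper's own Proposition~\ref{prop_derived_category_and_projective_complexes}(3), which already asserts the existence of a homotopy inverse for a quasi-isomorphism in $\mathbf{K}(R)^{-,\mathrm{proj}}$; tensoring a homotopy inverse with $S$ then finishes immediately.
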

\begin{proof}
For the first part, see \cite[Ch. II, \S 5, Lemma 1]{Mum08}. For the second part, use Proposition \ref{prop_change_of_coefficients_spectral_sequence} and \cite[Lemma 5.2.4]{Wei94}.
\end{proof}
\begin{corollary}
Let $R$ be a Noetherian local ring, and let $C^\bullet$ be a good complex of $R$-modules. Let $f : F^\bullet \to C^\bullet$ be a quasi-isomorphism, where $F^\bullet$ is a minimal complex. Then there are isomorphisms for each $i \in \bbZ$:
\[ F^i/(\ffrm_R) \cong H^i(C^\bullet \otimes_R R/\ffrm_R). \]
In particular, $F^i \neq 0$ if and only if $H^i(C^\bullet \otimes_R R/\ffrm_R) \neq 0$.
\end{corollary}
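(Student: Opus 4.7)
The plan is to combine the two parts of Lemma \ref{lem_quasi_isomorphism_of_perfect_complexes} with the definition of a minimal complex. First, since $F^\bullet$ is minimal it is in particular good, so each $F^i$ is a finite projective $R$-module, hence free (as $R$ is Noetherian local). The differentials of $F^\bullet \otimes_R R/\ffrm_R$ are zero by definition of minimality, so we immediately read off
\[ H^i(F^\bullet \otimes_R R/\ffrm_R) = F^i \otimes_R R/\ffrm_R = F^i/\ffrm_R F^i. \]

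Next, I invoke Lemma \ref{lem_quasi_isomorphism_of_perfect_complexes}(2) applied to the quasi-isomorphism $f : F^\bullet \to C^\bullet$ of bounded above complexes of projective $R$-modules, with $S = R/\ffrm_R$. This yields a quasi-isomorphism $f \otimes 1 : F^\bullet \otimes_R R/\ffrm_R \to C^\bullet \otimes_R R/\ffrm_R$, so in particular
\[ H^i(F^\bullet \otimes_R R/\ffrm_R) \cong H^i(C^\bullet \otimes_R R/\ffrm_R) \]
for every $i$. Composing with the previous identification produces the asserted isomorphism $F^i/\ffrm_R F^i \cong H^i(C^\bullet \otimes_R R/\ffrm_R)$.

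For the final ``in particular'' clause, I use Nakayama's lemma: since $F^i$ is a finitely generated $R$-module and $R$ is local, $F^i = 0$ if and only if $F^i/\ffrm_R F^i = 0$. Combined with the isomorphism just established, this gives $F^i \neq 0 \iff H^i(C^\bullet \otimes_R R/\ffrm_R) \neq 0$. There is no real obstacle here; the statement is essentially a formal consequence of Lemma \ref{lem_quasi_isomorphism_of_perfect_complexes}(2), which is the only non-trivial input and was already proved. The only subtlety worth noting is that it is crucial that $F^\bullet$ be a complex of projective modules so that part (2) of that lemma applies without needing a flat resolution.
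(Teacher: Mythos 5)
Your proof is correct and is exactly the argument the paper leaves implicit (the corollary is stated without a written proof): minimality kills the differentials mod $\ffrm_R$, Lemma \ref{lem_quasi_isomorphism_of_perfect_complexes}(2) transports the cohomology across the quasi-isomorphism after base change, and Nakayama gives the last clause.
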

\subsection{Dimension}

Let $R$ be a Noetherian ring, and $M$ a finite $R$-module. We write $\dim_R M$ for the Krull dimension of the quotient of $R$ which acts faithfully on $M$, i.e.\ the Krull dimension of $R/\mathrm{Ann}_R(M)$. We recall that a sequence of elements $x_1, \dots, x_d$ of the maximal ideal of $R$ is said to be a system of parameters of $M$ if $d = \dim M$ and $M / (x_1, \dots, x_d) M$ has finite length as an $R$-module (see \cite[Appendix]{Bru93}).
\begin{lemma}\label{lem_dimension_theory} Let $R$ be a Noetherian local ring.
\begin{enumerate}
\item Let $\mathbf{x} = (x_1, \dots, x_q)$ be a sequence of elements in $R$. Then $\dim_R M \geq \dim_{R/(\mathbf{x})} M/(\mathbf{x}) + q$, with equality if and only if $(x_1, \dots, x_q)$ is part of a system of parameters of $M$.
\item Let $\mathbf{x} = (x_1, \dots, x_q)$ be an $M$-regular sequence in $R$. Then $\dim_R M = \dim_{R/(\mathbf{x})} M/(\mathbf{x}) + q$ and $\depth_R M = \depth_{R/(\mathbf{x})} M/(\mathbf{x}) + q$.
\item Let $R \to S$ be a local homomorphism of Noetherian local rings, and let $M$ be an $S$-module which is finite as an $R$-module. Then $\dim_R M = \dim_S M$.
\end{enumerate}
\end{lemma}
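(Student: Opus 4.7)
The three parts are standard results in dimension theory (essentially in the appendix to \cite{Bru93} cited by the authors), and my plan for each is to reduce to a single-element case and induct, drawing on Krull's principal ideal theorem, the theory of systems of parameters, and the fact that integral extensions preserve Krull dimension.

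For part (1), I would induct on $q$. The base case $q = 1$ is Krull's Hauptidealsatz applied to the faithful quotient $R / \Ann_R M$, together with the observation that $\Ann_{R/(x_1)}(M/x_1 M) = (\Ann_R M + (x_1))/(x_1)$. The equality criterion then reduces to the standard fact that an element $x_1 \in \ffrm_R$ cuts the Krull dimension of $R/\Ann_R M$ by exactly one precisely when it avoids every minimal prime of $\Ann_R M$ of maximal coheight, which in turn is equivalent to $(x_1)$ extending to a system of parameters of $M$. The inductive step is immediate after noting that $M/(x_1,\dots,x_{q-1})M$ is naturally a finite $R/(x_1,\dots,x_{q-1})$-module. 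For part (2), I would first observe that any $M$-regular sequence is automatically part of a system of parameters: each $x_i$ is a non-zerodivisor on $M/(x_1,\dots,x_{i-1})M$ and hence avoids every associated prime, in particular every minimal prime of the annihilator of maximal coheight. The dimension equality then follows from the equality case of part (1). The depth statement follows from the standard fact that the concatenation of an $M$-regular $\mathbf{x}$ with an $(M/\mathbf{x}M)$-regular $\mathbf{y}$ is again $M$-regular, so that a maximal $(M/\mathbf{x}M)$-regular sequence prepended with $\mathbf{x}$ gives a maximal $M$-regular sequence.

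For part (3), the crucial point is that the map $R \to S$ sends $\Ann_R M$ into $\Ann_S M$, since the $R$-action on $M$ factors through $S$; this produces an injection $A := R/\Ann_R M \hookrightarrow B := S/\Ann_S M$. Because $B$ acts faithfully on the finite $A$-module $M$, it embeds into $\End_A(M)$, which is itself a finite $A$-module, so $B$ is module-finite over $A$. Integrality then preserves Krull dimension and gives $\dim A = \dim B$, as required. No single step is a serious obstacle — the lemma is essentially bookkeeping against standard commutative-algebra references — but the subtlest point is probably the equality characterization in part (1), which ultimately rests on a prime-avoidance argument to extend a partial system of parameters to a full one.
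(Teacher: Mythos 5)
Your proof is correct and follows essentially the same route as the paper: parts (1) and (2) are the standard facts from Bruns--Herzog that the paper cites directly, and your part (3) reproduces the paper's argument verbatim (embed $R/\Ann_R M \hookrightarrow S/\Ann_S M \hookrightarrow \End_R(M)$, note finiteness, conclude via integrality). The only difference is that you sketch proofs where the paper merely cites \cite[Prop.\ A.4, Prop.\ 1.2.12, Thm.\ 1.2.5]{Bru93}.
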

\begin{proof}
The first part is a well-known consequence of dimension theory; see \cite[Proposition A.4]{Bru93}. The second part follows from the first and the two facts that every $M$-regular sequence is part of a system of parameters (\cite[Proposition 1.2.12]{Bru93}) and that every maximal $M$-regular sequence has the same length (\cite[Theorem 1.2.5]{Bru93}).

For the third part of the lemma, we set $I = \Ann_R M$, $J = \Ann_S M$, and must show $\dim R/I = \dim S/J$. The natural map $R/I \to S/J$ is injective, as we in fact have $R/I \hookrightarrow S/J \hookrightarrow \End_R(M)$. Moreover, $\End_R(M)$ is a finite $R$-module, which shows that $S/J$ is a finite $R$-module, hence a finite $R$-algebra. It follows that $S/J$ is a finite $R/I$-algebra, so $\dim R/I = \dim S/J$, as required.
\end{proof}
\begin{lemma}\label{lem_dimension_criterion_for_exactness}
Let $l_0 \geq 0$, $q_0$ be integers, and let $S$ be a Cohen--Macaulay local ring of dimension $n \geq l_0$. Let $C^\bullet$ be a good complex of $S$-modules. Suppose that groups $H^i(C^\bullet \otimes_S S/\ffrm_S)$ are non-zero only if $i$ lies in the range $[q_0, q_0 + l_0]$. Then $\dim_S H^\ast(C^\bullet) \geq \dim S - l_0$. If equality holds, then $H^i(C^\bullet)$ is non-zero only if $i = q_0 + l_0$, and $H^{q_0+l_0}(C^\bullet)$ has projective dimension $l_0$.
\end{lemma}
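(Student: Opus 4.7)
The plan is first to pass from $C^\bullet$ to a minimal complex, then to exploit the spectral sequence of Proposition \ref{prop_change_of_coefficients_spectral_sequence}. By Lemma \ref{lem_perfect_and_minimal_complexes}(1) there is a quasi-isomorphism $F^\bullet \to C^\bullet$ with $F^\bullet$ minimal, and the unnumbered corollary just above gives $F^i/\ffrm_S F^i \cong H^i(C^\bullet \otimes_S S/\ffrm_S)$, so $F^\bullet$ is concentrated in degrees $[q_0, q_0+l_0]$. After shifting I take $q_0 = 0$, and I assume $H^\ast(F^\bullet) \neq 0$.

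For the inequality $\dim_S H^\ast(F^\bullet) \geq n - l_0$, I argue by contradiction. If $\dim_S H^\ast < n - l_0$, the Cohen--Macaulay identity $\dim S/I = n - \mathrm{ht}(I)$ forces $\mathrm{ht}(\Ann_S H^\ast) > l_0$; since grade equals height in a Cohen--Macaulay local ring, I can find an $S$-regular sequence $\mathbf{x} = (x_1, \dots, x_{l_0+1}) \in \Ann_S H^\ast(F^\bullet)$ (after passing to sufficiently high powers of elements of $\sqrt{\Ann_S H^\ast}$). Applying Proposition \ref{prop_change_of_coefficients_spectral_sequence} with $M = S/(\mathbf{x})$, the fact that $\mathbf{x}$ annihilates each $H^q(F^\bullet)$ makes the Koszul complex computing $\mathrm{Tor}^S_\bullet(H^q(F^\bullet), S/(\mathbf{x}))$ have zero differentials, so $E_2^{p,q}$ is a direct sum of copies of $H^q(F^\bullet)$, supported in $p \in [-(l_0+1), 0]$ and $q \in [0, l_0]$. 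Since $F^\bullet \otimes_S S/(\mathbf{x})$ is concentrated in $[0, l_0]$, the abutment $H^{p+q}(F^\bullet \otimes_S S/(\mathbf{x}))$ vanishes outside that range. I then induct on $i \in [0, l_0]$ to show $H^i(F^\bullet) = 0$: at the corner $(-(l_0+1), i)$, all incoming differentials vanish because they originate in columns $p < -(l_0+1)$, and each outgoing differential $d_r$ with $r \geq 2$ lands at a position whose $E_2$-term is built from $H^{i+1-r}(F^\bullet)$, which vanishes either by the inductive hypothesis ($i+1-r < i$) or because the row index is negative. Hence $E_\infty^{-(l_0+1), i} = H^i(F^\bullet)$ is a subquotient of $H^{-(l_0+1)+i}(F^\bullet \otimes_S S/(\mathbf{x})) = 0$, and the induction yields $H^\ast(F^\bullet) = 0$, a contradiction.

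For the equality case, $\dim_S H^\ast(F^\bullet) = n - l_0$ gives $\mathrm{ht}(\Ann_S H^\ast) = l_0$, so I choose an $S$-regular sequence $\mathbf{x}$ of length $l_0$ in $\Ann_S H^\ast(F^\bullet)$. The same spectral sequence argument, now centered at the corner $(-l_0, i)$, identifies $H^i(F^\bullet)$ with $E_\infty^{-l_0, i}$, a subquotient of $H^{-l_0+i}(F^\bullet \otimes_S S/(\mathbf{x}))$; this vanishes for $i \in [0, l_0-1]$. Consequently $F^\bullet$ is a bounded free resolution of $H^{l_0}(F^\bullet)[-l_0]$ of length at most $l_0$, so $\mathrm{pd}_S H^{l_0}(F^\bullet) \leq l_0$. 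The Auslander--Buchsbaum formula, together with $\depth_S H^{l_0}(F^\bullet) \leq \dim_S H^{l_0}(F^\bullet) = n - l_0$ and $\depth S = n$, then forces $\mathrm{pd}_S H^{l_0}(F^\bullet) = l_0$. The principal technical point, and the step where I expect the most care is required, is the inductive bookkeeping of which spectral sequence differentials vanish at the corner positions in each case.
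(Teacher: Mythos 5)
Your proof is correct, but it follows a genuinely different path from the paper's. The paper argues directly with the minimal complex: letting $m$ be the smallest index with $H^m(C^\bullet)\neq 0$, it notes that $C^{q_0}\to\cdots\to C^m$ is a projective (indeed minimal) resolution of $K^m=C^m/dC^{m-1}$ of length $\leq l_0$; Auslander--Buchsbaum then gives $\depth_S K^m\geq \dim S-l_0$, and the containment $H^m(C^\bullet)\subset K^m$ combined with the inequality $\dim S/\frp\geq\depth N$ for $\frp\in\Ass N$ yields $\dim_S H^m\geq \depth_S K^m\geq\dim S-l_0$. Equality forces $m=q_0+l_0$ and identifies $C^\bullet$ as a minimal free resolution of $H^{q_0+l_0}$ of length $l_0$, giving the projective dimension claim immediately. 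Your route instead proceeds by contradiction: you bound the grade of the annihilator from below (via $\mathrm{ht}=\mathrm{grade}$ in a CM ring), produce a long enough $S$-regular sequence $\mathbf{x}$ in $\Ann_S H^\ast$, observe that $\mathbf{x}$ kills the Koszul differentials on $\mathrm{Tor}_\ast^S(H^q(F^\bullet), S/(\mathbf{x}))$, and then run an induction on the far-left column of the base-change spectral sequence to force vanishing. Your bookkeeping of the differentials at the corner positions is correct in both the $\dim<n-l_0$ case (contradiction) and the $\dim=n-l_0$ case, and your derivation of $\mathrm{pd}=l_0$ from Auslander--Buchsbaum plus $\depth\leq\dim$ is sound. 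The trade-off is that the paper's argument is considerably shorter and avoids the spectral sequence entirely (it is essentially a one-line application of Auslander--Buchsbaum to a truncation of the minimal complex), whereas your argument, while heavier, makes the role of the annihilator and of the CM hypothesis more visible and would adapt more readily to situations where one only controls a regular sequence in $\Ann_S H^\ast$ rather than the vanishing pattern of the cohomology itself.
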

\begin{proof}
We follow the proof of \cite[Lemma 3.2]{Cal13}. After replacing $C^\bullet$ be a quasi-isomorphic complex, we can assume that $C^\bullet$ is minimal, and in particular $C^i \neq 0$ only if $i \in [q_0, q_0 + l_0]$. After shifting, we can assume that $q_0 = 0$. Let $m \geq 0$ denote the smallest integer such that $H^m(C^\bullet) \neq 0$, and set $K^m = C^m / d C^{m-1}$. Then the complex
\[ C^0 \to C^1 \to \dots \to C^m \]
is a projective resolution of $K^m$, which therefore has projective dimension at most $m$. On the other hand, we have $H^m(C^\bullet) \subset K^m$, and hence
\[ \dim_S H^m(C^\bullet) \geq \depth_S K^m = \dim S - \text{ proj dim }K^m \geq \dim S - m, \]
by the Auslander-Buchsbaum formula (and since $S$ is assumed Cohen--Macaulay). Since $m \leq l_0$, this shows the first assertion. If $\dim_S H^\ast(C^\bullet) = \dim S - l_0$, then we have
\[ \dim S - l_0 = \dim_S H^\ast(C^\bullet) \geq \dim_S H^m(C^\bullet) \geq \dim S - m, \]
from which it follows that $m = l_0$ and $C^\bullet$ is a projective resolution of $H^{l_0}(C^\bullet)$ of length $l_0$. This completes the proof.
\end{proof}

\subsection{Ordinary part}\label{sec_ordinary_parts}

Let $p$ be a prime and let $E$ be a finite extension of $\bbQ_p$ with ring of integers $\cO$ and residue field $k$. Let $R$ be a complete Noetherian local $\cO$-algebra with residue field $k$. 
\begin{lemma}\label{lem_ordinary_part_for_modules} 
\begin{enumerate} \item Let $M$ be a finite $R$-module, and let $T \in \End_R(M)$.  There is a unique $T$-invariant decomposition $M = M_\text{ord} \oplus M_\text{non-ord}$ with the following property: $T$ is invertible on $M_\text{ord}$, and topologically nilpotent (for the $\ffrm_R$-adic topology) on $M_\text{non-ord}$. Moreover, the limit $e = \lim_{n \to \infty} T^{n!}$ exists in $\End_R(M)$, is an idempotent, and we have $M_\text{ord} = e M$, $M_\text{non-ord} = (1-e)M$.
\item Let $M, N$ be finite $R$-modules, let $T_1 \in \End_R(M)$, $T_2 \in \End_R(M)$, and let $f \in \Hom_R(M, N)$ intertwine $T_1$ and $T_2$. Then we have $fM_\text{ord} \subset N_\text{ord}$, $f M_\text{non-ord} \subset N_\text{non-ord}$, the decompositions being taken with respect to $T_1$ and $T_2$.
\item Let $M$ be a finite $R$-module, and let $T \in \End_R(M)$. If $I \subset R$ is a proper ideal, then we have 
\begin{gather*}
(M/(I))_\text{ord} = M_\text{ord}/(I), \\ 
(M/(I))_\text{non-ord} = M_\text{non-ord}/(I).
\end{gather*}
\end{enumerate}
\end{lemma}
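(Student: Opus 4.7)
The plan is to realize the decomposition as coming from a canonical idempotent in a suitable finite commutative $R$-subalgebra of $\End_R(M)$, after which parts (2) and (3) follow essentially formally.

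For part (1), I would first observe that $\End_R(M)$ is a finite $R$-module (since $M$ is finite over the Noetherian ring $R$), so the commutative subalgebra $A = R[T] \subset \End_R(M)$ is itself a finite $R$-module and hence a complete Noetherian semilocal $R$-algebra. By the structure theorem for such rings, $A$ decomposes as a finite product $A = \prod_{i=1}^r A_i$ of complete Noetherian local $R$-algebras with finite residue fields. In each factor the image of $T$ is either a unit or lies in the maximal ideal; grouping accordingly I can write $A = A_\text{ord} \times A_\text{non-ord}$, and letting $e \in A$ be the identity of $A_\text{ord}$ yields a $T$-stable decomposition $M = eM \oplus (1-e)M$ on which $T$ is respectively invertible and topologically nilpotent. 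Uniqueness will follow once I identify $e$ with $\lim_n T^{n!}$, since that limit is manifestly determined by the pair $(M,T)$ alone.

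To establish $e = \lim_n T^{n!}$, I would work block by block in $A$. On $A_\text{non-ord}$, $T$ lies in the Jacobson radical and the ring is $\ffrm_A$-adically complete, so $T^{n!} \to 0$. On $A_\text{ord}$, $T$ is a unit and $A_\text{ord}^\times$ fits into a short exact sequence with pro-$p$ kernel $\prod (1 + \ffrm_{A_i})$ and finite cokernel $\prod (A_i/\ffrm_{A_i})^\times$. For $n$ sufficiently large, $n!$ kills the cokernel, so $T^{n!}$ lies in the pro-$p$ kernel, where it converges to $1$ because $v_p(n!) \to \infty$. Combining the two blocks gives $T^{n!} \to e$ in $A$, and hence in $\End_R(M)$. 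This pro-$p$ convergence is really the only non-formal step in the whole argument; everything else is structure theory of finite semilocal rings.

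Parts (2) and (3) then follow quickly. For (2), an $R$-linear map $f : M \to N$ between finite modules over a complete Noetherian local ring is automatically continuous for the $\ffrm_R$-adic topology, so the intertwining relation $f T_1 = T_2 f$ passes to limits: $f e_M = e_N f$, which gives both claimed inclusions. For (3), the direct-sum decomposition $M = M_\text{ord} \oplus M_\text{non-ord}$ base-changes to $M/IM = M_\text{ord}/IM_\text{ord} \oplus M_\text{non-ord}/IM_\text{non-ord}$, and on the two summands the induced endomorphism $\bar T$ is respectively invertible (as a quotient of an action on which $T$ is invertible) and topologically nilpotent (the $\ffrm_{R/I}$-adic topology is the quotient of the $\ffrm_R$-adic topology); by the uniqueness established in part (1), applied over $R/I$, this must coincide with the ord/non-ord decomposition of $M/IM$.
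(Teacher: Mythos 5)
Your argument is correct, and it reaches the same idempotent $e=\lim_n T^{n!}$ as the paper's proof, but by a genuinely different route. The paper reduces to $R$ Artinian, takes the characteristic polynomial $\overline{P}$ of $T$ on $M/\ffrm_R$, factors it as $\overline{A}\overline{B}$ with $\overline{B}=X^m$ and $\overline{A}(0)\in k^\times$, lifts the factorization by Hensel's lemma to $P=AB$ with $P(T)=0$, and then defines $M_\text{ord}=B(T)M$ and $M_\text{non-ord}=A(T)M$ by hand. You instead decompose the finite commutative $R$-algebra $A=R[T]\subset\End_R(M)$ into its local factors and read off the idempotent from the factors where $T$ is a unit. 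The two constructions are equivalent — the paper's $A(T),B(T)$ become, after Hensel, exactly the complementary idempotents of your product decomposition — but yours dispenses with the choice of lift $P$ and with the (slightly delicate, and glossed over in the paper) question of producing a monic lift of $\overline{P}$ that kills $T$ on the nose. Your proofs of (2) and (3) are also leaner: the paper re-derives these by tracking the polynomials $A,B$ through $f$ and through $R\to R/I$, whereas you invoke only the continuity of $R$-linear maps and the uniqueness from (1), which is the more conceptual path once $e=\lim T^{n!}$ is available.

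One small point worth making explicit in your write-up: the uniqueness claim in (1) requires not just that $\lim T^{n!}$ is determined by $(M,T)$, but that for \emph{any} competing $T$-stable decomposition $M=M'_\text{ord}\oplus M'_\text{non-ord}$ with the stated invertibility and topological nilpotence, the limit $\lim T^{n!}$ is the projection onto $M'_\text{ord}$. The non-ordinary half is immediate from topological nilpotence; the ordinary half needs the observation that if $T$ is invertible on $M'_\text{ord}$ then $T^{n!}\to 1$ there — which you can get either by running your pro-$p$ argument inside $R[T|_{M'_\text{ord}}]$, or more quickly by noting that $e':=\lim T^{n!}|_{M'_\text{ord}}$ is an idempotent with $1-e'\equiv 0\pmod{\ffrm_R}$ (since $\overline{T}$ has finite order on the finite $k$-vector space $M'_\text{ord}/\ffrm_R$), and an idempotent congruent to $1$ modulo $\ffrm_R$ in a complete local situation is $1$. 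With that sentence added, your proof is complete.
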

\begin{proof}
The $\ffrm_R$-adic topology on $\End_R(M)$ coincides with the topology induced by kernels of the maps $\End_R(M) \to \End_R(M/(\ffrm_R^n))$, $n \geq 1$. Replacing $R$ by one of the quotients $R / \ffrm_R^n$, we can therefore assume that $R$ is Artinian. Let $\overline{P}(X) \in k[X]$ be the characteristic polynomial of $T$ on $M/(\ffrm_R)$, and let $P(X) \in R[X]$ be a monic lift of $\overline{P}$ such that $P(T) = 0$ on $M$. There is a unique factorization $\overline{P} = \overline{A}\overline{B}$ in $k[X]$, where the constant term of $\overline{A}$ is a unit, and $\overline{B} = X^m$, for some $m \geq 0$; by Hensel's lemma, this factorization lifts uniquely to a factorization $P = AB$ in $R[X]$, with $A, B$ monic.

We define $M_\text{ord} = B(T)M$, $M_\text{non-ord} = A(T)M$. Since $P(T)M = 0$, we have $A(T)M_\text{ord} = 0$, which implies that $T$ is invertible on $M_\text{ord}$ (as the constant term of $A$ is a unit). We also have $B(T) M_\text{non-ord} = 0$, which implies that $T$ is topologically nilpotent (hence nilpotent, as $R$ is Artinian) on $M_\text{non-ord}$. Since $\overline{A}, \overline{B}$ generate the unit ideal in $k[X]$, we can find $f, g \in R[X]$ such that $f A + g B = 1 + h$, where $h \in \ffrm_R[X]$. It follows that $f(T)A(T) + g(T)B(T)$ is invertible in $\End_R(M)$, implying that $M = M_\text{ord} + M_\text{non-ord}$. If $x \in M$ lies in the intersection of these two submodules, then the sequence $T^n x$ is eventually 0 (as $T$ is nilpotent), hence $x = 0$ (as $T$ is invertible on $M_\text{ord}$). We deduce that in fact $M = M_\text{ord} \oplus M_{\text{non-ord}}$. In particular, there exists an integer $n_0$ such that for all $n \geq n_0$, $T^{n!} = e$ is the identity on $M_\text{ord}$ and trivial on $M_\text{non-ord}$.

It remains to check that this decomposition is unique. Suppose that there is another decomposition $M = M_\text{ord}' \oplus M_\text{non-ord}'$ such that $T$ is invertible on $M_\text{ord}'$ and nilpotent on $M_\text{non-ord}'$. After possibly increasing $n_0$, we can assume that for all $n \geq n_0$, $T^{n!} = e$ is the identity on $M_\text{ord}'$ and trivial on $M_\text{non-ord}'$. We then obtain
\begin{gather*} M_\text{ord} = e M = M_\text{ord}', \\
M_\text{non-ord} = (1-e)M = M_\text{non-ord}'.
\end{gather*}
This completes the proof of the first part of the lemma. For the second part, it is immediate that $T_1 M_\text{non-ord} \subset N_\text{non-ord}$. Let $B(X) \in R[X]$ be a monic polynomial lifting $X^m \in k[X]$ such that $B(T_2) N_\text{non-ord} = 0$. Then $f(M_\text{ord}) = f(B(T_1) M_\text{ord}) = B(T_2) f(M_\text{ord}) \subset N_\text{ord}$. This completes the proof of the second part. The third part of the lemma is immediate from our expressions $M_\text{ord} = B(T)M$, $M_\text{non-ord} = A(T)M$.
\end{proof}
In the situation of the first part of Lemma \ref{lem_ordinary_part_for_modules}, we call $M_\text{ord}$ the ordinary part of $M$ (with respect to $T$), and $M_\text{non-ord}$ the non-ordinary part.
\begin{lemma}\label{lem_ordinary_part_for_complexes}
\begin{enumerate} \item Let $C^\bullet$ be a complex of finite $R$-modules, and let $T \in \End_R(C^\bullet)$. Then, decomposing each term of $C^\bullet$ into ordinary and non-ordinary parts, we obtain a decomposition $C^\bullet = C^\bullet_\text{ord} \oplus C^\bullet_\text{non-ord}$ of complexes of $R$-modules. Moreover, for each integer $i$ there are canonical isomorphisms
\begin{gather*}
H^i(C^\bullet_\text{ord}) \cong H^i(C^\bullet)_\text{ord},\\
H^i(C^\bullet_\text{non-ord}) \cong H^i(C^\bullet)_\text{non-ord}.
\end{gather*}
\item Let $C^\bullet,$ $D^\bullet$ be complexes of finite $R$-modules, let $T_1 \in \End_R(C^\bullet)$, $T_2 \in \End_R(C^\bullet)$ and let $f \in \Hom_R(C^\bullet, D^\bullet)$ intertwine $T_1$ and $T_2$. Then $f C^\bullet_\text{ord} \subset D^\bullet_\text{ord}$ and $f C^\bullet_\text{non-ord} \subset D^\bullet_\text{non-ord}$.
\end{enumerate}
\end{lemma}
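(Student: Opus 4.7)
The plan is to bootstrap Lemma \ref{lem_ordinary_part_for_modules} from single modules to complexes by working termwise and verifying that the differentials respect the decomposition. For part (1), apply Lemma \ref{lem_ordinary_part_for_modules}(1) to each term $C^i$ with respect to the endomorphism $T^i \in \End_R(C^i)$ to obtain decompositions $C^i = C^i_\text{ord} \oplus C^i_\text{non-ord}$. Since $T$ is a chain map, the differential $d : C^i \to C^{i+1}$ intertwines $T^i$ and $T^{i+1}$, so Lemma \ref{lem_ordinary_part_for_modules}(2) (applied with $M = C^i$, $N = C^{i+1}$, and $f = d$) implies $d(C^i_\text{ord}) \subset C^{i+1}_\text{ord}$ and $d(C^i_\text{non-ord}) \subset C^{i+1}_\text{non-ord}$. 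This yields subcomplexes and hence the direct sum decomposition $C^\bullet = C^\bullet_\text{ord} \oplus C^\bullet_\text{non-ord}$ of complexes of $R$-modules.

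Taking cohomology (which commutes with finite direct sums) gives $H^i(C^\bullet) = H^i(C^\bullet_\text{ord}) \oplus H^i(C^\bullet_\text{non-ord})$. To identify this with the intrinsic ordinary/non-ordinary decomposition of $H^i(C^\bullet)$, I invoke the uniqueness part of Lemma \ref{lem_ordinary_part_for_modules}(1): $T$ acts invertibly on each $C^i_\text{ord}$, hence on the subquotient $H^i(C^\bullet_\text{ord})$ of $C^i_\text{ord}$; and $T$ is topologically nilpotent on each $C^i_\text{non-ord}$, hence also on the subquotient $H^i(C^\bullet_\text{non-ord})$. The characterizing property of the decomposition in Lemma \ref{lem_ordinary_part_for_modules}(1) then forces $H^i(C^\bullet_\text{ord}) = H^i(C^\bullet)_\text{ord}$ and $H^i(C^\bullet_\text{non-ord}) = H^i(C^\bullet)_\text{non-ord}$, and the canonical nature of these isomorphisms follows.

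For part (2), the hypothesis that $f$ intertwines $T_1$ and $T_2$ means that each component $f^i : C^i \to D^i$ intertwines $T_1^i$ and $T_2^i$. Applying Lemma \ref{lem_ordinary_part_for_modules}(2) termwise then gives $f^i(C^i_\text{ord}) \subset D^i_\text{ord}$ and $f^i(C^i_\text{non-ord}) \subset D^i_\text{non-ord}$ for every $i$, which is precisely the claim that $f$ is compatible with the decompositions of source and target as complexes.

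There is no real obstacle: the result is essentially a formal upgrade of Lemma \ref{lem_ordinary_part_for_modules} from modules to complexes. The only step that requires a moment's care is confirming that the induced decomposition of $H^i(C^\bullet)$ agrees with the one produced by applying Lemma \ref{lem_ordinary_part_for_modules} directly to the cohomology, and this is handled by the uniqueness clause. (One can alternatively give a more explicit argument using the limit idempotent $e = \lim_{n\to\infty} T^{n!}$: since $T$ is a chain map, so is $e$, and the termwise decomposition is then the decomposition into $eC^\bullet$ and $(1-e)C^\bullet$, which passes to cohomology because $e^2 = e$.)
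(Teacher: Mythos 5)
Your proof is correct, but the route differs from the paper's in a way worth noting. The paper invokes ``a standard reduction'' to reduce the cohomology statement to the claim that the ordinary-part functor $M \mapsto M_{\text{ord}}$ is \emph{exact} on short exact sequences of finite $R$-modules equipped with compatible endomorphisms, and then proves this exactness directly via the polynomial factorization $P = AB$ from the proof of Lemma~\ref{lem_ordinary_part_for_modules}. You instead build the decomposition of $C^\bullet$ exactly as the paper does (termwise, using part~(2) of Lemma~\ref{lem_ordinary_part_for_modules} to see the differential respects it), but identify $H^i(C^\bullet_{\text{ord}})$ with $H^i(C^\bullet)_{\text{ord}}$ by appealing to the \emph{uniqueness} clause rather than exactness. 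That is a genuine shortcut. The one step you slide over is that invertibility of $T$ on $C^i_{\text{ord}}$ descends to the subquotient $H^i(C^\bullet_{\text{ord}})$; this is not an abstract fact about module endomorphisms, but it does hold here --- since $T$ is a unit in the finite $R$-algebra $R[T]\subset\End_R(C^i_{\text{ord}})$ (by Nakayama), $T^{-1}$ is a polynomial in $T$ and so preserves $T$-stable subquotients --- and your closing parenthetical, observing that $e=\lim T^{n!}$ is itself a chain idempotent and that the whole decomposition is $eC^\bullet\oplus(1-e)C^\bullet$, makes the identification completely clean. What the paper's route buys is the intermediate statement that $(-)_{\text{ord}}$ and $(-)_{\text{non-ord}}$ are exact functors, which is slightly more information than the cohomological claim; what your route buys is brevity and a direct anticipation of the derived-category idempotent picture developed in Lemma~\ref{lem_ordinary_part_in_derived_category}. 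Your treatment of part~(2) agrees with the paper's.
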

\begin{proof}
For the first part, a standard reduction shows that it is enough to prove the following statement: take a commutative diagram with exact rows
\[ \xymatrix{ 0 \ar[r] & M'\ar[d]^{T'} \ar[r] & M \ar[r]\ar[d]^T & M'' \ar[r]\ar[d]^{T''} & 0 \\
 0 \ar[r] & M'\ar[r] & M \ar[r]& M'' \ar[r] & 0. } \]
 Then the induced sequences of $R$-modules
 \[ \xymatrix@1{ 0 \ar[r] & M'_\text{ord} \ar[r] & M_\text{ord} \ar[r] & M''_\text{ord} \ar[r] & 0 } \]
 and 
 \[ \xymatrix@1{ 0 \ar[r] & M'_\text{non-ord} \ar[r] & M_\text{non-ord} \ar[r] & M''_\text{non-ord} \ar[r] & 0 } \]
 are exact. Let $\overline{P}[X]$ be the characteristic polynomial of $T$ on $M/(\ffrm_R)$, and factor $\overline{P} = \overline{A} \overline{B}$ in $k[X]$, where the constant term of $\overline{A}$ is a unit and $\overline{B} = X^m$ for some $m \geq 0$. Let $P(X) \in R[X]$ be a monic lift of $\overline{P}$ such that $P(T) = 0$ on $M$. Then the above factorization lifts to a factorization $P = AB$ in $R[X]$ with $A, B$ monic. 
 
We now show that the sequence of ordinary parts is exact. Exactness on the left is clear. For exactness on the right, we observe that $B(T)$ is invertible on $M''_\text{ord}$ and zero on $M''_\text{non-ord}$; thus $M_\text{ord}'' = B(T)M''$ is the image in $M''$ of $B(T)M = M_\text{ord}$. For exactness in the middle, suppose that $m \in M_\text{ord}$ is killed in $M''_\text{ord}$. Since $B(T)$ is invertible on $M_\text{ord}$, we can write $m = B(T) n$ with $n \in M_\text{ord}$. The element $n$ is also killed in $M''_\text{ord}$, hence lifts to $M'$, hence $m = B(T) n$ lifts to $B(T) M' = M'_\text{ord}$. The proof of exactness of the sequence of non-ordinary parts is similar, using $A(T)$ instead of $B(T)$. This completes the proof of the first part of the lemma. The second part follows immediately from the second part of Lemma \ref{lem_ordinary_part_for_modules}.
\end{proof}
We now discuss idempotents from the point of view of the derived category. If $\cA$ is any additive category, and $X \in \cA$, $X_1, \dots, X_n \in \cA$, then the following are equivalent:
\begin{enumerate}
\item There exists an isomorphism $X \cong X_1 \oplus \dots \oplus X_n$.
\item There exist morphisms $i_j : X_j \to X$, $p_j : X \to X_j$, $j = 1, \dots, n$, such that $p_j i_j = 1$ for each $j$ and $1_X = \sum_{j=1}^n i_j p_j$.
\end{enumerate}
In the second case, each $i_j p_j \in \Hom_\cA(X, X)$ is an idempotent. We say that an additive category is idempotent complete if for any $X \in \cA$ and any idempotent $e \in \Hom_\cA(X, X)$, there exists a decomposition $X \cong X_1 \oplus X_2$ such that $e = i_1 p_1$. The derived category of an abelian category is idempotent complete.

\begin{lemma}\label{lem_ordinary_part_in_derived_category} Let $C^\bullet$ be a bounded above, perfect complex of $R$-modules. Let $t \in \End_{\mathbf{D}(R)}(C^\bullet)$. Then there is a unique idempotent $e \in \End_{\mathbf{D}(R)}(C^\bullet)$ with the following properties:
\begin{enumerate}
\item $e$ is a polynomial in $t$ (with $R$-coefficients).
\item The map $H(t)$ acts invertibly on $H(e)H^\ast(C^\bullet)$ and topologically nilpotently on $(1-H(e))H^\ast(C^\bullet)$. 
\end{enumerate}
\end{lemma}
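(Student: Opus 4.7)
The plan is to reduce to a chain-level minimal model of $C^\bullet$, apply the module-level ordinary-part results (Lemmas \ref{lem_ordinary_part_for_modules} and \ref{lem_ordinary_part_for_complexes}) to a lift of $t$, and then verify that the resulting idempotent actually descends to an element of the subring $R[t] \subset \End_{\mathbf{D}(R)}(C^\bullet)$.

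First I would choose a quasi-isomorphism $F^\bullet \to C^\bullet$ with $F^\bullet$ a minimal complex of finite free $R$-modules: this is possible by combining the definition of a perfect complex with Lemma \ref{lem_perfect_and_minimal_complexes}(1). Since $F^\bullet \in \mathbf{K}(R)^{-,\mathrm{proj}}$, Proposition \ref{prop_derived_category_and_projective_complexes}(1) identifies $\End_{\mathbf{D}(R)}(F^\bullet)$ with $\End_{\mathbf{K}(R)}(F^\bullet)$, so $t$ is represented by an honest chain map $T : F^\bullet \to F^\bullet$, well-defined up to homotopy. Each $F^i$ is a finite $R$-module, so Lemma \ref{lem_ordinary_part_for_complexes}(1) applied to $(F^\bullet, T)$ produces a direct-sum decomposition $F^\bullet = F^\bullet_{\mathrm{ord}} \oplus F^\bullet_{\mathrm{non-ord}}$ of complexes; I let $e \in \End_{\mathbf{D}(R)}(C^\bullet)$ be the class of the projection onto $F^\bullet_{\mathrm{ord}}$. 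The cohomology isomorphisms in Lemma \ref{lem_ordinary_part_for_complexes}(1), combined with Lemma \ref{lem_ordinary_part_for_modules}(1), immediately give property (2) and show that $H(e)$ is itself an idempotent projecting onto $H^\ast(C^\bullet)_{\mathrm{ord}}$.

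The subtler point is property (1). Lemma \ref{lem_ordinary_part_for_modules}(1) presents the projection $e|_{F^i}$ on each term as the $\ffrm_R$-adic limit $\lim_n T^{n!}|_{F^i}$ inside $\End_R(F^i)$, so passing to the quotient $\End_{\mathbf{D}(R)}(F^\bullet)$ gives $e = \lim_n t^{n!}$ in the natural $\ffrm_R$-adic topology. Because $F^\bullet$ is a bounded complex of finite free $R$-modules, $\End_{\mathbf{D}(R)}(F^\bullet)$ is a finite $R$-module (as a quotient of $\End_R(F^\bullet)$ by the homotopy submodule), and so $R[t]$ is a finitely generated $R$-submodule. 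Over the complete Noetherian local ring $R$, every finitely generated submodule of a finite $R$-module is $\ffrm_R$-adically closed, whence $e = \lim_n t^{n!} \in R[t]$, i.e., $e$ is a polynomial in $t$ with $R$-coefficients.

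For uniqueness, suppose $e' \in \End_{\mathbf{D}(R)}(C^\bullet)$ is a second idempotent satisfying (1) and (2). Both $e$ and $e'$ then lie in the commutative subring $R[t]$, and hence commute. The uniqueness clause of Lemma \ref{lem_ordinary_part_for_modules}(1) forces $H(e) = H(e')$ as endomorphisms of $H^\ast(C^\bullet)$, so $H(e - ee') = H(e) - H(e)H(e') = H(e) - H(e)^2 = 0$, and by Lemma \ref{lem_nilpotents_in_derived_category}(2) the element $e - ee'$ is nilpotent in $\End_{\mathbf{D}(R)}(F^\bullet)$. A short calculation using commutativity and idempotency shows $(e - ee')^2 = e - ee'$; combined with nilpotency this forces $e - ee' = 0$, and by symmetry $e' = e'e = ee' = e$. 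I expect the closedness-of-$R[t]$ step in the third paragraph to be the main obstacle, since it is where one must promote the module-level limit construction of the idempotent to an honest polynomial identity in $\mathbf{D}(R)$; this is the one point that uses non-formal input (namely the finiteness of $\End_{\mathbf{D}(R)}(F^\bullet)$ over $R$) beyond the material already established in this section.
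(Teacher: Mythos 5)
Your proof is correct, and the route is a genuine variant of the paper's, essentially ``dual'' in its ordering. The paper defines $e$ abstractly inside the finite $R$-algebra $S = R[t] \subset \End_{\mathbf{D}(R)}(C^\bullet)$: because $R$ is complete local and $S$ is module-finite over it, $S$ decomposes as a product of local rings, and $e$ is declared to be the idempotent selecting the factors on which $t$ is a unit; property (1) is then built into the definition, and property (2) is verified afterward by choosing a chain-level representative $T$ and observing $t^{n!} \to e$. You instead build $e$ at the chain level, applying Lemma \ref{lem_ordinary_part_for_complexes} to a lift $T$ of $t$ on a minimal model $F^\bullet$, so property (2) is immediate; the work is then in promoting $e$ to an element of $R[t]$, which you do via $e = \lim_n t^{n!}$ together with the $\ffrm_R$-adic closedness of the finitely generated submodule $R[t]$ in the finite $R$-module $\End_{\mathbf{D}(R)}(F^\bullet)$ (Krull intersection applied to the quotient). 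That closure argument is sound and is a clean explicit substitute for the structure theory of $S$ that the paper invokes. Your uniqueness step also differs: the paper reduces to the case that $S$ is local and appeals to the dichotomy $t \in \ffrm_S$ versus $t \notin \ffrm_S$, whereas you observe that $e - ee'$ is simultaneously idempotent (from commutativity in $R[t]$ and $H(e) = H(e')$) and nilpotent (Lemma \ref{lem_nilpotents_in_derived_category}(2) on the bounded minimal complex), hence zero. Both proofs rest on the same non-formal input, namely finiteness of $\End_{\mathbf{D}(R)}(C^\bullet)$ over $R$; the paper's order makes (1) free and checks (2), yours makes (2) free and checks (1).
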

\begin{proof}
We first show existence of $e$. Consider the $R$-subalgebra $S$ of $\End_{\mathbf{D}(R)}(C^\bullet)$ generated by $t$. Then $\End_{\mathbf{D}(R)}(C^\bullet)$ is a finite $R$-module, so $S$ is a finite $R$-algebra, hence admits a decomposition $S = \prod_\ffrm S_\ffrm$ over the finitely many maximal ideals of $S$. Let $e \in S$ denote the idempotent which is 0 on the factors such that $t \in \ffrm$, and $1$ on the factors such that $t \not\in \ffrm$. It is then clear that $e$ and $t$ commute. 

We must show that $t$ acts invertibly on $H(e)H^\ast(C^\bullet)$ and topologically nilpotently on $(1-H(e))H^\ast(C^\bullet)$. Replacing $C^\bullet$ by a quasi-isomorphic complex, we can assume that $C^\bullet$ is a good complex and $t$ is represented by an element $T \in \End_R(C^\bullet)$. We see that as $N \to \infty$ the endomorphism of the finite $R$-module $H^\ast(C^\bullet)$ induced by $t^{N!}$ approaches the endomorphism induced by $e$, so this is clear. 

We now show uniqueness. The idempotents of $S$ are in bijection with the sets of maximal ideals of $S$; we must show that there is a unique idempotent with the given action of cohomology. After writing $C^\bullet$ as a direct sum, we can assume that $S$ is local and must show that either $H(t)$ acts invertibly on $H^\ast(C^\bullet)$ or $H(t)$ acts nilpotently. Replacing $C^\bullet$ by a quasi-isomorphic good complex, it is clear that these two possibilities correspond to the cases $t \in \ffrm_S$ or $t\not\in \ffrm_S$. This completes the proof of the lemma.
\end{proof}

If $C^\bullet$ is a bounded above, perfect complex of $R$-modules, $T \subset \End_{\mathbf{D}(R)}(C^\bullet)$ is a finite (commutative) $R$-subalgebra, and $t \in \End_{\mathbf{D}(R)}(C^\bullet)$ is an endomorphism that commutes with $T$ (for example, if $t \in T$), then we define the ordinary part $C^\bullet_\text{ord}$ of $C^\bullet$ as the direct summand of $C^\bullet$ corresponding to the idempotent $e$, corresponding to the endomorphism $t$. If $C^\bullet$ is a good complex and we choose a representative $\widetilde{t} \in \End_{R}(C^\bullet)$ of $t$, then this agrees with the definition of $C^\bullet_\text{ord}$ given in Lemma \ref{lem_ordinary_part_for_complexes}. The  complex $C^\bullet_\text{ord}$ is defined uniquely up to quasi-isomorphism in $\mathbf{D}(R)$, and there is a canonical homomorphism $T \to \End_{\mathbf{D}(R)}(C^\bullet_\text{ord})$, given (essentially) by $x \mapsto exe$. 

Similarly, if $C^\bullet$ is a bounded above, perfect complex, $T \subset \End_{\mathbf{D}(R)}(C^\bullet)$ is a finite $R$-subalgebra, and $\ffrm \subset T$ is a maximal ideal, then $T_\ffrm \subset T$ is a direct factor (as $R$ is complete), and there is a corresponding idempotent $e_\ffrm \in T$. The corresponding direct factor $C^\bullet_\ffrm$ of $C^\bullet$ is defined uniquely up to quasi-isomorphism in $\mathbf{D}(R)$, and there is a canonical homomorphism $T \to \End_{\mathbf{D}(R)}(C^\bullet_\ffrm)$; its image is canonically identified with $T_\ffrm$.

We will use these constructions later to define the ordinary parts and $\ffrm$-parts of complexes which compute the cohomology of arithmetic locally symmetric spaces. In this context, the algebra $T$ will be the algebra generated by the Hecke operators at finite places, and $\ffrm$ will be a maximal ideal of this Hecke algebra corresponding to a given Hecke eigenclass. In this connection, we will also need to glue complexes. We now give a naive formulation of this kind of operation.

\begin{lemma}\label{lem_limit_of_good_complexes}
Let $I_1 \supset I_2 \supset \dots$ be a nested sequence of open ideals of $R$ such that $\cap_{i=1}^\infty I_i = 0$. Let $R_c = R/I_c$. Suppose given for each $c \geq 1$ a good complex $M^\bullet_c$ of $R_c$-modules, together with an isomorphism $f_{c+1} : M^\bullet_{c+1} \otimes_{R_{c+1}} R_{c} \to M^\bullet_{c}$. Then:
\begin{enumerate}
\item $M^\bullet_\infty = \plim_c M^\bullet_c$ is a good complex of $R$-modules, and there are canonical isomorphisms $F_c : M^\bullet_\infty \otimes_R R_c \cong M^\bullet_c$ for each $c \geq 1$.
\item The natural map $H^\ast(M^\bullet_\infty) \to \plim_c H^\ast(M^\bullet_c)$ is an isomorphism.
\item Suppose given for each $c \geq 1$ a good complex $N^\bullet_c$ of $R_c$-modules, together with an isomorphism $g_{c+1}  : N^\bullet_{c+1} \otimes_{R_{c+1}} R_c \to N^\bullet_c$ and an element $t_c \in \Hom_{\mathbf{K}(R_c)}(M^\bullet_c, N^\bullet_c)$ such that $g_{c+1} \circ t_{c+1} = t_c \circ f_{c+1}$ in $\mathbf{K}(R_c)$. Then there exists a unique element $t_\infty \in \Hom_{\mathbf{K}(R)}(M^\bullet_\infty, N^\bullet_\infty)$ such that for each $c \geq 1$, the diagram 
\[ \xymatrix{ M^\bullet_\infty \ar[r]^{t_\infty} \ar[d] & N^\bullet_\infty \ar[d] \\
M^\bullet_c \ar[r]^{t_c} & N^\bullet_c} \]
commutes in $\mathbf{K}(R)$. 
\end{enumerate}
\end{lemma}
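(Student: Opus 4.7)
The plan is to use throughout that each $R_c = R/I_c$ is an Artinian local ring (open ideals in a complete Noetherian local ring contain a power of $\ffrm_R$), so every finite $R_c$-module has finite length and therefore every inverse system of such modules satisfies the Mittag--Leffler condition automatically.

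For (i), fix a degree $i$ and set $n = \rank_{R_1} M^i_1$. Each $M^i_c$ is finite projective over the local ring $R_c$ and hence free, and by tensoring the given isomorphism $M^\bullet_{c+1}\otimes_{R_{c+1}} R_c \cong M^\bullet_c$ with residue fields one sees the rank is independent of $c$. I would construct a basis of $M^i_\infty$ inductively: given a compatible family of bases $(e^c_1,\dots,e^c_n)$ of $M^i_c$ for $c \leq C$, lift each $e^C_j$ to some $e^{C+1}_j \in M^i_{C+1}$ via the surjection $M^i_{C+1}\to M^i_C$, and use Nakayama applied to $R_{C+1}$ (which is local with the same residue field $k$ as $R$) plus freeness of $M^i_{C+1}$ of rank $n$ to deduce that the lifts again form a basis. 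The resulting sequences $(e^c_j)_c$ determine elements $e^\infty_j \in M^i_\infty$, and completeness of $R$ together with $\bigcap_c I_c = 0$ (which forces any Cauchy sequence to converge and any element of $\bigcap_c I_c^n$ to vanish) gives that the $e^\infty_j$ are a free $R$-basis of $M^i_\infty$; the same computation shows $M^i_\infty / I_c M^i_\infty \xrightarrow{\sim} M^i_c$, and this is the desired isomorphism $F_c$. Boundedness of $M^\bullet_\infty$ follows because $M^i_{c+1}$ free of positive rank implies $M^i_c \neq 0$, so the range of nonzero degrees is stable in $c$.

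For (ii), because every finite $R_c$-module is Artinian, the inverse systems $\{M^i_c\}_c$, $\{Z^i_c\}_c$, $\{B^i_c\}_c$, $\{H^i(M^\bullet_c)\}_c$ all satisfy Mittag--Leffler (the system $\{M^i_c\}$ even has surjective transitions). Apply $\plim_c$ to the short exact sequences
\[ 0 \to Z^i_c \to M^i_c \to B^{i+1}_c \to 0, \qquad 0 \to B^i_c \to Z^i_c \to H^i(M^\bullet_c) \to 0, \]
which stay exact by Mittag--Leffler. The first identifies $\plim_c Z^i_c$ with $Z^i(M^\bullet_\infty)$ and $\plim_c B^{i+1}_c$ with the image of the differential $d_\infty \colon M^i_\infty \to M^{i+1}_\infty$; the second then yields $H^i(M^\bullet_\infty) \cong \plim_c H^i(M^\bullet_c)$.

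For (iii), consider the Hom-complex $H^\bullet_c = \Hom^\ast_{R_c}(M^\bullet_c, N^\bullet_c)$, whose degree-0 cohomology is $\Hom_{\mathbf{K}(R_c)}(M^\bullet_c, N^\bullet_c)$. Projectivity of each $M^i_{c+1}$ together with surjectivity of $N^j_{c+1} \to N^j_c$ makes the transition $H^n_{c+1} \to H^n_c$ surjective and gives $\Hom^n_R(M^\bullet_\infty, N^\bullet_\infty) \cong \plim_c H^n_c$ as a complex of $R$-modules. Repeating the Mittag--Leffler argument of (ii) on this complex yields $\Hom_{\mathbf{K}(R)}(M^\bullet_\infty, N^\bullet_\infty) \cong \plim_c \Hom_{\mathbf{K}(R_c)}(M^\bullet_c, N^\bullet_c)$, and the compatible family $(t_c)$ is exactly an element of the right-hand side, producing a unique $t_\infty$ with the stated commuting diagrams. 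The main obstacle is the freeness step in (i): one needs to carefully exploit the local-ness of each $R_c$ (to get free, not just projective, modules of a well-defined rank) together with completeness of $R$ with respect to the filtration $\{I_c\}$, because without these one cannot promote the compatible family of bases to a genuine $R$-basis of $M^i_\infty$; once (i) is in hand, (ii) and (iii) are essentially formal consequences of the Artinian/Mittag--Leffler framework.
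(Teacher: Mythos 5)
Your proof is correct and follows essentially the same route as the paper: part (i) by lifting bases through the Artinian quotients and using $R=\plim_c R_c$, and parts (ii) and (iii) by applying the Mittag--Leffler property to inverse systems of finite modules over the Artinian rings $R_c$. Your packaging of part (iii) via the full Hom-complex is equivalent to the paper's explicit exact sequence $H_c \to \Hom_R(M^\bullet_c,N^\bullet_c) \to \Hom_{\mathbf{K}(R_c)}(M^\bullet_c,N^\bullet_c)\to 0$ involving graded homomorphisms.
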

\begin{proof}
The first part is easy. We observe that each of the rings $R_c$ is Artinian, hence any finite $R_c$-module is Artinian. In particular, any inverse system of $R$-modules $(N_c)_{c \geq 1}$ in which each $N_c$ is a finite $R_c$-module satisfies the Mittag-Leffler condition. This implies the second part of the lemma (cf. \cite[Theorem 3.5.8]{Wei94}). The third part follows in a similar way: we want to show that the natural map
\[ \Hom_{\mathbf{K}(R)}(M^\bullet_\infty, N^\bullet_\infty) \to \plim_c \Hom_{\mathbf{K}(R_c)}(M^\bullet_c, N^\bullet_c) \]
is an isomorphism. For each $1 \leq c \leq \infty$, let $H_c$ denote the $R_c$-module of graded homomorphisms $\oplus_{i \in \bbZ} M^i_c \to \oplus_{i \in \bbZ} N^{i-1}_c$ (which need not respect differentials). Then there are exact sequences
\[ \xymatrix@1{ H_c \ar[r] & \Hom_R(M^\bullet_c, N^\bullet_c) \ar[r] & \Hom_{\mathbf{K}(R)}(M^\bullet_c, N^\bullet_c) \ar[r] & 0,} \]
the first arrow being given by $s \mapsto ds + sd$. It is easy to see that the natural maps
\[ H_\infty \to \plim_c H_c, \text{ } \Hom_R(M^\bullet_\infty, N^\bullet_\infty) \to \plim_c \Hom_R(M^\bullet_c, N^\bullet_c) \]
are isomorphisms, so another application of the Mittag-Leffler property gives the result.
\end{proof}
\begin{lemma}\label{lem_limit_of_infinite_type_complexes}
Let $I_1 \supset I_2 \supset \dots$ be a nested sequence of open ideals such that $\cap_{i=1}^\infty I_i = 0$. Let $R_c = R/I_c$. Suppose given for each $c \geq 1$ a perfect, bounded above complex $M^\bullet_c$ of free $R_c$-modules, together with a quasi-isomorphism $f_{c+1} : M^\bullet_{c+1} \otimes_{R_{c+1}} R_c \cong M_c^\bullet$. Then we can find a minimal complex $F^\bullet_\infty$ of $R$-modules and a collection of quasi-isomorphisms $g_c : F^\bullet_\infty \otimes_R R_c \to M^\bullet_c$ such that for each $c \geq 1$, the diagram
\[ \xymatrix{ F^\bullet_\infty \otimes_R R_{c+1} \ar[r]^-{g_{c+1}} \ar[d] & M^\bullet_{c+1} \ar[d]^{f_{c+1}} \\
F^\bullet_\infty \otimes_R R_c \ar[r]^-{g_c} & M^\bullet_c} \]
commutes in $\mathbf{K}(R)$. Moreover, $F^\bullet_\infty$ is unique in the following sense: if $(G^\bullet_\infty, (h_c)_{c \geq 1})$ is another pair satisfying these conclusions, then there is an isomorphism $j : F^\bullet_\infty \to G^\bullet_\infty$ such that $j \otimes_R R_c = h_c^{-1} \circ g_c$ in $\mathbf{D}(R_c)$ for all $c \geq 1$, and $j$ is uniquely determined up to chain homotopy.
\end{lemma}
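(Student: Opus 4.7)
My plan is to realize $F^\bullet_\infty$ as the inverse limit of a carefully chosen system of minimal complexes $F^\bullet_c$ representing $M^\bullet_c$, arranged so that the transition maps $F^\bullet_{c+1} \otimes_{R_{c+1}} R_c \to F^\bullet_c$ are literal chain isomorphisms. Once this is set up, Lemma \ref{lem_limit_of_good_complexes} delivers both the existence of $F^\bullet_\infty$ and, via its part (3), the uniqueness up to chain homotopy.

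For each $c$, I use Lemma \ref{lem_perfect_and_minimal_complexes}(1) to produce a minimal complex $F^\bullet_c$ together with a quasi-isomorphism $\alpha_c : F^\bullet_c \to M^\bullet_c$. By the corollary following that lemma, each $F^i_c$ is free over $R_c$ of rank $n_i := \dim_k H^i(M^\bullet_c \otimes_{R_c} k)$, and the identification $M^\bullet_{c+1} \otimes_{R_{c+1}} k \cong M^\bullet_c \otimes_{R_c} k$ induced by $f_{c+1}$ together with Lemma \ref{lem_quasi_isomorphism_of_perfect_complexes}(2) shows that $n_i$ is independent of $c$. Next, I construct the $F^\bullet_c$ inductively so as to arrange the reductions. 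Given $(F^\bullet_c, \alpha_c)$, I take any minimal representative $F'^\bullet$ of $M^\bullet_{c+1}$ with quasi-isomorphism $\alpha' : F'^\bullet \to M^\bullet_{c+1}$. By Lemma \ref{lem_perfect_and_minimal_complexes}(2), $F'^\bullet \otimes_{R_{c+1}} R_c$ is again minimal, and via $f_{c+1} \circ (\alpha' \otimes R_c)$ it is quasi-isomorphic to $M^\bullet_c$. Proposition \ref{prop_derived_category_and_projective_complexes}(3) then supplies a homotopy equivalence $\phi : F'^\bullet \otimes R_c \to F^\bullet_c$ with $\alpha_c \circ \phi = f_{c+1} \circ (\alpha' \otimes R_c)$ in $\mathbf{K}(R_c)$. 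Because $\phi$ is a homotopy equivalence between minimal complexes, each component $\phi^i$ is an actual linear isomorphism: reducing mod $\ffrm_R$ kills differentials, so $\phi \otimes k$ becomes a degree-wise linear iso, and Nakayama's lemma together with equality of ranks lifts this to $\phi^i \in \GL_{n_i}(R_c)$. Since $R_{c+1} \to R_c$ has nilpotent kernel, each $\phi^i$ lifts to $\widetilde\phi^i \in \GL_{n_i}(R_{c+1})$, and I use $(\widetilde\phi^i)$ to rebasis $F'^\bullet$ into a chain-isomorphic complex $F^\bullet_{c+1}$ satisfying $F^\bullet_{c+1} \otimes_{R_{c+1}} R_c = F^\bullet_c$ on the nose, with a corresponding $\alpha_{c+1}$ obtained by composing $\alpha'$ with the rebasis isomorphism.

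With the tower in place, Lemma \ref{lem_limit_of_good_complexes}(1) supplies $F^\bullet_\infty := \plim_c F^\bullet_c$ as a good complex with canonical isomorphisms $F^\bullet_\infty \otimes_R R_c \cong F^\bullet_c$, and minimality of $F^\bullet_\infty$ is immediate since $F^\bullet_\infty \otimes_R R/\ffrm_R \cong F^\bullet_1 \otimes_{R_1} k$ has vanishing differentials. Setting $g_c := \alpha_c$ under these identifications produces the required quasi-isomorphisms, and the compatibility squares commute in $\mathbf{K}(R)$ by construction. For uniqueness, given $(G^\bullet_\infty, (h_c))$, I set $j_c := h_c^{-1} \circ g_c \in \Hom_{\mathbf{D}(R_c)}(F^\bullet_\infty \otimes R_c, G^\bullet_\infty \otimes R_c)$; both source and target are minimal, so as in the inductive step $j_c$ is represented by an honest chain isomorphism, and the two compatibility squares force $j_{c+1} \otimes R_c = j_c$ in $\mathbf{K}(R_c)$ for every $c$. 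Lemma \ref{lem_limit_of_good_complexes}(3) glues the $j_c$ to a morphism $j : F^\bullet_\infty \to G^\bullet_\infty$ in $\mathbf{K}(R)$ that is unique up to homotopy, and running the symmetric argument yields an inverse, so $j$ is an isomorphism.

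The main obstacle is the inductive rebasis step: converting the abstract $\mathbf{K}(R_c)$-equivalence supplied by Proposition \ref{prop_derived_category_and_projective_complexes} into an honest chain-level iso whose reduction mod $I_c$ is literally the chosen identification with $F^\bullet_c$. Minimality is the key lever, since it forces every homotopy equivalence between minimal complexes to be a degree-wise linear isomorphism; such an isomorphism can then be lifted through the nilpotent kernel $I_c/I_{c+1}$, after which I can re-present $F^\bullet_{c+1}$ so that reduction is an equality on the nose, making the inverse limit \emph{actually} compute $F^\bullet_\infty$ rather than merely a homotopy limit.
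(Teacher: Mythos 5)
Your proof is correct and follows the same skeleton as the paper's: replace each $M^\bullet_c$ by a quasi-isomorphic minimal complex and feed the resulting tower into Lemma \ref{lem_limit_of_good_complexes}, using its last part for uniqueness. The inductive rebasis step --- lifting the matrices $\phi^i$ through the nilpotent kernel $I_c/I_{c+1}$ so that $F^\bullet_{c+1}\otimes_{R_{c+1}}R_c = F^\bullet_c$ on the nose --- is unnecessary, since Lemma \ref{lem_limit_of_good_complexes} only requires the transition maps to be chain \emph{isomorphisms}, and you have already observed (as the paper does) that a quasi-isomorphism between minimal complexes is automatically a degree-wise isomorphism; the paper simply takes homotopy equivalences $a_c, b_c$ to/from minimal $N^\bullet_c$, notes that $b_c \circ f_{c+1}\circ(a_{c+1}\otimes R_c)$ is an isomorphism of minimal complexes, and applies the lemma directly.
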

\begin{proof} By hypothesis, we can find for each $c \geq 1$ a minimal complex $N^\bullet_c$ of $R_c$-modules, together with morphisms $a_c : N^\bullet_c \to M_c^\bullet$ and $b_c : M^\bullet_c \to N^\bullet_c$ such that each of $a_c b_c$ and $b_c a_c$ is chain homotopic to the identity. The composite map 
\[ b_c \circ f_{c+1} \circ (a_{c+1} \otimes_{R_{c+1}} R_c) : N^\bullet_{c+1} \otimes_{R_{c+1}} R_c \to N^\bullet_c \]
is then a quasi-isomorphism, hence an isomorphism of minimal complexes. By Lemma \ref{lem_limit_of_good_complexes}, we can find a minimal complex $F^\bullet_\infty$ of $R$-modules, together with a compatible system of isomorphisms $h_c : F^\bullet_\infty \otimes_R R_c \cong N_c$. We then define $g_c = a_c \circ h_c$. It is clear that the pair $(F^\bullet_\infty, (g_c)_{c \geq 1})$ has the desired properties. The uniqueness follows from the last part of Lemma \ref{lem_limit_of_good_complexes}.
\end{proof}
\begin{proposition}\label{prop_limit_and_gluing_ordinary_parts}
Let $I_1 \supset I_2 \supset \dots$ be a nested sequence of open ideals of $R$ such that $\cap_{i=1}^\infty I_i = 0$. Let $R_c = R/I_c$. Suppose given for each $c \geq 1$ a perfect, bounded above complex $M^\bullet_c$ of free $R_c$-modules, together with an element $t_c \in \End_{\mathbf{D}(R_c)}(M^\bullet_c) = \End_{\mathbf{K}(R_c)}(M^\bullet_c)$. Suppose given as well morphisms $f_{c+1} : M^\bullet_{c+1} \otimes_{R_{c+1}} R_c \to M^\bullet_c$ making the diagrams
\[ \xymatrix{ M^\bullet_{c+1} \otimes R_c \ar[r]^-{t_{c+1} \otimes R_c} \ar[d]^{f_{c+1}} & M^\bullet_{c+1} \otimes R_c \ar[d]^{f_{c+1}} \\
M^\bullet_c \ar[r]^{t_c} & M^\bullet_c } \]
commute in $\mathbf{D}(R_c)$, and inducing isomorphisms
\[ H^\ast(M^\bullet_{c+1} \otimes_{R_{c+1}} R_c)_\text{ord} \cong H^\ast(M^\bullet_c)_\text{ord}, \]
the decomposition into ordinary parts taken with respect to the operator $t_c$. Then we can find a minimal complex $F^\bullet_\infty$ of $R$-modules, together with morphisms 
\begin{gather*} g_c : F^\bullet_\infty \otimes_R R_c \to M^\bullet_c,\\
g_c' : M^\bullet_c \to F^\bullet_\infty \otimes_R R_c,
\end{gather*}
all satisfying the following conditions:
\begin{enumerate}
\item We have equalities $g'_c g_c = 1$, $f_{c+1} \circ (g_{c+1} \otimes_{R_{c+1}} R_c) = g_c$ and $g_c' \circ f_{c+1} = (g'_{c+1} \otimes_{R_{c+1}} R_c)$ of morphisms in $\mathbf{D}(R_c)$.
\item  $g_c g'_c$ is an idempotent in  $\End_{\mathbf{D}(R_c)}(M^\bullet_c)$. 
\item The induced map on cohomology of $g_c g'_c$ is projection onto $H^\ast(M^\bullet_c)_\text{ord}$ along $H^\ast(M^\bullet_c)_{\text{non-ord}}$. In particular, there are commutative squares for every $c \geq 1$:
\[ \xymatrix{ H^\ast(F^\bullet_\infty \otimes_R R_{c+1}) \ar[r]^-\cong \ar[d] & H^\ast(M^\bullet_{c+1})_\text{ord} \ar[d] \\
H^\ast(F^\bullet_\infty \otimes_R R_c) \ar[r]^-\cong & H^\ast(M^\bullet_c)_\text{ord}.} \]
\end{enumerate}
The pair $(F^\bullet_\infty, (g_c)_{c \geq 1}, (g'_c)_{c \geq 1})$ is unique in the following sense: if $(G^\bullet_\infty, (h_c)_{c \geq 1}, (h'_c)_{c \geq 1})$ is another tuple satisfying points 1--3, then we can find an isomorphism $j : F^\bullet_\infty \to G^\bullet_\infty$ such that $j \otimes_R R_c = h'_c \circ g_c$ in $\mathbf{D}(R_c)$, and $j$ is uniquely determined up to chain homotopy.

Finally, suppose given for each $c \geq 1$ an element $s_c \in \End_{\mathbf{D}(R_c)}(M^\bullet_c)$ such that $f_{c+1} \circ (s_{c+1} \otimes_{R_{c+1}} R_c) = s_c \circ f_{c+1}$, and $s_c t_c = t_c s_c$. Then there is a unique element $s_\infty \in \End_{\mathbf{D}(R)}(F^\bullet_\infty)$ with the property that $(s_\infty \otimes_R R_c) = g_c s_c g'_c$ in $\mathbf{D}(R_c)$ for each $c \geq 1$. In particular, the action of $s_\infty \otimes_R R_c$ on the direct summand 
\[ H^\ast(F^\bullet_\infty \otimes_R R_c) = H^\ast(M^\bullet_c)_\text{ord} \subset H^\ast(M^\bullet_c) \]
agrees with the restriction of the element $s_c$ to this subspace.
\end{proposition}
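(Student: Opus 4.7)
The plan is to reduce the proposition to Lemmas \ref{lem_limit_of_good_complexes} and \ref{lem_limit_of_infinite_type_complexes} by first decomposing each $M^\bullet_c$ into its ordinary and non-ordinary parts, using the idempotent $e_c$ supplied by Lemma \ref{lem_ordinary_part_in_derived_category}, and then gluing the resulting ordinary summands. First, for each $c \geq 1$, I would apply Lemma \ref{lem_ordinary_part_in_derived_category} to $(M^\bullet_c, t_c)$ to obtain a canonical idempotent $e_c \in \End_{\mathbf{D}(R_c)}(M^\bullet_c)$ which is a polynomial in $t_c$ and whose action on cohomology is the ordinary projector. Since $\mathbf{D}(R_c)$ is idempotent complete, there is a decomposition $M^\bullet_c \simeq M^\bullet_{c,\text{ord}} \oplus M^\bullet_{c,\text{non-ord}}$ together with canonical maps $\iota_c \colon M^\bullet_{c,\text{ord}} \to M^\bullet_c$ and $\pi_c \colon M^\bullet_c \to M^\bullet_{c,\text{ord}}$ with $\pi_c \iota_c = 1$ and $\iota_c \pi_c = e_c$.

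Next I would check that $f_{c+1}$ respects these decompositions. Because $f_{c+1}$ commutes with the $t$'s in $\mathbf{D}(R_c)$ by hypothesis, and $e_{c+1}$ (resp.\ $e_c$) is a polynomial in $t_{c+1}$ (resp.\ $t_c$), it follows that $f_{c+1}$ intertwines $e_{c+1} \otimes_{R_{c+1}} R_c$ and $e_c$ in $\mathbf{D}(R_c)$. Thus $f_{c+1}$ induces a well-defined morphism $\overline{f}_{c+1} \colon M^\bullet_{c+1,\text{ord}} \otimes_{R_{c+1}} R_c \to M^\bullet_{c,\text{ord}}$ in $\mathbf{D}(R_c)$, and the assumption that $f_{c+1}$ induces an isomorphism on ordinary parts of cohomology together with Proposition \ref{prop_derived_category_and_projective_complexes} implies that $\overline{f}_{c+1}$ is a quasi-isomorphism. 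Applying Lemma \ref{lem_limit_of_infinite_type_complexes} to the system $(M^\bullet_{c,\text{ord}}, \overline{f}_{c+1})$ yields a minimal complex $F^\bullet_\infty$ of $R$-modules equipped with compatible quasi-isomorphisms $\alpha_c \colon F^\bullet_\infty \otimes_R R_c \to M^\bullet_{c,\text{ord}}$. Pick quasi-inverses $\beta_c$ using Proposition \ref{prop_derived_category_and_projective_complexes}(3) and set $g_c = \iota_c \circ \alpha_c$ and $g'_c = \beta_c \circ \pi_c$. The relations $g'_c g_c = 1$, $g_c g'_c = e_c$, and the compatibility with $f_{c+1}$ (points 1--3) then follow directly; the commutative diagram on cohomology is immediate since $\alpha_c$ induces an isomorphism onto $H^\ast(M^\bullet_c)_\text{ord}$.

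For uniqueness, given another tuple $(G^\bullet_\infty, (h_c), (h'_c))$ satisfying the same conclusions, the composition $h'_c \circ g_c \colon F^\bullet_\infty \otimes_R R_c \to G^\bullet_\infty \otimes_R R_c$ is a quasi-isomorphism of minimal complexes, hence an isomorphism, and these form a compatible system under the transitions $f_{c+1}$. The uniqueness assertion in Lemma \ref{lem_limit_of_infinite_type_complexes} then produces the required isomorphism $j \colon F^\bullet_\infty \to G^\bullet_\infty$ with $j \otimes_R R_c = h'_c \circ g_c$ in $\mathbf{D}(R_c)$, uniquely determined up to chain homotopy.

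For the final assertion about $s_\infty$, since $s_c$ commutes with $t_c$ in $\mathbf{D}(R_c)$ and $e_c$ is a polynomial in $t_c$, we have $s_c e_c = e_c s_c$, so $s_c$ preserves the ordinary direct summand; equivalently, $s^F_c := g'_c s_c g_c \in \End_{\mathbf{D}(R_c)}(F^\bullet_\infty \otimes_R R_c)$ corresponds to the restriction of $s_c$ to $M^\bullet_{c,\text{ord}}$ through $\alpha_c, \beta_c$. The hypothesis $f_{c+1} \circ (s_{c+1} \otimes R_c) = s_c \circ f_{c+1}$, combined with the compatibility of the $g_c$'s and $g'_c$'s with $f_{c+1}$, gives compatibility of the $s^F_c$'s with the transition maps $F^\bullet_\infty \otimes_R R_{c+1} \to F^\bullet_\infty \otimes_R R_c$ induced by reduction. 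The third part of Lemma \ref{lem_limit_of_good_complexes}, applied with $M^\bullet_c = N^\bullet_c = F^\bullet_\infty \otimes_R R_c$ and transition data $s^F_c$, then produces a unique $s_\infty \in \End_{\mathbf{K}(R)}(F^\bullet_\infty) = \End_{\mathbf{D}(R)}(F^\bullet_\infty)$ (these coincide by Proposition \ref{prop_derived_category_and_projective_complexes}(1), since $F^\bullet_\infty$ is minimal) whose reduction modulo $I_c$ is $s^F_c = g_c^{-1} s_c g_c$ in the required sense; the compatibility of $s_\infty \otimes_R R_c$ with the ordinary cohomology of $M^\bullet_c$ is then immediate. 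The main obstacle in this argument is the bookkeeping between equalities in $\mathbf{D}(R_c)$, in $\mathbf{K}(R_c)$, and strictly as maps of complexes; this is finessed throughout by restricting to good (indeed minimal) representatives, where $\mathbf{K}$ and $\mathbf{D}$ morphisms agree by Proposition \ref{prop_derived_category_and_projective_complexes}(1), and by systematically using that $e_c$ is a polynomial in $t_c$ so that intertwining with $t_c$ yields intertwining with $e_c$.
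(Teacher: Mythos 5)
Your argument follows the same overall strategy as the paper's proof: extract the ordinary direct summand of each $M^\bullet_c$ by an idempotent that is a polynomial in $t_c$, observe that the transition maps restrict to quasi-isomorphisms on these summands, and glue using the earlier limit lemmas. You work slightly more abstractly, invoking Lemma \ref{lem_ordinary_part_in_derived_category} plus idempotent completeness and gluing via Lemma \ref{lem_limit_of_infinite_type_complexes}, whereas the paper first replaces each $M^\bullet_c$ by a minimal representative, forms the chain-level decomposition of Lemma \ref{lem_ordinary_part_for_complexes}, and then notes that the transitions between the resulting minimal complexes are \emph{isomorphisms}, so that Lemma \ref{lem_limit_of_good_complexes} applies directly. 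Both routes are fine; the paper's concreteness has the mild advantage of making the idempotents and splittings literal chain-level maps.

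There is, however, one step in your proposal that is stated too quickly. You write that since $e_{c+1}$ (resp.\ $e_c$) is a polynomial in $t_{c+1}$ (resp.\ $t_c$), the intertwining $f_{c+1}\circ(t_{c+1}\otimes R_c) = t_c\circ f_{c+1}$ automatically forces $f_{c+1}\circ(e_{c+1}\otimes R_c) = e_c\circ f_{c+1}$. As written this does not follow: if $e_{c+1} = P_{c+1}(t_{c+1})$ and $e_c = P_c(t_c)$ with $P_{c+1}\neq P_c$, the intertwining only gives $f_{c+1}\circ(e_{c+1}\otimes R_c)=P_{c+1}(t_c)\circ f_{c+1}$, and there is no \emph{a priori} reason for $P_{c+1}(t_c)$ to equal $e_c$. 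What saves the argument is that one may take $e_{c+1}=t_{c+1}^{N!}$ and $e_c=t_c^{N!}$ for a single sufficiently large $N$ (this is how the idempotent is actually produced in the proof of Lemma \ref{lem_ordinary_part_in_derived_category}, and powers $t^{N!}$ stabilize once $N$ is large), so a \emph{common} polynomial works for both. The paper makes exactly this point, observing that $i_c\circ p_c = t_c^{N_c!}$ in $\End_{\mathbf{D}(R_c)}(M^\bullet_c)$ and then deducing $f_{c+1}\,i_{c+1}p_{c+1} = i_c p_c\, f_{c+1}$. You should insert this remark; without it the ``it follows'' is not justified. (Also, near the end you write ``$s^F_c = g_c^{-1}s_cg_c$''; you mean $g'_cs_cg_c$ as you wrote earlier --- $g_c$ is not invertible, only a split monomorphism.)
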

\begin{proof}
After replacing each $M^\bullet_c$ by a quasi-isomorphic complex, we can assume that each $M^\bullet_c$ is minimal. Let $T_c \in \End_R(M^\bullet_c)$ denote a representative of $t_c \in \End_{\mathbf{D}(R_c)}(M^\bullet_c)$, and define $F^\bullet_c = M^\bullet_{c, \text{ord}}$, as in Lemma \ref{lem_ordinary_part_for_complexes}, the decomposition taken with respect to $T_c$. Let $i_c : F^\bullet_c \to M^\bullet_c$ and $p_c : M^\bullet_c \to F^\bullet_c$ denote the canonical inclusion and projection, respectively. We observe that there is an integer $N_c \geq 1$ such that the equality $i_c \circ p_c = t_c^{N_c!}$ holds in $\End_{\mathbf{D}(R_c)}(M^\bullet_c)$ (since the equality $i_c \circ p_c = T_c^{N_c!}$ holds at the level of cochains). In particular, the equality $f_{c+1} t_{c+1} = t_c f_{c+1}$ implies the equality $f_{c+1} i_{c+1} p_{c+1} = i_c p_c f_{c+1}$ in $\mathbf{D}(R_c)$.

The complex $F^\bullet_c$ is minimal (being a direct summand of a minimal complex), and for each $c \geq 1$, the composite map
\[ H^\ast(F^\bullet_c) \to H^\ast(M^\bullet_c) \to H^\ast(M^\bullet_c)_\text{ord} \]
is an isomorphism. There is a morphism $a_{c+1} : F^\bullet_{c+1} \otimes_{R_{c+1}} R_c \to F^\bullet_c$, given by the formula $a_{c+1} = p_c \circ f_{c+1} \circ (i_{c+1} \otimes_{R_{c+1}} R_c)$, and we see that $a_{c+1}$ is a quasi-isomorphism of minimal complexes, hence an isomorphism. We can therefore apply Lemma \ref{lem_limit_of_good_complexes} to the complexes $F^\bullet_c$ to obtain a minimal complex $F^\bullet_\infty$ of $R$-modules, together with isomorphisms $b_c : F^\bullet_\infty \otimes_R R_c \cong F^\bullet_c$ for each $c \geq 1$. We define $g_c = i_c \circ b_c$, and $g'_c = b_c^{-1} \circ p_c$. Then $g'_c g_c = b_c^{-1} p_c i_c b_c = 1$ and $g_c g'_c = i_c p_c$ is an idempotent (even as a morphism of complexes, before passing to the derived category). 

The equality $f_{c+1} \circ (g_{c+1} \otimes_{R_{c+1}} R_c) = g_c$ is equivalent to the equality $i_c p_c f_{c+1} i_{c+1} b_{c+1} = f_{c+1} i_{c+1} b_{c+1}$, which holds because $i_c p_c f_{c+1} = f_{c+1} i_{c+1} p_{c+1}$ and $p_{c+1} i_{c+1} = 1$. The equality $g_c' \circ f_{c+1} = (g'_{c+1} \otimes_{R_{c+1}} R_c)$ is equivalent to the equality $p_c f_{c+1} = p_c f_{c+1} i_{c+1} p_{c+1}$, which holds for essentially the same reasons This completes the proof that $(F^\bullet_\infty, (g_c)_{c \geq 1}, (g'_c)_{c \geq 1})$ has the claimed properties.

If $(G^\bullet_\infty, (h_c)_{c \geq 1}, (h'_c)_{c \geq 1})$ is another minimal complex as in the statement of the proposition, then we have diagrams for every $c \geq 1$:
\[ \xymatrix{ F^\bullet_\infty \otimes_R R_c \ar[r]^-{g_{c+1} \otimes R_c} \ar[d] & M^\bullet_{c+1} \ar[r]^-{h'_{c+1} \otimes R_c} \ar[d] & G^\bullet_\infty \otimes_R R_c \ar[d] \\
F^\bullet_\infty \otimes_R R_c \ar[r]^-{g_c} & M^\bullet_{c} \ar[r]^-{h'_c} & G^\bullet_\infty \otimes_R R_c.} \]
The composites $h'_c g_c$ are quasi-isomorphisms of minimal complexes, hence isomorphisms, and the third part of Lemma \ref{lem_limit_of_good_complexes} implies that they glue to an isomorphism $F^\bullet_\infty \cong G^\bullet_\infty$ with the desired properties.

Finally, suppose given for each $c \geq 1$ an element $s_c \in \End_{\mathbf{D}(R_c)}(M^\bullet_c)$ such that $f_{c+1} \circ (s_{c+1} \otimes_{R_{c+1}} R_c) = s_c \circ f_{c+1}$, and $s_c t_c = t_c s_c$. This implies that $s_c i_c p_c = i_c p_c s_c$. Let $s'_c = g'_c s_c g_c = b_c^{-1} p_c s_c i_c b_c \in \End_{\mathbf{D}(R_c)}(F^\bullet_\infty \otimes_R R_c) = \End_{\mathbf{K}(R_c)}(F^\bullet_\infty \otimes_R R_c)$. We claim that $s'_{c+1} \otimes_{R_{c+1}} R_c = s'_c$. This is equivalent to the equality $p_c s_c i_c a_{c+1} = a_{c+1} p_{c+1} s_{c+1} i_{c+1}$. The left hand side of this equation equals
\[ p_c s_c i_c p_c f_{c+1} i_{c+1} = p_c s_c f_{c+1} i_{c+1}. \]
The right hand side equals
\[ p_c f_{c+1} i_{c+1} p_{c+1} s_{c+1} i_{c+1} = p_c f_{c+1} s_{c+1} i_{c+1}. \]
These are the same, because of the relation $s_c f_{c+1} = f_{c+1} s_{c+1}$. The last part of Lemma \ref{lem_limit_of_good_complexes} now implies the existence and uniqueness of the desired element $s_\infty$.
\end{proof}
\section{Abstract patching}\label{sec_abstract_patching}

Let $p$ be a prime, $E$ a finite extension of $\bbQ_p$, and let $\cO$ be its ring of integers. We write $\lambda \subset \cO$ for the unique maximal ideal and $k = \cO/\lambda$.
\begin{proposition}\label{prop_main_patching_argument}
Fix $\Lambda \in \CNL_\cO$ and $q \geq 1$, and define $S_\infty = \Lambda \llbracket S_1, \dots, S_q \rrbracket$ and $\fra = \ker(S_\infty \to \Lambda)$. Let $S_\infty \supset I_1 \supset I_2 \dots$ be a decreasing sequence of open ideals of $S_\infty$ such that $\cap_{i \geq 1} I_i = 0$. If $N \geq 1$, then we set $S_N = S_\infty/I_N$. Suppose given the following data.
\begin{enumerate}
\item A good complex $C^\bullet_0$ of $\Lambda$-modules concentrated in degrees $[0, d]$ for some $d \geq 0$.
\item An object $R_0 \in \CNL_\Lambda$ together with a map $R_0 \to \End_{\mathbf{D}(\Lambda)}(C^\bullet_0)$.
\item For every integer $N \geq 1$, a good complex $C^\bullet_N$ of $S_N$-modules, together with an isomorphism $f_N : C^\bullet_N / (\fra) \to C^\bullet_0 \otimes_{S_\infty} S_N$. \(We consider $C^\bullet_0$ as an $S_\infty$-module via the augmentation homomorphism $S_\infty \to \Lambda$.\)
\item For every integer $N \geq 1$, an object $R_N \in \CNL_{S_N}$ and maps $R_N \to \End_{\mathbf{D}(S_N)}(C^\bullet_N)$ and $R_N \to R_0/(I_N)$ of $S_N$-algebras, such that the following diagram commutes:
\[ \xymatrix{ R_N \ar[r] \ar[d] & \End_{\mathbf{D}(S_N)}(C^\bullet_N) \ar[d] \\
R_0/(I_N) \ar[r]  & \End_{\mathbf{D}(\Lambda/(I_N))}(C^\bullet_0 \otimes_{S_\infty} S_N). } \]
\item A complete Noetherian $\Lambda$-algebra $R_\infty$ and for each integer $N \geq 1$, a surjective map of $\Lambda$-algebras $g_N : R_\infty \to R_N$. 
\end{enumerate}
Then we can find the following data.
\begin{enumerate}
\item A good complex $C^\bullet_\infty$ of $S_\infty$-modules, together with an isomorphism $f_\infty : C^\bullet_\infty \otimes_{S_\infty} \Lambda \to C^\bullet_0$.
\item A homomorphism $S_\infty \to R_\infty$ in $\CNL_\Lambda$, making $R_\infty$ into an $S_\infty$-algebra.
\item A commutative diagram of $S_\infty$-algebras
\[ \xymatrix{ R_\infty \ar[r] \ar[d] & \End_{\mathbf{D}(S_\infty)}(C^\bullet_\infty) \ar[d]^{- \otimes_{S_\infty} \Lambda} \\
R_0 \ar[r] & \End_{\mathbf{D}(\Lambda)}(C^\bullet_0). } \]
\end{enumerate}
\end{proposition}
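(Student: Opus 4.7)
The plan is to run a Taylor--Wiles style patching argument, adapted to the derived category: at each finite level $N$ one packages a finite collection of discrete invariants attached to the data $C^\bullet_M$ for $M \geq N$, extracts by pigeonhole a subsequence that stabilizes level by level, and then glues the resulting tower using Lemma \ref{lem_limit_of_good_complexes}. The main obstacle is the correct choice of invariants: they must simultaneously be finite in number at each level, restrict compatibly from level $N+1$ to level $N$, and remember enough structure to reconstruct all of the claimed data in the inverse limit.

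First I would replace each $C^\bullet_N$ by a quasi-isomorphic minimal complex, using Lemma \ref{lem_perfect_and_minimal_complexes}. Reducing the isomorphism $f_N$ modulo the maximal ideal of $S_N$ gives $C^\bullet_N \otimes_{S_N} k \simeq C^\bullet_0 \otimes_\Lambda k$, so these minimal complexes are free over $S_N$ with ranks in each degree independent of $N$. Each $S_N$ is Artinian because $I_N$ is open in $S_\infty$. For each $M \geq N$, the complex $C^\bullet_M \otimes_{S_M} S_N$ is a minimal good complex of $S_N$-modules carrying: an induced isomorphism $\phi_{M,N} : (C^\bullet_M \otimes_{S_M} S_N)/(\fra) \cong C^\bullet_0 \otimes_{S_\infty} S_N$; the $\Lambda$-algebra composite $\alpha_{M,N} : R_\infty \twoheadrightarrow R_M \to \End_{\mathbf{D}(S_M)}(C^\bullet_M) \to \End_{\mathbf{D}(S_N)}(C^\bullet_M \otimes_{S_M} S_N)$; and the image in $(R_\infty/\ffrm_{R_\infty}^N)^q$ of any chosen $\Lambda$-algebra lift $\tau_M : S_\infty \to R_\infty$ of $S_\infty \to S_M \to R_M$ along $g_M$ (such a lift exists because $S_\infty = \Lambda\llbracket S_1,\dots,S_q\rrbracket$ is formally smooth over $\Lambda$ and $g_M$ is surjective). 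I would define the \emph{patching datum at level $N$ coming from $M$} to be the isomorphism class of this tuple.

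Each of these invariants takes values in a finite set: there are only finitely many minimal complexes over the finite ring $S_N$ with prescribed ranks, up to isomorphism; $\End_{\mathbf{D}(S_N)}(D^\bullet)$ is finite, and any $\Lambda$-algebra map from the complete Noetherian local ring $R_\infty$ to a finite target factors through some finite quotient $R_\infty/\ffrm_{R_\infty}^n$; and $(R_\infty/\ffrm_{R_\infty}^N)^q$ is itself finite. Restriction from level $N+1$ to level $N$ is evidently compatible with these constructions, since tensoring by $\otimes_{S_{N+1}} S_N$ sends a minimal complex at level $N+1$ to a minimal complex at level $N$, and reducing $\tau_M$ further is consistent. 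By the standard diagonal pigeonhole argument I would then extract an increasing sequence $M_1 < M_2 < \cdots$ such that, for each $N$, the patching datum at level $N$ coming from $C^\bullet_{M_k}$ stabilizes as $k \to \infty$ to a tuple $(D^\bullet_N,\phi_N,\alpha_N,\overline{\tau}_N)$, these being automatically compatible as $N$ varies.

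Finally, applying Lemma \ref{lem_limit_of_good_complexes} to the coherent system $\{D^\bullet_N\}$ produces a good complex $C^\bullet_\infty$ of $S_\infty$-modules together with isomorphisms $C^\bullet_\infty \otimes_{S_\infty} S_N \cong D^\bullet_N$; the case $N=1$ combined with $\phi_1$ yields $f_\infty : C^\bullet_\infty \otimes_{S_\infty} \Lambda \cong C^\bullet_0$. The compatible reductions $\overline{\tau}_N$ assemble, by completeness of $R_\infty$ in its maximal-ideal topology, into a $\Lambda$-algebra map $S_\infty \to R_\infty$ whose composition with each $g_N$ recovers the given $S_N$-algebra structure on $R_N$. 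The third part of Lemma \ref{lem_limit_of_good_complexes} applied to the $\alpha_N$'s delivers the homomorphism $R_\infty \to \End_{\mathbf{D}(S_\infty)}(C^\bullet_\infty)$, and commutativity of the final diagram is verified after reduction modulo each $I_N$, where it holds by construction.
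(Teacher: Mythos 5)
Your overall architecture matches the paper's (minimize the complexes, record finitely many invariants at each level, pigeonhole and diagonalize, glue via Lemma \ref{lem_limit_of_good_complexes}), but your patching datum is missing one essential component. You record the composite $\alpha_{M,N} : R_\infty \twoheadrightarrow R_M \to \End_{\mathbf{D}(S_N)}(C^\bullet_M \otimes_{S_M} S_N)$, but nothing tracking the composite $R_\infty \twoheadrightarrow R_M \to R_0/(I_M) \to R_0/(I_N)$ coming from the second map in assumption (4). No map $R_\infty \to R_0$ is given in the hypotheses, so it has to be manufactured by the patching; and these composites need not agree for different $M, M' \geq N$, so unless they are part of the invariant you stabilize, the diagonal subsequence carries no information about them. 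Without a compatible system $R_\infty \to R_0/(I_N)$ you cannot pass to the limit $R_0 = \plim_N R_0/(I_N)$ (an identification that holds because, by Chevalley's theorem and Krull intersection, the ideals $(I_N)R_0$ are cofinal with powers of $\ffrm_{R_0}$). In the paper this is precisely the role of the component $\eta_2 : R \to R_0/\ffrm_{R_0}^{(d+1)Nsg}$ of the patching datum, together with the commutativity constraint imposed on the whole tuple $(D^\bullet, \psi, R, \eta_0, \eta_1, \eta_2)$, which ensures that in the limit $\eta_2 : R^\infty \to R_0$ fits into the required square. Once you add the $R_0$-component to your datum and impose the analogue of assumption (4)'s commutativity on it (so that it survives the limit), the argument closes.

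A secondary difference: you patch the reductions of chosen lifts $\tau_M : S_\infty \to R_\infty$ directly, whereas the paper patches the $S_\infty$-algebra structure on the truncated rings $R$ and only at the end lifts $S_\infty \to R^\infty$ to $S_\infty \to R_\infty$ using formal smoothness of $S_\infty$ over $\Lambda$. The paper's ordering is cleaner bookkeeping: it sidesteps the need to verify that your limiting $\tau_\infty$ is compatible with the $S_\infty$-action on $\End_{\mathbf{D}(S_\infty)}(C^\bullet_\infty)$ induced from the patched complexes.
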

\begin{proof}
We patch. By Chevalley's theorem, we can assume without loss of generality that $I_N = \ffrm_{S_\infty}^{m_N}$, for a sequence $m_N$ of integers which tend to infinity as $N \to \infty$. For simplicity, let us assume that in fact $I_N = \ffrm_{S_\infty}^N$. We define $s = \dim_k H^\ast(C^\bullet_0 \otimes_\Lambda \Lambda/\ffrm_\Lambda)$. If $t \in \ffrm_{R_N}$, then $t$ acts nilpotently on 
\[ H^\ast(C^\bullet_N \otimes_{S_N} k) \cong H^\ast(C^\bullet_0 \otimes_\Lambda k), \]
hence $t^s$ acts as 0 on $H^\ast(C^\bullet_N \otimes_{S_N} k)$. The existence of the spectral sequence of a filtered complex implies that $t^{Ns}$ then acts trivially on $H^\ast(C^\bullet_N \otimes_{S_\infty} S_\infty/\ffrm_{S_\infty}^N)$, hence (Lemma \ref{lem_nilpotents_in_derived_category}) the image of $t^{(d+1)Ns}$ is 0 in 
\[ \End_{\mathbf{D}(S_\infty/\ffrm_{S_\infty}^N)}(C^\bullet_N \otimes_{S_\infty} S_\infty/\ffrm_{S_\infty}^N). \]
In particular, if the Zariski tangent space of $R_\infty$ has $k$-dimension $g$, then the image of $\ffrm_{R_N}^{(d+1)Nsg}$ in $\End_{\mathbf{D}(S_\infty/\ffrm_{S_\infty}^N)}(C^\bullet_N \otimes_{S_\infty} S_\infty/\ffrm_{S_\infty}^N)$ is 0.

 We define a patching datum of level $N \geq 1$ to be a tuple $(D^\bullet, \psi, R, \eta_0, \eta_1, \eta_2)$, where:
\begin{itemize}
\item $D^\bullet$ is a good complex of $S_N$-modules.
\item $\psi$ is an isomorphism $D^\bullet \cong C^\bullet_0 \otimes_{S_\infty} S_N$ of complexes of $S_\infty$-modules.
\item $R$ is an object of $\CNL_{S_\infty}$ equipped with a surjection $\eta_0 : R_\infty \to R$ in $\CNL_\Lambda$, and maps $\eta_1 : R \to \End_{\mathbf{D}(S_N)}(D^\bullet)$ and $\eta_2 : R \to R_0/\ffrm_{R_0}^{(d+1)Nsg}$ of $S_N$-algebras. Moreover, $\ffrm_R^{(d+1)NSg} = 0$.
\item The following diagram is commutative:
\[ \xymatrix{ R \ar[r] \ar[d]^-{\eta_2} \ar[r]^-{\eta_1} & \End_{\mathbf{D}(S_\infty/\ffrm_{S_\infty}^N)}(D^\bullet) \ar[d] \\
R_0/\ffrm_{R_0}^{(d+1)Nsg} \ar[r] & \End_{\mathbf{D}(\Lambda/\ffrm_\Lambda^N)}(C^\bullet_0 \otimes_{\Lambda} \Lambda/\ffrm_\Lambda^N).} \]
\end{itemize}
We make the collection of patching data of level $N$ into a category $\mathrm{Patch}_N$ as follows: a morphism $\alpha : (D^\bullet, \psi, R, \eta_0, \eta_1,\eta_2) \to (E^\bullet, \psi', R', \eta_0', \eta_1', \eta_2')$ is a pair $\alpha = (f, g)$, where:
\begin{itemize}
\item $f : D^\bullet \to E^\bullet$ is an isomorphism such that $\psi' (f \otimes_{S_\infty} S_\infty/(\ffrm_{S_\infty}^N, \fra)) \psi^{-1}$ is the identity.
\item $g : R \to R'$ is an isomorphism in $\CNL_{S_\infty}$ that intertwines $\eta_0$ and $\eta_0'$, $\eta_1$ and $\eta_1'$, and $\eta_2$ and $\eta_2'$.
\end{itemize}
Evidently the category $\mathrm{Patch}_N$ is a groupoid, i.e.\ every morphism is an isomorphism. There is a collection of functors $(F_N : \mathrm{Patch}_{N+1} \to \mathrm{Patch}_N)_{N \geq 1}$, which assign to a tuple $(D^\bullet, \psi, R, \eta_0, \eta_1, \eta_2)$ the tuple $F_N(D^\bullet, \psi, \eta_0, \eta_1, \eta_2) = (E^\bullet, \psi', R', \eta_0', \eta_1', \eta_2')$ given as follows:
\begin{itemize}
\item $E^\bullet = D^\bullet \otimes_{S_\infty} S_\infty/\ffrm_{S_\infty}^N$.
\item $\psi'$ is the composite 
\[ E^\bullet \otimes_{S_\infty} S_\infty/(\ffrm_{S_\infty}^N + \fra)  \cong D^\bullet \otimes_{S_\infty} S_\infty/(\ffrm_{S_\infty}^N + \fra) \cong C^\bullet \otimes_{S_\infty} S_\infty/(\ffrm_{S_\infty}^N + \fra). \]
\item $R' = R/\ffrm_R^{(d+1)Nsg}$.
\item $\eta_0', \eta_1'$ and $\eta_2'$ are the obvious maps.
\end{itemize}
The set of isomorphism classes of patching data of level $N$ is finite. Indeed, it suffices to note that the cardinality of the ring $R$ in the tuple $(D^\bullet, \psi, R,\eta_0, \eta_1, \eta_2)$ is bounded solely in terms of $N$ (since $R_\infty/\ffrm_{R_\infty}^{(d+1)Nsg}$ is a ring of finite cardinality), and $\End_{\mathbf{D}(S_\infty/\ffrm_{S_\infty}^N)}(D^\bullet)$ has cardinality bounded above by $\# S_\infty/\ffrm_{S_\infty}^N \cdot (\dim_k D^\bullet \otimes_{S_\infty} k)^2$.

The set of isomorphism classes is also non-empty. Indeed, for each $M \geq 1$, we can define a patching datum $D(M, M) = (D^\bullet, \psi, R, \eta_0, \eta_1, \eta_2) \in \mathrm{Patch}_M$ as follows:
\begin{itemize}
\item $D^\bullet = C^\bullet_M \otimes_{S_\infty} S_\infty/\ffrm_{S_\infty}^M$.
\item $\psi = f_M$.
\item $R = R_M/\ffrm_{R_M}^{(d+1)Msg}$.
\item $\eta_0, \eta_1$ and $\eta_2$ are the obvious maps.
\end{itemize}
If $M \geq N \geq 1$, then we define $D(M, N) \in \mathrm{Patch}_N$ to be the image of $D(M, M)$ under the composite functor $F_N \circ F_{N+1} \circ \dots \circ F_{M-1}$. After diagonalization, we can find an increasing sequence $(M_N)_{N \geq 1}$ of integers, together with a system of isomorphisms $\alpha_N : F_N(D(M_{N+1}, N+1)) \to D(M_N, N)$. Passing to the inverse limit with respect to these isomorphisms, we obtain a tuple $(D_\infty^\bullet, \psi_\infty, R^\infty, \eta_0, \eta_1, \eta_2)$, where:
\begin{itemize}
\item $D_\infty^\bullet$ is a good complex of $S_\infty$-modules. 
\item $\psi_\infty$ is an isomorphism $D_\infty^\bullet \otimes_{S_\infty} S_\infty/(\fra) \cong C_0^\bullet$.
\item $R^\infty$ is an object of $\CNL_{S_\infty}$, and $\eta_0$ is a surjective homomorphism $R_\infty \to R^\infty$ in $\CNL_\Lambda$. 
\item $\eta_1 : R^\infty \to \End_{\mathbf{D}(S_\infty)}(D^\bullet_\infty)$ is a homomorphism of $S_\infty$-algebras.
\item $\eta_2 : R^\infty \to R_0$ is a homomorphism of $S_\infty$-algebras.
\item The following diagram is commutative:
\[ \xymatrix{ R^\infty \ar[r] \ar[d]^-{\eta_2} \ar[r]^-{\eta_1} & \End_{\mathbf{D}(S_\infty)}(D^\bullet_\infty) \ar[d] \\
R_0 \ar[r] & \End_{\mathbf{D}(\Lambda)}(C^\bullet_0).} \]
\end{itemize}
It follows that $\eta_1$ is injective. Since $S_\infty$ is formally smooth over $\Lambda$, we can lift the homomorphism $S_\infty \to R^\infty$ to a homomorphism $S_\infty \to R_\infty$. The proof is now completed on taking $C^\bullet_\infty = D^\bullet_\infty$.
\end{proof}
To illustrate the use of Proposition \ref{prop_main_patching_argument}, we state the following result, which will not be used later. It serves as a prototype for the proof of our main result, Theorem \ref{thm_r_equals_t}.
\begin{corollary}
With notation as in Proposition \ref{prop_main_patching_argument}, suppose given integers $n \geq l_0$ such that $\Lambda$ is a regular local ring of dimension $n$ and $R_\infty$ is a formally smooth $\Lambda$-algebra of dimension $\dim R_\infty = \dim S_\infty - l_0 = n + q - l_0$. Suppose moreover that $H^\ast(C^\bullet_0) \neq 0$ and $C^\bullet_0$ is concentrated in degrees $[0, l_0]$. Then the map $R_0 \to \End_{\mathbf{D}(\Lambda)}(C^\bullet_0)$ is injective and $H^{l_0}(C^\bullet_0)$ is a free $R_0$-module.
\end{corollary}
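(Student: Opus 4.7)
The plan is to patch via Proposition \ref{prop_main_patching_argument} to obtain a good complex $C^\bullet_\infty$ of $S_\infty$-modules, an $S_\infty$-algebra structure on $R_\infty$, and a map $R_\infty \to \End_{\mathbf{D}(S_\infty)}(C^\bullet_\infty)$ of $S_\infty$-algebras lifting the data over $\Lambda$. Set $M := H^\ast(C^\bullet_\infty)$. Since the $S_\infty$-algebra structure of $R_\infty$ is compatible with the natural $S_\infty$-action on $C^\bullet_\infty$, the $S_\infty$-action on $M$ factors through $S_\infty \to R_\infty$, so $M$ is a finite module over both rings. I would then apply Lemma \ref{lem_dimension_criterion_for_exactness} with $S = S_\infty$ (regular of dimension $n+q$): since $C^\bullet_\infty \otimes_{S_\infty} k \simeq C^\bullet_0 \otimes_\Lambda k$ has cohomology concentrated in $[0,l_0]$, the lemma gives $\dim_{S_\infty} M \geq n+q-l_0$. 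By Lemma \ref{lem_dimension_theory}(3) applied to $S_\infty \to R_\infty$, $\dim_{R_\infty} M = \dim_{S_\infty} M$, and the bound $\dim_{R_\infty} M \leq \dim R_\infty = n+q-l_0$ forces equalities throughout. Since $R_\infty$ is a regular local domain, this means $R_\infty$ acts faithfully on $M$; and the ``moreover'' clause of Lemma \ref{lem_dimension_criterion_for_exactness} tells us $H^i(C^\bullet_\infty)=0$ for $i \neq l_0$ and that $M = H^{l_0}(C^\bullet_\infty)$ has projective dimension exactly $l_0$ over $S_\infty$.

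The next step is to upgrade faithfulness to freeness. Auslander--Buchsbaum in $S_\infty$ gives $\depth_{S_\infty} M = n+q-l_0$. Transferring any $M$-regular sequence in $\ffrm_{S_\infty}$ along $S_\infty \to R_\infty$ yields an $M$-regular sequence of the same length in $\ffrm_{R_\infty}$ (the images act on $M$ in exactly the same way), so $\depth_{R_\infty} M \geq n+q-l_0 = \dim R_\infty$, forcing equality. Since $R_\infty$ is regular, Auslander--Buchsbaum now gives $\mathrm{proj\,dim}_{R_\infty} M = 0$, so $M$ is a free $R_\infty$-module; it is nonzero, since otherwise $C^\bullet_\infty \simeq 0$ in $\mathbf{D}(S_\infty)$ and hence $H^\ast(C^\bullet_0) = 0$, contradicting the hypothesis.

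Finally, since $H^\ast(C^\bullet_\infty)$ is concentrated in degree $l_0$, the spectral sequence of Proposition \ref{prop_change_of_coefficients_spectral_sequence} for $C^\bullet_0 = C^\bullet_\infty \otimes_{S_\infty}^L \Lambda$ collapses onto the row $q = l_0$, yielding in particular $H^{l_0}(C^\bullet_0) \cong M \otimes_{S_\infty} \Lambda = M/\fra M$. Writing $\overline{R}_\infty := R_\infty/\fra R_\infty$, this quotient is a nonzero free $\overline{R}_\infty$-module, hence a faithful one. The patching diagram shows that the action of $\overline{R}_\infty$ on $H^{l_0}(C^\bullet_0)$ factors through the surjection $\overline{R}_\infty \twoheadrightarrow R_0$, so faithfulness forces this map to be an isomorphism. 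Hence $H^{l_0}(C^\bullet_0)$ is a free $R_0$-module, and the composite
\[
R_0 \to \End_{\mathbf{D}(\Lambda)}(C^\bullet_0) \to \End_\Lambda(H^{l_0}(C^\bullet_0))
\]
is injective, so $R_0 \to \End_{\mathbf{D}(\Lambda)}(C^\bullet_0)$ is injective as well. The main subtlety is the depth comparison in the middle paragraph, which rests crucially on the fact that the $S_\infty$-action on $M$ really does factor through $S_\infty \to R_\infty$; this is what allows one to transfer regular sequences between the two rings and invoke Auslander--Buchsbaum in $R_\infty$ rather than only in $S_\infty$.
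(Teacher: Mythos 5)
Your proof is correct and follows essentially the same strategy as the paper's: patch via Proposition \ref{prop_main_patching_argument}, use Lemmas \ref{lem_dimension_theory} and \ref{lem_dimension_criterion_for_exactness} to pin down the dimension, pass to depth via Auslander--Buchsbaum to get freeness of $H^{l_0}(C^\bullet_\infty)$ over $R_\infty$, and then specialize along $\fra$. You spell out in more detail why $\depth_{R_\infty} M \geq \depth_{S_\infty} M$ (transfer of regular sequences along $S_\infty \to R_\infty$, using that the $S_\infty$-action factors through $R_\infty$), which is the crucial inequality the paper invokes more tersely.
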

\begin{proof}
There is a commutative diagram of $S_\infty$-algebras:
\[ \xymatrix{ R_\infty \ar[r] \ar[d] & \End_{\mathbf{D}(S_\infty)}(C^\bullet_\infty) \ar[d]^{- \otimes_{S_\infty} \Lambda} \\
R_0 \ar[r] & \End_{\mathbf{D}(\Lambda)}(C^\bullet_0). } \]
By Lemma \ref{lem_dimension_theory}, we have
\[ \dim_{S_\infty} H^\ast(C^\bullet_\infty) = \dim_{R_\infty} H^\ast(C^\bullet_\infty) \leq \dim R_\infty = \dim S_\infty - l_0. \]
It follows from Lemma \ref{lem_dimension_criterion_for_exactness} that equality holds here, and the cohomology groups $H^i(C^\bullet_\infty)$ are non-zero if and only if $i = l_0$. We also see that
\[ \depth_{R_\infty} H^{l_0}(C^\bullet_\infty) \geq \depth_{S_\infty} H^{l_0}(C^\bullet_\infty) = \dim R_\infty, \]
so the Auslander-Buchsbaum formula implies that $H^{l_0}(C^\bullet_\infty)$ is in fact a free $R_\infty$-module. It follows that $H^{l_0}(C^\bullet_\infty) \otimes_{S_\infty} \Lambda \cong H^{l_0}(C^\bullet_0)$ is a free $R_\infty/(\fra)$-module, hence $R_\infty/(\fra) \cong R_0$ and $H^{l_0}(C^\bullet_0)$ is a free $R_0$-module. 
\end{proof}
\section{Galois deformation theory}\label{sec_galois_deformation_theory}

Let $F$ be a number field, let $p$ be a prime, and $S_p$ denote the set of places of $F$ dividing $p$. We fix a coefficient field $E$ together with a continuous, absolutely irreducible representation $\overline{\rho} : G_F \to \GL_n(k)$ and a continuous character $\mu : G_F \to \cO^\times$ which lifts $\det \overline{\rho}$.  We also fix a finite set $S$ of finite places of $F$, containing $S_p$ and the places at which $\overline{\rho}$ and $\mu$ are ramified. We assume for simplicity that $p > n$ and $n \geq 2$. In particular, the prime $p$ is odd.

For each $v \in S$, we fix a ring $\Lambda_v \in \CNL_\cO$ and we define $\Lambda = \widehat{\otimes}_{v \in S} \Lambda_v$, the completed tensor product being over $\cO$. Then $\Lambda \in \CNL_\cO$. Let $v \in S$. We write $\cD_v^\square : \CNL_{\Lambda_v} \to \mathrm{Sets}$ for the functor that associates to $R \in \mathrm{CNL}_{\Lambda_v}$ the set of all continuous homomorphisms $r : G_{F_v} \to \GL_n(R)$ such that $r \mod \ffrm_R = \overline{\rho}|_{G_{F_v}}$ and $\det r$ agrees with the composite $G_{F_v} \to \cO^\times \to R^\times$ given by $\mu|_{G_{F_v}}$ and the structural homomorphism $\cO \to R$. 
It is easy to see that $\cD_v^\square$ is represented by an object $R_v^{\square} \in \CNL_{\Lambda_v}$ (for example, by applying Schlessinger's criterion as in \cite{Maz89}).
\begin{definition}
Let $v \in S$. A local deformation problem for $\overline{\rho}|_{G_{F_v}}$ is a subfunctor $\cD_v \subset \cD_v^\square$ satisfying the following conditions:
\begin{itemize}
\item $\cD_v$ is represented by a quotient $R_v$ of $R_v^\square$.
\item For all $R \in \CNL_{\Lambda_v}$, $a \in \ker(\GL_n(R) \to \GL_n(k))$ and $r \in \cD_v(R)$, we have $a r a^{-1} \in \cD_v(R)$.
\end{itemize}
\end{definition}

\begin{definition}
A global deformation problem is a tuple
\[ \cS = (\overline{\rho}, \mu, S, \{ \Lambda_v \}_{v \in S}, \{ \cD_v \}_{v \in S}), \]
where:
\begin{itemize}
\item The objects $\overline{\rho} : G_F \to \GL_n(k)$, $\mu$, $S$ and $\{ \Lambda_v \}_{v \in S}$ are as at the beginning of this section.
\item For each $v \in S$, $\cD_v$ is a local deformation problem for $\overline{\rho}|_{G_{F_v}}$.
\end{itemize}
\end{definition}
\begin{definition}
Let $\cS = (\overline{\rho}, \mu, S, \{ \Lambda_v \}_{v \in S}, \{ \cD_v \}_{v \in S})$ be a global deformation problem. Let $R \in \CNL_{\Lambda}$, and let $\rho : G_F \to \GL_n(R)$ be a continuous homomorphism. We say that $\rho$ is of type $\cS$ if it satisfies the following conditions:
\begin{itemize}
\item $\rho$ is unramified outside $S$.
\item $\det \rho = \mu$. More precisely, the homomorphism $\det \rho : G_F \to R^\times$ agrees with the composite $G_F \to \cO^\times \to R^\times$ induced by $\mu$ and the structural homomorphism $\cO \to R$.
\item For each $v \in S$, the restriction $\rho|_{G_{F_v}}$ lies in $\cD_v(R)$, where we give $R$ the natural $\Lambda_v$-algebra structure arising from the homomorphism $\Lambda_v \to \Lambda$.
\end{itemize}
We say that two liftings $\rho_1, \rho_2 : G_F \to \GL_n(R)$ are strictly equivalent if there exists a matrix $a \in \ker(\GL_n(R) \to \GL_n(k))$ such that $\rho_2 = a \rho_1 a^{-1}$. 
\end{definition}
It is easy to see that strict equivalence preserves the property of being of type $\cS$. We write $\cD^\square_\cS$ for the functor $\CNL_{\Lambda} \to \Sets$ which associates to $R \in \CNL_{\Lambda}$ the set of liftings $\rho : G_F \to \GL_n(R)$ which are of type $\cS$. We write $\cD_\cS$ for the functor $\CNL_{\Lambda} \to \Sets$ which associates to $R \in \CNL_{\Lambda}$ the set of strict equivalence classes of liftings of type $\cS$.

\begin{definition} If $T \subset S$ and $R \in \mathrm{CNL}_\Lambda$, then we define a $T$-framed lifting of $\overline{\rho}$ to $R$ to be a tuple $(\rho, \{ \alpha_v \}_{v \in T})$, where $\rho : G_F \to \GL_n(R)$ is a lifting and for each $ v\in T$, $\alpha_v$ is an element of $\ker(\GL_n(R) \to \GL_n(k))$. Two $T$-framed liftings $(\rho_1, \{ \alpha_v \}_{v \in T})$ and $(\rho_n, \{ \beta_v \}_{v \in T})$ are said to be strictly equivalent if there is an element $a \in \ker(\GL_n(R) \to \GL_n(k))$ such that $\rho_2 = a \rho_1 a^{-1}$ and $\beta_v = a \alpha_v$ for each $v \in T$.
\end{definition}
We write $\cD^T_\cS$ for the functor $\CNL_{\Lambda} \to \Sets$ which associates to $R \in \CNL_{\Lambda}$ the set of strict equivalence classes of $T$-framed liftings $(\rho, \{ \alpha_v \}_{v \in T})$ to $R$ such that $\rho$ is of type $\cS$. 
\begin{theorem}
Let $\cS = (\overline{\rho}, \mu, S, \{ \Lambda_v \}_{v \in S}, \{ \cD_v \}_{v \in S})$ be a global deformation problem. Then the functors $\cD_\cS$, $\cD_\cS^\square$ and $\cD_\cS^T$ are representable by objects $R_\cS$, $R_\cS^\square$ and $R_\cS^T$, respectively, of $\CNL_{\Lambda}$. 
\end{theorem}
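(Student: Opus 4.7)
The plan is to verify Schlessinger's representability criteria for each of $\cD_\cS^\square$, $\cD_\cS$, and $\cD_\cS^T$, using the absolute irreducibility of $\overline{\rho}$ together with the standard finiteness of $H^i(G_{F,S}, V)$ for any finite $k[G_{F,S}]$-module $V$ (a consequence of the finiteness of $S$-class groups of finite extensions of $F$, and Hermite--Minkowski).

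I would first treat $\cD_\cS^\square$, which parametrizes actual lifts rather than equivalence classes. Schlessinger's conditions H1 and H2 (gluing of lifts over small surjections) are immediate, since a lift to a fiber product $R_1 \times_{R_0} R_2$ is literally a compatible pair of lifts to $R_1$ and $R_2$. The tangent space $\cD_\cS^\square(k[\epsilon])$ sits inside the space of cocycles $Z^1(G_{F,S}, \ad^0 \overline{\rho})$ (the fixed-determinant condition forces coefficients into the trace-zero part), further cut down by closed conditions at each $v \in S$ arising from the fact that $\cD_v$ is represented by a quotient of $R_v^\square$; this gives condition H3 by the cohomology finiteness statement above. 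The $\Lambda_v$-algebra structures on each $R_v$ combine via the universal property of the completed tensor product to give the desired $\Lambda$-algebra structure on the representing ring $R_\cS^\square$.

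For $\cD_\cS$, absolute irreducibility of $\overline{\rho}$ together with Schur's lemma imply that the centralizer in $\GL_n(R)$ of any lift of $\overline{\rho}$ consists only of scalars $R^\times$. This is Mazur's original setting \cite{Maz89}: passing to the quotient by strict equivalence (which is free modulo scalars on the framed functor) gives Schlessinger's remaining condition H4 and a representing object $R_\cS \in \CNL_\Lambda$ carrying a universal deformation $\rho_\cS : G_F \to \GL_n(R_\cS)$, well-defined up to conjugation by an element of $\ker(\GL_n(R_\cS) \to \GL_n(k))$. The intermediate functor $\cD_\cS^T$ adjoins, at each $v \in T$, a framing parameter in $\widehat{\GL}_n := \ker(\GL_n \to \GL_n(k))$ modulo simultaneous conjugation; equivalently, $R_\cS^T$ is obtained from $R_\cS$ by adjoining formal variables that parametrize these framings modulo the diagonal action of residual scalars, and representability follows from that of $R_\cS$ (alternatively, by verifying Schlessinger directly for $\cD_\cS^T$).

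The main obstacle, such as it is, is bookkeeping: one must check that the closed conditions imposed by the various $\cD_v$ are compatible with the fixed-determinant condition and the $\Lambda$-algebra structure coming from $\Lambda = \widehat{\otimes}_{v \in S} \Lambda_v$, and that the strict equivalence quotient preserves representability in each of the three variants. These verifications are routine within the framework of Mazur and de Smit--Lenstra, and no new idea beyond this is required here.
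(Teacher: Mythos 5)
Your proposal is correct and is essentially the standard argument that the paper delegates to a reference: the paper's own ``proof'' is simply the citation to Gouv\^ea's appendix for $\cD_\cS$, with the remark that $\cD_\cS^\square$ and $\cD_\cS^T$ follow easily. You reconstruct what that reference contains, with one organizational difference: the paper goes $\cD_\cS \Rightarrow \cD_\cS^\square, \cD_\cS^T$, whereas you start with $\cD_\cS^\square$ (where Schlessinger's H1--H2 are trivially bijections because there is no equivalence relation to quotient by and $\GL_n$ of a fiber product is the fiber product of $\GL_n$'s), then pass to $\cD_\cS$ using Schur's lemma to see that the action of $\widehat{\PGL}_n$ on framed lifts is free, and finally obtain $\cD_\cS^T$ by adjoining framing coordinates. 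This ordering is arguably cleaner and matches the paper's explicit Lemma \ref{lem_framed_and_unframed_deformations}, which identifies $R_\cS^T \cong R_\cS \widehat{\otimes}_\cO \cT$. One small point worth making explicit: the compatibility of the local conditions $\cD_v$ with fiber products (needed for H1--H2 on the framed functor) is exactly what is guaranteed by the requirement that $\cD_v$ be represented by a \emph{quotient} of $R_v^\square$; you use this implicitly and it is fine, but it is the place where the definition of ``local deformation problem'' enters. No gap.
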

\begin{proof}
This is well-known; see \cite[Appendix 1]{Gou01} for a proof that $\cD_\cS$ is representable. The representability of the functors $\cD_\cS^\square$ and $\cD_\cS^T$ can be deduced easily from this.
\end{proof}
Let $\cS = (\overline{\rho}, \mu, S, \{ \Lambda_v \}_{v \in S}, \{ \cD_v \}_{v \in S})$ be a global deformation problem, and for each $v \in S$ let $R_v \in \CNL_{\Lambda_v}$ denote the representing object of $\cD_v$. We write $A_\cS^T = \widehat{\otimes}_{v \in T} R_v$ for the completed tensor product, taken over $\cO$, of the rings $R_v$. The ring $A^T_\cS$ has a canonical $\Lambda_T$-algebra structure, where $\Lambda_T = \widehat{\otimes}_{v \in T} \Lambda_v$; it is easy to see that $A^T_\cS$ represents the functor $\mathrm{CNL}_{\Lambda_T} \to \mathrm{Sets}$ which associates to a $\Lambda_T$-algebra $R$ the set of tuples $(\rho_v)_{v \in T}$, where for each $v \in T$, $\rho_v : G_{F_v} \to \GL_n(R)$ is a lifting of $\overline{\rho}|_{G_{F_v}}$ such that $\rho_v \in \cD_v(R)$ when we give $R$ the $\Lambda_v$-algebra structure arising from the homomorphism $\Lambda_v \to \Lambda_T \to R$.

The natural transformation $(\rho, \{ \alpha_v \}_{v \in T}) \mapsto ( \alpha_v^{-1} \rho|_{G_{F_v}} \alpha_v)_{v \in T}$ induces a canonical map $A^T_\cS \to R_\cS^T$, which is a homomorphism of $\Lambda_T$-algebras. We will generally use this construction only when $\Lambda_v = \cO$ for each $v \in S - T$, in which case there is a canonical isomorphism $\Lambda_T \cong \Lambda$.

\begin{lemma}\label{lem_framed_and_unframed_deformations}
Let $\cS$ be a global deformation problem, and let $\rho_\cS : G_F \to \GL_n(R_\cS)$ be a representative of the universal deformation. Choose $v_0 \in T$, and let $\cT = \cO\llbracket \{ X_{v, i, j} \}_{v \in T, 1 \leq i, j \leq n} \rrbracket/(X_{v_0, 1, 1})$. There is a canonical isomorphism $R_\cS^T \cong R_\cS \widehat{\otimes}_\cO \cT$.
\end{lemma}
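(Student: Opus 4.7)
The plan is to exhibit $R_\cS^T$ and $R_\cS \widehat\otimes_\cO \cT$ as representing the same functor on $\CNL_\Lambda$ by showing that every strict equivalence class of $T$-framed lifts of type $\cS$ over $R$ admits a unique representative of the form $(\phi^*\rho_\cS, \{\beta_v\}_{v\in T})$ with $\phi : R_\cS \to R$ a $\Lambda$-algebra map, $\beta_v \in \ker(\GL_n(R) \to \GL_n(k))$, and the normalization $(\beta_{v_0})_{1,1} = 1$. The second piece of data is precisely the $R$-points of $\cT$, via $X_{v,i,j} \mapsto (\beta_v - I)_{i,j}$, since $\beta_v - I$ has entries in $\ffrm_R$ and the relation $(X_{v_0, 1, 1}) = 0$ matches the normalization.

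First I would construct the map $R_\cS^T \to R_\cS\widehat\otimes_\cO \cT$ directly from the universal property: let $\tilde X_v \in M_n(\cT)$ denote the matrix with entries $X_{v,i,j}$, so that $I + \tilde X_v \in \ker(\GL_n(R_\cS \widehat\otimes_\cO \cT) \to \GL_n(k))$, and $(I + \tilde X_{v_0})_{1,1} = 1$. Then $(\rho_\cS \otimes 1, \{I + \tilde X_v\}_{v \in T})$ is a $T$-framed lift of type $\cS$, classifying a $\Lambda$-algebra map $R_\cS^T \to R_\cS \widehat\otimes_\cO \cT$.

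For the inverse, let $(\rho^T, \{\alpha_v^T\})$ be the universal $T$-framed lift over $R_\cS^T$. The class of $\rho^T$ in $\cD_\cS(R_\cS^T)$ produces a map $\phi : R_\cS \to R_\cS^T$, and there is an element $a \in \ker(\GL_n(R_\cS^T) \to \GL_n(k))$ with $\rho^T = a \cdot \phi_*\rho_\cS \cdot a^{-1}$. Since $\overline{\rho}$ is absolutely irreducible, Schur's lemma together with a standard argument shows $a$ is unique up to left multiplication by the scalar subgroup $1 + \ffrm_{R_\cS^T}$. To pin $a$ down, observe that $((a')^{-1} \alpha_{v_0}^T)_{1,1}$ transforms by $\lambda^{-1}$ when $a$ is rescaled to $a' = \lambda a$, and this entry is a unit (being $\equiv 1 \bmod \ffrm$), so there is a unique $a$ with $(a^{-1}\alpha_{v_0}^T)_{1,1} = 1$. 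Setting $\beta_v = a^{-1}\alpha_v^T$ gives $\beta_v - I \in M_n(\ffrm_{R_\cS^T})$ with $(\beta_{v_0} - I)_{1,1} = 0$, so $X_{v,i,j} \mapsto (\beta_v - I)_{i,j}$ defines a $\Lambda$-algebra map $\cT \to R_\cS^T$, which I combine with $\phi$ to produce the desired map $R_\cS \widehat\otimes_\cO \cT \to R_\cS^T$.

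The main delicate point is to verify well-definedness of the backward construction on strict equivalence classes and then that the two composites are the identity. If $(\rho, \{\alpha_v\})$ is replaced by $(u\rho u^{-1}, \{u\alpha_v\})$ for $u \in \ker(\GL_n(R) \to \GL_n(k))$, then the normalized conjugator becomes $ua$ and $\beta_v = a^{-1}\alpha_v$ is unchanged; the normalization condition on $\beta_{v_0}$ continues to hold. One composite is the identity by construction (applying the backward map to $(\rho_\cS \otimes 1, \{I + \tilde X_v\})$ recovers $\phi = \mathrm{id}$ and $a = I$, hence $\beta_v = I + \tilde X_v$), and the other is forced by the uniqueness of the normalized representative established above.
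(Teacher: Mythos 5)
Your proposal is correct and takes essentially the same approach as the paper: the forward map $R_\cS^T \to R_\cS\widehat\otimes_\cO\cT$ is defined by the same $T$-framed lift $(\rho_\cS\otimes 1,\{I+\tilde X_v\})$, and you supply the inverse that the paper dismisses as "easy to check." The construction of the inverse via a Schur-lemma normalization of the conjugator (pinned down by requiring the $(1,1)$-entry of $\beta_{v_0}$ to equal $1$, made possible by killing $X_{v_0,1,1}$) is exactly the intended argument, and your verification that the normalization is invariant under strict equivalence and that both composites are the identity is sound. One small imprecision: when you apply the backward construction to $(\rho_\cS\otimes 1,\{I+\tilde X_v\})$, the resulting map $\phi\colon R_\cS\to R_\cS\widehat\otimes_\cO\cT$ is the structural inclusion, not literally the identity, but the intent is clear and the conclusion unaffected.
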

\begin{proof}
There is a $T$-framed lifting over $R_\cS \widehat{\otimes}_\cO \cT$ given by the tuple $(\rho_\cS, \{ (X_{v, i, j})_{i, j} \}_{v \in T})$. It is easy to check that the induced map $R_\cS^T \to R_\cS \widehat{\otimes}_\cO \cT$ is an isomorphism.
\end{proof}

\subsection{Galois cohomology}\label{sec_galois_cohomology}

Let $\cS = (\overline{\rho}, \mu, S, \{ \Lambda_v \}_{v \in S}, \{ \cD_v \}_{v \in S})$ be a global deformation problem. We write $\ad \overline{\rho}$ for the $k[G_F]$-module given by the space $M_n(k)$ of $n \times n$ matrices, with $G_F$ acting via the adjoint (conjugation) action of $\overline{\rho}$. We write $\ad^0 \overline{\rho} \subset \ad \overline{\rho}$ for the $k[G_F]$-submodule consisting of matrices of trace 0. For each $v \in S$, let $R_v$ denote the representing object of $\cD_v$ and $I_v = \ker (R_v^\square \to R_v)$. There are canonical isomorphisms
\begin{equation}\label{eqn_defn_of_local_cocycle_space}
Z^1(F_v, \ad^0 \overline{\rho}) \cong \Hom_k(\ffrm_{R_v^\square}/(\ffrm_{R_v^\square}^2, \ffrm_{\Lambda_v}), k) \cong \Hom_{\CNL_{\Lambda_v}}(R^\square_v, k[\epsilon]/(\epsilon^2)),
\end{equation}
where we write $Z^1(F_v, \ad^0 \overline{\rho})$ for the space of continuous 1-coycles $\phi : G_{F_v} \to \ad^0 \overline{\rho}$. The pre-image in $Z^1(F_v, \ad^0 \overline{\rho})$ of $\Hom_{\CNL_{\Lambda_v}}(R_v, k[\epsilon]/(\epsilon^2)) \subset \Hom_{\CNL_{\Lambda_v}}(R^\square_v, k[\epsilon]/(\epsilon^2))$ is a $k$-vector subspace, which we call $\cL^1_v$. It contains the space of 1-coboundaries, and we write $\cL_v$ for the image of $\cL^1_v$ in $H^1(F_v, \ad^0 \overline{\rho})$.

Now fix a subset $T \subset S$, which may be empty, and suppose that $\Lambda_v = \cO$ for each $v \in S - T$. Then there is a canonical isomorphism $\Lambda_T \cong \Lambda$, and the map $A_\cS^T \to R_\cS^T$ is a morphism of $\Lambda$-algebras. In this case, we define (following \cite[\S 2]{Clo08}) a complex $C_{\cS, T}^\bullet(\ad^0 \overline{\rho})$ by the formula
\[ C_{\cS, T}^i(\ad \overline{\rho}) = \left\{ \begin{array}{ll} C^0(F_S/F, \ad \overline{\rho}) & i = 0\\
C^1(F_S/F, \ad^0 \overline{\rho}) \oplus_{v \in T} C^0(F_v, \ad \overline{\rho}) & i = 1\\
C^2(F_S/F, \ad^0 \overline{\rho}) \oplus_{v \in S-T} C^1(F_v, \ad^0 \overline{\rho})/\cL^1_v & i =2\\
C^i(F_S/F, \ad^0 \overline{\rho}) \oplus_{v \in S} C^{i-1}(F_v, \ad^0 \overline{\rho}) & \text{otherwise. }
\end{array}\right. \]
The boundary map is given by the formula
\begin{gather*} C^i_{\cS, T}(\ad^0 \overline{\rho}) \to C^{i+1}_{\cS, T}(\ad^0 \overline{\rho}) \\ 
(\phi, (\psi_v)_v) \mapsto (\partial \phi, (\phi|_{G_{F_v}} - \partial \psi_v)_v). 
\end{gather*} 
There is a long exact sequence of cohomology groups
\begin{equation}\label{eqn_long_exact_sequence_in_taylor_cohomology}
\xymatrix@R-2pc{ 
0 \ar[r] & H^0_{\cS, T}(\ad^0 \overline{\rho}) \ar[r] & H^0(F_S/F, \ad \overline{\rho}) \ar[r] & \oplus_{v \in T} H^0(F_v, \ad \overline{\rho}) \\
\ar[r] &  H^1_{\cS, T}(\ad^0 \overline{\rho}) \ar[r] & H^1(F_S/F, \ad^0 \overline{\rho}) \ar[r] & \oplus_{v \in T} H^1(F_v, \ad^0 \overline{\rho}) \oplus_{v \in S-T} H^1(F_v, \ad^0 \overline{\rho})/\cL_v \\
 \ar[r] &  H^2_{\cS, T}(\ad^0 \overline{\rho}) \ar[r] & H^2(F_S/F, \ad^0 \overline{\rho}) \ar[r] & \oplus_{v \in S} H^2(F_v, \ad^0 \overline{\rho}) \\
\ar[r] &  \dots}
\end{equation}
and consequently an equation
\begin{equation}\label{eqn_equality_of_euler_characteristics}
\chi_{\cS, T}(\ad^0 \overline{\rho}) = \chi(F_S/F, \ad^0 \overline{\rho}) - \sum_{v \in S} \chi(F_v, \ad^0 \overline{\rho}) + \sum_{v \in S - T} (\ell_v - h^0(F_v, \ad^0 \overline{\rho})) - 1 + \# T
\end{equation}
relating the Euler characteristics of these complexes (which are all finite; see \cite[Ch. 1, Corollary 2.3]{Mil06} and \cite[Ch. 1, Corollary 4.15]{Mil06}). We also define a group that plays the role of the dual Selmer group in this setting. Since $p > n$, there is a perfect duality of Galois modules
\begin{gather}\label{eqn_duality_of_adjoint_representation}
\ad^0 \overline{\rho} \times \ad^0 \overline{\rho}(1) \to k(\epsilon) \\
(X, Y) \mapsto \tr X Y. \nonumber
\end{gather}
In particular, this induces for each finite place $v$ of $F$ a perfect duality between the groups $H^1(F_v, \ad^0 \overline{\rho})$ and $H^1(F_v, \ad^0 \overline{\rho}(1))$ (by Tate duality; see \cite[Ch. 1, Corollary 2.3]{Mil06} again). We write $\cL_v^\perp \subset H^1(F_v, \ad^0 \overline{\rho}(1))$ for the annihilator under this pairing of $\cL_v$, and we define
\begin{equation} H^1_{\cS, T}(\ad^0 \overline{\rho}(1)) = \ker \left( H^1(F_S/F, \ad^0 \overline{\rho}(1)) \to \prod_{v \in S - T} H^1(F_v, \ad^0 \overline{\rho}(1)) / \cL_v^\perp \right). 
\end{equation}
\begin{proposition}\label{prop_presenting_global_deformation_ring} Let $\cS = (\overline{\rho}, \mu, S, \{ \Lambda_v \}_{v \in S}, \{ \cD_v \}_{v \in S})$ be a global deformation problem, and let $T \subset S$ be a non-empty subset. Suppose that $\Lambda_v = \cO$ for each $v \in S - T$. 
\begin{enumerate} \item  There is a surjection $A_\cS^T\llbracket X_1, \dots, X_g \rrbracket \to R_\cS^T$ of $A_\cS^T$-algebras, where $g = h^1_{\cS, T}(\ad^0 \overline{\rho})$. If for each $v \in S - T$, the ring $R_v$ is formally smooth over $\cO$, then the kernel can be generated by $r$ elements, where $r = h^2_{\cS, T}(\ad^0 \overline{\rho})$.
\item There is are equalities $h^2_{\cS, T}(\ad^0 \overline{\rho}) = h^1_{\cS, T}(\ad^0 \overline{\rho}(1))$ and 
\[ h^1_{\cS, T}(\ad^0 \overline{\rho}) = h^1_{\cS, T}(\ad^0 \overline{\rho}(1))+\sum_{v \in S - T} \left( \ell_v - h^0(F_v, \ad^0 \overline{\rho}) \right) - h^0(F, \ad^0 \overline{\rho}(1)) - \sum_{v | \infty} h^0(F_v, \ad^0 \overline{\rho}) - 1 + \# T. \]
\end{enumerate}
\end{proposition}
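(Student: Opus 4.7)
The plan is to handle Part 1 by a relative tangent--obstruction analysis, and Part 2 by Poitou--Tate duality together with the Euler characteristic identity (\ref{eqn_equality_of_euler_characteristics}).

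For Part 1, I would first identify the relative cotangent space $\ffrm_{R_\cS^T}/(\ffrm_{R_\cS^T}^2 + \ffrm_{A_\cS^T} R_\cS^T)$ with the $k$-linear dual of $H^1_{\cS, T}(\ad^0 \overline{\rho})$. Concretely, the $A_\cS^T$-algebra maps $R_\cS^T \to k[\epsilon]/(\epsilon^2)$ are the $T$-framed deformations of $\overline{\rho}$ whose induced local pieces factor through $k$, which upon unwinding the definition of $C^\bullet_{\cS, T}$ (the degree-$0$ boundary sends $\phi \mapsto (\partial \phi, (\phi|_{G_{F_v}})_{v \in T})$, and the conditions at $v \in S - T$ are imposed by $\cL^1_v$ via (\ref{eqn_defn_of_local_cocycle_space})) is exactly $H^1_{\cS, T}(\ad^0 \overline{\rho})$. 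Nakayama's lemma then produces a surjection with $g = h^1_{\cS, T}(\ad^0 \overline{\rho})$ variables. For the relation bound under the formal smoothness hypothesis, I would apply standard obstruction theory: the obstruction to extending a type-$\cS$, $T$-framed deformation across a square-zero extension $R' \twoheadrightarrow R$ in $\CNL_\Lambda$ with kernel $I$ lies a priori in $H^2(F_S/F, \ad^0 \overline{\rho}) \otimes_k I$; local obstructions at $v \in S - T$ vanish because $R_v$ is formally smooth over $\cO$, and local obstructions at $v \in T$ are absorbed by the framings since $R_v^\square$ admits lifts freely. This forces the global obstruction into $H^2_{\cS, T}(\ad^0 \overline{\rho}) \otimes_k I$. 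Applying this observation to the minimal presentation $A_\cS^T\llbracket X_1, \dots, X_g \rrbracket \twoheadrightarrow R_\cS^T$ and invoking Nakayama bounds the minimal number of generators of the kernel by $r = h^2_{\cS, T}(\ad^0 \overline{\rho})$.

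For Part 2, the duality $h^2_{\cS, T}(\ad^0 \overline{\rho}) = h^1_{\cS, T}(\ad^0 \overline{\rho}(1))$ is a form of Greenberg--Wiles / Poitou--Tate Selmer duality: combining the Poitou--Tate nine-term exact sequence with Tate local duality via the perfect pairing (\ref{eqn_duality_of_adjoint_representation}) and the annihilator condition defining $\cL_v^\perp$ produces a perfect pairing between the two groups, using the degree-$2$ portion of (\ref{eqn_long_exact_sequence_in_taylor_cohomology}). The numerical formula then follows by computing $\chi_{\cS, T}(\ad^0 \overline{\rho})$ in two ways: from (\ref{eqn_long_exact_sequence_in_taylor_cohomology}), noting the complex has amplitude $[0, 2]$ since $G_{F, S}$ has cohomological dimension at most $2$ on $p$-torsion when $p$ is odd, this equals $h^0_{\cS, T} - h^1_{\cS, T} + h^2_{\cS, T}$, while from (\ref{eqn_equality_of_euler_characteristics}) it equals the stated expression built from global and local Euler characteristics. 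I would then substitute the global Tate formula for $\chi(F_S/F, \ad^0 \overline{\rho})$, the local Euler characteristic formulas $\chi(F_v, \ad^0 \overline{\rho}) = -[F_v : \bbQ_p] \dim \ad^0 \overline{\rho}$ for $v \mid p$ and $\chi(F_v, \ad^0 \overline{\rho}) = 0$ for finite $v \nmid p$, and evaluate $h^0_{\cS, T}$ from (\ref{eqn_long_exact_sequence_in_taylor_cohomology}) using absolute irreducibility ($H^0(F, \ad \overline{\rho}) = k$) and the identity $\dim H^0(F_v, \ad \overline{\rho}) = \dim H^0(F_v, \ad^0 \overline{\rho}) + 1$.

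The hard part will be the bookkeeping in Part 2: the asymmetry between $\ad \overline{\rho}$ and $\ad^0 \overline{\rho}$ in the low-degree terms of $C^\bullet_{\cS, T}$ (reflecting the interaction of the fixed-determinant condition with framing), the $-1 + \#T$ correction in (\ref{eqn_equality_of_euler_characteristics}), and the archimedean contributions must all be tracked through the sign-sensitive Euler characteristic computation so that the scalar $\pm 1$'s from framing-trace and $\ad/\ad^0$ terms cancel correctly to produce the formula as stated. The obstruction-theoretic half of Part 1 is conceptually standard but requires carefully unpacking why formal smoothness of $R_v$ for $v \in S - T$ is the precise condition that pushes the local obstructions into $\cL^1_v$ and therefore into the Selmer complex $C^\bullet_{\cS, T}$, rather than merely into the unrestricted local Galois cohomology.
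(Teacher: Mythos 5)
Your Part~1 matches the paper's approach in essence: you identify the relative cotangent space $\ffrm_{R_\cS^T}/(\ffrm_{R_\cS^T}^2, \ffrm_{A_\cS^T})$ with $H^1_{\cS,T}(\ad^0\overline\rho)^\vee$ via the explicit cocycle description of $T$-framed liftings to $k[\epsilon]/(\epsilon^2)$, then invoke Nakayama together with standard obstruction theory for the relation bound. The paper itself only carries out the tangent-space computation and explicitly omits the obstruction-theoretic half as ``a standard argument in obstruction theory,'' so your sketch is consistent with the intended proof.

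Part~2 contains a genuine gap. You assert that $C^\bullet_{\cS,T}$ has cohomological amplitude $[0,2]$ because $G_{F,S}$ has $p$-cohomological dimension at most $2$. That bound applies only to the global factor $C^\bullet(F_S/F, \ad^0\overline\rho)$. The complex $C^\bullet_{\cS,T}$ is a mapping cone whose degree-$3$ term is $C^3(F_S/F, \ad^0\overline\rho) \oplus_{v\in S} C^{2}(F_v, \ad^0\overline\rho)$, and the local pieces $H^2(F_v, \ad^0\overline\rho)$ do not vanish in general. Indeed, the long exact sequence (\ref{eqn_long_exact_sequence_in_taylor_cohomology}) yields a surjection $\oplus_{v\in S}H^2(F_v, \ad^0\overline\rho) \twoheadrightarrow H^3_{\cS,T}(\ad^0\overline\rho)$, and the Poitou--Tate sequence identifies the cokernel of $H^2(F_S/F, \ad^0\overline\rho) \to \oplus_{v\in S}H^2(F_v, \ad^0\overline\rho)$ with $H^0(F_S/F, \ad^0\overline\rho(1))^\vee$, so $h^3_{\cS,T}(\ad^0\overline\rho) = h^0(F, \ad^0\overline\rho(1))$. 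This can be nonzero; it vanishes under the hypothesis $\overline\rho\not\cong\overline\rho\otimes\epsilon$, which is imposed in the corollary following the proposition but not in the proposition itself. Because you drop the degree-$3$ term, your identity $\chi_{\cS,T} = h^0_{\cS,T} - h^1_{\cS,T} + h^2_{\cS,T}$ is off by exactly $h^3_{\cS,T}$, which is precisely the $-h^0(F, \ad^0\overline\rho(1))$ appearing in the formula you are trying to prove; the bookkeeping as you describe it cannot close. The fix is to retain $h^3_{\cS,T}$, use the same comparison of the long exact sequence (\ref{eqn_long_exact_sequence_in_taylor_cohomology}) with Poitou--Tate that gives $h^2_{\cS,T}(\ad^0\overline\rho) = h^1_{\cS,T}(\ad^0\overline\rho(1))$ to also read off $h^3_{\cS,T}(\ad^0\overline\rho) = h^0(F_S/F, \ad^0\overline\rho(1))$, and then substitute $\chi_{\cS,T} = -h^1_{\cS,T} + h^2_{\cS,T} - h^3_{\cS,T}$ into (\ref{eqn_equality_of_euler_characteristics}). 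This is exactly what the paper does.
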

\begin{proof}
The global analogue of (\ref{eqn_defn_of_local_cocycle_space}) is the chain of isomorphisms
\begin{equation}
H^1_{\cS, T}(\ad^0 \overline{\rho}) \cong \Hom_k(\ffrm_{R_\cS^T}/(\ffrm_{R_\cS^T}^2, \ffrm_{A_{\cS}^T}), k) \cong \Hom_{\mathrm{CNL}_{\Lambda_T}}(R_{\cS^T}/(\ffrm_{A_{\cS}^T}), k[\epsilon]/(\epsilon^2)).
\end{equation}
We explain the first isomorphism. A $T$-framed lifting of $\overline{\rho}$ to $k[\epsilon]/(\epsilon^2)$ can be written in the form $((1+\epsilon \phi)\overline{\rho}, (1 + \epsilon \alpha_v)_{v \in T})$, with $\phi \in Z^1(F_S/F, \ad^0 \overline{\rho})$. The condition that it be of type $\cS$ is equivalent to the condition $\phi|_{G_{F_v}} \in \cL^1_v$ for $v \in S$. The condition that it give the trivial lifting at $v \in T$ is equivalent to the condition
\[ (1 - \epsilon \alpha_v) (1 + \epsilon \phi|_{G_{F_v}})\overline{\rho}|_{G_{F_v}} (1 + \epsilon \alpha_v) = \overline{\rho}|_{G_{F_v}}. \]
Putting it another way, the $T$-framed liftings of $\overline{\rho}$ to $k[\epsilon]/(\epsilon^2)$ are in bijection with the tuples $(\phi, \{ \alpha_v \}_{v \in T})$, where $\phi \in Z^1(F_S/F, \ad^0 \overline{\rho})$, $\alpha_v \in \ad \overline{\rho}$, and for each $v \in T$ we have the equality
\[ \phi|_{G_{F_v}} = (\ad \overline{\rho}|_{G_{F_v}} - 1) \alpha_v. \]
Two tuples $(\phi, \{ \alpha_v \}_{v \in T})$ and $(\phi', \{ \alpha'_v \}_{v \in T})$ give rise to strictly equivalent $T$-framed liftings if and only if there exists $t \in \ad \overline{\rho}$ satisfying
\begin{gather*}
\phi' = \phi + (1 - \ad \overline{\rho})t, \\
\alpha'_v = \alpha_v + t
\end{gather*}
for each $v \in T$. This completes the calculation of the tangent space. The rest of the first part is a standard argument in obstruction theory, which we omit.

For the second part, we recall that $H^i(F_S/F, \ad^0 \overline{\rho}) = 0$ if $i \geq 3$ (by \cite[Ch. 1, Theorem 4.10]{Mil06}, and since $p$ is odd), while Tate's local and global Euler characteristic formulae (see \cite[Ch. 1, Theorem 2.8]{Mil06} and \cite[Ch. 1, Theorem 5.1]{Mil06}, respectively) give 
\begin{gather*} \sum_{v \in S}\chi(F_v, \ad^0 \overline{\rho}) = n^2 [F : \bbQ], \\
\chi(F_S/F, \ad^0 \overline{\rho}) = n^2 [ F : \bbQ] - \sum_{v | \infty} h^0(F_v, \ad^0 \overline{\rho}),
\end{gather*}
hence 
\begin{equation}\label{eqn_taylor_euler_characteristic}
\chi_{\cS, T}(\ad^0 \overline{\rho}) = - \sum_{v | \infty} h^0(F_v, \ad^0 \overline{\rho}) + \sum_{v \in S - T} (\ell_v - h^0(F_v, \ad^0 \overline{\rho})) + 1 - \# T
\end{equation}
(use (\ref{eqn_equality_of_euler_characteristics})).
We now observe that there are exact sequences
\begin{equation*}
\xymatrix@R-2pc{& & H^1(F_S/F, \ad^0 \overline{\rho}) \ar[r] & \oplus_{v \in T} H^1(F_v, \ad^0 \overline{\rho}) \oplus_{v \in S-T} H^1(F_v, \ad^0 \overline{\rho})/\cL_v \\
 \ar[r] &  H^2_{\cS, T}(\ad^0 \overline{\rho}) \ar[r] & H^2(F_S/F, \ad^0 \overline{\rho}) \ar[r] & \oplus_{v \in S} H^2(F_v, \ad^0 \overline{\rho})  \\
\ar[r] &  H^3_{\cS, T}(\ad^0 \overline{\rho}) \ar[r] & 0}
\end{equation*}
and
\begin{equation*}
\xymatrix@R-2pc{ & & H^1(F_S/F, \ad^0 \overline{\rho}) \ar[r] & \oplus_{v \in T} H^1(F_v, \ad^0 \overline{\rho}) \oplus_{v \in S - T} H^1(F_v, \ad^0 \overline{\rho})/\cL_v \\
\ar[r] & H^1_{\cS, T}(\ad^0 \overline{\rho}(1))^\vee \ar[r] & H^2(F_S/F, \ad^0 \overline{\rho}) \ar[r] & \oplus_{v \in S} H^2(F_v, \ad^0 \overline{\rho}) \\
\ar[r] & H^0(F_S/F, \ad^0 \overline{\rho}(1))^\vee\ar[r] & 0.}
\end{equation*}
(The first sequence is part of (\ref{eqn_long_exact_sequence_in_taylor_cohomology}), while the second is part of the Poitou-Tate exact sequence (see \cite[Ch. 1, Proposition 4.10]{Mil06}).) Comparing these two exact sequences, we see obtain
\begin{gather*}
 h^2_{\cS, T}(\ad^0 \overline{\rho}) = h^1_{\cS, T}(\ad^0 \overline{\rho}(1)),\\
 h^3_{\cS, T}(\ad^0 \overline{\rho}) = h^0(F_S/F, \ad^0 \overline{\rho}(1)),
\end{gather*}
and so (\ref{eqn_taylor_euler_characteristic}) gives
\begin{equation*}
h^1_{\cS, T}(\ad \overline{\rho}) = h^1_{\cS, T}(\ad^0 \overline{\rho}(1)) - h^0(F_S/F, \ad^0 \overline{\rho}(1)) - \sum_{v | \infty} h^0(F_v, \ad^0 \overline{\rho}) + \sum_{v \in S - T} (\ell_v - h^0(F_v, \ad^0 \overline{\rho})) - 1 + \# T
\end{equation*}
(we have $h^0_{\cS, T}(\ad^0 \overline{\rho}) = 0$, since $T$ is non-empty by assumption). Re-arranging this equation completes the proof.
\end{proof}
We say that $\overline{\rho}$ is totally odd if for each complex conjugation $c \in G_F$, we have 
\[ \overline{\rho}(c) \sim \diag(\underbrace{1, \dots, 1}_a, \underbrace{-1, \dots, -1}_b), \]
with $|a - b| \leq 1$. If $F$ is totally complex, then this condition is empty!
\begin{corollary}
Suppose further that the following conditions hold.
\begin{enumerate}
\item The representation $\overline{\rho}$ is totally odd.
\item The representation $\overline{\rho}$ is not isomorphic to its twist $\overline{\rho} \otimes \epsilon$ by the cyclotomic character.
\item For each $v \in S - T$, $R_v$ is formally smooth over $\cO$ of dimension $n^2$.
\item $T$ is non-empty.
\end{enumerate}
Then $R_\cS^T$ is a quotient of a power series ring over $A_\cS^T$ in 
\[ g = h^1_{\cS, T}(\ad^0 \overline{\rho}(1)) - n(n-1)[F : \bbQ]/2 - l_0 -1 + \# T\]
variables, where $l_0$ is the `defect' (depending only on $F$ and $n$) defined in \S \ref{sec_completed_cohomology_setup} below. The kernel of this surjection can be generated by at most $r = h^1_{\cS, T}(\ad \overline{\rho}(1))$ elements. In particular we have:
\[ \dim R_\cS^T \geq \dim A_{\cS}^T + g - r = \dim A_{\cS}^T - n(n-1)[F : \bbQ]/2 - l_0 + 1 - \# T, \]
hence
\[ \dim R_\cS \geq (\dim A_{\cS}^T - (n^2 - 1)\# T) - n(n-1)[F : \bbQ]/2 - l_0. \]
\end{corollary}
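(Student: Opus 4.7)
The plan is a direct application of Proposition~\ref{prop_presenting_global_deformation_ring} combined with three simplifications of its cohomological output. First I would invoke part~(1) of that proposition: under the formal smoothness hypothesis on $R_v$ for $v \in S-T$, the ring $R_\cS^T$ is presented as a quotient of $A_\cS^T\llbracket X_1,\ldots,X_g\rrbracket$ with $g = h^1_{\cS,T}(\ad^0\overline{\rho})$ variables and kernel generated by $r = h^2_{\cS,T}(\ad^0\overline{\rho})$ relations. Part~(2) gives $r = h^1_{\cS,T}(\ad^0\overline{\rho}(1))$, so after subtraction I am left with the task of evaluating
\[ g - r = \sum_{v \in S-T}(\ell_v - h^0(F_v, \ad^0\overline{\rho})) - h^0(F, \ad^0\overline{\rho}(1)) - \sum_{v \mid \infty} h^0(F_v, \ad^0\overline{\rho}) - 1 + \#T. \]

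The three simplifications are as follows. First, the hypothesis $\overline{\rho} \not\cong \overline{\rho} \otimes \epsilon$ together with absolute irreducibility gives, via Schur's lemma, that $\Hom_{k[G_F]}(\overline{\rho},\overline{\rho}(1)) = 0$, hence $(\ad^0\overline{\rho}(1))^{G_F}=0$ and thus $h^0(F,\ad^0\overline{\rho}(1)) = 0$. Second, formal smoothness of $R_v$ over $\cO$ of dimension $n^2$ together with the isomorphism~(\ref{eqn_defn_of_local_cocycle_space}) forces $\dim_k \cL_v^1 = n^2 - 1$; since $\dim_k B^1(F_v,\ad^0\overline{\rho}) = n^2 - 1 - h^0(F_v,\ad^0\overline{\rho})$, a short computation yields $\ell_v = \dim_k \cL_v = h^0(F_v,\ad^0\overline{\rho})$, so the local correction sum over $S-T$ vanishes. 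Third, for $v$ complex one has $h^0(F_v,\ad^0\overline{\rho}) = n^2 - 1$, while for $v$ real the totally odd hypothesis lets one take $\overline{\rho}(c) = \diag(\underbrace{1,\ldots,1}_a,\underbrace{-1,\ldots,-1}_b)$ with $a+b = n$ and $|a-b| \le 1$, giving $h^0(F_v,\ad^0\overline{\rho}) = a^2+b^2-1$ by a direct centralizer calculation. Comparing against the explicit formula $l_0 = r_1(\lceil n/2 \rceil - 1) + r_2(n-1)$ (obtained from~(\ref{eqn_intro_defect}) applied to $\GL_n/F$ with signature $(r_1,r_2)$), a case analysis on the parity of $n$ then yields the arithmetic identity
\[ \sum_{v \mid \infty} h^0(F_v,\ad^0\overline{\rho}) = r_1(a^2+b^2-1) + r_2(n^2-1) = n(n-1)[F:\bbQ]/2 + l_0. \]

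Substituting the three simplifications gives $g - r = -n(n-1)[F:\bbQ]/2 - l_0 - 1 + \#T$ and hence the stated formula for $g$, while the inequality $\dim R_\cS^T \geq \dim A_\cS^T + g - r$ is the standard bound for a quotient of a power series ring by $r$ relations. Finally, to deduce the bound on $\dim R_\cS$, I would apply Lemma~\ref{lem_framed_and_unframed_deformations}: $R_\cS^T \cong R_\cS \widehat{\otimes}_\cO \cT$ with $\cT$ formally smooth over $\cO$ of dimension $n^2 \#T$, so that $\dim R_\cS = \dim R_\cS^T - n^2 \#T + 1$, producing the claimed lower bound on $\dim R_\cS$. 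There is no serious obstacle here: the content is entirely bookkeeping, with the archimedean identity being the only step that requires any genuine computation, and even that reduces to a routine case analysis on the parity of $n$.
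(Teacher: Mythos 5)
Your proof is correct and follows the intended direct application of Proposition~\ref{prop_presenting_global_deformation_ring}: the three simplifications (vanishing of $h^0(F, \ad^0\overline{\rho}(1))$ by Schur's lemma and absolute irreducibility, $\ell_v = h^0(F_v, \ad^0\overline{\rho})$ from the formal smoothness hypothesis, and the archimedean identity by the centralizer computation and a parity case analysis) all check out, as does the descent to $\dim R_\cS$ via Lemma~\ref{lem_framed_and_unframed_deformations}. Your computation $g - r = -n(n-1)[F:\bbQ]/2 - l_0 - 1 + \#T$ is the correct one, and it quietly corrects a sign typo in the paper's intermediate display (which writes $+1-\#T$ where it should read $-1+\#T$; only the latter is consistent with both the stated formula for $g$ and the final lower bound on $\dim R_\cS$).
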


\subsection{Local deformation problems}

\subsubsection{Ordinary deformations}\label{sec_ordinary_local_deformation_problem}

Let $v \in S_p$, and suppose that $\Lambda_v = \cO \llbracket \cO_{F_v}^\times(p) \times \dots \times \cO_{F_v}^\times(p) \rrbracket$ ($n-1$ times). We now define a functor of ordinary deformations. For simplicity, we assume that we are in the special case where $\overline{\rho}|_{G_{F_v}}$ is trivial and $[F_v : \bbQ_p] > n(n-1)/2 + 1$. In this case, we have the following result:
\begin{proposition}\label{prop_ordinary_lifting_rings}
There exists an $\cO$-flat, reduced quotient $R_v^\triangle$ of $R_v^\square$ satisfying the following conditions:
\begin{enumerate}
\item $R_v^\triangle$ defines a local deformation problem.
\item Let $R \in \CNL_{\Lambda_v}$ be a domain, and let $\overline{K}$ be an algebraic closure of $K = \Frac R$. Let $\rho$ be a lifting of $\overline{\rho}|_{G_{F_v}}$ corresponding to a homomorphism $f : R_v^\square \to R$. Then $f$ factors through the quotient map $R_v^\square \to R_v^\triangle$ if and only if there exists an increasing filtration
\begin{equation} 0 = \Fil^0_v \subset \Fil^1_v \subset \dots \subset \Fil^n_v = \overline{K}^n 
\end{equation}
of $\rho \otimes_R \overline{K}$ with the following property: each $\gr^i \Fil^\bullet_v = \Fil^i_v/\Fil^{i-1}_v$, $i = 1, \dots, n-1$, is 1-dimensional, and the corresponding character $\psi^i_v : G_{F_v} \to \overline{K}^\times$ satisfies $\psi^i_v|_{I_{F_v}} = \chi^i_v$, where $\chi^i_v$ is the pushforward of the universal character $I_{F_v} \to \Lambda_v^\times \to R^\times$.
\item The structural map $\Spec R_v^\triangle \to \Spec \Lambda_v$ induces a bijection on irreducible components. In particular, if there exists an embedding $F_v \hookrightarrow E$ then each irreducible component of $\Spec R_v^\triangle$ is geometrically irreducible. Each irreducible component of $\Spec R_v^\triangle$ has dimension $1 + (n^2 - 1) + [F_v : \bbQ_p]( n(n+1)/2 - 1)$.
\end{enumerate}
\end{proposition}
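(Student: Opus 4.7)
The plan is to construct $R_v^\triangle$ as the scheme-theoretic image inside $\Spec R_v^\square$ of a moduli space $\widetilde R_v$ parameterizing ``ordinary framed'' liftings equipped with a compatible full flag, and then to deduce all of the claimed properties of $R_v^\triangle$ from analogous (easier) properties of $\widetilde R_v$. More precisely, I would introduce the functor $\widetilde\cD_v : \CNL_{\Lambda_v} \to \Sets$ that sends $R$ to the set of pairs $(\rho, \Fil^\bullet)$, where $\rho \in \cD_v^\square(R)$ and $\Fil^\bullet = (0 = \Fil^0 \subset \Fil^1 \subset \dots \subset \Fil^n = R^n)$ is a $\rho$-stable increasing filtration of $R^n$ by direct-summand $R$-submodules with each $\gr^i$ free of rank $1$, such that the character $\psi^i_v : G_{F_v} \to R^\times$ giving the action on $\gr^i$ satisfies $\psi^i_v|_{I_{F_v}} = \chi^i_v$. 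Since $\overline\rho|_{G_{F_v}}$ is trivial the residual filtration is forced (up to the prescribed ordering of the $\chi^i_v$) to be the standard one, and from this one obtains that $\widetilde\cD_v$ is pro-representable by some $\widetilde R_v \in \CNL_{\Lambda_v}$. The forgetful natural transformation $(\rho, \Fil^\bullet) \mapsto \rho$ induces a morphism of $\cO$-algebras $R_v^\square \to \widetilde R_v$, equivalently a morphism $f : \Spec \widetilde R_v \to \Spec R_v^\square$ whose fibers are closed subschemes of the flag variety of $\GL_n$ and which is therefore proper. I then define $R_v^\triangle$ to be the quotient of $R_v^\square$ by the kernel of this ring map, so that $R_v^\triangle \hookrightarrow \widetilde R_v$ is the image factorization of $f^\ast$.

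The central step, and the main obstacle, is to establish that $\widetilde R_v$ is formally smooth over $\Lambda_v$ of relative dimension $(n^2 - 1) + [F_v : \bbQ_p] \, n(n-1)/2$. I would prove this via a tangent--obstruction analysis organized as a tower of extensions along the unipotent radical of the Borel: after fixing the universal characters $\chi^i_v$, one builds the Borel-valued representation entry by entry above the diagonal, with infinitesimal extensions at each stage parameterized by $H^1(G_{F_v}, \psi^i_v (\psi^j_v)^{-1})$ and obstructions lying in $H^2(G_{F_v}, \psi^i_v (\psi^j_v)^{-1})$ for suitable $i < j$. Tate's local Euler characteristic formula gives $\dim H^1 - \dim H^2 = [F_v : \bbQ_p]$ at each stage, and the numerical hypothesis $[F_v : \bbQ_p] > n(n-1)/2 + 1$ is designed precisely so that one has enough slack in these $H^1$ groups to absorb every obstruction in the tower while simultaneously accommodating the fixed-determinant and framing contributions. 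The triviality of $\overline\rho|_{G_{F_v}}$ is crucial here, as it reduces all relevant coefficient modules to twists of characters of $G_{F_v}$ and thereby makes the cohomology computation completely explicit.

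With formal smoothness of $\widetilde R_v / \Lambda_v$ established, the three claimed properties follow rather formally. Property (1) is immediate: the condition of admitting a filtration of the stated type is manifestly preserved under conjugation by $\ker(\GL_n(R) \to \GL_n(k))$, so $\cD_v^\triangle$ is a local deformation problem. For property (2), the properness of $f$ combined with the valuative criterion shows that a homomorphism $R_v^\square \to R$ with $R$ a domain factors through $R_v^\triangle$ if and only if it lifts to $\widetilde R_v$ after base-change to $\overline K = \overline{\Frac R}$, and such a lift is exactly a filtration of the required type on $\rho \otimes_R \overline K$. For property (3), formal smoothness of $\widetilde R_v$ over $\Lambda_v$ gives that $\widetilde R_v$ is $\cO$-flat and reduced, and these properties transfer to $R_v^\triangle \hookrightarrow \widetilde R_v$ by a standard analysis of associated primes; the bijection on irreducible components (with geometric irreducibility when $F_v \hookrightarrow E$) transfers analogously. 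Finally, the computation
\[ \dim \widetilde R_v = \dim \Lambda_v + (n^2 - 1) + [F_v:\bbQ_p]\, n(n-1)/2 \]
rearranges to $1 + (n^2 - 1) + [F_v:\bbQ_p](n(n+1)/2 - 1)$, and this equals $\dim R_v^\triangle$ once one verifies that $\widetilde R_v \to R_v^\triangle$ is birational on each irreducible component: this comes from the fact that a ``regular ordinary'' representation over the generic point of any component admits a unique flag compatible with the prescribed ordering of inertial characters.
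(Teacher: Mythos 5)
Your overall strategy --- build a flag resolution $\widetilde R_v$, push forward to define $R_v^\triangle$ as a scheme-theoretic image, and transfer flatness/reducedness/irreducibility from $\widetilde R_v$ --- is the right one, and is essentially what the paper cites \cite[\S 3.3.2]{Tho14} for. However, two of your key intermediate claims are wrong under the stated hypotheses, and the third is a misreading of where the degree bound enters.

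First, the representability claim fails. You assert that because $\overline\rho|_{G_{F_v}}$ is trivial the residual filtration ``is forced to be the standard one.'' This is exactly backwards: since $\overline\rho|_{G_{F_v}}$ is trivial, \emph{every} full flag of $k^n$ is $\overline\rho$-stable and every graded piece carries the trivial character, so the prescribed ordering of the $\chi^i_v$ (which all reduce to the trivial character) imposes no constraint on the residual flag. Hence $\widetilde\cD_v(k)$ is in bijection with the $k$-points of the full flag variety $\cF$, not a single point, and $\widetilde\cD_v$ is \emph{not} pro-representable by an object of $\CNL_{\Lambda_v}$. The correct construction replaces your local ring $\widetilde R_v$ by a scheme $\cG_v$, a closed subscheme of $\Spec R_v^\square \times_{\cO} \cF$, proper over $\Spec R_v^\square$, and the subsequent analysis (flatness, reducedness, component count) must be carried out for this non-local object; this is precisely the extra complication that makes the trivial-$\overline\rho$ case harder than the generic case treated in \cite{Ger09}.

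Second, formal smoothness of the flag resolution over $\Lambda_v$ is false in general. The obstruction to lifting a filtered representation along the unipotent radical lives, as you note, in $H^2(G_{F_v}, \overline\psi^i_v (\overline\psi^j_v)^{-1})$ for $i<j$; but with $\overline\rho|_{G_{F_v}}$ trivial, every such coefficient module is the trivial module $k$, so by local Tate duality $H^2(G_{F_v}, k) \cong H^0(G_{F_v}, k(1))^\vee$, which is nonzero exactly when $\mu_p \subset F_v$. The proposition is stated (and needed later) without assuming $\mu_p \not\subset F_v$ --- indeed the paper carefully isolates the case ``$F_v$ contains no nontrivial $p$-th roots of unity'' later on precisely because $\Lambda_v$ and hence $R_v^\triangle$ fail to be domains otherwise. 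So an argument deducing flatness, reducedness, and the bijection of components from formal smoothness cannot be right: what one actually shows is smoothness over a dense open of each irreducible component of $\Spec \Lambda_v$, and then one has to argue separately that this suffices. Relatedly, the claim that $[F_v : \bbQ_p] > n(n-1)/2 + 1$ is what lets you ``absorb every obstruction'' confuses tangent and obstruction spaces: Tate's formula makes $H^1$ large when $[F_v:\bbQ_p]$ is large, but it does nothing to kill the $H^2$'s, which are what carry the obstructions. The degree hypothesis is a numerical input used elsewhere in the argument (dimension bounds for the patching set-up), not a device for securing unobstructedness here. Your dimension formula for the relative dimension is correct, but it has to be obtained by a generic-smoothness plus dimension-count argument on the non-local $\cG_v$, not by pointwise unobstructed deformation theory at the closed point.
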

\begin{proof}
See \cite[\S 3.3.2]{Tho14}. We note that in \emph{loc. cit.} we consider the analogous situation where the determinant is not fixed, but it is easy to pass from this to the situation considered here.
\end{proof}

\subsubsection{Taylor--Wiles deformations}\label{sec_definition_of_taylor_wiles_deformations}

Suppose that $q_v \equiv 1 \text{ mod }p$, and $\overline{\rho}|_{G_{F_v}}$ is unramified. Let $\Delta_v = k(v)^\times(p)^{n-1}$ and $\Lambda_v = \cO[\Delta_v]$, and suppose that $\overline{\rho}(\Frob_v)$ has $n$ distinct eigenvalues $\gamma_{v, 1}, \dots, \gamma_{v, n} \in k$. We define $\cD_v^\text{TW}$ to be the functor of liftings over $R \in \CNL_{\Lambda_v}$
\[ r \sim C_1 \oplus \dots \oplus C_n, \]
for continuous characters $C_1, \dots, C_n : G_{F_v} \to R^\times$ satisfying the following conditions: for each $i = 1, \dots, n-1$, we have $(C_i \text{ mod }\ffrm_R)(\Frob_v) = \gamma_{v, i}$ and $C_i|_{I_{F_v}}$ agrees, on composition with the Artin map, with the $i^\text{th}$ canonical character $k(v)^\times(p) \to R^\times$. (This character exists because $R$ is a $\Lambda_v$-algebra, by assumption.) We observe that the functor $\cD_v^\text{TW}$ depends on the choice of ordering of the eigenvalues of $\overline{\rho}(\Frob_v)$. The functor $\cD_v^\text{TW}$ is represented by a formally smooth $\Lambda_v$-algebra.

Suppose that $\cS = (\overline{\rho}, \mu, S, \{ \Lambda_v \}_{v \in S}, \{ \cD_v \}_{v \in S})$ is a deformation problem. Let $Q$ be a set of places disjoint from $S$, such that for each $v \in Q$, $q_v \equiv 1 \text{ mod }p$ and $\overline{\rho}(\Frob_v)$ has distinct eigenvalues $\gamma_{v, 1}, \dots, \gamma_{v, n} \in k$. We refer to the tuple $(Q, (\gamma_{v, 1}, \dots, \gamma_{v, n})_{v \in Q})$ as a Taylor--Wiles datum, and define the augmented deformation problem
\[ \cS_Q = (\overline{\rho}, \mu, S \cup Q, \{ \Lambda_v \}_{v \in S} \cup \{ \cO[\Delta_v] \}_{v \in Q}, \{ \cD_v \}_{v \in S} \cup \{ \cD_v^\text{TW} \}_{v \in Q}). \]
Let $\Delta_Q = \prod_{v \in Q} \Delta_v = \prod_{v \in Q} k(v)^\times(p)$. Then $R_{\cS_Q}$ is naturally a $\cO[\Delta_Q]$-algebra. If $\fra_Q \subset \cO[\Delta_Q]$ is the augmentation ideal, then there is a canonical isomorphism $R_{\cS_Q}/(\fra_Q) \cong R_\cS$. 

\subsection{Auxiliary places}

We continue with the notation established at the beginning of \S \ref{sec_galois_deformation_theory}. In particular, $p = \mathrm{char }k > n$.
\begin{definition}\label{def_huge_image}
We say that a subgroup $H \subset \GL_n(k)$ is enormous if it satisfies the following conditions:
\begin{enumerate}
\item $H$ has no non-trivial $p$-power order quotient.
\item $H^0(H, \ad^0) = H^1(H, \ad^0) = 0$ (for the adjoint action of $H$).
\item For all simple $k[H]$-submodules $W \subset \ad^0$, we can find $h \in H$ with $n$ distinct eigenvalues, all lying in $k$, and an element $\alpha \in k$ such that $\alpha$ is an eigenvalue of $H$ and $\tr e_{h, \alpha} W \neq 0$. \(By definition, $e_{h, \alpha} \in M_n(k) = \ad$ is the unique $h$-equivariant projection onto the $\alpha$-eigenspace of $h$.\)
\end{enumerate}
\end{definition}
\begin{remark} \begin{enumerate} \item A enormous subgroup is big (cf. \cite[Definition 2.5.1]{Clo08}).
\item In the first version of this paper, we used the adjective `huge' instead of `enormous' to describe a subgroup satisfying the conditions of Definition \ref{def_huge_image}. We have made the change in order to be consistent with \cite{Cal13}, where the same definition is employed for a similar purpose (cf. \cite[\S 9.2]{Cal13}).
\end{enumerate}
\end{remark}
\begin{lemma}\label{lem_existence_of_taylor_wiles_primes}
Let $(\overline{\rho}, \mu, S, \{ \Lambda_v \}_{v \in S}, \{ \cD_v \}_{v \in S})$ be a global deformation problem, and let $T \subset S$. Suppose that $\overline{\rho}({G_{F(\zeta_p)}})$ is enormous, and let $q \geq h^1_{\cS, T}(\ad^0 \overline{\rho}(1))$. Then for every $N \geq 1$ there exists a Taylor--Wiles datum $(Q_N, (\gamma_{v, 1}, \dots, \gamma_{v, n})_{v \in Q_N})$ satisfying the following conditions:
\begin{enumerate}
\item $\# Q_N = q$.
\item For each $v \in Q_N$, $q_v \equiv 1 \text{ mod }p^N$.
\item $h^1_{\cS_{Q_N}, T}(\ad^0 \overline{\rho}(1)) = 0$.
\end{enumerate}
\end{lemma}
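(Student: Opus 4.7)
This is a Chebotarev-style density argument: I would build $Q_N$ one prime at a time, using the enormous image hypothesis to produce, at each step, a prime $v$ whose addition strictly decreases $\dim_k H^1_{\cS_Q, T}(\ad^0 \overline{\rho}(1))$. Since $q \geq h^1_{\cS, T}(\ad^0 \overline{\rho}(1))$, this process can be iterated enough times to reduce the dual Selmer group to $0$.

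The first key ingredient is a local computation at a Taylor--Wiles prime. Fix $v$ with $q_v \equiv 1 \pmod{p^N}$ and $\overline{\rho}(\Frob_v)$ regular semisimple with eigenvalues $\gamma_{v,1}, \dots, \gamma_{v,n} \in k$, so that $\overline{\rho}|_{G_{F_v}}$ diagonalizes. Using local Tate duality and the decomposition of $\ad^0 \overline{\rho}$ into its diagonal part (eigenvalue $1$ for $\Frob_v$) and its off-diagonal characters, I would identify $\cL_v^{\text{TW}, \perp} \subset H^1(F_v, \ad^0 \overline{\rho}(1))$ as consisting of unramified classes whose diagonal component vanishes. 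In particular, a class $\phi \in H^1(F_v, \ad^0\overline{\rho}(1))$ maps nontrivially to $H^1(F_v, \ad^0\overline{\rho}(1))/\cL_v^{\text{TW},\perp}$ as soon as $\tr(e_{\overline{\rho}(\Frob_v),\alpha} \phi(\Frob_v)) \neq 0$ for some eigenvalue $\alpha$ of $\overline{\rho}(\Frob_v)$.

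Next I would set up the Chebotarev argument. Let $F_0 = F(\zeta_{p^N})(\ker \overline{\rho})$. Condition~(1) of enormous image (no $p$-power quotient) forces $\overline{\rho}(G_{F(\zeta_{p^N})}) = \overline{\rho}(G_{F(\zeta_p)})$ since $[F(\zeta_{p^N}):F(\zeta_p)]$ is a $p$-power, while condition~(2) gives $H^i(\Gal(F_0/F), \ad^0 \overline{\rho}(1)) = 0$ for $i = 0, 1$. Via inflation--restriction, any nonzero class $\phi \in H^1_{\cS_Q, T}(\ad^0 \overline{\rho}(1))$ then restricts to a nonzero $\Gal(F_0/F)$-equivariant homomorphism $\phi_0 \colon G_{F_0} \to \ad^0 \overline{\rho}(1)$. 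Let $W \subseteq \ad^0 \overline{\rho}(1)$ be a simple $k[\Gal(F_0/F)]$-submodule contained in the image of $\phi_0$; by condition~(3) of enormous image, there exist $h \in \overline{\rho}(G_{F(\zeta_p)})$ with $n$ distinct eigenvalues in $k$ and an eigenvalue $\alpha$ such that $\tr(e_{h,\alpha} W) \neq 0$. Applying Chebotarev to the finite extension $F_0(\phi_0)/F$, I would find a prime $v$ (avoiding $S \cup Q$) whose Frobenius is conjugate to an element $\sigma$ satisfying $\sigma|_{F(\zeta_{p^N})} = 1$, $\sigma|_{F_0} = h$, and $\phi_0(\sigma) \in W$ chosen with $\tr(e_{h,\alpha} \phi_0(\sigma)) \neq 0$. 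The three clauses of enormous image are exactly what makes these three Chebotarev constraints simultaneously realizable.

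Adding such a $v$ to $Q$ then imposes a new local condition that excludes $\phi$ from $H^1_{\cS_{Q \cup \{v\}}, T}(\ad^0 \overline{\rho}(1))$, shrinking its dimension by at least one. Iterating $q$ times produces the desired $Q_N$. The main obstacle is the first paragraph: pinning down $\cL_v^{\text{TW}, \perp}$ precisely and matching the trace pairing with the projectors $e_{h,\alpha}$ appearing in Definition~\ref{def_huge_image}. Once this local-global compatibility is in place, the enormous image hypothesis is tailored to feed directly into the Chebotarev step, and the induction is routine.
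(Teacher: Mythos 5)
Your plan matches the paper's proof: reduce by the standard \cite[Proposition 2.5.9]{Clo08}-type argument to the Chebotarev problem of finding, for each nonzero dual Selmer cocycle $\varphi$, an element $\sigma \in G_{F(\zeta_{p^N})}$ with $\overline{\rho}(\sigma)$ regular semisimple and $\tr e_{\sigma,\alpha_i}\varphi(\sigma) \neq 0$ for some $i$; then use conditions (1), (2) of enormous image (via two-step inflation--restriction) to see $\varphi$ restricts nonzero over $L = F(\zeta_{p^N})(\ker\overline{\rho})$, and condition (3) applied to a simple submodule of that image to produce such a $\sigma$. One small imprecision: since the desired $\sigma$ has $\overline{\rho}(\sigma) = h \neq 1$, it lies outside $G_{F_0}$, so ``$\phi_0(\sigma) \in W$'' is not meaningful as stated; you should instead choose $\sigma_0$ with $\overline{\rho}(\sigma_0) = h$ and, if $\tr e_{\sigma_0,\alpha_i}\varphi(\sigma_0)$ vanishes for all $i$, adjust to $\sigma = \tau\sigma_0$ with $\tau \in G_L$ satisfying $\tr e_{\sigma_0,\alpha_i}\varphi(\tau) \neq 0$, exactly as the paper does.
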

\begin{proof}
Fix $N \geq 1$. By the usual arguments (cf. \cite[Proposition 2.5.9]{Clo08}), it is enough to find a set $Q$ of places of $F$, disjoint from $S$, and satisfying the following conditions:
\begin{itemize}
\item Each place $v \in Q$ splits in $F(\zeta_{p^N})$.
\item If $v \in Q$, then $\overline{\rho}(\Frob_v)$ has distinct eigenvalues $\gamma_{v, 1}, \dots, \gamma_{v, n}$.
\item The natural map $H^1_{\cS, T}( \ad^0 \overline{\rho}(1)) \to \oplus_{v \in Q} H^1(F_v, \ad^0 \overline{\rho}(1))$ is injective.
\end{itemize}
By the Chebotarev density theorem, and induction, it even suffices to find for each cocycle $\varphi$ representing a non-zero element of $H^1_{\cS, T}(\ad^0 \overline{\rho}(1))$, an element $\sigma \in G_{F(\zeta_{p^N})}$ satisfying the following conditions:
\begin{itemize}
\item $\overline{\rho}(\sigma)$ has distinct eigenvalues $\alpha_1, \dots, \alpha_n$.
\item There is $i = 1, \dots, n$ such that $\tr e_{\sigma, \alpha_i} \varphi(\sigma) \neq 0$.
\end{itemize}
Suppose that $\varphi$ represents a non-zero element of $H^1(F_S/F, \ad^0 \overline{\rho}(1))$. It follows from the definition of `enormous' that the image of $\varphi$ in $H^1(F(\zeta_{p^N}), \ad^0 \overline{\rho}(1))$ is non-zero, and this image is represented by a $G_F$-equivariant homomorphism $f : F(\zeta_{p^N}) \to \ad^0 \overline{\rho}(1)$. Let $\sigma_0 \in G_{F(\zeta_{p^N})}$ be any element such that $\overline{\rho}(\sigma_0)$ has distinct eigenvalues $\alpha_1, \dots, \alpha_n$. If $\tr e_{\sigma_0, \alpha_i} \varphi(\sigma_0) \neq 0$ for some $i$, then we're done. Otherwise, we can assume that $\tr e_{\sigma_0, \alpha} \varphi(\sigma_0) = 0$ for each $i$. 

Using that $H^1(\overline{\rho}(G_{F(\zeta_{p^N})}, \ad^0 \overline{\rho}(1)) = 0$, we see that the image of $\varphi$ in $H^1(L, \ad^0 \overline{\rho}(1))$ is non-zero, where $L/F(\zeta_{p^N})$ is the extension cut out by $\overline{\rho}$. We thus choose any element $\tau \in G_L$ with $\tr e_{\sigma_0, \alpha_i} \varphi(\tau) \neq 0$, and set $\sigma = \tau \sigma_0$. Since $\overline{\rho}(\sigma) = \overline{\rho}(\sigma_0)$ and $\varphi(\sigma) = \varphi(\sigma_0) + \varphi(\tau)$, this element does the job. 
\end{proof}
We now construct some examples of enormous subgroups by induction from a character. Let $M/F$ be a degree $n$ Galois extension, and let $\overline{\chi} : G_M \to k^\times$ be a continuous character. Suppose that for all $g \in G_F - G_M$, we have $\overline{\chi}^g \neq \overline{\chi}$. Then $\overline{\rho} = \Ind_{G_M}^{G_F} \overline{\chi}$ is absolutely irreducible. Let $H$ denote the image of $\overline{\rho}$. Then $H$ has order prime to $p$, and $\overline{\rho}$ will be enormous if and only if the following condition is satisfied:
\begin{itemize}
\item For all simple $k[H]$-submodules $W \subset \ad^0$, we can find $h \in H$ with $n$ distinct eigenvalues and $\alpha \in k$ such that $\alpha$ is an eigenvalue of $H$ and $\tr e_{h, \alpha} W \neq 0$. 
\end{itemize}
\begin{lemma}\label{lem_generic_induced_representations_are_huge}
With assumptions as above, choose a decomposition $G_F = \sqcup_{i=1}^n g_i G_M$, and make the following further assumptions:
\begin{enumerate}
\item For all pairs $(i, j) \neq (i', j')$ with $i \neq j$, $i' \neq j'$, we have $\chi^{g_i}/\chi^{g_j} \neq \chi^{g_{i'}}/\chi^{g_{j'}}$.
\item For each $i = 1, \dots, n$, there is an element $h \in g_i G_M$ such that $\overline{\rho}$ has distinct eigenvalues.
\end{enumerate} Then $\overline{\rho}$ has enormous image.
\end{lemma}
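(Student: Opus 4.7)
My plan is to verify directly the characterization of enormity from the remark immediately preceding the lemma: since $|H|$ is prime to $p$ and $\overline{\rho}$ is absolutely irreducible, conditions (1) and (2) of Definition \ref{def_huge_image} are automatic ($H$ has no nontrivial $p$-power quotient, $H^1(H,\ad^0)$ vanishes by Maschke, and $H^0(H,\ad^0) = 0$ follows from $H^0(H,\ad) = k$ by Schur's lemma together with the splitting $\ad = \ad^0 \oplus k\cdot I$, valid because $p > n$). Only condition (3) remains: for each simple $k[H]$-submodule $W \subset \ad^0$, one must exhibit $h \in H$ with $n$ distinct eigenvalues in $k$ and an eigenvalue $\alpha \in k$ such that $\tr(e_{h,\alpha}W) \neq 0$.

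The first step is to decompose $\ad^0 \overline{\rho}$ as a $k[G_F]$-module. Writing $\overline{\rho}|_{G_M} = \bigoplus_i \psi_i$ with $\psi_i = \overline{\chi}^{g_i}$, the matrix basis element $E_{ij}$ spans the $G_M$-character $\psi_i/\psi_j$. Assumption 1 says these $n(n-1)$ off-diagonal characters are all distinct, and since $\Gamma := \Gal(M/F)$ permutes $\{1,\dots,n\}$ regularly, every $\Gamma$-orbit on off-diagonal pairs has size exactly $n$. This yields
\[
\ad^0 \overline{\rho} \;=\; D \;\oplus\; \bigoplus_O V_O,
\]
where $D$ is the trace-zero subspace of the diagonal (on which $G_M$ acts trivially and $\Gamma$ permutes coordinates), and $V_O = \bigoplus_{(i,j)\in O} k\cdot E_{ij}$ for each $\Gamma$-orbit $O$ on off-diagonal pairs; Mackey's irreducibility criterion, combined with assumption 1, implies that each $V_O$ is a simple $k[G_F]$-module.

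For a simple $W \subset D$, apply assumption 2 with $i=1$ to find $h_1 \in G_M$ acting as $\diag(\psi_1(h_1),\dots,\psi_n(h_1))$ with distinct entries in $k^\times$; for the eigenvalue $\alpha = \psi_l(h_1)$ the projection $e_{h_1,\alpha}$ equals $E_{ll}$, and since any nonzero $w \in W$ has some coordinate $w_l \neq 0$, the functional $v \mapsto \tr(E_{ll}v)$ is nonzero on $W$. For an off-diagonal simple piece $W = V_O$, fix $(i,j) \in O$ with $i \neq j$, let $\sigma \in \Gamma$ be the unique element sending $i$ to $j$, and let $k$ be the index with $g_k \in \sigma G_M$; assumption 2 supplies $h \in g_k G_M$ with $n$ distinct eigenvalues in $k$. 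The matrix $\overline{\rho}(h)$ is then a monomial matrix realizing $\sigma$, block-diagonal with respect to the cycle decomposition of $\sigma$, and a direct computation shows that each eigenvector has all coordinates nonzero along its cycle. Therefore, for the eigenvalue $\alpha$ coming from the cycle containing $i$ and $j$, the projection $e_{h,\alpha}$ has nonzero $(j,i)$-entry, and since $E_{ij} \in V_O = W$ we conclude $\tr(e_{h,\alpha}\,E_{ij}) \neq 0$.

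The main obstacle is the off-diagonal case: establishing simplicity of each $V_O$ via Mackey (where the full distinctness in assumption 1 is used), and carrying out the short linear-algebra calculation that the cyclic eigenvectors of a monomial matrix have all nonzero coordinates along any cycle of the underlying permutation --- this is precisely the input that lets one conclude $(e_{h,\alpha})_{j,i} \neq 0$ whenever $i$ and $j$ lie in a common $\sigma$-cycle. Everything else is bookkeeping with the general structure of induced representations.
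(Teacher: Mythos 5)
Your proof is correct and follows essentially the same route as the paper's: the same splitting of $\ad^0\overline{\rho}$ into diagonal and off-diagonal parts, the same choice of $h$ lying over the permutation $\sigma$ with $\sigma(i)=j$, and the same monomial-matrix analysis. Two small differences are worth flagging. First, in the off-diagonal step you compute the projector $e_{h,\alpha}$ explicitly via the left and right eigenvectors on the cycle through $i$ and $j$; the paper instead notes the cleaner observation that $h=\sum_\alpha \alpha\,e_{h,\alpha}$, so $\tr(\overline{\rho}(h)X)\neq 0$ already forces $\tr(e_{h,\alpha}X)\neq 0$ for \emph{some} $\alpha$, avoiding any explicit eigenvector computation. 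Second, your reduction to the case $W=V_O$ via Mackey silently uses that the $V_O$ are pairwise non-isomorphic (otherwise a simple submodule of $M_1=\bigoplus_O V_O$ need not be one of the $V_O$); this is true and follows again from assumption 1, but the paper sidesteps the issue entirely by taking $W\subset M_1$ arbitrary and observing that, as a $k[G_M]$-module, $W$ is multiplicity-free and therefore contains some $E_{ij}$ on the nose, which is all that is needed.
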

\begin{proof}
A basis of $\Ind_{G_M}^{G_F} \overline{\chi} = k[G_F] \otimes_{k[G_M]} k(\overline{\chi})$ is given by the elements $e_1, \dots, e_n$, $e_i = g_i \otimes 1$. We can decompose $\ad^0 \overline{\rho} = M_0 \oplus M_1$, where $M_0$ is the submodule of diagonal matrices and $M_1$ is the submodule of matrices with all diagonal entries equal to 0. If $W \subset \ad^0 \overline{\rho}$ is a simple submodule, then either $W \subset M_0$ or $W \subset M_1$. 

In the case $W \subset M_0$, let $h \in G_M$ be an element such that $\overline{\rho}(h)$ has distinct eigenvalues. It is then clear that $\tr e_{h, \alpha} W \neq 0$ for some choice of eigenvalue $\alpha$ of $H$. In the case $W \subset M_1$, we see that $W$ decomposes as $k[G_M]$-module into a sum of pairwise non-isomorphic 1-dimensional representations. It follows that we can find an element $X \in W$ such that $X$ has exactly one non-zero entry (with respect to the basis $e_1, \dots, e_n$). We will show that there exists $h \in G_F$ with pairwise distinct eigenvalues and such that $\tr h X \neq 0$. This will complete the proof of the lemma. Indeed, the element $h$ generates a commutative subalgebra $k[h] \subset M_n(k)$ of dimension $n$ over $k$, and there is an isomorphism $k[h] \cong \prod_\alpha k$, the product being over the eigenvalues $\alpha \in k$ of $h$; the idempotents corresponding to this decomposition are exactly the equivariant projectors $e_{h, \alpha}$ onto the eigenspaces of $h$. If $\tr h X \neq 0$, then $\tr e_{h, \alpha} W \neq 0$ for some $\alpha$, and this is what we need to show.

Consider an element of the form $h = g_j \sigma$, for some $\sigma \in G_M$. Then we have $h e_i = g_j \sigma g_i \otimes 1 = g_k \otimes \overline{\chi}(g_k^{-1} g_j g_i) \overline{\chi}(g_i^{-1} \sigma g_i)$ for some $k$ uniquely determined by $i$ and $j$. In particular, the matrix $\overline{\rho}(h)$ is in the normalizer of the diagonal matrix torus of $\GL_n(k)$. Since $G_F$ acts transitively on the set $G_F / G_M$, we can choose $j$ so that $g_j X$ has (exactly one) non-zero diagonal entry. Since $\overline{\rho}(\sigma)$ is diagonal, we have 
\[ \tr \overline{\rho}(g_j \sigma) X = \tr \overline{\rho}(g_j \sigma g_j^{-1}) \overline{\rho}(g_j) X \neq 0 \]
for all $\sigma \in G_M$. By assumption, we can choose $\sigma \in G_M$ so that $h = g_j \sigma$ has distinct eigenvalues. This completes the proof.
\end{proof}
\begin{lemma}\label{lem_existence_of_generic_characters}
With assumptions as above, let $\bbG_m$ denote the standard 1-dimensional torus over $\overline{\bbF}_p$, and let $Z \subset \bbG_m^n$ denote a proper Zariski closed subset. Choose a decomposition $G_F = \sqcup_{i=1}^n g_i G_M$. We assume that $g_1 = 1$. Then for all $N \geq 1$, we can find the following:
\begin{enumerate}
\item A prime $l > N$.
\item A continuous character $\overline{\chi} : G_M \to \overline{\bbF}_p^\times$ of $l$-power order.
\item An element $\sigma \in G_M$ such that $(\overline{\chi}^{g_1}(\sigma), \dots, \overline{\chi}^{g_n}(\sigma)) \not\in Z$.
\end{enumerate}
\end{lemma}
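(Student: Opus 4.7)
The strategy is to construct $\overline{\chi}$ by class field theory from a single auxiliary prime $\mathfrak p$ of $F$ that splits completely in $M$, arranged so that the $n$ conjugates $\overline{\chi}^{g_i}$ are ramified at the $n$ distinct primes of $M$ lying above $\mathfrak p$. This yields enough multiplicative independence of the $\overline{\chi}^{g_i}$ so that linear independence of characters forces the evaluation map $\sigma \mapsto (\overline{\chi}^{g_1}(\sigma), \dots, \overline{\chi}^{g_n}(\sigma))$ to land outside $Z$ for some $\sigma \in G_M$.

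First I would reduce to a purely character-theoretic statement. Since $Z$ is a proper Zariski closed subset of $\bbG_m^n$ over $\overline{\bbF}_p$, there is a nonzero Laurent polynomial $f = \sum_{\alpha \in A} c_\alpha x^\alpha \in \overline{\bbF}_p[x_1^{\pm 1}, \dots, x_n^{\pm 1}]$ with $V(f) \supset Z$, where $A \subset \bbZ^n$ is finite and each $c_\alpha \in \overline{\bbF}_p^\times$. Set $B = \max_{\alpha, \alpha' \in A,\, i} |\alpha_i - \alpha'_i|$. If the characters $\chi_\alpha := \prod_i (\overline{\chi}^{g_i})^{\alpha_i}$ for $\alpha \in A$ are pairwise distinct, then linear independence of characters of $G_M$ with values in $\overline{\bbF}_p^\times$ implies that
\[ \sigma \longmapsto f\bigl(\overline{\chi}^{g_1}(\sigma), \dots, \overline{\chi}^{g_n}(\sigma)\bigr) = \sum_{\alpha \in A} c_\alpha \chi_\alpha(\sigma) \]
is not identically zero, so a suitable $\sigma$ exists automatically.

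Next I would pick the prime $l$ and the auxiliary prime $\mathfrak p$. Take $l$ prime with $l > \max(N, B)$ and $l \neq p$. By Chebotarev applied to $M(\zeta_l)/F$, there exist infinitely many primes $\mathfrak p$ of $F$ that split completely in $M(\zeta_l)$; fix one such. Then $\mathfrak p$ has exactly $n$ primes $\lambda_1, \dots, \lambda_n$ of $M$ above it, and $l \mid N(\mathfrak p) - 1 = \#k(\lambda_j)^\times$ for each $j$. Since $\Gal(M/F)$ acts simply transitively on $\{\lambda_j\}$ and $\{g_i\}$ projects bijectively to $\Gal(M/F)$ (it is a complete set of coset representatives with $g_1 = 1$), after relabeling I may take $\lambda_i = g_i(\lambda_1)$. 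Class field theory then produces a continuous character $\overline{\chi} : G_M \to \mu_l(\overline{\bbF}_p) \subset \overline{\bbF}_p^\times$ of order exactly $l$, ramified only at $\lambda_1$, arising from the composite $\cO_{M_{\lambda_1}}^\times \twoheadrightarrow k(\lambda_1)^\times \twoheadrightarrow \mu_l$.

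Finally I would verify distinctness of the $\chi_\alpha$ by a local calculation. Using the conjugation identity $g I_\lambda g^{-1} = I_{g(\lambda)}$, the conjugate $\overline{\chi}^{g_i}$ is ramified exactly at $\lambda_i$ and its restriction to $I_{\lambda_i}$ has order $l$. For any nonzero $\gamma \in \bbZ^n$ with $\|\gamma\|_\infty \leq B$, the restriction of $\prod_i (\overline{\chi}^{g_i})^{\gamma_i}$ to $I_{\lambda_j}$ reduces to $(\overline{\chi}^{g_j})^{\gamma_j}|_{I_{\lambda_j}}$ (all other factors being unramified at $\lambda_j$), and this is nontrivial precisely when $\gamma_j \not\equiv 0 \pmod l$. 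Since $\gamma \neq 0$ has all entries of absolute value at most $B < l$, at least one coordinate $\gamma_j$ is nonzero modulo $l$, so $\prod_i (\overline{\chi}^{g_i})^{\gamma_i} \neq 1$. Applied with $\gamma = \alpha - \alpha'$ for distinct $\alpha, \alpha' \in A$, this yields the required pairwise distinctness, completing the plan. The main obstacle is really only the bookkeeping around transport of ramification under conjugation; once that is in hand, Chebotarev and linear independence of characters finish the job.
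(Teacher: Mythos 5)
Your argument is correct in outline but takes a genuinely different route from the paper's. The paper's proof is considerably more direct: it observes that $l$-power roots of unity are Zariski dense in $\bbG_m^n(\overline{\bbF}_p)$, chooses a tuple $(\zeta_1, \dots, \zeta_n)$ of $l$-power roots of unity outside $Z$, applies Grunwald--Wang to construct $\overline{\chi}$ \emph{unramified} at the places $w_i = g_i(w_1)$ of $M$ above a chosen prime $v$ of $F$ split in $M$ with $\overline{\chi}(\Frob_{w_i}) = \zeta_i$, and then simply takes $\sigma = \Frob_{w_1}$. The verification $(\overline{\chi}^{g_1}(\sigma), \dots, \overline{\chi}^{g_n}(\sigma)) = (\zeta_1, \dots, \zeta_n) \notin Z$ is then a one-line computation. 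By contrast you produce a \emph{ramified} $\overline{\chi}$ and recover the existence of $\sigma$ abstractly via linear independence of characters, which requires introducing the Laurent polynomial $f$, the exponent bound $B$, and a separate distinctness argument for the $\chi_\alpha$. Both approaches work, but the paper's buys you an explicit $\sigma$ and avoids that extra layer of combinatorics.

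Two soft spots in your writeup. First, with the convention $\overline{\chi}^g(\tau) = \overline{\chi}(g\tau g^{-1})$ (which is the one used in the paper's proof of this lemma), $\overline{\chi}^{g_i}$ is ramified at $\lambda_j$ iff $\overline{\chi}$ is ramified at $g_i(\lambda_j)$, i.e.\ iff $g_i(\lambda_j) = \lambda_1$; so for $\overline{\chi}^{g_i}$ to be the conjugate ramified at $\lambda_i$ you should label $\lambda_i = g_i^{-1}(\lambda_1)$, not $g_i(\lambda_1)$. This is harmless but needs correcting. Second, the claim that class field theory yields a character ``ramified \emph{only} at $\lambda_1$'' arising from $\cO_{M_{\lambda_1}}^\times \twoheadrightarrow k(\lambda_1)^\times \twoheadrightarrow \mu_l$ is not automatic: a global unit of $M$ could generate the $l$-part of $k(\lambda_1)^\times$, killing the $l$-torsion of the ray class group mod $\lambda_1$. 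What you actually need is only that $\overline{\chi}$ be ramified at $\lambda_1$ (with inertia image of order $l$) and unramified at $\lambda_2, \dots, \lambda_n$, which is exactly what Grunwald--Wang provides, allowing auxiliary ramification at primes outside $\{\lambda_1, \dots, \lambda_n\}$; such auxiliary ramification is harmless since the argument only looks at restrictions to $I_{\lambda_j}$ for $j = 1, \dots, n$. Either invoke Grunwald--Wang as the paper does, or choose $\mathfrak p$ by Chebotarev in the Kummer extension $M(\zeta_l, (\cO_M^\times)^{1/l})/F$ so that all global units are $l$-th powers modulo $\lambda_1$.
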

\begin{proof}
Fix a place $v$ of $F$ which splits in $M$ and a rational prime $l> \sup(N, p)$. Let $w_1$ be a place of $M$ above $v$, and let $w_i = g_i(w_1)$. The set of $l$-power roots of unity in $\bbG_m^n(\overline{\bbF}_p)$ is Zariski dense. We can therefore choose $l$-power roots of unity $\zeta_1, \dots, \zeta_n \in \overline{\bbF}_p^\times$ such that $(\zeta_1, \dots, \zeta_n) \not\in Z$. By the Grunwald-Wang theorem, we can find a character $\overline{\chi} : G_M \to \overline{\bbF}_p^\times$ unramified at $w_1, \dots, w_n$, of $l$-power order, and such that $\overline{\chi}(\Frob_{w_i}) = \zeta_i$. 

We then have $\overline{\chi}^{g_i}(\Frob_{w_1}) = \overline{\chi}(g_i \Frob_{w_1} g_i^{-1}) = \overline{\chi}(\Frob_{w_i}) = \zeta_i$. The lemma follows on taking $\sigma = \Frob_{w_1}$.
\end{proof}
\begin{lemma}\label{lem_many_huge_characters}
Let $M/F$ be a degree $n$ Galois extension. Then there exist infinitely many choices of $\overline{\chi} : G_M \to \overline{\bbF}_p^\times$ such that $\overline{\rho} = \Ind_{G_M}^{G_F} \overline{\chi}$ has enormous image.
\end{lemma}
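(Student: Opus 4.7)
The plan is to reduce the problem to a Zariski-genericity statement on the values $(\overline{\chi}^{g_1}(\sigma), \dots, \overline{\chi}^{g_n}(\sigma))$ for some $\sigma \in G_M$, and then invoke Lemma \ref{lem_existence_of_generic_characters} to produce such characters, obtaining infinitely many by letting the auxiliary prime $\ell$ vary.

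First I would fix a coset decomposition $G_F = \sqcup_{i=1}^n g_i G_M$ with $g_1 = 1$; since $M/F$ is Galois, conjugation by any $g_i$ preserves $G_M$, and for a $1$-dimensional character $\overline{\chi}$ of $G_M$ the twist $\overline{\chi}^g$ depends only on the coset $gG_M$. I would then invoke Lemma \ref{lem_generic_induced_representations_are_huge}: it suffices to exhibit $\overline{\chi}$ satisfying (i) all the twists $\overline{\chi}^{g_i}$ are distinct (so that $\overline{\rho} = \Ind_{G_M}^{G_F}\overline{\chi}$ is absolutely irreducible), (ii) the ratios $\overline{\chi}^{g_i}/\overline{\chi}^{g_j}$ for $i\neq j$ are pairwise distinct, and (iii) for each $i$, some element of $g_i G_M$ has $\overline{\rho}$-image with distinct eigenvalues.

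Second, I would package these three requirements into a single genericity condition. Given $\sigma \in G_M$, the matrix $\overline{\rho}(g_i\sigma)$ with respect to the basis $e_j = g_j\otimes 1$ is a monomial matrix whose permutation is induced by left multiplication of $g_i$ on $G_F/G_M$ and whose nonzero entries are products of the $\overline{\chi}^{g_j}(\sigma)$ (together with some fixed factors $\overline{\chi}(\tau_j)$ that depend on the coset representatives but not on $\sigma$). Its characteristic polynomial is a polynomial in the coordinates $x_j = \overline{\chi}^{g_j}(\sigma)$, and the discriminant of this polynomial is a polynomial in the $x_j$ that does not vanish identically (for generic $x_j$ the eigenvalues along each cycle of the permutation are distinct, as $p > n$ so all needed roots of unity are distinct in $\overline{\bbF}_p$, and the eigenvalues attached to different cycles are generically distinct as well). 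Similarly (i) and (ii) are cut out by non-vanishing of explicit Laurent monomials $x_i - x_j$ and $x_ix_{j'} - x_jx_{i'}$. Taking the union of these proper closed conditions defines a proper Zariski closed subset $Z \subset \bbG_m^n$ over $\overline{\bbF}_p$ with the property that, whenever $(\overline{\chi}^{g_1}(\sigma), \dots, \overline{\chi}^{g_n}(\sigma)) \notin Z$, conditions (i)--(iii) all hold.

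Third, I would apply Lemma \ref{lem_existence_of_generic_characters} to this $Z$: for any $N \geq 1$ it produces a prime $\ell > N$, a continuous character $\overline{\chi} : G_M \to \overline{\bbF}_p^\times$ of $\ell$-power order, and an element $\sigma \in G_M$ with $(\overline{\chi}^{g_i}(\sigma))_i \notin Z$. By the previous paragraph this $\overline{\chi}$ satisfies the hypotheses of Lemma \ref{lem_generic_induced_representations_are_huge}, so $\Ind_{G_M}^{G_F} \overline{\chi}$ has enormous image. Letting $N\to \infty$ forces $\ell\to\infty$, and since the order of $\overline{\chi}$ is a power of $\ell$, we obtain characters of pairwise distinct (and unbounded) orders, hence infinitely many distinct $\overline{\chi}$.

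The step I expect to require the most care is the verification that the `distinct eigenvalues of the monomial matrix $\overline{\rho}(g_i\sigma)$' condition, for $i > 1$, really is cut out by a proper Zariski closed subset: the characteristic polynomial factors according to the cycle decomposition of the permutation induced by $g_i$, and one must check that the discriminant is not identically zero as a polynomial in the $x_j$. This boils down to the fact that, because $p > n$, the $k$-th roots of unity appearing within a single cycle of length $k\leq n$ are already distinct in $\overline{\bbF}_p$, while the generic $x_j$ make the products along distinct cycles algebraically independent in an appropriate sense; everything else is bookkeeping.
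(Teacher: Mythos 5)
Your strategy is the same as the paper's: reduce to Lemma~\ref{lem_generic_induced_representations_are_huge} via a genericity condition, invoke Lemma~\ref{lem_existence_of_generic_characters}, and let $\ell$ grow to get infinitely many $\overline{\chi}$. But there is a genuine gap in your handling of the monomial matrix. You correctly note that the nonzero entries of $\overline{\rho}(g_j\sigma)$ have the form $\overline{\chi}(g_k^{-1}g_jg_i)\cdot\overline{\chi}^{g_i}(\sigma)$ and that the first factor does not depend on $\sigma$; but it \emph{does} depend on $\overline{\chi}$, which is precisely what you are trying to construct, so you cannot treat these factors as fixed constants and then speak of ``the discriminant as a polynomial in the $x_j$''. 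Concretely, the product of the cocycle factors around a cycle $I$ of $g_j$ telescopes to $\overline{\chi}^{g_{i_0}}\bigl(g_j^{\#I}\bigr)$ (for $i_0\in I$), so the cycle weight in the characteristic polynomial is $\overline{\chi}^{g_{i_0}}\bigl(g_j^{\#I}\bigr)\cdot\prod_{i\in I}\overline{\chi}^{g_i}(\sigma)$, and this extra factor is not determined by the tuple $\bigl(\overline{\chi}^{g_i}(\sigma)\bigr)_i$ that Lemma~\ref{lem_existence_of_generic_characters} lets you prescribe. Thus your $Z$ is not a well-defined proper closed subset of $\bbG_m^n$ independent of $\overline{\chi}$, and the genericity argument does not close.

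The paper repairs this with a splitting argument, applied \emph{after} $\overline{\chi}$ has been produced, which you are missing. Let $L$ be the field cut out by $\overline{\rho}$; then $\Gal(L/F)$ is an extension of $\Gal(M/F)$ by an abelian $\ell$-group, and because $\ell > n = \#\Gal(M/F)$ the obstruction in $H^2$ vanishes, so the extension splits. Adjusting each $g_i$ by right multiplication by an element of $G_M$ --- which changes neither $\overline{\chi}^{g_i}$ nor the condition at $\sigma$ --- one arranges that the $g_i$ lie over a section; then $g_k^{-1}g_jg_i \in G_L \subset \ker\overline{\chi}$ and every cocycle factor is trivial. Only after this do the cycle weights become exactly $\beta_k = \prod_{i\in I_k}\overline{\chi}^{g_i}(\sigma)$, and the paper's $Z$ (the locus where $\prod_{i\in I}x_i = \prod_{j\in J}x_j$ for some disjoint nonempty $I,J$) controls them directly. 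A further fact the paper uses implicitly and that tidies up your ``eigenvalues attached to different cycles are generically distinct'' handwave: since $M/F$ is Galois, $g_j$ acts on $G_F/G_M \cong \Gal(M/F)$ by left translation, so all its cycles have the same length $m = \mathrm{ord}(g_j)$; hence ``the $\beta_k$ are pairwise distinct'' is directly equivalent to ``$\overline{\rho}(g_j\sigma)$ has distinct eigenvalues'', with no need to worry about resonances between roots coming from cycles of unequal length.
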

\begin{proof}
We use the previous 2 lemmas. Choose a decomposition $G_F = \sqcup_{i=1}^n g_i G_M$ with $g_1 = 1$.
Let $Z \subset \bbG_m^n$ be the set of elements $(x_1, \dots, x_n)$ such that there exist disjoint non-empty subsets $I, J \subset \{ 1, \dots, n \}$ such that $\prod_{i \in I} x_i = \prod_{j \in J} x_j$. By Lemma \ref{lem_existence_of_generic_characters}, we can find (after possibly enlarging $k$) a prime $l > p$, a character $\overline{\chi} : G_M \to k^\times$ of $l$-power order, and an element $\sigma \in G_M$ such that $(\overline{\chi}^{g_1}(\sigma), \dots, \overline{\chi}^{g_n}(\sigma)) \not\in Z$. In particular, the characters $\overline{\chi}^{g_1}, \dots, \overline{\chi}^{g_n}$ are pairwise distinct, so $\overline{\rho} =  \Ind_{G_M}^{G_F} \overline{\chi}$ is absolutely irreducible, and their ratios are pairwise distinct, so the first condition of Lemma \ref{lem_generic_induced_representations_are_huge} is satisfied.

Let $L/F$ be the extension cut out by $\overline{\rho}$. Then $\Gal(L/F)$ sits in a short exact sequence
\[ \xymatrix@1{ 1 \ar[r] & \Gal(L/M) \ar[r] & \Gal(L/F) \ar[r] & \Gal(M/F) \ar[r] & 1,} \]
with $\Gal(L/M)$ abelian of $l$-power order. This extension is classified by an element of the cohomology group $H^2(\Gal(M/F), \Gal(L/M))$, which is trivial (since $l > n = \# \Gal(M/F)$). It follows that this extension is split. Choose a splitting $\Gal(M/F) \hookrightarrow \Gal(L/F)$; multiplying the elements $g_i$ on the right by elements of $G_M$, we can assume that they all lie in the image of this map. This does not change the characters $\overline{\chi}^{g_i}$, or the condition on the element $\sigma$.

We again use the basis $e_1, \dots, e_n$ of $\overline{\rho}$, where $e_i = g_i \otimes 1$. The action of an element $g_j \sigma \in g_j G_M$ is now given by $g_j \sigma e_i = g_j \sigma g_i \otimes 1 = g_j g_i \otimes \overline{\chi}(g_i^{-1} \sigma g_i)$. Let $I_1, \dots, I_s \subset \{ 1, \dots n \}$ be the cycle decomposition of $g_j$ (i.e.\ the orbits of $g_j$ on $\{ g_1, \dots, g_n \}$ by left multiplication). It follows from our construction that the elements $\beta_k = \prod_{i \in I_k} \overline{\chi}^{g_i}(\sigma)$, $k = 1, \dots, s$ are pairwise distinct. This implies that the element $\overline{\rho}(g_j \sigma)$ has distinct eigenvalues; indeed, these eigenvalues are exactly the distinct $(\# I_k )^\text{th}$ roots of the elements $\beta_k$, $k = 1, \dots, s$. It follows that the second assumption of Lemma \ref{lem_generic_induced_representations_are_huge} is satisfied, and we deduce that $\overline{\rho}$ has enormous image, as required.
\end{proof}

We now change notation slightly, with a view to the applications in \S \ref{sec_potential_automorphy_and_leopoldt} below. Let $K/\bbQ$ be a Galois totally real field of degree $n$, and let $F$ be an imaginary CM field which is linearly disjoint over $\bbQ$ from $K$. Set $M = K \cdot F$. Then $M/F$ is a Galois extension of imaginary CM fields of degree $n$. 

\begin{lemma}\label{lem_existence_of_odd_huge_representations}
Let $F^+ \subset F$ be the maximal totally real subfield, and let $c \in G_{F^+}$ be a choice of complex conjugation. There exists a continuous character $\overline{\chi} : G_M \to k^\times$ such that $\overline{\rho} = \Ind_{G_M}^{G_F} \overline{\chi}$ satisfies the following conditions:
\begin{enumerate}
\item $\overline{\rho}$ has enormous image. In particular, $\overline{\rho}$ is absolutely irreducible.
\item The pair $(\overline{\rho}, \epsilon^{1-n} \delta_{F/F^+}^n)$ is polarized, in the sense of \cite[\S 2.1]{Bar14}. In other words, there exists a non-degenerate symmetric bilinear form  $\langle \cdot, \cdot \rangle : \overline{\rho} \times \overline{\rho} \to k$ such that for all $\sigma \in G_F$, $x, y \in \overline{\rho}$, we have $\langle \overline{\rho}(\sigma)x, \overline{\rho}(\sigma^c) y \rangle = \epsilon^{1-n}(\sigma) \langle x, y \rangle$.
\end{enumerate}
\end{lemma}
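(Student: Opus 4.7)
The plan is to refine the construction of Lemma \ref{lem_many_huge_characters} so as to additionally impose the polarization. The first step is to translate the polarization into a condition on $\overline{\chi}$. Since $K$ and $F$ are linearly disjoint over $\bbQ$, the compositum $M=KF$ is Galois over $F^+$ with $\Gal(M/F^+) \cong \Gal(K/\bbQ) \times \Gal(F/F^+)$, so conjugation by any $c \in G_{F^+}$ stabilizes $G_M$, and $\overline{\chi}^c(\sigma) := \overline{\chi}(c\sigma c^{-1})$ makes sense. There is a canonical isomorphism $(\Ind_{G_M}^{G_F}\overline{\chi})^c \cong \Ind_{G_M}^{G_F} \overline{\chi}^c$; combining with $(\Ind \overline{\chi})^\vee \cong \Ind \overline{\chi}^{-1}$ and observing that $\delta_{F/F^+}|_{G_M}=1$ (because $G_M \subset G_F = \ker\delta_{F/F^+}$), the existence of a non-degenerate symmetric bilinear form with multiplier $\mu = \epsilon^{1-n}\delta_{F/F^+}^n$ is equivalent, up to replacing $\overline{\chi}$ by a $G_F$-conjugate, to the character identity
\[ \overline{\chi}^c \cdot \overline{\chi} = \epsilon^{1-n}|_{G_M}. \]
That the induced form can be taken symmetric (and not alternating) is a direct calculation in the standard basis $(e_i = g_i \otimes 1)$ of the induction, using an involution on the indexing set provided by $c$.

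The second step is to find an initial character $\overline{\chi}_0$ with $\overline{\chi}_0^c\overline{\chi}_0 = \epsilon^{1-n}|_{G_M}$. When $n$ is odd one may simply take $\overline{\chi}_0 = \epsilon^{(1-n)/2}|_{G_M}$, which is $c$-invariant and hence trivially satisfies the identity. When $n$ is even one argues via class field theory: because $\epsilon^{1-n}|_{G_M}$ is $c$-invariant and $M/M^+$ is CM, the norm map on appropriate ray class groups provides a character of $G_M$ whose product with its $c$-conjugate is $\epsilon^{1-n}|_{G_M}$ (after possibly enlarging $k$). Once $\overline{\chi}_0$ is fixed, every polarized lift has the form $\overline{\chi} = \overline{\chi}_0 \overline{\psi}$ with $\overline{\psi}: G_M \to k^\times$ anti-self-dual in the sense $\overline{\psi}^c\overline{\psi} = 1$.

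The third step is to produce an anti-self-dual $\overline{\psi}$ for which $\overline{\rho} = \Ind_{G_M}^{G_F}(\overline{\chi}_0 \overline{\psi})$ has enormous image. The key observation is that characters of the form $\overline{\eta}/\overline{\eta}^c$ are automatically anti-self-dual, and Grunwald--Wang applied to $\overline{\eta}$ is completely flexible. Choose a prime $\ell > p$ and a finite place $v$ of $F$ that splits completely both in $M$ and in $F^+$, so that $v^c \neq v$; let $w_1, \dots, w_n$ be the places of $M$ above $v$, so $w_1^c, \dots, w_n^c$ are $n$ further distinct places above $v^c$. By Lemma \ref{lem_existence_of_generic_characters} applied in this setting, for any prescribed $\alpha_1, \dots, \alpha_n$ in the $\ell$-power torsion of $\overline{k}^\times$ one finds a continuous $\overline{\eta}: G_M \to \overline{k}^\times$ of $\ell$-power order with $\overline{\eta}(\Frob_{w_i}) = \alpha_i$ and $\overline{\eta}(\Frob_{w_i^c}) = 1$; then $\overline{\psi} = \overline{\eta}/\overline{\eta}^c$ satisfies $\overline{\psi}(\Frob_{w_i}) = \alpha_i$, which ranges freely. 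Choosing the $\alpha_i$ so that $(\overline{\chi}^{g_1}(\Frob_{w_1}), \dots, \overline{\chi}^{g_n}(\Frob_{w_1}))$ avoids the proper closed subvariety $Z \subset \bbG_m^n$ of coincident partial products produces enormous image by the argument of Lemma \ref{lem_many_huge_characters} verbatim.

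The main obstacle is that the polarization identity rigidly couples the values of $\overline{\chi}$ at $w$ and $w^c$, seemingly competing with the need to prescribe $\overline{\chi}$ at $n$ places generically. Both points above resolve this: splitting $v$ in $F^+$ keeps the prescribed places $\{w_i\}$ disjoint from their complex conjugates, and the parameterization of anti-self-dual twists as $\overline{\eta}/\overline{\eta}^c$ reduces the construction to an unconstrained application of Grunwald--Wang.
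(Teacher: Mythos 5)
Your overall strategy — decompose the target character as a ``base polarized'' piece $\overline{\chi}_0$ times an anti-self-dual twist $\overline{\psi}$ of $\ell$-power order, and feed the latter into the genericity machinery of Lemma \ref{lem_many_huge_characters} — is the same as the paper's; the paper's $\overline{\omega}|_{G_M}$ and $\overline{\theta}$ play the roles of your $\overline{\chi}_0$ and $\overline{\psi}$. Your use of $\overline{\psi}=\overline{\eta}/\overline{\eta}^c$ plus Grunwald--Wang on $\overline{\eta}$ at Frobenii is a sensible replacement for the paper's appeal to \cite[Lemma A.2.4]{Bar14} (which prescribes $\overline{\theta}$ on inertia while enforcing $\overline{\theta}\overline{\theta}^c=1$ in one go); that part of the substitution is fine.

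However, there are two linked gaps. First, you need $\overline{\chi}_0$ to be the restriction to $G_M$ of a character $\overline{\omega}\colon G_F\to k^\times$, not merely a character of $G_M$. This is what makes $\Ind_{G_M}^{G_F}(\overline{\chi}_0\overline{\psi})\cong \overline{\omega}\otimes \Ind_{G_M}^{G_F}\overline{\psi}$ available, and it is precisely what the paper arranges by choosing $\overline{\omega}$ on $G_F$ with $\overline{\omega}\,\overline{\omega}^c=\epsilon^{1-n}$. For $n$ odd your $\epsilon^{(1-n)/2}$ does descend, but for $n$ even your appeal to ``the norm map on appropriate ray class groups'' only produces a character of $G_M$, and nothing in that sketch gives descent to $G_F$; as written the even case is a gap.

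Second, and more seriously, the claim that the argument of Lemma \ref{lem_many_huge_characters} applies ``verbatim'' to $\overline{\chi}=\overline{\chi}_0\overline{\psi}$ is false. That argument uses the short exact sequence $1\to\Gal(L/M)\to\Gal(L/F)\to\Gal(M/F)\to1$ for the field $L$ cut out by $\overline{\rho}$, and splits it using that $\Gal(L/M)$ has $\ell$-power order with $\ell>n$; this hinges on the inducing character having $\ell$-power order. Your $\overline{\chi}$ does not (it is contaminated by $\overline{\chi}_0$). The correct route — the one the paper takes — is to apply the Lemma \ref{lem_many_huge_characters} argument to $\Ind_{G_M}^{G_F}\overline{\psi}$ (where $\overline{\psi}$ genuinely has $\ell$-power order), obtain enormous image there, and then note that twisting by the $G_F$-character $\overline{\omega}$ preserves enormous image. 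This again requires the descent in the previous paragraph, so the two gaps reinforce each other. Finally, your ``involution on the indexing set'' justification for symmetry versus alternation is not what actually runs: since $c$ commutes with the coset representatives (because $\Gal(M/F^+)\cong\Gal(M/F)\times\Gal(F/F^+)$), the induced involution on indices is trivial, and the correct observation is that the pairing is diagonal in the basis $(e_i)$ because the characters $\overline{\chi}^{g_i}$ are pairwise distinct.
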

\begin{proof}
The fields $F$ and $K \cdot F^+$ are linearly disjoint over $F^+$, from which it follows that $\Gal(M/F^+) \cong \Gal(F/F^+) \times \Gal(K/\bbQ)$. The maximal totally real subfield of $M$ is $M^+ = F^+ \cdot K$, and we have $c \in G_{M^+} \subset G_{F^+}$. Let $l > p$ be a rational prime which splits in $M$, let $v$ be a place of $F$ above $l$, and let $w = w_1, \dots, w_n$ be the places of $M$ above $v$. Choose a surjection $I_{M_{w_1}}^\text{ab} \to \bbZ_l$, and let $\sigma_1 \in I_{M_{w_1}}$ be an element generating the image of this surjection. Let $1 = g_1, \dots, g_n$ be a transversal of $G_M$ in $G_F$, and let $\sigma_i = \sigma^{g_i} \in G_M$. 

By \cite[Lemma A.2.4]{Bar14}, for any tuple $(\zeta_1, \dots, \zeta_n) \in \bbG_m^n$ of $l$-power roots of unity in $\overline{\bbF}_p^\times$, we can find a continuous character $\overline{\theta} : G_M \to \overline{\bbF}_p^\times$ such that $\overline{\theta} \overline{\theta}^c = 1$ and $\overline{\theta}(\sigma_i) = \zeta_i$, for each $i = 1, \dots, n$. After possibly replacing $\overline{\theta}$ by $\overline{\theta}^M$, for some integer $M$ prime to $l$, we can assume further that $\overline{\theta}$ has $l$-power order. Arguing as in the proof of Lemma \ref{lem_many_huge_characters}, we see that we can even choose $\overline{\theta}$ so that $\overline{\rho} = \Ind_{G_M}^{G_F} \overline{\theta}$ is absolutely irreducible and has enormous image, and the elements $\overline{\theta}(\sigma_i)$ are pairwise distinct. We can also find a character $\overline{\omega} : G_F \to k^\times$ such that $\overline{\omega} \overline{\omega}^c = \epsilon^{1-n}$. Then the representation $\overline{\rho} = \overline{\omega} \otimes \Ind_{G_M}^{G_F} \overline{\theta} \cong \Ind_{G_M}^{G_F} \overline{\omega} \overline{\theta}$ has enormous image, being a twist of a representation with enormous image, so we take $\overline{\chi} = \overline{\theta} \overline{\omega}$.

It remains to check that the representation $(\overline{\rho}, \epsilon^{1-n} \delta_{F/F^+}^n)$ is polarized. Since $\overline{\theta} \overline{\theta}^c = 1$, we have isomorphisms
\[ ( \Ind_{G_M}^{G_F} \overline{\theta} )^c \cong \Ind_{G_M}^{G_F} \overline{\theta}^{-1} \cong ( \Ind_{G_M}^{G_F} \overline{\theta} ), \]
so there exists a non-degenerate bilinear form $\langle \cdot, \cdot \rangle : \Ind_{G_M}^{G_F} \overline{\theta} \times \Ind_{G_M}^{G_F} \overline{\theta} \to k$, unique up to scalar, such that $\langle \sigma x, \sigma^c y \rangle = \langle x, y \rangle$ for all $x ,y \in  \Ind_{G_M}^{G_F} \overline{\theta}$. To finish the proof of the lemma, it is enough to show that this pairing is in fact symmetric (and not anti-symmetric). 

Let us write $V = \Ind_{G_M}^{G_F} \overline{\theta}$, and let $L/M$ be the extension cut out by $\overline{\theta}$. Then we can find an isomorphism $\Gal(L/F) \cong \Gal(L/M) \rtimes \Gal(M/F)$, and we choose coset representatives $\Gal(L/F) = \sqcup_{j=1}^n g_j \Gal(M/F)$ with $g_j \in \Gal(M/F)$. We fix a basis $e_1, \dots, e_n$ of $V$ with $e_j = g_j \otimes 1$.

If $\sigma \in G_M$, then $\langle \sigma e_i, \sigma^c e_j \rangle = \overline{\theta}^{g_i}(\sigma) \overline{\theta}^{g_jc}(\sigma) \langle e_i, e_j \rangle = \overline{\theta}^{g_i}/\overline{\theta}^{g_j}(\sigma) \langle e_i, e_j \rangle$ (because $\overline{\theta}^c = \overline{\theta}^{-1}$ and $\Gal(M/F^+) \cong \Gal(M/F) \times \Gal(F/F^+)$). The character $\overline{\theta}^{g_i}/\overline{\theta}^{g_j}$ is non-trivial if $i \neq j$, so it follows that $\langle e_i, e_j \rangle = 0$ if $i \neq j$. This is implies that the non-degenerate bilinear form $\langle \cdot, \cdot \rangle$ is in fact symmetric, and completes the proof of the lemma.
\end{proof}

\section{Interlude on $q$-adic Iwahori Hecke algebras}\label{sec_q_adic_hecke_algebras}

Let $l$ be a prime, and let $F$ be a finite extension of $\bbQ_l$. Let $p \neq l$ be an odd prime, and let $E$ be a finite extension of $\bbQ_p$. Let $\cO_F$ be the ring of integers of $F$, with maximal ideal $\ffrm_F$ and residue field $k_F$. Let $\cO$ be the ring of integers of $E$, with maximal ideal $\lambda$ and residue field $k$. Let $q = \# k_F$. 

In this section, we consider some aspects of the smooth representation theory of the group $G = \GL_n(F)$ over $\cO$ under the assumption $q \equiv 1 \text{ mod }p$ and $p > n$. The results of this section will be used in \S \ref{sec_ordinary_completed_cohomology}.  By definition, a smooth $\cO[G]$-module $M$ is an $\cO[G]$-module such that every element of $M$ is fixed by an open compact subgroup $U \subset G$. An admissible $\cO[G]$-module $M$ is a smooth $\cO[G]$-module such that for every open compact subgroup $U \subset G$, $M^U$ is a finite $\cO$-module. We suppose fixed a square root $q^{1/2}$ of $q$ in $\cO$; this exists, by Hensel's lemma. 

We first introduce some important subgroups of $G$. Let $\cK = \GL_n(\cO_F)$, and let $\cB \subset \cK$ denote the standard (upper-triangular mod $\ffrm_F$) Iwahori subgroup. Then $[ \cK: \cB]\equiv n! \text{ mod }p$ lies in $\cO^\times$. We write $\cH_\cB$ for the convolution algebra of compactly supported $\cB$-biinvariant functions $f : G \to \cO$, identity element given by $e_\cB = [\cB]$ (i.e.\ the characteristic function of $\cB$). Then $\cH_\cB = \cH(G, \cB) \otimes_\bbZ \cO$, in the notation of \S \ref{sec_hecke_operators} below. More generally, if $U$ is any open compact subgroup of $G$ then we write $\cH_U$ for the convolution algebra of compactly supported $\cB$-biinvariant functions $f : G \to \cO$, identity element given by $e_U = [U]$.

We write $T \subset B \subset G$ for the standard maximal torus and Borel subgroup, respectively. We write $\Phi \subset X^\ast(T)$ for the set of roots, and $R \subset X^\ast(T)$ for the set of simple roots given by $R = \{ \alpha_1, \dots, \alpha_{n-1} \}$, where $\alpha_i(t_1, \dots, t_n) = t_i/t_{i+1}$. Then the Weyl group $W = N_G(T)/T$ is generated by the simple reflections $s_{\alpha_i} = s_i$, $i = 1, \dots, n-1$.

We now introduce the Bernstein presentation of the algebra $\cH_B$, as exposed in \cite{Hai10}. The Bernstein presentation is an algebra isomorphism
\[ \cH_\cB \cong \cO[X_\ast(T)] \widetilde{\otimes}_\cO \cO[\cB \backslash \cK /\cB], \]
where the terms are as follows:
\begin{itemize}
\item $\cO[X_\ast(T)]$ is the group algebra of the free abelian group $X_\ast(T)$. The embedding $\cO[X_\ast(T)] \hookrightarrow \cH_B$ is defined as follows. If $\lambda \in X_\ast(T)_+$ is a dominant cocharacter, then we send $e_{\lambda} \mapsto q^{-l(\lambda)/2} [\cB \lambda(\varpi_F) \cB]$, where $l$ is the usual length function on the extended affine Weyl group. This defines a homomorphism $\cO[X_\ast(T)_+] \to \cH_\cB$ which extends uniquely to an embedding $\cO[X_\ast(T)] \hookrightarrow \cH_B$. If $\lambda \in X_\ast(T)$ is any element, then we write $e_\lambda \in \cO[X_\ast(T)]$ for the corresponding group algebra element and $\theta_{\lambda}$ for its image in $\cH_B$.
\item $\cO[\cB \backslash \cK / \cB]$ is the convolution algebra of $\cB$-bi-invariant functions $f : \cK \to \cO$, with its natural structure as subalgebra of $\cH_B$. If $w \in W = N_G(T)/T \cong S_n$, then we write $T_w = [\cB \dot{w} \cB]$, where $\dot{w} \in \cK$ is any representative of $w$.
\item The twisted tensor product $\widetilde{\otimes}$ is the usual tensor product as $\cO$-modules, with a twisted multiplication law characterized on basis elements by the equality (for simple $s_\alpha \in W$, $\lambda \in X_\ast(T)$):
\begin{equation}\label{eqn_bernstein_presentation} T_{s_\alpha} \theta_\lambda = \theta_{s_\alpha(\lambda)} T_{s_\alpha} + (q-1) \frac{\theta_{s_\alpha(\lambda)} - \theta_\lambda}{1 - \theta_{-\alpha^\vee}}. 
\end{equation}
The fraction, a priori an element of $\Frac \cO[X_\ast(T)]$, in fact lies in $\cO[X_\ast(T)]$.
\end{itemize}
We can use this presentation to deduce the following result. (Fabian Januszewski has pointed out to us that it could also be deduced more directly from the Iwahori--Matsumoto presentation.)
\begin{lemma}\label{lem_isomorphism_of_mod_p_hecke_algebra}
There is an isomorphism
\[ \cH_\cB \otimes_\cO k \cong k[X_\ast(T) \rtimes W]. \]
\end{lemma}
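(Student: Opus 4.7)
The plan is to reduce the Bernstein presentation modulo $\lambda$ and observe that both the twisting in the semidirect tensor product and the twisting in the finite Hecke algebra degenerate under the hypothesis $q \equiv 1 \bmod p$.

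First I would record that the Bernstein relation (\ref{eqn_bernstein_presentation}) simplifies mod $\lambda$. Although the rational expression $(q-1)(\theta_{s_\alpha(\lambda)} - \theta_\lambda)/(1 - \theta_{-\alpha^\vee})$ involves a denominator, the paper has noted it lies in $\cO[X_\ast(T)]$, so reducing mod $\lambda$ makes sense. Since $q - 1 \in \lambda$, this entire summand vanishes in $\cH_\cB \otimes_\cO k$, leaving the clean commutation rule $T_{s_\alpha} \theta_\lambda = \theta_{s_\alpha(\lambda)} T_{s_\alpha}$. So the twisted tensor product $\cO[X_\ast(T)] \widetilde{\otimes}_\cO \cO[\cB\backslash \cK/\cB]$ becomes, on reduction, the ordinary tensor product of $k[X_\ast(T)]$ with $\cO[\cB\backslash \cK/\cB]\otimes_\cO k$, with $W$ acting on $X_\ast(T)$ in the natural way.

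Next I would identify the finite piece $\cO[\cB \backslash \cK/\cB] \otimes_\cO k$ with $k[W]$. The algebra $\cO[\cB\backslash \cK/\cB]$ is the Iwahori--Hecke algebra of $\GL_n(k_F)$, with standard generators $T_{s_i}$ ($i=1,\dots,n-1$) satisfying the braid relations of $W = S_n$ together with the quadratic relation $T_{s_i}^2 = (q-1)T_{s_i} + q$. Modulo $\lambda$, the assumption $q \equiv 1 \bmod p$ collapses the quadratic relation to $T_{s_i}^2 = 1$. The resulting $k$-algebra has generators satisfying exactly the Coxeter presentation of $W$, so the natural map $k[W] \to \cO[\cB\backslash \cK/\cB]\otimes_\cO k$ sending $s_i \mapsto T_{s_i}$ is surjective. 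For injectivity, one compares $k$-dimensions: both sides are free of rank $|W| = n!$ (the source obviously, the target because $\cO[\cB\backslash \cK/\cB]$ is $\cO$-free on the basis $\{ T_w \}_{w \in W}$ by the Iwahori--Matsumoto theorem, using that $[\cK : \cB] = \sum_w q^{\ell(w)}$ is a unit in $\cO$ as $p > n$).

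Assembling these two steps gives
\[
\cH_\cB \otimes_\cO k \;\cong\; k[X_\ast(T)] \otimes_k k[W],
\]
with multiplication determined by $w \cdot \theta_\lambda = \theta_{w(\lambda)} \cdot w$, i.e.\ the group algebra of the semidirect product $X_\ast(T) \rtimes W$, as claimed. The only subtle point is verifying that the twisting summand in (\ref{eqn_bernstein_presentation}) is genuinely integral (so that its image mod $\lambda$ is well-defined and zero), but this is already asserted in the exposition of the Bernstein presentation and follows from the standard divisibility observations about $\theta_{s_\alpha(\lambda)} - \theta_\lambda$ by $1 - \theta_{-\alpha^\vee}$ in $\cO[X_\ast(T)]$. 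I do not expect any further obstacle.
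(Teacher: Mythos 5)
Your proof is correct and follows essentially the same route as the paper's: reduce the Bernstein presentation modulo $\lambda$, observe that the twisting term in relation (\ref{eqn_bernstein_presentation}) is killed by $q-1 \in \lambda$, and identify $\cO[\cB\backslash\cK/\cB]\otimes_\cO k$ with $k[W]$ via the degenerated quadratic relation $T_{s_i}^2 = 1$. You supply a bit more detail on the injectivity of $k[W] \to \cO[\cB\backslash\cK/\cB]\otimes_\cO k$ (the rank-$n!$ dimension count via the Iwahori–Matsumoto basis) than the paper, which argues more tersely by matching presentations, but the argument is the same.
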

\begin{proof}
We first observe that the natural $k$-vector space map $k[W] \to k[\cB \backslash \cK / \cB]$ given on basis elements by $w \mapsto T_w$ is an algebra isomorphism. This can be deduced from studying the presentations of each algebra: $W$ is the free group on generators $s_\alpha$, $\alpha \in R$, subject to the relations $(s_\alpha s_\beta)^{m_{\alpha \beta}} = 1$, for appropriate coefficients $m_{\alpha \beta} \in \{ 1, 2, \infty\}$, while $k[\cB \backslash \cK / \cB]$ can be presented as the free algebra on elements $T_{i} = T_{s_i}$, $i = 1, \dots, n-1$, subject to the relations 
\begin{gather*}
T_{i}^2 = q + (q-1)T_i\text{ } (i = 1, \dots, n-1),\\
T_i T_{i+1} T_i = T_{i+1} T_i T_{i+1}\text{ } (i = 1, \dots, n-2) \\
T_i T_j = T_j T_i\text{ }(i, j = 1, \dots, n-1 \text{ and }|i - j|\geq 2).
\end{gather*}
Using the fact that $q = 1$ in $k$, we see that these two sets of relations are the same. Together with the relation (\ref{eqn_bernstein_presentation}), this shows that the map $k[X_\ast(T) \rtimes W] \to \cH_B \otimes_\cO k$, $e_\lambda w \mapsto \theta_\lambda \cdot T_w$, is an algebra isomorphism.
\end{proof}
\begin{lemma}\label{lem_splitting_of_inclusion_map_in_taylor_wiles_set_up}
Let $M$ be a smooth $\cO[G]$-module. Then the natural inclusion $M^\cK \subset M^\cB$ is canonically split, namely by the map $x \mapsto [\cK : \cB]^{-1} [\cK] x$.
\end{lemma}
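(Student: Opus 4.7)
The plan is a short averaging argument, exploiting the hypothesis that $[\cK:\cB]$ is a unit in $\cO$. Recall that $\cK/\cB \cong \GL_n(k_F)/B(k_F)$, and a direct count gives $[\cK:\cB] = \prod_{i=1}^n (1 + q + \cdots + q^{i-1})$. Since $q \equiv 1 \bmod p$, the $i$th factor is $\equiv i \bmod p$, so $[\cK:\cB] \equiv n! \bmod p$; since $p > n$ by assumption, this is a unit in $\cO$ and the scalar $[\cK:\cB]^{-1}$ appearing in the statement makes sense.

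Next I would interpret the notation $[\cK]x$: for $x \in M^\cB$, the element $gx$ depends only on the coset $g\cB$, so $[\cK]x := \sum_{g \in \cK/\cB} gx$ is well defined in $M$. The map
\[ \pi : M^\cB \to M,\qquad \pi(x) = [\cK:\cB]^{-1} \sum_{g \in \cK/\cB} gx \]
is $\cO$-linear. To see that $\pi$ lands in $M^\cK$, note that for any $h \in \cK$, left multiplication by $h$ permutes the cosets $\cK/\cB$, so
\[ h \pi(x) = [\cK:\cB]^{-1} \sum_{g \in \cK/\cB} (hg)x = [\cK:\cB]^{-1} \sum_{g' \in \cK/\cB} g' x = \pi(x). \]
Finally, if $x \in M^\cK$, then every summand $gx$ equals $x$, giving $\pi(x) = [\cK:\cB]^{-1} \cdot [\cK:\cB] \cdot x = x$. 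This shows $\pi \circ \iota = \mathrm{id}_{M^\cK}$, where $\iota : M^\cK \hookrightarrow M^\cB$ is the inclusion, so $\pi$ is the desired splitting.

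There is essentially no obstacle here; the only point to check is that the factor $[\cK:\cB]$ is genuinely invertible in $\cO$, which is precisely the content of the running hypotheses $q \equiv 1 \bmod p$ and $p > n$. The construction is the standard projection onto $\cK$-invariants, made available in characteristic $p$ by the order of $\cK/\cB$ being prime to $p$.
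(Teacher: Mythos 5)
Your argument is correct and is essentially the same as the paper's: the paper observes that the Hecke operator $[\cK]$ induces the trace map $M^\cB\to M^\cK$ and that precomposing with the inclusion gives multiplication by $[\cK:\cB]\in\cO^\times$, which is exactly your averaging computation (the unit computation $[\cK:\cB]\equiv n!\bmod p$ is also noted in the paper just before the lemma). Your write-up merely spells out the verification in more detail.
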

\begin{proof}
The Hecke operator $[\cK] \in \cH_\cB$ induces the natural trace map $M^\cB \to M^\cK$. The composite $M^\cK \hookrightarrow M^\cB \rightarrow M^\cK$ is therefore multiplication by the index $[\cK : \cB] \in \cO^\times$.
\end{proof}
If $M$ is a smooth $k[G]$-module, then $M^\cB$ is a $k[W]$-module, by Lemma \ref{lem_isomorphism_of_mod_p_hecke_algebra}, and Lemma \ref{lem_splitting_of_inclusion_map_in_taylor_wiles_set_up} shows that $M^\cK$ is in fact the submodule of $M^\cB$ of $W$-invariants. 

Let $t_1, \dots, t_n \in \cO[X_\ast(T)]$ be the elements corresponding to the standard basis of $X_\ast(T)$. For each $i = 1, \dots, n$, let $T^i$ denote the standard unramified Hecke operator in $\cH_K$:
\[ T^i = \left[ \cK \diag(\underbrace{\varpi_F, \dots, \varpi_F}_i, \underbrace{1, \dots, 1}_{n-i}) \cK \right]. \]
Then $\cO[X_\ast(T)]^W$ is the center of $\cH_\cB$, and there is an isomorphism $\cO[X_\ast(T)]^W \cong \cH_K$ given by $x \mapsto [\cK] x$ (see \cite[\S 4.6]{Hai10}). This isomorphism sends the symmetric polynomial $e_i(t_1, \dots, t_n)$ to the element $q^{i(n-i)/2} T^i$.
\begin{lemma}\label{lem_unramified_hecke_eigenvalues_determine_eigenspaces}
Let $M$ be a $\cH_\cB$-module which is also a finite-dimensional $k$-vector space. Suppose that $[\cK]M \neq 0$ and that there are pairwise distinct elements $\gamma_1, \dots, \gamma_n \in k$ such that $T^i$ acts as the scalar $e_i(\gamma_1, \dots, \gamma_n)$ on $[\cK]M$. Then for each $w \in W$, the maximal ideal $\ffrm_w = (t_1 - \gamma_{w(1)}, \dots, t_n - \gamma_{w(n)}) \subset k[X_\ast(T)]$ is in the support of $M$.
\end{lemma}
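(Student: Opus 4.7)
The plan is to use Lemma \ref{lem_isomorphism_of_mod_p_hecke_algebra} to identify $\cH_\cB \otimes_\cO k$ with the crossed product algebra $k[X_\ast(T) \rtimes W]$, whose center is $k[X_\ast(T)]^W$. Under this identification, $[\cK] = \sum_{w \in W} T_w$ corresponds to $\sum_{w \in W} w \in k[W]$, using the Bruhat decomposition $\cK = \sqcup_{w \in W} \cB w \cB$. One then checks directly that $[\cK] M$ coincides with the $W$-invariant subspace $M^W$ of $M$: the image $(\sum_w w) M$ lies in $M^W$, and on $M^W$ the operator $\sum_w w$ is multiplication by $n! \in k^\times$. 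So the hypothesis $[\cK] M \neq 0$ becomes $M^W \neq 0$.

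Next I would pin down the central character of $M^W$. Using the identity $[\cK] \cdot e_i(t_1,\dots,t_n) = q^{i(n-i)/2} T^i$ in $\cH_\cB$ recorded just before the lemma, the congruence $q \equiv 1 \pmod{p}$, and the scalar by which $[\cK]$ acts on $M^W$, the hypothesis on $T^i$-eigenvalues determines the action of each $e_i(t_1, \dots, t_n) \in k[X_\ast(T)]^W$ on $M^W$. This singles out one maximal ideal $\frn_0 \subset k[X_\ast(T)]^W$, which by construction is the contraction of $\ffrm_w \subset k[X_\ast(T)]$; it is independent of $w \in W$ since the $\ffrm_w$ form a single $W$-orbit. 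Because the $\gamma_i$ are pairwise distinct, the $n!$ ideals $\{\ffrm_w\}_{w \in W}$ are pairwise distinct and are exactly the maximal ideals of $k[X_\ast(T)]$ lying above $\frn_0$.

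Then I would decompose $M$ into primary components for the $k[X_\ast(T)]$-action, $M = \bigoplus_\ffrm M_{(\ffrm)}$, and regroup by central character to get $M_{(\frn_0)} = \bigoplus_{w \in W} M_{(\ffrm_w)}$. Since the central action on $M^W$ is through $\frn_0$, we have $M^W \subseteq M_{(\frn_0)}$; the nonvanishing of $M^W$ therefore forces some summand $M_{(\ffrm_{w_0})}$ to be nonzero. Finally, the $W$-action on $M$ is semilinear over the natural $W$-action on $k[X_\ast(T)]$, giving $w \cdot M_{(\ffrm)} = M_{(w \cdot \ffrm)}$; since $W$ acts transitively (in fact freely) on $\{\ffrm_w\}_{w \in W}$, nonvanishing of one summand forces nonvanishing of every $M_{(\ffrm_w)}$, which is the desired conclusion.

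The step I expect to require the most care is the second one: tracking the normalizations (the factor $q^{i(n-i)/2}$, which becomes $1$ in $k$, and the scalar by which $[\cK]$ acts on $M^W$) to verify that $\frn_0$ really does correspond to the $W$-orbit of $(\gamma_1, \dots, \gamma_n)$ in $\Spec k[X_\ast(T)]$ rather than to some rescaled tuple. Once this identification is correct, the rest of the argument is purely structural, driven by the Galois-like behaviour of $W$ acting on the fiber above $\frn_0$ and by the commutation of the $W$-action with the central subalgebra.
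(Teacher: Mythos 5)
Your argument is correct and takes essentially the same route as the paper: identify $[\cK]M$ with the $W$-invariants $M^W$, observe that the $T^i$-eigenvalues determine the action of $k[X_\ast(T)]^W$ on $M^W$ (hence a maximal ideal $\frn$), decompose the $\frn$-local piece of $M$ as $\bigoplus_{w \in W} M_{\ffrm_w}$, and use the $W$-equivariance of the support to propagate nonvanishing around the orbit. The paper's proof is a compressed version of exactly this, and the normalization issue you flag (the factor $q^{i(n-i)/2}$ and the scalar by which $[\cK]$ acts on $M^W$) is real but innocuous, since $q \equiv 1$ and $[\cK:\cB] \equiv n! \in k^\times$ modulo $p$.
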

\begin{proof}
Let $\frn = (e_1(t_1, \dots, t_n) - e_1(\gamma_1, \dots, \gamma_n), \dots, e_n(t_1, \dots, t_n) - e_n(\gamma_1, \dots, \gamma_n)) \subset k[X_\ast(T)]^W$. Then $[\cK] M \subset M[\frn]$, hence $M_\frn \neq 0$. Since $M_\frn = \oplus_{w \in W} M_{\ffrm_w}$, it follows that there exists $w \in W$ such that $M_{\ffrm_w} \neq 0$. Since $W$ acts on the set of maximal ideals of $k[X_\ast(T)]$ which are in the support of $M$, it follows that $M_{\ffrm_w} \neq 0$ for all $w \in W$. This shows the lemma.
\end{proof}
\begin{lemma}\label{lem_hecke_module_with_regular_infinitesimal_character}
Let $M$ be a $\cH_\cB$-module which is also a finite-dimensional $k$-vector space. Suppose that for each maximal ideal $\frn \subset k[X_\ast(T)]^W$ in the support of $M$, the polynomial 
\[ \sum_{i=0}^n (-1)^i q^{i(n-i)/2} e_i(t_1, \dots, t_n) X^{n-i} \in k[X_\ast(T)]^W[X] \]
has distinct roots modulo $\frn$. Then $[\cK]M \neq 0$. If there is a unique maximal ideal  $\frn \subset \cO[X_\ast(T)]^W$ in the support of $M$, then for each maximal ideal $\ffrm \subset k[X_\ast(T)]$ in the support of $M$, the maps
\[ k[W] \otimes_k M_\ffrm \to M,\text{ } w \otimes x \mapsto w \cdot x \]
and
\[ M_\ffrm \to [\cK] M, \text{ } x \mapsto [\cK] \cdot x \]
are isomorphisms.
\end{lemma}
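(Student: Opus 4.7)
The plan is to translate the statement, via Lemma~\ref{lem_isomorphism_of_mod_p_hecke_algebra}, into one about modules over the crossed product $A \rtimes W$, where $A = k[X_\ast(T)]$, $W$ acts on $A$ by permuting $t_1, \dots, t_n$, and $B = A^W$ is the center. Under this isomorphism $[\cK] = \sum_{w \in W} T_w$ corresponds to $\sum_{w \in W} w$; since $p > n$ the integer $|W| = n!$ is a unit in $k$, so $[\cK] = |W| \cdot e_W$ where $e_W = |W|^{-1} \sum_w w$ is the idempotent projecting onto $W$-invariants, and hence $[\cK] M = M^W$.

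I would first decompose $M$ according to its $B$-support, which lets me treat each maximal ideal $\frn$ of $B$ in the support separately; for the first assertion it suffices to handle a single $\frn$, while the second assertion already assumes there is only one. After enlarging $k$ by a finite extension if necessary---this is harmless since all conclusions concern dimensions and bijectivity of $k$-linear maps---I may further assume that $B/\frn = k$ and that the roots $\gamma_1, \dots, \gamma_n \in k$ of the polynomial mod $\frn$ are pairwise distinct elements of $k$. Under these hypotheses, $A \otimes_B B/\frn$ is an \'etale $k$-algebra of dimension $n!$, whose maximal ideals are indexed by the $n!$ permutations of $(\gamma_1, \dots, \gamma_n)$ (all with residue field $k$), and on which $W$ acts freely and transitively.

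Choosing $N$ with $\frn^N M = 0$, I view $M$ as a module over $(A/\frn^N A) \rtimes W$. The orthogonal idempotents $\{e_\ffrm\}$ associated to the $n!$ local factors of $A/\frn^N A$ yield a decomposition $M = \bigoplus_\ffrm M_\ffrm$ with $M_\ffrm = e_\ffrm M$, and the relation $w \cdot e_\ffrm = e_{w\ffrm} \cdot w$ shows that each $w \in W$ restricts to a $k$-linear bijection $M_\ffrm \xrightarrow{\sim} M_{w\ffrm}$. Since $W$ permutes the $\ffrm$ freely and transitively, the map $k[W] \otimes_k M_\ffrm \to M$, $w \otimes x \mapsto wx$, is bijective; this gives the first claimed isomorphism and also yields $\dim_k M^W = \dim_k M_\ffrm$ upon taking invariants. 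For the second isomorphism $M_\ffrm \to [\cK] M = M^W$, $x \mapsto \sum_w wx$, injectivity is immediate because the $M_\ffrm$-component of $\sum_w wx$ is $x$ (only $w = 1$ fixes $\ffrm$); the dimension equality then forces bijectivity, and $[\cK] M \neq 0$ follows because some $M_\ffrm$ is non-zero. The main technical point, and the place where the distinct-roots hypothesis is essential, is establishing freeness and transitivity of the $W$-action on the $\ffrm$'s; without it one would obtain an induction from a non-trivial stabilizer rather than the clean tensor-product decomposition above.
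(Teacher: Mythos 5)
Your proof is correct and follows essentially the same route as the paper's: pass through the identification $\cH_\cB\otimes_\cO k\cong k[X_\ast(T)\rtimes W]$, enlarge $k$ so that the roots mod $\frn$ lie in $k$, decompose $M$ over the $n!$ maximal ideals $\ffrm_w$ of $A=k[X_\ast(T)]$ lying above $\frn$, and use that $W$ permutes them freely so that $[\cK]=\sum_w w$ restricts to an injection $M_\ffrm\hookrightarrow\bigoplus_w M_{\ffrm_w}$. The paper's proof is considerably terser (it simply asserts this injection and says ``the rest of the lemma now follows''), and you have supplied the missing bookkeeping -- the \'etale-ness of $A/\frn A$, the identity $[\cK]M=M^W$ via the unit $|W|=n!$, and the dimension count forcing surjectivity -- but the underlying mechanism is the same.
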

\begin{proof}
After possibly enlarging $k$, we can assume that the roots $\gamma_1, \dots, \gamma_n$ of the polynomial 
\[ \sum_{i=0}^n (-1)^i q^{i(n-i)/2} e_i(t_1, \dots, t_n) X^{n-i} \text{ mod }\frn \in k[X] \]
 lie in $k$. Then the maximal ideal $\ffrm = (t_1 - \gamma_1, \dots, t_n - \gamma_n) \subset k[X_\ast(T)]$ lies in the support of $M$, hence $M_\ffrm \neq 0$. The operator $[\cK] = \sum_{w \in W} w$ defines an injection $M_\ffrm \hookrightarrow \oplus_{w \in W} M_{\ffrm_w}$, which shows that $[\cK] M \neq 0$. 

If $\frn$ is the unique maximal ideal of $k[X_\ast(T)]$ in the support of $M$, then every maximal ideal of $k[X_\ast(T)]$ in the support of $M$ has the form $\ffrm_w$, for some $w \in W$, and $M = \oplus_{w \in W} M_{\ffrm_w} = \oplus_{w \in W} w \cdot M_\ffrm$. The rest of the lemma now follows.
\end{proof}

\section{Ordinary completed cohomology}\label{sec_ordinary_completed_cohomology}

\subsection{Set-up}\label{sec_completed_cohomology_setup}

Let $F$ be a number field, and let $G = \GL_{n, F}$, a reductive group over $F$. We introduce the following notation:
\begin{itemize}
\item $G_\infty = G(F \otimes_\bbQ \bbR)$, and $\frg$ is the complexified Lie algebra of $G_\infty$, viewed as a real Lie group.
\item $K_\infty \subset G_\infty$ is a maximal compact subgroup.
\item $G^\infty = G(\bbA_F^\infty)$. Thus $G(\bbA_F) = G^\infty \times G_\infty$.
\item $Z \subset G$ is the center, $Z_\infty = Z(F \otimes_\bbQ \bbR)$, and $Z^\infty = Z(\bbA_F^\infty)$.
\end{itemize}
If $U \subset G^\infty$ is an open compact subgroup, then we define
\begin{equation}
X_U = G(F) \backslash G(\bbA_F) / U Z_\infty K_\infty.
\end{equation}
If $g \in G^\infty$, then $\Gamma_{U, g} = g U g^{-1} \cap G(F)$ is a discrete subgroup of $G_\infty$. Let $\overline{\Gamma}_{U, g}$ denote the image of $\Gamma_{U, g}$ in $G_\infty/Z_\infty$. Then $\overline{\Gamma}_{U, g}$ is a discrete subgroup of $G_\infty/Z_\infty = \PGL_n(F \otimes_\bbQ \bbR)$, and acts properly discontinuously on the quotient $G_\infty/K_\infty Z_\infty$. (The class of arithmetic subgroups is invariant under surjective morphisms of reductive groups.)

If $U \subset G^\infty$ is an open compact subgroup of the form $U = \prod_v U_v$, we say that $U$ is a good subgroup if it satisfies the following conditions:
\begin{itemize}
\item For each $g \in G(\bbA_F^\infty)$, the group $\Gamma_{U,g }$ is neat (hence $\overline{\Gamma}_{U, g}$ is neat, and torsion-free).
\item For each finite place $v$ of $F$, $U_v \subset \GL_n(\cO_{F_v})$.
\end{itemize}
In this connection, we have the following well-known lemma.
\begin{lemma}\label{lem_good_subgroups_act_without_fixed_points}
\begin{enumerate}
\item Good subgroups exist.
\item Let $U \subset G^\infty$ be a good subgroup. Then for all $g \in G^\infty$, the group $\overline{\Gamma}_{U, g}$ acts freely on $X = G_\infty/K_\infty Z_\infty$, and $X_U$ is endowed with the structure of smooth manifold.
\item Let $V = \prod_v V_v$ be an open compact subgroup of the good subgroup $U$. Then $V$ is also good.
\item Let $V = \prod_v V_v$ be a normal open compact subgroup of the good subgroup $U$. Then the map $X_V \to X_U$ is a Galois covering space, with Galois group $U/(V \cdot ( Z(F) \cap U ))$. (The intersection $Z(F) \cap U$ is taken inside $G^\infty$.)
\end{enumerate}
\end{lemma}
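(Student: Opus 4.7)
The plan is to prove the four parts in order, using that all hinge on the neatness assumption (which in particular implies that $\Gamma_{U,g}$ and $\overline{\Gamma}_{U,g}$ are torsion-free) together with the standard fact that any arithmetic subgroup of $G_\infty/Z_\infty$ acts properly discontinuously on the contractible symmetric space $X = G_\infty/K_\infty Z_\infty$.

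For part (1), I would exhibit good subgroups concretely: fix an auxiliary rational prime $\ell$ unramified in $F$, and take $U_v = \GL_n(\cO_{F_v})$ at all finite places $v \nmid \ell$ and $U_v = 1 + \ell^N M_n(\cO_{F_v})$ at places $v \mid \ell$, for $N$ sufficiently large. For $N \geq 1$, the eigenvalues of any element of $\Gamma_{U,g}$ generate a subgroup of $\overline{\bbQ}^\times$ whose image modulo $\ell^N$ is trivial, so by a standard application of Minkowski's lemma (as in Borel's book) this subgroup is torsion-free for large $N$, whence $\Gamma_{U,g}$ is neat. Containment in the standard maximal compact at each finite place is automatic.

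For part (2), I would use the decomposition $X_U = \bigsqcup_i \overline{\Gamma}_{U,g_i} \backslash X$ over a (finite) set of double coset representatives $g_i$ for $G(F) \backslash G^\infty / U$. Proper discontinuity of each $\overline{\Gamma}_{U,g_i}$-action on $X$ is standard, so it suffices to show freeness: if $\overline{\gamma} \in \overline{\Gamma}_{U,g}$ fixes some $x K_\infty Z_\infty$ then a lift $\gamma \in \Gamma_{U,g}$ maps under $G(F) \hookrightarrow G_\infty$ to an element whose image in $G_\infty/Z_\infty$ lies in the compact set $x (K_\infty Z_\infty/Z_\infty) x^{-1}$; discreteness of $\overline{\Gamma}_{U,g}$ forces $\overline{\gamma}$ to have finite order, and neatness then forces $\overline{\gamma} = 1$. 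The smooth structure on $X_U$ then descends from that of $X$. Part (3) is immediate from the definition, since neatness is inherited by subgroups.

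For part (4), I would first define the right action of $U$ on $X_V$ by $[g] \cdot u = [gu]$, which is well-defined because $V$ is normal in $U$, and clearly factors through $U/V$. The subgroup $Z(F) \cap U$ acts trivially because elements of $Z(F)$ can be absorbed into the left quotient by $G(F)$. To verify that $H = U/(V \cdot (Z(F) \cap U))$ acts freely, suppose $u \in U$ fixes $[g] \in X_V$; unpacking, there exist $\gamma \in G(F)$, $v \in V$, $z_\infty \in Z_\infty$, $k_\infty \in K_\infty$ with $\gamma g u^{-1} = g v z_\infty k_\infty$. Separating finite and infinite components gives $\gamma \in g u^{-1} V g^{-1} \cap G(F) \subset \Gamma_{U,g}$ and $\gamma^{-1} \in K_\infty Z_\infty$ in $G_\infty$; the argument of part (2) then forces $\overline{\gamma} = 1$, i.e.\ $\gamma \in Z(F)$, and the relation $\gamma = g u^{-1} v' g^{-1}$ with $v' \in V$ then gives $u \in V \cdot (Z(F) \cap U)$. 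Conversely, surjectivity of $X_V / H \to X_U$ follows because any identification $g_2 = \gamma g_1 u^{-1} z_\infty k_\infty$ realising $[g_1]$ and $[g_2]$ as having the same image in $X_U$ displays $u \in U$ moving $[g_1]$ to $[g_2]$ in $X_V$. The main (mild) obstacle is the case analysis for freeness in part (4), which is where the correction by $Z(F) \cap U$ appears, but this is a routine bookkeeping once part (2) is in hand.
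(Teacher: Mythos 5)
The paper itself offers no proof of this lemma; it is stated as ``well-known'' and the text immediately proceeds to the numerology of the $X_U$. Your write-up is a correct filling-in, and each step is the standard one: the congruence-level construction at an auxiliary prime $\ell$ for part (1); the decomposition $X_U = \bigsqcup_i \overline{\Gamma}_{U,g_i}\backslash X$ together with the compactness/discreteness argument for freeness in part (2); inheritance of neatness by subgroups for part (3); and the bookkeeping with $Z(F)\cap U$ for part (4).

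A few small precision points you may want to tidy. In part (1), the reason the construction works for \emph{every} $g\in G^\infty$ (not just $g=1$) is that the eigenvalues of $\gamma\in\Gamma_{U,g}$ coincide with those of $g^{-1}\gamma g\in U$, so the constraint that they lie (after a fixed embedding $\overline{\bbQ}\hookrightarrow\overline{\bbQ}_\ell$) in the torsion-free group $1+\ell^N\overline{\bbZ}_\ell$ is insensitive to the conjugation, even though $gUg^{-1}$ itself need not lie in $\GL_n(\widehat{\cO}_F)$. Making this explicit would make ``image modulo $\ell^N$ is trivial'' precise. In part (4), the last sentence should assert \emph{injectivity} (equivalently bijectivity) of $X_V/H\to X_U$; surjectivity is automatic from $V\subset U$, and what you actually verify is that two points of $X_V$ with the same image in $X_U$ lie in one $H$-orbit. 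It is also worth remarking that $H$ is a finite group, so the free action you exhibit is automatically properly discontinuous and $X_V\to X_U$ is a genuine covering map.
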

We now discuss the numerology of the manifolds $X_U$. Let $r_1,$ $r_2$ denote the number of real and complex places of $F$, respectively. We then have
\begin{equation} \dim X_U = d = \frac{r_1}{2}(n-1)(n+2) + r_2(n^2 - 1). 
\end{equation}
The so-called defect is (where the rank denotes dimension of a maximal torus in a real algebraic group)
\begin{equation}\label{eqn_calculation_of_defect} l_0 = \rank G_\infty - \rank Z_\infty K_\infty = \left\{ \begin{array}{ll} r_1 (\frac{n-2}{2}) + r_2(n-1) & n \text { even;} \\ r_1(\frac{n-1}{2}) + r_2(n-1) &n \text{ odd,} \end{array}\right. 
\end{equation}
and we set 
\begin{equation} q_0 = \frac{d-l_0}{2} = \left\{ \begin{array}{ll} r_1 (\frac{n^2}{4}) + r_2n(n-1) & n \text { even;} \\ r_1 (\frac{n^2-1}{4}) + r_2n(n-1) &n \text{ odd.} \end{array}\right. 
\end{equation}
(In particular, if $F$ is totally complex, then 
\[ d = (n^2 - 1) \frac{[F : \bbQ]}{2} \]
and
\[ l_0 = (n-1) \frac{[F : \bbQ]}{2} \]
and
\[ q_0 = n(n-1)\frac{[F : \bbQ]}{2}. \]

\subsection{Cohomology and Hecke operators}\label{sec_hecke_operators}

Let $\cJ$ denote the set of good subgroups of $G^\infty$. If $U \in \cJ$ and $M$ is a (left) $\bbZ[U]$-module on which the group $Z(F) \cap U \subset U$ acts trivially, we can define a local system $\cL_M$ on $X_U$ as the sheaf of local sections of the morphism
\[ G(F) \backslash \left[ G^\infty \times G_\infty/Z_\infty K_\infty \times M\right]  / U  \to G(F) \backslash \left[ G^\infty \times G_\infty/Z_\infty K_\infty\right] /U = X_U. \]
On the left-hand side, the group $G(F) \times U$ acts by $(\gamma, u) \cdot (g^\infty, x, m) = (\gamma g^\infty u^{-1}, \gamma x, u m)$; $M$ is endowed with the discrete topology. The cohomology groups $H^\ast(X_U, M) = H^\ast(X_U, \cL_M)$ are then defined. Let $C_{\bbA, \bullet}$ denote the complex of singular chains with $\bbZ$-coefficients, valued in $G^\infty \times G_\infty/ K_\infty Z_\infty$; it is naturally a $\bbZ[G(F) \times G^\infty]$-module. We define $C^\bullet_\bbA(U, M) = \Hom_{G(F) \times U}(C_{\bbA, \bullet}, M)$ and $C_{\bbA, \bullet}(U, M) = C_{\bbA, \bullet} \otimes_{\bbZ[G(F) \times U]} M$. (In forming this tensor product, it is necessary to view $C_{\bbA, \bullet}$ as a right $\bbZ[G(F) \times U]$-module. Writing a general singular simplex in $G^\infty \times G_\infty/ K_\infty Z_\infty$ as $h \times \sigma$, where $h \in G^\infty$ and $\sigma$ is a singular simplex in $G_\infty/ K_\infty Z_\infty$, we thus have the relation
\[ (hu \times \sigma) \otimes m = (h \times \sigma) \otimes um \]
in $C_{\bbA, \bullet}(U, M)$ for all $u \in U$, $m \in M$.)
\begin{proposition}\label{prop_adelic_complex_computes_cohomology} Let $U \in \cJ$, and let $M$ be a $\bbZ[U]$-module. Then there are canonical isomorphisms
\begin{equation}\label{eqn_adelic_complex_computes_cohomology}
H^\ast(X_U, M) \cong H^\ast(C^\bullet_\bbA(U, M)),
\end{equation}
\begin{equation}\label{eqn_adelic_complex_computes_homology}
H_\ast(X_U, M) \cong H_\ast(C_{\bbA, \bullet}(U, M)).
\end{equation}
\end{proposition}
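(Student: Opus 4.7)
The plan is to reduce both assertions to the classical identification of the (co)homology of an arithmetic locally symmetric space with group (co)homology, carried out one connected component at a time.

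First, I would use the finiteness of the double coset space $G(F) \backslash G^\infty / U$ for $U$ open compact to pick representatives $g_1, \dots, g_r$ and obtain the decomposition into connected components
\[ X_U = \bigsqcup_{i=1}^r \overline{\Gamma}_{U, g_i} \backslash X, \qquad X = G_\infty / Z_\infty K_\infty. \]
Since $U$ is good, Lemma~\ref{lem_good_subgroups_act_without_fixed_points} gives that each $\overline{\Gamma}_{U, g_i}$ acts freely on the contractible symmetric space $X$, so each component $\overline{\Gamma}_{U, g_i} \backslash X$ is a $K(\overline{\Gamma}_{U, g_i}, 1)$. Unwinding the definition shows that $\cL_M$ restricts on this component to the local system attached to $M$ equipped with the action $\gamma \cdot m = (g_i^{-1} \gamma g_i) \cdot m$, which is well-defined because $Z(F) \cap U$ acts trivially on $M$. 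This gives canonical isomorphisms
\[ H^\ast(X_U, \cL_M) \cong \bigoplus_{i=1}^r H^\ast(\overline{\Gamma}_{U, g_i}, M), \qquad H_\ast(X_U, \cL_M) \cong \bigoplus_{i=1}^r H_\ast(\overline{\Gamma}_{U, g_i}, M). \]

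Next, I would match the adelic complexes to this direct sum via Shapiro's lemma. Because $G^\infty$ is totally disconnected, every singular simplex of $G^\infty \times G_\infty/Z_\infty K_\infty$ factors through a single slice $\{g\} \times X$, giving a $(G(F) \times U)$-equivariant decomposition $C_{\bbA, \bullet} \cong \bigoplus_{g \in G^\infty} C_\bullet(X)$ in which the group acts on the indexing set by $(\gamma, u) \cdot g = \gamma g u^{-1}$ and on each summand by $\gamma \in G(F)$ acting on $X$. The orbits of this action are precisely the double cosets $G(F) g_i U$, and the stabilizer of $g_i$ is the subgroup $\{(\gamma, g_i^{-1} \gamma g_i) : \gamma \in \Gamma_{U, g_i}\}$, naturally isomorphic to $\Gamma_{U, g_i}$. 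Collecting orbit-by-orbit and applying $\Hom_{G(F) \times U}(-, M)$, respectively $- \otimes_{\bbZ[G(F) \times U]} M$, produces canonical isomorphisms
\[ C^\bullet_\bbA(U, M) \cong \bigoplus_{i=1}^r \Hom_{\overline{\Gamma}_{U, g_i}}(C_\bullet(X), M), \qquad C_{\bbA, \bullet}(U, M) \cong \bigoplus_{i=1}^r C_\bullet(X) \otimes_{\overline{\Gamma}_{U, g_i}} M, \]
where the action of $\Gamma_{U, g_i}$ on $M$ (factoring through $\overline{\Gamma}_{U, g_i}$) is via $\gamma \mapsto g_i^{-1} \gamma g_i \in U$.

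To conclude, since $\overline{\Gamma}_{U, g_i}$ acts freely on the contractible space $X$, each term $C_n(X)$ is a free $\bbZ[\overline{\Gamma}_{U, g_i}]$-module (on a choice of one simplex per orbit), and the augmentation $C_\bullet(X) \to \bbZ$ is therefore a free resolution. Taking cohomology of each side then reproduces the component-wise group (co)homology from the first step, establishing the two asserted isomorphisms. The only real technical burden is bookkeeping: one must check that each identification is independent of the auxiliary choice of coset representatives $g_i$, which I expect to follow from the observation that a change of representative $g_i' = \gamma g_i u$ induces an inner automorphism of $\overline{\Gamma}_{U, g_i}$ together with a compatible translation of the $M$-action, and inner conjugation acts trivially on group (co)homology.
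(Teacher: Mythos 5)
Your proof is correct and follows essentially the same route as the paper: both hinge on the observation that a singular simplex in $G^\infty \times X$ is locally constant in the totally disconnected first factor, and then decompose $C_{\bbA,\bullet}$ along the double cosets $G(F)\backslash G^\infty/U$ to match the adelic complex with the component-wise complexes computing $H^\ast(\overline{\Gamma}_{U,g_i}\backslash X, M_i)$. Your packaging of this as a $(G(F)\times U)$-equivariant direct sum indexed by $G^\infty$ followed by an orbit-by-orbit Shapiro-type reduction is just a reorganization of the paper's restriction map together with its injectivity and surjectivity checks.
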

\begin{proof}
We treat the isomorphism (\ref{eqn_adelic_complex_computes_cohomology}) in cohomology, the other case being similar. Let us write $X = G_\infty / Z_\infty K_\infty$, and choose a decomposition $G^\infty = \sqcup_i G(F) g_i U$. Each of the groups $\overline{\Gamma}_{U, g_i}$ acts freely on $X$, and we have 
\[ X_U = \sqcup_i \left( \overline{\Gamma}_{U, g_i} \backslash X \right).  \]
There are corresponding identifications 
\[ H^\ast(X_U, M) = \oplus_i H^\ast(\overline{\Gamma}_{U, g_i} \backslash X, M_i) = \oplus_i H^\ast( \Hom_{{\Gamma}_{U, g_i}}(C_{\bbR, \bullet}, M_i)), \]
where $C_{\bbR, \bullet}$ denotes the complex of singular chains in $X$ with $\bbZ$-coefficients, and $M_i$ denotes $M$ viewed as an $\bbZ[\Gamma_{U, g_i}]$-module via the formula $\gamma \cdot m = (g_i^{-1} \gamma g_i) m$ ($\gamma \in \Gamma_{U, g_i}$, $g_i^{-1} \gamma g_i \in U$). We will show that there is a natural isomorphism
\begin{equation}\label{eqn_isomorphism_of_chain_complexes} \Hom_{\bbZ[G(F) \times U]}(C_{\bbA, \bullet}, M) \cong \oplus_i \Hom_{\Gamma_{U, g_i}}(C_{\bbR, \bullet}, M_i). 
\end{equation}
This will induce an isomorphism $H^\ast(X_U, M) \cong H^\ast(C^\bullet_\bbA(U, M))$ which is easily seen not to depend on the choice of $g_i$, and therefore prove the proposition. To construct the map (\ref{eqn_isomorphism_of_chain_complexes}), we note that there is a closed embedding
\[ \sqcup_i \left( \{ g_i \} \times X \right) \hookrightarrow G^\infty \times X, \]
and that the corresponding restriction map $\Hom_{\bbZ[G(F) \times U]}(C_{\bbA, \bullet}, M) \to \oplus_i \Hom_\bbZ(C_{\bbR, \bullet}, M)$ actually takes values in $\oplus_i \Hom_{\Gamma_{U, g_i}}(C_{\bbR, \bullet}, M_i)$. To see that the induced map
\[ \Hom_{\bbZ[G(F) \times U]}(C_{\bbA, \bullet}, M) \to \oplus_i \Hom_{\Gamma_{U, g_i}}(C_{\bbR, \bullet}, M_i) \]
 is injective, we note that a singular simplex $\sigma : \Delta^j \to G^\infty \times X$ becomes constant after projection to $G^\infty$, so is $G(F) \times U$-conjugate to a simplex taking values in $\{ g_i \} \times X$, for some $i$ depending on $\sigma$. To see that it is surjective, we note that two simplices $\sigma, \sigma' : \Delta^j \to \{ g_i \} \times X$ are $G(F) \times U$-conjugate if and only if they $\Gamma_{U, g_i}$-conjugate. This completes the proof.
\end{proof}
Let $U, V \in \cJ$. Let $S$ be a finite set of finite places of $F$ such that $U_v = V_v$ if $v \in S$, and suppose that $M$ is actually a $\bbZ[G^{\infty, S} \times U_S]$-module. If $g \in G^{\infty, S}$, then we define a map $[U g V]^\ast : C^\bullet_\bbA(V, M) \to C^\bullet_\bbA(U, M)$ as follows. Fix a decomposition $U g V = \coprod_i g_i V$ ($g_i \in G^{\infty, S}$), and take $\varphi \in C^\bullet_\bbA(U, M)$. If $\sigma \in C_{\bbA, \bullet}$, then we define $([ U g V ]^\ast \varphi)(\sigma) = \sum_i g_i \varphi(g_i^{-1} \sigma)$. It is easy to check that this does not depend on the choice of double coset decomposition. If $U = V$ and we write $\cH(G^{\infty, S}, U^S)$ for the convolution algebra of compactly supported $U^S$-biinvariant functions $f : G^{\infty, S} \to \bbZ$ (with unit element $e_{U^S}$, the characteristic function of $U^S$), then the formula $e_{U^S g U^S} \mapsto [U^S g U^S]^\ast$ defines an action of $\cH(G^{\infty, S}, U^S)$ on the complex $C^\bullet_\bbA(U, M)$, and hence on the cohomology groups $H^\ast(X_U, M)$. Alternatively, if $v \not\in S$ is a finite place of $F$ then the algebra $\cH(G(F_v), U_v)$ acts on $H^\ast(X_U, M)$ in a similar way. 

Similarly, we define a map $[U g V]_\ast : C_{\bbA, \bullet}(U, M) \to C_{\bbA, \bullet}(V, M)$, given on elements of the form $(h \times \sigma) \otimes m$ ($h \in G^\infty,$ $\sigma : \Delta^j \to X,$ $m \in M$) by the formula 
\[ [U g V]_\ast((h \times \sigma) \otimes m) = \sum_i [g_i^{-1} \cdot (h \times \sigma)] \otimes g_i^{-1} m = \sum_i (h g_i \times \sigma) \otimes g_i^{-1} m. \]
It is easy to check that this is well-defined, and (in the case $U = V$) defines an action of the algebra $\cH(G^{\infty, S}, U^S)^\text{op}$ on the complex $C_{\bbA, \bullet}(U, M)$. We then have the following simple lemma.
\begin{lemma}
With notation as above, suppose that $M$ is a  $\bbZ[G^{\infty, S} \times U_S]$-module which is finite as a $\bbZ$-module. Let $M^\vee = \Hom(M, \bbQ/\bbZ)$, equipped with its natural structure of $\bbZ[G^{\infty, S} \times U_S]$-module. Then there are natural isomorphisms $C_{\bbA, \bullet}(V, M)^\vee \cong C_\bbA^\bullet(V, M^\vee)$ and $C_{\bbA, \bullet}(U, M)^\vee \cong C_\bbA^\bullet(U, M^\vee)$, and with respect to these isomorphisms we have $[U g V]^\ast = [U g V]_\ast^\vee$ (i.e. transpose). If $U = V$, then this is an isomorphism of $\cH(G^{\infty, S}, U^S)$-modules, when $C_{\bbA, \bullet}(U, M)^\vee$ is given its natural (transpose) structure of $\cH(G^{\infty, S}, U^S)$-module.
\end{lemma}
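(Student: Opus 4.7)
The plan is to reduce the statement to the tensor-hom adjunction and then verify Hecke compatibility by a direct computation on generators. The key input is that for any right $\bbZ[H]$-module $C$ and any left $\bbZ[H]$-module $N$, there is a natural isomorphism
\[ \Hom_\bbZ(C \otimes_{\bbZ[H]} N, \bbQ/\bbZ) \cong \Hom_{\bbZ[H]}(C, \Hom_\bbZ(N, \bbQ/\bbZ)), \]
given by $\Phi \mapsto (c \mapsto (n \mapsto \Phi(c \otimes n)))$. Specializing this to $H = G(F) \times V$ (resp.\ $H = G(F) \times U$), $C = C_{\bbA, \bullet}$, and $N = M$ immediately produces the desired natural isomorphisms
\[ C_{\bbA, \bullet}(V, M)^\vee \cong C_\bbA^\bullet(V, M^\vee), \qquad C_{\bbA, \bullet}(U, M)^\vee \cong C_\bbA^\bullet(U, M^\vee). \]

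Next, to check the compatibility $[UgV]^\ast = [UgV]_\ast^\vee$ under these identifications, I would fix a double-coset decomposition $UgV = \bigsqcup_i g_i V$ and evaluate both sides on a generator $(h \times \sigma) \otimes m \in C_{\bbA, \bullet}(U, M)$. Given $\phi \in C_\bbA^\bullet(V, M^\vee)$ with corresponding $\tilde\phi \in C_{\bbA, \bullet}(V, M)^\vee$, the composite $\tilde\phi \circ [UgV]_\ast$ evaluates on $(h \times \sigma) \otimes m$ to
\[ \sum_i \phi(h g_i \times \sigma)(g_i^{-1} m), \]
using the formula $[UgV]_\ast((h \times \sigma) \otimes m) = \sum_i (h g_i \times \sigma) \otimes g_i^{-1} m$ supplied by the text. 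On the other side, $([UgV]^\ast \phi)(h \times \sigma) = \sum_i g_i \phi(g_i^{-1} \cdot (h \times \sigma))$; unfolding the left $G^\infty$-action $g_i^{-1} \cdot (h \times \sigma) = h g_i \times \sigma$ on $C_{\bbA, \bullet}$ together with the contragredient formula $(g_i f)(m) = f(g_i^{-1} m)$ for $f \in M^\vee$, this evaluates at $m$ to the same expression. Independence of the choice of coset representatives follows just as for the individual operators.

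The final assertion, that for $U = V$ the isomorphism is one of $\cH(G^{\infty, S}, U^S)$-modules, is then automatic: by construction the Hecke algebra acts on the dual complex $C_{\bbA, \bullet}(U, M)^\vee$ by transpose, so the equality $[UgU]^\ast = [UgU]_\ast^\vee$ just verified on double-coset generators extends by linearity to the entire algebra. No step of this plan looks difficult; the only real subtlety is bookkeeping the left- versus right-module conventions and the inversions introduced when passing between them, and checking that the adjunction isomorphism indeed carries the $\bbZ[G(F) \times U]$-equivariance condition defining $C^\bullet_\bbA(U,M^\vee)$ into the vanishing of the relations defining $C_{\bbA,\bullet}(U,M)$.
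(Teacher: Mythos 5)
Your proof is correct, and it takes the route one would expect. The paper in fact offers no proof of this lemma at all---it is stated bare, presumably because the authors regard it as immediate from the definitions---so there is no alternative argument in the text to compare against. What you have written is exactly the standard argument that the omitted proof would amount to: the tensor--hom adjunction
\[
\Hom_\bbZ\bigl(C_{\bbA,\bullet}\otimes_{\bbZ[G(F)\times U]} M,\,\bbQ/\bbZ\bigr)\;\cong\;\Hom_{G(F)\times U}\bigl(C_{\bbA,\bullet},\,\Hom_\bbZ(M,\bbQ/\bbZ)\bigr)
\]
gives the isomorphism of complexes once the right/left module conventions are unwound, and the Hecke compatibility is a one-line calculation on generators $(h\times\sigma)\otimes m$ using the formulas $[UgV]_\ast((h\times\sigma)\otimes m)=\sum_i (hg_i\times\sigma)\otimes g_i^{-1}m$ and $([UgV]^\ast\phi)(h\times\sigma)=\sum_i g_i\phi(hg_i\times\sigma)$, together with the contragredient action on $M^\vee$; both sides evaluate at $m$ to $\sum_i\phi(hg_i\times\sigma)(g_i^{-1}m)$. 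Your remark about verifying independence of the coset representatives and about the left/right bookkeeping identifies the only places where care is needed, and those steps go through without incident.
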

\begin{corollary}
Let $L$ be a field, and let $U$ be a good subgroup. Then the natural isomorphism $H^\ast(X_U, L) \cong \Hom(H_\ast(X_U, L), L)$ is an isomorphism of $\cH(G^\infty, U)$-modules.
\end{corollary}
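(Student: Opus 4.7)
The plan is to establish an $\cH(G^\infty, U)$-equivariant isomorphism at the level of cochain complexes,
\[ \Phi : C^\bullet_\bbA(U, L) \xrightarrow{\sim} \Hom_L(C_{\bbA, \bullet}(U, L), L), \]
and then deduce the stated isomorphism on cohomology by an application of the universal coefficient theorem over the field $L$. Since $L$ is a trivial $\bbZ[G(F) \times U]$-module, the tensor--Hom adjunction provides a canonical such $\Phi$ at the level of complexes of $L$-vector spaces, sending $\varphi \in C^\bullet_\bbA(U, L)$ to the functional $(h \times \tau) \otimes 1 \mapsto \varphi(h \times \tau)$; this is well-defined because $\varphi$ factors through $(G(F) \times U)$-orbits, and it is clearly bijective.

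Next I would verify that $\Phi$ intertwines $[UgU]^\ast$ on the source with the $L$-linear transpose $[UgU]_\ast^\vee$ on the target. This is the direct analogue, for $L$-linear duality, of the preceding lemma (which handled Pontryagin duality for finite $M$). Fixing a decomposition $UgU = \sqcup_i g_i U$, one evaluates both $\Phi([UgU]^\ast \varphi)$ and $[UgU]_\ast^\vee \Phi(\varphi)$ on a generator $(h \times \tau) \otimes 1$ and matches the resulting sums, using the $G(F) \times U$-equivariance of $\varphi$ and the fact that $\{u^{-1} g_i\}_i$ is another system of left coset representatives of $U$ in $UgU$ for any $u \in U$. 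This is the main computational step, and by its formal nature it extends the preceding lemma from finite $M$ to arbitrary trivial coefficients $L$.

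Finally, for any complex $C_\bullet$ of $L$-vector spaces, the canonical map
\[ H^i(\Hom_L(C_\bullet, L)) \longrightarrow \Hom_L(H_i(C_\bullet), L) \]
is an isomorphism, since vector space duality is exact over a field. Applying this to $C_\bullet = C_{\bbA, \bullet}(U, L)$ and combining with Proposition \ref{prop_adelic_complex_computes_cohomology}, I would conclude
\[ H^i(X_U, L) \cong H^i(\Hom_L(C_{\bbA, \bullet}(U, L), L)) \cong \Hom_L(H_i(X_U, L), L), \]
the entire chain being $\cH(G^\infty, U)$-equivariant by the second step. The main obstacle is the coset bookkeeping underlying Hecke equivariance of $\Phi$; once that identity is in hand, the passage to cohomology is formal.
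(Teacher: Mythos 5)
Your proposal is correct and matches what the paper intends. The corollary follows the unproved lemma immediately preceding it, which treats Pontryagin duality for finite $M$; for a possibly infinite field $L$ one cannot literally invoke that lemma (since $\Hom(L, \bbQ/\bbZ)$ is not the $L$-linear dual), and your strategy — redoing the same formal tensor--Hom adjunction and coset computation with $\Hom_L(-, L)$ in place of $(-)^\vee$, then passing to cohomology via exactness of $L$-linear duality — is precisely the natural route, and the coset bookkeeping you describe is identical to what the lemma's proof would contain. The only small imprecision is calling this an "extension" of the lemma: it is really a parallel argument with a different dualizing object, but since both are instances of the same formal manipulation this does not affect correctness.
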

We will generally consider coefficient systems of the following 2 kinds, defined in terms of a fixed prime $p$:
\begin{enumerate}
\item Let $V$ be a finite-dimensional $\bbQ_p$-vector space equipped with a continuous action of $\prod_{v | p} G(F_v)$, and on which the group $\prod_{v | p} Z(F_v)$ acts trivially. We take $M = V$, with $G^\infty$ acting on $M$ by its projection to the factor $\prod_{v | p} G(F_v)$. Then for each $U \in \cJ$, the full Hecke algebra $\cH(G^\infty, U)$ acts on the groups $H^\ast(X_U, M)$, which are $\bbQ_p$-vector spaces.
\item Let $S = S_p$ denote the set of places of $F$ dividing $p$, and let $V$ be a finite $\bbZ_p$-module equipped with a continuous action of $\prod_{v \in p} \GL_n(\cO_{F_v})$, on which the group $\prod_{v | p} \cO_{F_v}^\times$ acts trivially. We take $M = V$, with $G^{\infty, S} \times U_S$ acting on $M$ by its projection to the factor $U_S \subset \prod_{v | p} \GL_n(\cO_{F_v})$. Then for each $U \in \cJ$, the $p$-deprived Hecke algebra $\cH(G^{\infty, S}, U^S)$ acts on the groups $H^\ast(X_U, M)$, which are $\bbZ_p$-modules.
\end{enumerate}
The connection between these two situations will be exploited in \S \ref{sec_independence_of_weight} below.

We now introduce some particular open compact subgroups and associated Hecke operators. If $v$ is a finite place of $F$, then we fix a choice $\varpi_v$ of uniformizer of $\cO_{F_v}$, and define for each $i = 1, \dots, n$ a matrix $\alpha_{v, i} = \diag(\varpi_v, \dots, \varpi_v, 1, \dots, 1)$ (with exactly $i$ entries equal to $\varpi_v$). If $U \in \cJ$ and $U_v = \GL_n(\cO_{F_v})$, then the unramified Hecke operators $T_v^i = [U_v\alpha_{v, i} U_v] \in \cH(G(F_v), U_v)$ are independent of the choice of $\varpi_v$, and pairwise commute. 

If $v$ is a finite place of $F$ and $c \geq b \geq 0$ are integers, then we define an open compact subgroup $I_v(b, c)$ of $\GL_n(\cO_{F_v})$ by the formula
\begin{equation}\label{eqn_definition_of_iwahori} I_v(b, c) = \left\{ \left( \begin{array}{cccc} t_1 & \ast & \cdots & \ast \\
0 & t_2 & \ddots & \vdots \\
\vdots & \ddots & \ddots & \ast \\
0 & \cdots & 0 & t_n \end{array}\right) \text{ mod }\varpi_v^c, \text{ with }t_1 \equiv \dots \equiv t_n \text{ mod }\varpi_v^b\right\}. 
\end{equation}
Thus $I_v(0, 1)$ is the standard Iwahori subgroup of $\GL_n(\cO_{F_v})$. If $U_v = I_v(b, c)$ for some integers $0 \leq b \leq c$, then we define the operators $\mathbf{U}_v^i$ by the same formula $\mathbf{U}_v^i = [U_v \alpha_{v, i} U_v]$. In contrast to the unramified case, these operators do depend on the choice of $\varpi_v$, but this dependence will not be important in what follows. If $\alpha \in I_v(1, c)$ then we write $\langle \alpha \rangle = [ U_v \alpha U_v ]$. (We observe that $I_v(b, c)$ is a normal subgroup of $I_v(1, c)$.)

 The notation $\mathbf{U}_v^i$ is ambiguous, since it does not involve the integers $b \leq c$; however, this abuse of notation is mild, as the following lemma shows.
\begin{lemma}\label{lem_ordinary_hecke_operators} Let $v$ be a finite place of $F$, $U \in \cJ$, and suppose that $U_v = I_v(b, c)$ for some $1 \leq b \leq c$. Let $A$ be a finite abelian group, equipped with its trivial $G^\infty$-action.
\begin{enumerate}
\item The operators $\mathbf{U}_v^i,$ ($i = 1, \dots, n$) and $\langle \alpha \rangle$ ($\alpha \in I_v(1, c)$) all commute in their action on the complex $C^\bullet_\bbA(U, A)$, and hence on the groups $H^\ast(X_U, A)$. 
\item Let $U' \subset U$ be a subgroup with $U'_w = U_w$ if $w \neq v$, and $U'_v = \Iw_v(b', c')$ for some $b' \geq b$, $c' \geq b$. Then the operators $\mathbf{U}_v^i$ and $\langle \alpha \rangle$ commute with the natural maps
\[ C^\bullet_\bbA(U, A) \to C^\bullet_\bbA(U', A) \]
and
\[ H^\ast(X_U, A) \to H^\ast(X_{U'}, A). \]
\item Suppose that $c > b$. Define another subgroup $U' = \prod_w U'_w$ by the formula $U'_w = U_w$ if $w \neq v$ and $U'_v = I_v(b, c-1)$. Then the operator $\mathbf{U}_v^i$ on $C^\bullet_\bbA(U, A)$ takes image inside the subcomplex $C^\bullet_\bbA(X_{U'}, A)$. Consequently, the induced map on cohomology factors
\[ H^\ast(X_U, A) \to H^\ast(X_{U'}, A) \to H^\ast(X_U, A), \]
where the first operator is the double coset operator $[U \alpha_{v, i} U']$ and the second is the canonical pullback.
\end{enumerate}
Analogous statements hold at the level of homology groups.
\end{lemma}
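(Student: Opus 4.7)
The plan is to verify all three parts by direct manipulation of right coset representatives, using two key inputs. First, since $A$ carries the trivial $G^\infty$-action, the operator $[UgU]^*$ acts by the simple formula $\varphi \mapsto \sum_j \varphi(g_j^{-1} \cdot)$ with no coefficient twist, and the transpose formula computes the corresponding homological operator. Second, we have the Iwahori factorization $I_v(b, c) = \overline{N}_c \cdot T_b \cdot N_0$, where $N_0$ denotes the $\cO_{F_v}$-points of the unipotent radical of the upper Borel, $\overline{N}_c$ is the level-$\varpi_v^c$ congruence subgroup of the opposite unipotent, and $T_b$ is the subgroup of $T(\cO_{F_v})$ of diagonal matrices whose entries agree modulo $\varpi_v^b$. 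Since $\alpha_{v,i}$ is dominant with respect to the upper Borel, we have $\alpha_{v,i} N_0 \alpha_{v,i}^{-1} \subseteq N_0$ and $\alpha_{v,i}^{-1} \overline{N}_c \alpha_{v,i} \subseteq \overline{N}_c$, yielding the canonical right coset decomposition
\[ I_v(b,c) \cdot \alpha_{v,i} \cdot I_v(b,c) = \bigsqcup_{u \in N_0 / \alpha_{v,i} N_0 \alpha_{v,i}^{-1}} u \, \alpha_{v,i} \, I_v(b,c), \]
which underlies all subsequent calculations.

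For part 1, the commutativity of $\mathbf{U}_v^i$ and $\mathbf{U}_v^j$ is a formal consequence of $\alpha_{v,i} \alpha_{v,j} = \alpha_{v,j} \alpha_{v,i}$ together with the dominance of both elements, which identifies both composite operators with $[I_v(b,c) \cdot \alpha_{v, i+j} \cdot I_v(b,c)]^*$. For $\langle \alpha \rangle$ with $\alpha \in I_v(1, c)$, the normality of $I_v(b, c)$ in $I_v(1, c)$ reduces the operator to a single coset $\alpha I_v(b, c)$, and the commutation with $\mathbf{U}_v^i$ follows by a direct conjugation computation. For part 2, since $U_v$ and $U'_v$ differ only in their lower-triangular and diagonal conditions, they share the same upper unipotent part $N_0$, so the same family of representatives $u \in N_0 / \alpha_{v,i} N_0 \alpha_{v,i}^{-1}$ parametrizes both $U_v \alpha_{v,i} U_v / U_v$ and $U'_v \alpha_{v,i} U'_v / U'_v$. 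Consequently the operators agree term-by-term on the overlap $C^\bullet_\bbA(U, A) \hookrightarrow C^\bullet_\bbA(U', A)$; the argument for $\langle \alpha \rangle$ is similar wherever the element is defined at both levels.

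Part 3 is the main obstacle. Unwinding the definition, the claim reduces to the set-theoretic containment
\[ I_v(b, c-1) \subset I_v(b, c) \cdot \alpha_{v, i} I_v(b, c) \alpha_{v, i}^{-1} \cdot I_v(b, c), \]
which guarantees that left multiplication by any $u' \in I_v(b, c-1)$ permutes the right $I_v(b, c)$-cosets in $I_v(b, c) \alpha_{v, i} I_v(b, c)$; by the trivial-coefficient formula, this permutation invariance immediately yields $U'$-invariance of $\mathbf{U}_v^i f$. To verify the containment, given $m \in I_v(b, c-1)$ written in its Iwahori factorization, one exploits that conjugation by $\alpha_{v, i}$ maps $\overline{N}_c$ into a subgroup whose lower-left $(n-i) \times i$ block has entries in $\varpi_v^{c-1} \cO_{F_v}$, precisely absorbing the extra flexibility of $\overline{N}_{c-1}$ over $\overline{N}_c$ in that block; the remaining below-diagonal blocks, where conjugation does not relax the congruence, are handled by combining left- and right-multiplication corrections in $I_v(b, c)$ against the upper-unipotent factor $N_0$. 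The hypothesis $c > b$ enters exactly to ensure that all diagonal adjustments induced by these corrections lie in $T_b$ (the correction terms involve products of entries of valuation at least $c-1$, which the assumption forces into $\varpi_v^b \cO_{F_v}$). The factorization through $[U \alpha_{v, i} U']$ composed with pullback then follows by grouping the coset representatives compatibly. The analogous statements at the level of homology follow by a dual argument using transfer maps on coinvariants.
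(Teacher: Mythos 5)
Parts 1 and 2 of your argument are essentially correct, and they are in effect a worked-out version of the facts the paper outsources to Hida's Proposition 2.2 and Geraghty's Lemma 2.3.3: your observation that the coset representatives live in $N_0/\alpha_{v,i}N_0\alpha_{v,i}^{-1}$, a set that does not see $b$ or $c$, is precisely the independence-of-level statement invoked there. One notational slip: $\alpha_{v,i}\alpha_{v,j}\neq\alpha_{v,i+j}$. The commutativity of $\mathbf{U}_v^i$ and $\mathbf{U}_v^j$ comes from identifying both composites with $[I_v(b,c)\,\alpha_{v,i}\alpha_{v,j}\,I_v(b,c)]$ (legitimate because $\alpha_{v,i},\alpha_{v,j}$ are commuting dominant elements), not with a double coset of $\alpha_{v,i+j}$.

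Part 3 has a genuine gap, in two places. First, the reduction is not to the containment you state. For $U$-invariant $\varphi$ the cochain $\mathbf{U}_v^i\varphi=\sum_j\varphi(g_j^{-1}\cdot)$ is $U'$-invariant if and only if left multiplication by every $u'\in U'$ stabilizes the set $U\alpha_{v,i}U$ of left $U$-cosets, i.e.\ if and only if $U'_v\subseteq U_v\alpha_{v,i}U_v\alpha_{v,i}^{-1}$ — \emph{without} your trailing factor of $I_v(b,c)$. Membership in $U_v\alpha_{v,i}U_v\alpha_{v,i}^{-1}\cdot U_v$ is strictly weaker and does not imply the coset permutation: writing $u'=u_1(\alpha u_2\alpha^{-1})u_3$ gives $u'\alpha=u_1\alpha u_2(\alpha^{-1}u_3\alpha)$, and $\alpha_{v,i}^{-1}u_3\alpha_{v,i}$ generically fails to lie in $\GL_n(\cO_{F_v})$ because its upper-right $i\times(n-i)$ block picks up a factor $\varpi_v^{-1}$, so one cannot place $u'\alpha_{v,i}$ in $U\alpha_{v,i}U$. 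Second, the correct containment $I_v(b,c-1)\subseteq I_v(b,c)\,\alpha_{v,i}I_v(b,c)\alpha_{v,i}^{-1}$ actually fails for $1\le i<n$ once $n\ge 3$: conjugation by $\alpha_{v,i}$ loosens only the lower-left $(n-i)\times i$ block, and there is no way to loosen the sub-diagonal positions interior to the leading $i\times i$ and trailing $(n-i)\times(n-i)$ blocks. Concretely, with $n=3$, $i=1$: the $(3,2)$ entry of every element of $I_v(b,c)\cdot\alpha_{v,1}I_v(b,c)\alpha_{v,1}^{-1}$ is still divisible by $\varpi_v^c$, whereas $I_v(b,c-1)$ contains $1+\varpi_v^{c-1}E_{32}$. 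So your "left- and right-multiplication corrections against the upper-unipotent factor $N_0$" cannot rescue the argument for a single $\alpha_{v,i}$. The mechanism works cleanly only when the defining element is \emph{strictly} dominant, so that every lower-triangular position is simultaneously relaxed upon conjugation; this is the case for $\mathbf{U}_p=\prod_{v\in S_p}\prod_i\mathbf{U}_v^i$, the operator actually used in \S\ref{sec_ordinary_part_of_homology}, and that strictly-dominant statement is what Geraghty's Lemma 2.5.2 (cited in the paper's proof) handles.
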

\begin{proof}
The first part follows from the fact that these operators generate a commutative subalgebra of $\cH(G(F_v), U_v)$; see \cite[Proposition 2.2]{Hid95}. The proof of \emph{loc. cit.} implies that the coset representatives used to define the action of $\mathbf{U}_v^i$ can be chosen independent of $b$ and $c$, which implies that the given maps commute with the action of Hecke, cf. the proof of \cite[Lemma 2.3.3]{Ger09}. This implies the second part of the lemma. The third part follows in a similar way, by using the argument of \cite[Lemma 2.5.2]{Ger09}.
\end{proof}
\subsection{Ordinary part of homology}\label{sec_ordinary_part_of_homology}

We now fix, for the rest of \S \ref{sec_ordinary_completed_cohomology}, a prime $p$. Let $S_p$ denote the set of places of $F$ dividing $p$. Let $E$ be a finite extension of $\bbQ_p$ inside $\overline{\bbQ}_p$, which contains the images of all embeddings $F \hookrightarrow \overline{\bbQ}_p$. We write $\varpi \in \cO$ for a choice of uniformizer of the ring of integers $\cO$ of $E$, $\lambda = (\varpi)$, and $k = \cO/\lambda$. We also fix a good subgroup $U = \prod_v U_v \subset G^\infty$, with the property that $U_v = I_v(1, 1)$ for each $v \in S_p$. If $c \geq b \geq 1$ are integers, then we define a new good subgroup $U(b, c) = \prod_v U(b, c)_v \subset U$ by the formulae:
\begin{itemize}
\item If $v \in S_p$, then $U(b, c)_v = I_v(b, c)$.
\item If $v \not\in S_p$, then $U(b, c)_v = U_v$. 
\end{itemize}
We define 
\[ T(0, c) = \prod_{v \in S_p} (\cO_{F_v}/\varpi_v^c)^\times \times \dots \times (\cO_{F_v}/\varpi_v^c)^\times\text{ (}n-1\text{ times),}\]
and $T(b, c) = \ker(T(0, c) \to T(0, b))$. There is an isomorphism
\begin{gather}
U(1, c) / U(b, c) \cong T(b, c) \\ \label{eqn_remove_the_center}
\left( \begin{array}{cccc} t_{v, 1} & \ast & \cdots & \ast \\
0 & t_{v, 2} & \ddots & \vdots \\
\vdots & \ddots & \ddots & \ast \\
0 & \cdots & 0 & t_{v, n} \end{array}\right)_v \text{ mod }\varpi_v^c \mapsto (t_{v, 1} t_{v, n}^{-1}, \dots, t_{v, n-1} t_{v, n}^{-1})_v.
\end{gather}
We observe that for any $c \geq b \geq 1$, we have $Z(F) \cap U(b, c) = Z(F) \cap U(1, c) = Z(F) \cap U$. The group $U(1, c)$ acts on $X_{U(b, c)}$, and the quotient 
\[ U(1, c) / (U(b, c) \cdot (Z(F) \cap U(0, c)) = U(1, c) / U(b, c) \cong T(b, c)\]
 acts freely, by Lemma \ref{lem_good_subgroups_act_without_fixed_points}. We define $\Lambda(b, c) = \cO[T(b, c)]$ and $\Lambda_b = \plim_c \Lambda(b, c)$. If $b \geq 1$, then $\Lambda_b$ is a complete Noetherian local ring (it is the completed group ring of an abelian pro-$p$ group). If $b = 1$, then we simply write $\Lambda = \Lambda_1$.

We now define the ordinary completed cohomology groups which are our objects of ultimate interest. By Lemma \ref{lem_ordinary_hecke_operators}, the natural maps
\[ H_i(X_{U(c', c')}, \cO/\varpi^{c'}\cO) \to H_i(X_{U(c, c)}, \cO/\varpi^c\cO)\text{ }(c' \geq c) \]
commute with the action of the Hecke operator 
\[ \mathbf{U}_p = \prod_{v \in S_p} \prod_{i = 1}^n \mathbf{U}_v^i = [ U(c, c) \alpha_p U(c, c)], \]
 where
\[ \alpha_p = \prod_{v \in S_p} \prod_{i=1}^n \alpha_{v, i} = (\diag(\varpi_v^{n}, \varpi_v^{n-1}, \dots, \varpi_v) )_{v \in S_p} \in \prod_{v \in S_p} G(F_v). \]
We therefore have induced maps (where ordinary parts are taken with respect to the operator $\mathbf{U}_p$):
\[ H_{i}(X_{U(c', c')}, \cO/\varpi^{c'}\cO)_\text{ord} \to H_{i}(X_{U(c, c)}, \cO/\varpi^c\cO)_\text{ord}. \]
We define
\[ H^i_\text{ord}({U}) = \plim_c H_{d-i}(X_{U(c, c)}, \cO/\varpi^c \cO)_\text{ord}. \]
The $\Lambda$-modules $H^i_\text{ord}({U})$ receive a canonical action of the algebra $\cH(G^{p, \infty}, U^p)^\text{op}$, induced by the action at finite levels. Our goal in this section is to prove the following version of Hida's control theorem (\cite[Theorem 6.2]{Hid98}): 
\begin{proposition}\label{prop_complex_control_theorem}
There is a minimal complex $F^\bullet_\infty$ of $\Lambda$-modules, together with morphisms for every $c \geq 1$:
\begin{gather*} g_c : F^\bullet_{\infty} \otimes_\Lambda \Lambda(1, c)/(\varpi^c) \to C_{\bbA, d - \bullet}(U(c, c), \cO/\varpi^c \cO), \\
g_c' :  C_{\bbA, d-\bullet}(U(c, c), \cO/\varpi^c \cO) \to  F^\bullet_{\infty} \otimes_\Lambda \Lambda(1, c)/(\varpi^c),
\end{gather*}
all satisfying the following conditions:
\begin{enumerate}
\item We have $g'_c g_c = 1$, and $g_c g'_c$ is an idempotent in $\End_{\mathbf{D}(\Lambda(1, c)/(\varpi^c))}(C_{\bbA, d - \bullet}(U(c, c), \cO/\varpi^c \cO))$.
\item For each $c \geq 1$, the diagram
\[ \xymatrix{F^\bullet_\infty\otimes_\Lambda \Lambda(1, c+1)/(\varpi^{c+1}) \ar[r] \ar[d] &  C_{\bbA, d - \bullet}(U(c+1, c+1), \cO/\varpi^{c+1} \cO) \ar[d] \ar[r] & F^\bullet_\infty\otimes_\Lambda \Lambda(1, c+1)/(\varpi^{c+1})\ar[d] \\
F^\bullet_\infty\otimes_\Lambda \Lambda(1, c)/(\varpi^{c}) \ar[r]  & C_{\bbA, d - \bullet}(U(c, c), \cO/\varpi^c \cO) \ar[r] & F^\bullet_\infty\otimes_\Lambda \Lambda(1, c)/(\varpi^{c}).} \]
is commutative, up to chain homotopy.
\item For each $c \geq 1$, the induced map of $g_c g'_c$ on homology is the natural projection
\[ H_{d-\ast}(X_{U(c, c)}, \cO/\varpi^c \cO) \to H_{d-\ast}(X_{U(c, c)}, \cO/\varpi^c \cO)_\text{ord}. \]
\end{enumerate}
The minimal complex $F^\bullet_\infty$ is determined uniquely up to unique isomorphism in $\mathbf{D}(\Lambda)$ by these properties. Finally, there is a unique homomorphism $f : \cH(G^{\infty, p}, U^p)^\text{op} \to \End_{\mathbf{D}(\Lambda)}(F^\bullet_\infty)$ with the property that for all $t \in \cH(G^{\infty, p}, U^p)^\text{op}$, we have $(f(t) \otimes_\Lambda \Lambda(1, c)/(\varpi^c)) = g'_c t g_c$.
\end{proposition}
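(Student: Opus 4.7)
The plan is to apply Proposition \ref{prop_limit_and_gluing_ordinary_parts} to the inverse system
\[ M^\bullet_c = C_{\bbA, d-\bullet}(U(c,c), \cO/\varpi^c \cO), \qquad t_c = \mathbf{U}_p, \]
with base ring $\Lambda$ and ideals $I_c = \ker(\Lambda \twoheadrightarrow \Lambda(1,c)/(\varpi^c))$, so that the resulting minimal complex is the desired $F^\bullet_\infty$ and the maps $g_c, g'_c$ together with properties (1)--(3) follow immediately from that proposition's conclusion.

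The first step is to verify that each $M^\bullet_c$ is a perfect, bounded-above complex of free $\Lambda(1,c)/(\varpi^c)$-modules concentrated in degrees $[0,d]$. Since $U(c,c)$ and $U(1,c)$ are good and the covering $X_{U(c,c)} \to X_{U(1,c)}$ is Galois with deck group $T(1,c) \cong U(1,c)/U(c,c)(Z(F) \cap U)$ acting freely (Lemma \ref{lem_good_subgroups_act_without_fixed_points}), the complex $C_{\bbA,\bullet}(U(c,c), \cO)$ is quasi-isomorphic, via Proposition \ref{prop_adelic_complex_computes_cohomology}, to the cellular chain complex of a finite $T(1,c)$-equivariant triangulation, giving a bounded complex of free $\cO[T(1,c)] = \Lambda(1,c)$-modules concentrated in $[0,d]$; reducing mod $\varpi^c$ yields freeness over $\Lambda(1,c)/(\varpi^c)$. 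The transition maps $f_{c+1}$ arise from the pushforward of chains along $U(c+1,c+1) \subset U(c,c)$ (which factors through coinvariants under the kernel of $T(1,c+1) \twoheadrightarrow T(1,c)$, hence through the tensor product with $\Lambda(1,c)/(\varpi^c)$) composed with reduction of coefficients, and intertwine $\mathbf{U}_p$ by the second part of Lemma \ref{lem_ordinary_hecke_operators}.

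The crux is to verify that $f_{c+1}$ induces an isomorphism on the ordinary parts of cohomology, i.e., Hida's control theorem in the present setting. The key input is the third part of Lemma \ref{lem_ordinary_hecke_operators}: at each place $v \mid p$, each operator $\mathbf{U}_v^i$ on $M^\bullet_{c+1}$ factors through restriction to a lower Iwahori level followed by pullback. Iterating this factorization at all $v \mid p$ shows that a sufficiently high power $\mathbf{U}_p^N$ acting on $M^\bullet_{c+1} \otimes \Lambda(1,c)/(\varpi^c)$ factors through $f_{c+1}$ and a corresponding operator on $M^\bullet_c$. Since the ordinary idempotent is the limit of powers of $\mathbf{U}_p$, this factorization forces $f_{c+1}$ to become an isomorphism on ordinary parts of cohomology, verifying the remaining hypothesis of Proposition \ref{prop_limit_and_gluing_ordinary_parts} and producing $F^\bullet_\infty$, $g_c$, $g'_c$ with the asserted properties and uniqueness.

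Finally, for any $t \in \cH(G^{\infty,p}, U^p)^\text{op}$, the action on each $M^\bullet_c$ commutes with $\mathbf{U}_p$ (the operators are supported at disjoint places) and intertwines the transition maps $f_{c+1}$ (again by Lemma \ref{lem_ordinary_hecke_operators}), so the final clause of Proposition \ref{prop_limit_and_gluing_ordinary_parts}, applied with $s_c = t$, produces a unique $f(t) \in \End_{\mathbf{D}(\Lambda)}(F^\bullet_\infty)$ satisfying $f(t) \otimes_\Lambda \Lambda(1,c)/(\varpi^c) = g'_c t g_c$; the uniqueness clause, together with functoriality of the construction, forces $t \mapsto f(t)$ to be a ring homomorphism. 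The main obstacle is the control-theorem step: one must exploit the explicit factorization of the Iwahori-level operator $\mathbf{U}_p$ to identify the kernel and cokernel of $f_{c+1}$ with submodules on which $\mathbf{U}_p$ acts topologically nilpotently, so that they vanish after applying the ordinary projector.
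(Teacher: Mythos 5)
Your proposal is correct and follows essentially the same approach as the paper: the paper first establishes that $C_{\bbA,\bullet}(V,\cO/\varpi^c\cO)$ is a perfect complex of free modules over the finite group ring (Lemma \ref{lem_existence_of_perfect_complex}, proved via a lifted triangulation exactly as you sketch), then proves a control lemma (Lemma \ref{lem_hidas_main_lemma}) that the pushforward $H_\ast(X_{U(b,c)},\cO/\varpi^d\cO)_\text{ord}\to H_\ast(X_{U(b,b)},\cO/\varpi^d\cO)_\text{ord}$ is an isomorphism, using precisely the factorization of $\mathbf{U}_p$ from the third part of Lemma \ref{lem_ordinary_hecke_operators}, and finally feeds these into Proposition \ref{prop_limit_and_gluing_ordinary_parts}. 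The only minor imprecision is your phrase ``a sufficiently high power $\mathbf{U}_p^N$'': in fact a single application of $\mathbf{U}_p$ already factors through the lower Iwahori level, which is what the paper's Lemma \ref{lem_hidas_main_lemma} exploits (passing from $U(b,c+1)$ to $U(b,c)$ one step at a time), so no high power is needed.
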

When we wish to emphasize the level, we will use the alternative notation $F^\bullet_{\infty, U}$. Later, we will also introduce a variant $F^\bullet_{\infty, U, \ffrm}$, which computes cohomology after localization at a maximal ideal $\ffrm$ of a suitable Hecke algebra, and a variant where we allow a finite $p$-group to act at an auxiliary place (see Lemma \ref{lem_properties_of_individual_taylor_wiles_prime} below). We see (Lemma \ref{lem_limit_of_good_complexes}) that there are natural Hecke-equivariant isomorphisms
\[ H^\ast(F^\bullet_\infty) \cong \plim_c H_{d-\ast}(X_{U(c, c)}, \cO/\varpi^c \cO)_\text{ord} \cong \plim_c H_{d-\ast}(X_{U(c, c)}, \cO)_\text{ord}. \]
\begin{corollary}\label{cor_control_theorem_finite_levels}
For each $c \geq 1$, there is a spectral sequence:
\[ E_2^{p, q} = \mathrm{Tor}^\Lambda_{-p}(H^q_\text{ord}({U}), \Lambda(1, c)/(\varpi^c)) \Rightarrow H_{d - (p + q)}(X_{U(c, c)}, \cO/\varpi^c \cO)_\text{ord}. \]
\end{corollary}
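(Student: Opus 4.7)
The plan is to apply the change-of-coefficients hyper-Tor spectral sequence (Proposition \ref{prop_change_of_coefficients_spectral_sequence}) directly to the minimal complex $F^\bullet_\infty$ furnished by Proposition \ref{prop_complex_control_theorem}, taking $R = \Lambda$ and $M = \Lambda(1,c)/(\varpi^c)$. Since $F^\bullet_\infty$ is minimal, each $F^i_\infty$ is a finite free $\Lambda$-module and the complex is bounded, so in particular it is a bounded above complex of free $\Lambda$-modules and Proposition \ref{prop_change_of_coefficients_spectral_sequence} produces a convergent spectral sequence
\[ E_2^{p,q} = \mathrm{Tor}^\Lambda_{-p}(H^q(F^\bullet_\infty), \Lambda(1,c)/(\varpi^c)) \Rightarrow H^{p+q}(F^\bullet_\infty \otimes_\Lambda \Lambda(1,c)/(\varpi^c)). \]

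It then remains only to identify the $E_2$-page and the abutment with the terms in the statement. For the $E_2$-page, I would combine the compatible system of isomorphisms on cohomology supplied by point 3 of Proposition \ref{prop_complex_control_theorem} with the inverse-limit-of-cohomology identification in Lemma \ref{lem_limit_of_good_complexes}(2), which (since the transition maps between the artinian modules $H^q(F^\bullet_\infty \otimes_\Lambda \Lambda(1,c')/(\varpi^{c'}))$ automatically satisfy Mittag--Leffler) gives
\[ H^q(F^\bullet_\infty) \;\cong\; \plim_{c'} H^q(F^\bullet_\infty \otimes_\Lambda \Lambda(1,c')/(\varpi^{c'})) \;\cong\; \plim_{c'} H_{d-q}(X_{U(c',c')}, \cO/\varpi^{c'}\cO)_\text{ord} \;=\; H^q_\text{ord}(U), \]
as required.

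For the abutment, the maps $g_c$ and $g'_c$ of Proposition \ref{prop_complex_control_theorem} satisfy $g'_c g_c = 1$, so $F^\bullet_\infty \otimes_\Lambda \Lambda(1,c)/(\varpi^c)$ is a direct summand, in $\mathbf{D}(\Lambda(1,c)/(\varpi^c))$, of $C_{\bbA, d-\bullet}(U(c,c), \cO/\varpi^c\cO)$; the idempotent $g_c g'_c$ acts on homology as the projection onto the ordinary part, and hence
\[ H^{p+q}\bigl(F^\bullet_\infty \otimes_\Lambda \Lambda(1,c)/(\varpi^c)\bigr) \;\cong\; H_{d-(p+q)}(X_{U(c,c)}, \cO/\varpi^c\cO)_\text{ord}, \]
which matches the claimed abutment and completes the identification.

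There is essentially no obstacle: given the infrastructure of Proposition \ref{prop_complex_control_theorem} and the standard hyper-Tor spectral sequence, the corollary is a direct formal consequence. The only minor subtlety to check carefully is the Mittag--Leffler hypothesis needed to commute inverse limit with cohomology in the identification of $H^q(F^\bullet_\infty)$, but this is automatic here because each finite-level cohomology group is a finite $\cO/\varpi^c\cO$-module, hence artinian.
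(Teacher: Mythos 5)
Your proposal is correct and follows exactly the paper's intended argument: apply the change-of-coefficients spectral sequence (Proposition \ref{prop_change_of_coefficients_spectral_sequence}) to the minimal complex $F^\bullet_\infty$ of Proposition \ref{prop_complex_control_theorem}, then identify the $E_2$-page via $H^\ast(F^\bullet_\infty)\cong H^\ast_{\text{ord}}(U)$ (the observation recorded immediately after Proposition \ref{prop_complex_control_theorem} via Lemma \ref{lem_limit_of_good_complexes}) and the abutment via the idempotent $g_c g'_c$. Your careful accounting of the Mittag--Leffler hypothesis and the direct-summand identification of the abutment is a correct unpacking of the paper's terse ``combine the two propositions.''
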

\begin{proof}[Proof of Corollary \ref{cor_control_theorem_finite_levels}]
This follows on combining Proposition \ref{prop_complex_control_theorem} and Proposition \ref{prop_change_of_coefficients_spectral_sequence}.
\end{proof}
\begin{corollary}\label{cor_control_theorem_spectral_sequence}
For each $c \geq 1$, there is a spectral sequence:
\[ E_2^{p, q} = \mathrm{Tor}^\Lambda_{-p}(H^q_\text{ord}({U}), \Lambda(1, c)) \Rightarrow H_{d - (p + q)}(X_{U(c, c)}, \cO)_\text{ord}. \]
\end{corollary}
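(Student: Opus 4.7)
The plan is to apply Proposition \ref{prop_change_of_coefficients_spectral_sequence} to the minimal complex $F^\bullet_\infty$ furnished by Proposition \ref{prop_complex_control_theorem} with coefficient module $M=\Lambda(1,c)$. Since $F^\bullet_\infty$ is a bounded above complex of finite free $\Lambda$-modules, this yields
\[ E_2^{p,q} = \mathrm{Tor}^\Lambda_{-p}(H^q(F^\bullet_\infty), \Lambda(1,c)) \Rightarrow H^{p+q}(F^\bullet_\infty \otimes_\Lambda \Lambda(1,c)). \]
Lemma \ref{lem_limit_of_good_complexes} combined with Proposition \ref{prop_complex_control_theorem} canonically identifies $H^q(F^\bullet_\infty)$ with $H^q_\text{ord}(U)$, so the $E_2$-page already agrees with the one in the statement. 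What remains is to identify the abutment $H^{p+q}(F^\bullet_\infty \otimes_\Lambda \Lambda(1,c))$ with $H_{d-(p+q)}(X_{U(c,c)},\cO)_\text{ord}$.

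Both sides of this desired identification are finitely generated $\cO$-modules, hence $\varpi$-adically complete: the left because $\Lambda(1,c)$ is a finite $\cO$-algebra and $F^\bullet_\infty \otimes_\Lambda \Lambda(1,c)$ is a perfect complex of $\Lambda(1,c)$-modules, and the right as a direct summand of the finite $\cO$-module $H_{d-(p+q)}(X_{U(c,c)},\cO)$. Applying the universal coefficient sequence to the relevant finite free $\cO$-complexes, and using that the transition maps on the Tor contributions $H_\ast(X_{U(c,c)},\cO)_\text{ord}[\varpi^{c'}]$ are multiplication by $\varpi$ (so these pieces vanish in the inverse limit), we obtain
\[ H^{p+q}(F^\bullet_\infty \otimes_\Lambda \Lambda(1,c)) \cong \plim_{c'} H^{p+q}(F^\bullet_\infty \otimes_\Lambda \Lambda(1,c)/(\varpi^{c'})) \]
and
\[ H_{d-(p+q)}(X_{U(c,c)},\cO)_\text{ord} \cong \plim_{c'} H_{d-(p+q)}(X_{U(c,c)},\cO/\varpi^{c'}\cO)_\text{ord}. \]
It then suffices to match these two inverse systems term by term, compatibly in $c'$, for $c' \geq c$.

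For each such $c'$, the identification proceeds in three steps. First, Proposition \ref{prop_complex_control_theorem} at level $(c',c')$ supplies a Hecke-equivariant quasi-isomorphism $F^\bullet_\infty \otimes_\Lambda \Lambda(1,c')/(\varpi^{c'}) \simeq C_{\bbA, d-\bullet}(U(c',c'), \cO/\varpi^{c'}\cO)_\text{ord}$ in $\mathbf{D}(\Lambda(1,c')/(\varpi^{c'}))$, fitting together as $c'$ varies. Second, we form coinvariants under the subgroup $T(c,c') = \ker(T(1,c') \to T(1,c)) \cong U(c,c')/U(c',c')$, which acts freely on $X_{U(c',c')}$ by Lemma \ref{lem_good_subgroups_act_without_fixed_points}: on the left this yields $F^\bullet_\infty \otimes_\Lambda \Lambda(1,c)/(\varpi^{c'})$, and on the right $C_{\bbA, d-\bullet}(U(c,c'),\cO/\varpi^{c'}\cO)_\text{ord}$, using that the ordinary-part idempotent commutes with coinvariants. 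Third, iterated application of Lemma \ref{lem_ordinary_hecke_operators}(3) shows that the natural pullback realizes a quasi-isomorphism $C_{\bbA, d-\bullet}(U(c,c),\cO/\varpi^{c'}\cO)_\text{ord} \simeq C_{\bbA, d-\bullet}(U(c,c'),\cO/\varpi^{c'}\cO)_\text{ord}$ whenever $c' \geq c$. Concatenating these quasi-isomorphisms, taking cohomology, and passing to the inverse limit in $c'$ delivers the required identification of the abutment. The main obstacle is the bookkeeping required to verify that the three-step construction fits into a morphism of inverse systems, so that $\plim_{c'}$ may be applied coherently and all isomorphisms remain compatible with the action of $\cH(G^{\infty,p},U^p)^\text{op}$; the individual ingredients themselves are straightforward.
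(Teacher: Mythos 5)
Your proof is correct and essentially reproduces the paper's argument: in both cases the crux is the compatible family of quasi-isomorphisms $F^\bullet_\infty \otimes_\Lambda \Lambda(1,c)/(\varpi^{c'}) \simeq C_{\bbA, d-\bullet}(U(c,c),\cO/\varpi^{c'}\cO)_\text{ord}$ for $c' \geq c$, which the paper summarizes as following ``from the construction of $F^\bullet_\infty$'' and which you unpack via control at level $(c',c')$, coinvariants under $T(c,c')$, and Hida's lemma (which you re-derive from Lemma \ref{lem_ordinary_hecke_operators}(3) rather than quoting Lemma \ref{lem_hidas_main_lemma} directly). The only cosmetic difference is the order of operations: the paper passes to the $\varpi$-adic limit at the chain level to obtain a quasi-isomorphism $F^\bullet_\infty \otimes_\Lambda \Lambda(1,c) \simeq C_{\bbA,\bullet}(U(c,c),\cO)_\text{ord}$ and then invokes Proposition \ref{prop_change_of_coefficients_spectral_sequence}, whereas you apply the spectral sequence first and identify the abutment afterwards by a universal-coefficient inverse-limit argument on cohomology groups.
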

\begin{proof}[Proof of Corollary \ref{cor_control_theorem_spectral_sequence}]
We want to show that there is a quasi-isomorphism \
\[ F^\bullet_\infty \otimes_\Lambda \Lambda(1, c) \cong C_{\bbA, \bullet}(U(c, c), \cO)_\text{ord}, \]
where the ordinary part is taken in the sense of the discussion following Lemma \ref{lem_ordinary_part_in_derived_category}.
It follows from the construction of $F^\bullet_\infty$ that for every $d \geq c$ there is a quasi-isomorphism
\[ F^\bullet_\infty \otimes_\Lambda \Lambda(1, c)/(\varpi^d) \cong C_{\bbA, \bullet}(U(c, c), \cO/\varpi^d \cO)_\text{ord}, \]
and that the natural diagrams 
\[ \xymatrix{ F^\bullet_\infty \otimes_\Lambda \Lambda(1, c)/(\varpi^{d+1}) \ar[r] \ar[d] & C_{\bbA, \bullet}(U(c, c), \cO/\varpi^{d+1} \cO)_\text{ord} \ar[d] \\
 F^\bullet_\infty \otimes_\Lambda \Lambda(1, c)/(\varpi^{d}) \ar[r]  & C_{\bbA, \bullet}(U(c, c), \cO/\varpi^{d} \cO)_\text{ord} } \]
 commute in $\mathbf{D}(\cO)$. It follows that we can pass to the limit, as in the proof of Lemma \ref{lem_limit_of_good_complexes}, to obtain the desired result.
\end{proof}
We now begin the proof of Proposition \ref{prop_complex_control_theorem}, starting with the following useful lemma:
\begin{lemma}\label{lem_existence_of_perfect_complex}
Let $U$ be a good subgroup, and let $V \subset U$ be a good normal subgroup such that the quotient $\Delta = U/V$ is abelian of $p$-power order and $Z(F) \cap U \subset V$. (Then $\Delta$ acts freely on $X_V$.) Then there is a canonical isomorphism 
\[ C_{\bbA, \bullet}(V, \cO/\varpi^c \cO) \otimes_{\cO[\Delta]} \cO \cong C_{\bbA, \bullet}(U, \cO/\varpi^c \cO).
\]
Moreover, $C_{\bbA, \bullet}(V, \cO/\varpi^c \cO)$ is a perfect complex of free $\cO/\varpi^c\cO[\Delta]$-modules.
\end{lemma}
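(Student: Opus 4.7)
The plan is to deduce both assertions from covering space theory combined with the standard finiteness properties of arithmetic locally symmetric spaces. For the canonical isomorphism, I would first observe that since $V$ is normal in $U$ with quotient $\Delta$, the ring $\bbZ[G(F) \times U]$ is free of rank $|\Delta|$ as a left $\bbZ[G(F) \times V]$-module, with a right $\Delta$-action by multiplication. By associativity of tensor product, this yields
\[ \bigl(C_{\bbA, \bullet} \otimes_{\bbZ[G(F) \times V]} \cO/\varpi^c\cO\bigr) \otimes_{\cO[\Delta]} \cO \;\cong\; C_{\bbA, \bullet} \otimes_{\bbZ[G(F) \times U]} \cO/\varpi^c\cO, \]
which is exactly the desired isomorphism. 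The $\Delta$-action on $C_{\bbA,\bullet}(V, \cO/\varpi^c\cO)$ entering the left-hand side is the one induced from the $U$-action on $\cO/\varpi^c\cO$ (which is trivial when restricted to $V$), or equivalently, from the right $\Delta$-action on $\bbZ[G(F) \times U]$ viewed as a $(\bbZ[G(F) \times V],\bbZ[\Delta])$-bimodule.

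For perfectness I would use that $X_U$ has the homotopy type of a finite CW complex. Concretely, since $U$ is good, $X_U$ is a finite disjoint union of smooth manifolds of the form $\overline{\Gamma}_{U, g_i} \backslash X$ with $X = G_\infty/Z_\infty K_\infty$ contractible and each $\overline{\Gamma}_{U, g_i}$ a torsion-free arithmetic subgroup of $G_\infty/Z_\infty$; by Borel--Serre, each such quotient deformation retracts onto a compact manifold with corners, giving a finite CW structure up to homotopy. By Lemma \ref{lem_good_subgroups_act_without_fixed_points}, $X_V \to X_U$ is a Galois covering on which $\Delta$ acts freely, so pulling back a finite CW structure on $X_U$ yields a $\Delta$-equivariant finite CW structure on $X_V$ whose cells are permuted freely by $\Delta$. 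The associated cellular chain complex $C^{\mathrm{CW}}_\bullet(X_V, \cO/\varpi^c\cO)$ is then a bounded complex of finitely generated free $\cO/\varpi^c\cO[\Delta]$-modules. Comparison with singular chains identifies it (as a complex of $\Delta$-modules) with $C_\bullet(X_V, \cO/\varpi^c\cO)$, which in turn is $\Delta$-equivariantly quasi-isomorphic to $C_{\bbA, \bullet}(V, \cO/\varpi^c\cO)$ via an equivariant refinement of the argument of Proposition \ref{prop_adelic_complex_computes_cohomology}.

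The main obstacle is verifying $\Delta$-equivariance in this last comparison: the isomorphism constructed in the proof of Proposition \ref{prop_adelic_complex_computes_cohomology} depends on a choice of transversal for $G(F) \backslash G^\infty / V$, and the natural $\Delta$-action on $C_{\bbA, \bullet}(V, \cO/\varpi^c\cO)$ permutes such choices. This is resolved either by giving a coset-independent description (using that both complexes compute the derived functor of $V$-coinvariants of $C_{\bbA,\bullet}$), or by selecting a transversal that is stable under (some lift of) $\Delta$ and tracking the resulting identification carefully. Once $\Delta$-equivariance is established, everything else is formal, and the two claims of the lemma follow together.
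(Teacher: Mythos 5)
Your treatment of the isomorphism is fine and is essentially the paper's: it is a direct consequence of associativity of tensor products for the chain of ring maps $\bbZ[G(F)\times V]\to\bbZ[G(F)\times U]$ and $\cO[\Delta]\to\cO$, using that $\bbZ[G(F)\times U]$ is free over $\bbZ[G(F)\times V]$ with a right $\Delta$-action.

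For perfectness, however, there is a genuine gap in the step ``pulling back a finite CW structure on $X_U$.'' The space $X_U$ is non-compact, so it does not carry a finite CW structure; what Borel--Serre gives is a compact manifold with corners $\overline{X}_U$ together with a homotopy equivalence $X_U \simeq \overline{X}_U$. A homotopy equivalence to a finite complex cannot be pulled back along the covering $X_V \to X_U$ to produce a finite $\Delta$-CW structure on $X_V$: to pull back cells you need an honest CW structure on the base, and the only one available is infinite. Your argument can be repaired, but only by changing its shape: either triangulate the compact bordification $\overline{X}_U$ (which does admit a finite triangulation), lift to the induced covering $\overline{X}_V\to\overline{X}_U$ (the Borel--Serre bordification is functorial for finite coverings), and then invoke the $\Delta$-equivariant homotopy equivalence $X_V\simeq \overline{X}_V$; or, as the paper actually does, triangulate $X_U$ itself — accepting that this triangulation is infinite — lift to $X_V$ to get an \emph{unbounded-rank} complex of free $\cO/\varpi^c\cO[\Delta]$-modules concentrated in degrees $[0,d]$, observe (via Borel--Serre, now used only for finiteness of cohomology) that its cohomology is finitely generated, and invoke Lemma \ref{lem_quasi_isomorphism_of_perfect_complexes} to replace it by a good complex. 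The paper's route is slightly more economical precisely because it only needs Borel--Serre to supply finite generation of homology, not an actual finite cell structure.

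Your worry in the last paragraph about $\Delta$-equivariance of the identification between lifted simplicial chains, singular chains on $X_V$, and $C_{\bbA,\bullet}(V,-)$ is legitimate but not a serious obstacle: once the triangulation of $X_V$ is obtained by lifting one from $X_U$, the comparison maps (simplicial to singular, and singular on $X_V$ to the adelic complex via the decomposition in the proof of Proposition \ref{prop_adelic_complex_computes_cohomology}) are natural in the covering space and therefore automatically $\Delta$-equivariant; one needs only to choose the transversal $\{g_i\}$ of $G(F)\backslash G^\infty/U$ rather than of $G(F)\backslash G^\infty/V$, so that the $\Delta$-action is not entangled with the choice. This is implicit in the paper and worth making explicit, as you do.
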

\begin{proof}[Proof of Lemma \ref{lem_existence_of_perfect_complex}]
The existence of the given isomorphism is an easy consequence of the definitions. For the second part, we must show that $C_\bbA^\bullet(X_V, \varpi^{-c} \cO/\cO)^\vee$ is quasi-isomorphic to good complex of $\cO/\varpi^c\cO[\Delta]$-modules. (It is easy to see that it is a complex of free $\cO/\varpi^c\cO[\Delta]$-modules.) We fix a triangulation of $X_U$ and lift it to $X_V$. (Since $X_V$ is not compact, this triangulation will have infinitely many simplices.) Let $C_\bullet$ be the corresponding complex of free $\cO/\varpi^c\cO[\Delta]$-modules which computes $H_\ast(X_V, \cO/\varpi^c\cO)$, concentrated in degrees $[0, d]$. By Lemma \ref{lem_quasi_isomorphism_of_perfect_complexes}, $C_\bullet$ is quasi-isomorphic to a good complex. (Its cohomology is finitely generated because of the existence of the Borel--Serre compactification.) On the other hand, $C_\bullet$ is also quasi-isomorphic to $C_{\bbA, \bullet}(X_V, \cO/\varpi^c \cO)$. This completes the proof.
\end{proof}

\begin{lemma}\label{lem_hidas_main_lemma}
For all $c \geq b \geq 1$, $d \geq 1$, the natural map
\begin{equation}
H_\ast( X_{U(b, c)}, \cO/\varpi^d \cO)_\text{ord} \to H_\ast( X_{U(b, b)}, \cO/\varpi^d \cO)_\text{ord} 
\end{equation}
is an isomorphism.
\end{lemma}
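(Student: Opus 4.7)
The plan is to prove the lemma by induction on $c-b$. The base case is trivial, so it is enough to show that for $c > b \geq 1$ the natural pushforward
\[ \pi_\ast \colon H_\ast(X_{U(b,c)}, \cO/\varpi^d\cO)_\text{ord} \longrightarrow H_\ast(X_{U(b,c-1)}, \cO/\varpi^d\cO)_\text{ord} \]
is an isomorphism. The strategy is to use Lemma \ref{lem_ordinary_hecke_operators}(3), which dualizes to give, at the chain level, a map $\tau \colon C_{\bbA, \bullet}(U(b,c-1), A) \to C_{\bbA, \bullet}(U(b,c), A)$ attached to the $\mathbf{U}_p$ double coset, together with a factorization $\mathbf{U}_p|_c = \tau \circ \pi_\ast$ (where I write $A = \cO/\varpi^d\cO$ and $\mathbf{U}_p|_c$ for the $\mathbf{U}_p$-action at level $U(b,c)$).

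First I would establish the companion identity $\pi_\ast \circ \tau = \mathbf{U}_p|_{c-1}$. Granted this, Lemma \ref{lem_ordinary_hecke_operators}(2) (which says $\pi_\ast \circ \mathbf{U}_p|_c = \mathbf{U}_p|_{c-1} \circ \pi_\ast$) will immediately imply the intertwining relation $\mathbf{U}_p|_c \circ \tau = \tau \circ \mathbf{U}_p|_{c-1}$, so that both $\pi_\ast$ and $\tau$ carry ordinary parts into ordinary parts in homology. Since $\mathbf{U}_p$ is invertible on those ordinary parts, the two chain-level identities will force the induced maps $\pi_\ast$ and $\tau$ to be isomorphisms on ordinary parts, each inverse to the other up to the automorphism $\mathbf{U}_p$.

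The hard part will be the identity $\pi_\ast \circ \tau = \mathbf{U}_p|_{c-1}$: formal manipulation using only $\mathbf{U}_p|_c = \tau \circ \pi_\ast$ and the commutation relation gives this identity only after post-composition with $\pi_\ast$, hence only on the image of $\pi_\ast$. To upgrade it to an identity on all of $C_{\bbA, \bullet}(U(b,c-1), A)$, I would perform a direct coset-level comparison, using the fact that a single set of representatives for the double coset $U(b,c) \alpha_p U(b,c)/U(b,c)$ serves simultaneously as a set of representatives for $U(b,c-1) \alpha_p U(b,c-1)/U(b,c-1)$, since the relevant quotients of unipotent radicals stabilize once $c$ grows beyond $b$, in the spirit of the coset bookkeeping in \cite[Proposition 2.2]{Hid95}.
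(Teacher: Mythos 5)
Your approach matches the paper's essentially step for step: both exhibit a pushforward $\pi_\ast$ and a double-coset transfer $\tau$ between levels $c$ and $c-1$ satisfying $\tau\pi_\ast = \mathbf{U}_p|_c$ and $\pi_\ast\tau = \mathbf{U}_p|_{c-1}$, and then conclude that $\pi_\ast$ is an isomorphism on ordinary parts because $\mathbf{U}_p$ is invertible there. You are also right to flag that the second relation does not follow by formal manipulation from the first together with the intertwining relation of Lemma \ref{lem_ordinary_hecke_operators}(2) — one must check it directly at the coset level, using the fact that the $\mathbf{U}_p$ double-coset admits a single set of representatives serving at all depths, as in \cite[Proposition 2.2]{Hid95} and \cite[Lemma 2.5.2]{Ger09}; the paper's proof is terse here but is implicitly relying on exactly this fact, which is invoked in the proof of Lemma \ref{lem_ordinary_hecke_operators} itself.
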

\begin{proof}
Let $c > b \geq 1$, and let $i$ be an integer. It follows from Lemma \ref{lem_ordinary_hecke_operators} that there is a commutative diagram
\begin{equation*}
\xymatrix{ H_\ast(X_{U(b, c+1)},\cO/\varpi^d\cO) \ar[r]^{\mathbf{U}_p} \ar[d]^{j} & H_\ast(X_{U(b, c+1)}, \cO/\varpi^d\cO)\ar[d]^{j} \\
H_\ast(X_{U(b, c)}, \cO/\varpi^d\cO) \ar[ur]^{r} \ar[r]^{\mathbf{U}_p} & H_\ast(X_{U(b, c)}, \cO/\varpi^d\cO). }
\end{equation*}
Here we write $j : H_\ast(X_{U(b, c+1)}, \cO/\varpi^d\cO) \to H_\ast(X_{U(b, c)}, \cO/\varpi^d\cO)$ for the natural pushforward and $r = [U(b, c) \alpha_p U(b, c+1)]_\ast$. It follows easily from the existence of this diagram that $j$ induces the desired isomorphism on ordinary parts. 
\end{proof}
We can now complete the proof of Proposition \ref{prop_complex_control_theorem}. We apply Proposition \ref{prop_limit_and_gluing_ordinary_parts} to the system of complexes $M^\bullet_c = C_{\bbA, d-\bullet}(U(c, c), \cO/\varpi^c \cO)$, together with their Hecke operators $t_c = \mathbf{U}_p$. To apply this proposition, we must check that the natural maps
\[  H^\ast(C_{\bbA, d - \bullet}(U(c+1, c+1), \cO/\varpi^{c+1} \cO) \otimes_{\Lambda} \Lambda(1, c)/(\varpi^c))_\text{ord} \to H^\ast(C_{\bbA, d - \bullet}(U(c, c), \cO/\varpi^{c} \cO))_\text{ord} \]
are isomorphisms; equivalently, that the natural maps
\[  H_\ast(X_{U(c, c+1)},  \cO/\varpi^c \cO)_\text{ord} \to H_\ast(X_{U(c, c)}, \cO/\varpi^c \cO)_\text{ord} \]
are isomorphisms. However, this is exactly the content of Lemma \ref{lem_hidas_main_lemma}.

\subsection{Independence of weight}\label{sec_independence_of_weight}

Let us now write $\GL_n$ for the general linear group over $\bbZ$, and $\mathrm{T}_n \subset \mathrm{B}_n \subset \GL_n$ for its standard diagonal maximal torus and upper-triangular Borel subgroup. We now introduce coefficient systems on the spaces $X_U$ corresponding to algebraic representations of $\GL_n$. Let $\bbZ^n_+ \subset \bbZ^n$ denote the set of tuples of integers $\lambda_1 \geq \dots \geq \lambda_n$ with $\lambda_1 + \dots + \lambda_n = 0$. If $\lambda \in \bbZ^n$, then we write $w_0 \lambda$ for the tuple $(w_0 \lambda)_i = \lambda_{n+1-i}$. Given $\lambda \in \bbZ^n$, we can define the algebraic representation $V_\lambda = \Ind_{\mathrm{B}_n}^{\GL_n}(w_0 \lambda)_{/\bbZ}$ of $\GL_n$ of highest weight $\lambda$ as in \cite[\S 1]{Ger09}. There is a $\mathrm{T}_n$-equivariant weight decomposition
\[ V_\lambda = \oplus_{\mu \in X^\ast(\mathrm{T}_n)} V_{\lambda, \mu}. \]
The highest and lowest weight spaces $V_{\lambda, \lambda}$ and $V_{\lambda, w_0 \lambda}$ of $V_\lambda$ are 1-dimensional. The condition $\lambda_1 + \dots + \lambda_n = 0$ implies that the center $\mathrm{Z}_n \subset \mathrm{GL}_n$ acts trivially on $V_{\lambda}$.

Now suppose given an element $\bl = (\lambda_\tau)_{\tau : F \hookrightarrow E} \in (\bbZ^n_+)^{\Hom(F, E)}$. For each $\tau \in \Hom(F, E)$, $V_{\lambda_\tau}(\cO)$ is a finite free $\cO$-module which receives a continuous action of the group $\GL_n(\cO)$, hence a continuous action of the group $\GL_n(\cO_{F_{v(\tau)}})$, where $v(\tau)$ is the $p$-adic place of $F$ induced by the embedding $\tau$. We define $M_\bl = \otimes_{\tau : F \hookrightarrow E} V_{\lambda_\tau}(\cO)$, the tensor product being over $\cO$. Then $M_\bl$ is a free $\cO$-module which receives a continuous action of the group $\prod_{v \in S_p} \GL_n(\cO_{F_v})$. If $\bl = 0$, then $M_\bl$ is just $\cO$, equipped with the trivial action of the group $\prod_{v \in S_p} \GL_n(\cO_{F_v})$. Moreover, $M_\bl \otimes_\cO E$ is equipped with a continuous action of the group $\prod_{v \in S_p} \GL_n(F_v)$ extending the action of its subgroup $\prod_{v \in S_p} \GL_n(\cO_{F_v})$.

Now suppose that $U \in \cJ$ and $U_v = I_v(1, 1)$ for each $v \in S_p$. Let $\bl \in (\bbZ^n_+)^{\Hom(F, E)}$. If $c \geq b \geq 1$ are integers, then we define $U(b, c)$ as in \S \ref{sec_ordinary_part_of_homology}. For each $v \in S_p$, the Hecke operators $\mathbf{U}_v^i$ act on the complex $C_{\bbA, \bullet}(U, M_\bl \otimes_\cO E)$. In fact, a suitably scaled version of these operators leaves invariant the natural integral lattice:
\begin{lemma}\label{lem_integrality_of_u_p_operator}
The operator $\bl(\alpha_{v, i}) \mathbf{U}_v^i$ on $C_{\bbA, \bullet}(U, M_\bl \otimes_\cO E)$ preserves the submodule $C_{\bbA, \bullet}(U, M_\bl)$.
\end{lemma}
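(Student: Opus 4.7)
My plan is to verify the integrality at the chain level by working with an explicit coset decomposition of the Hecke operator. Following the formula for $[U g U]_*$ from Section \ref{sec_hecke_operators}, I would first choose coset representatives for $U \alpha_{v,i} U / U$ in the form $u_j \alpha_{v,i}$, where $u_j$ runs over a transversal of $U_v / (U_v \cap \alpha_{v,i} U_v \alpha_{v,i}^{-1})$ (such a transversal exists because $\alpha_{v,i}$ is contracting on the upper unipotent part of the Iwahori $I_v(1,1)$). With this choice,
\[ \mathbf{U}_v^i \cdot \left((h \times \sigma) \otimes m\right) = \sum_j \left(h u_j \alpha_{v,i} \times \sigma\right) \otimes \alpha_{v,i}^{-1} u_j^{-1} m. \]
Since each $u_j^{-1} \in U_v \subset \GL_n(\cO_{F_v})$ preserves the $\cO$-lattice $M_\bl \subset M_\bl \otimes_\cO E$, the claim reduces to showing that the single operator $\bl(\alpha_{v,i}) \cdot \alpha_{v,i}^{-1}$ maps $M_\bl$ into itself.

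To handle this, I would invoke the weight decomposition $M_\bl = \bigoplus_\mu M_{\bl, \mu}$ as $\cO$-modules, indexed by tuples $\mu = (\mu_\tau)_{\tau \mid v}$, where $\tau$ runs over the $p$-adic embeddings $F \hookrightarrow E$ above $v$. This decomposition is defined integrally because each $V_{\lambda_\tau}(\cO)$ admits a weight decomposition over $\cO$. The element $\alpha_{v,i}^{-1}$ acts on $M_{\bl, \mu}$ by the scalar $\prod_{\tau \mid v} \tau(\varpi_v)^{-\sum_{k=1}^i (\mu_\tau)_k}$, and the normalizing scalar $\bl(\alpha_{v,i}) = \prod_{\tau \mid v} \tau(\varpi_v)^{\sum_{k=1}^i (\lambda_\tau)_k}$ is by construction the scalar by which $\alpha_{v,i}$ itself acts on the highest weight line. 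Hence the composite $\bl(\alpha_{v,i}) \cdot \alpha_{v,i}^{-1}$ acts on $M_{\bl, \mu}$ as $\prod_{\tau \mid v} \tau(\varpi_v)^{\sum_{k=1}^i ((\lambda_\tau)_k - (\mu_\tau)_k)}$, and it suffices to check that these scalars all lie in $\cO$.

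This last step reduces to the inequality $\sum_{k=1}^i (\mu_\tau)_k \leq \sum_{k=1}^i (\lambda_\tau)_k$, valid for each $1 \leq i \leq n-1$ and each weight $\mu_\tau$ appearing in $V_{\lambda_\tau}$. Since such weights satisfy $\mu_\tau \leq \lambda_\tau$ in the dominance order, writing $\lambda_\tau - \mu_\tau = \sum_k c_k (e_k - e_{k+1})$ with $c_k \in \bbZ_{\geq 0}$ yields $\sum_{k=1}^i (\lambda_\tau - \mu_\tau)_k = c_i \geq 0$, as required.

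There is no significant obstacle in this argument: it is essentially a routine weight computation once the correct coset decomposition $U \alpha_{v,i} U = \coprod_j u_j \alpha_{v,i} U$ is chosen. The conceptual point is that this choice isolates all the failure of integrality into the diagonal factor $\alpha_{v,i}^{-1}$, where it is controlled by the highest weight of $M_\bl$ via the dominance order.
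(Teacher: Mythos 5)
Your proposal is correct and takes essentially the same route as the paper: pick coset representatives of the form $u_j\alpha_{v,i}$ with $u_j \in U_v$, observe that $u_j^{-1}$ preserves the integral lattice, and reduce to checking that $\bl(\alpha_{v,i})\alpha_{v,i}^{-1}$ preserves $M_\bl$, which follows from the fact that for every weight $\mu_\tau$ of $V_{\lambda_\tau}$ the difference $\lambda_\tau - \mu_\tau$ is a non-negative integer combination of simple roots, so that the relevant partial sum $\sum_{k\leq i}(\lambda_\tau - \mu_\tau)_k$ is non-negative. The paper phrases the last step by evaluating $(\lambda_\tau - \mu)(t) = \prod_j (t_j/t_{j+1})^{a_j}$ at $t = \alpha_{v,i}$ rather than expanding the weight decomposition of $M_\bl$ explicitly, but the content is identical.
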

\begin{proof}
We first note that the map $C_{\bbA, \bullet}(U, M_\bl) \to C_{\bbA, \bullet}(U, M_\bl \otimes_\cO E)$ is indeed injective (because $C_{\bbA, \bullet}(U, M_\bl)$ is a separated, torsion-free $\cO$-module, being isomorphic to the space of singular chains in $X_U$ with $M_\bl$-coefficients, cf. the proof of Proposition \ref{prop_adelic_complex_computes_cohomology}). Choose a decomposition $U(c, c) = \sqcup_j g_j (U(c, c) \cap \alpha_{v, i} U(c, c) \alpha_{v, i}^{-1})$. Then for any $(h \times \sigma) \otimes m \in C_{\bbA, \bullet}(U, M_\bl \otimes_\cO E) = C_{\bbA, \bullet} \otimes_{\bbZ[G_F \times U]} M_\bl \otimes_\cO E$, we have
\[ \mathbf{U}_{v}^i (h \times \sigma) \otimes m = \sum_j (h g_j \alpha_{v, i} \times \sigma) \otimes \alpha_{v, i}^{-1} g_j^{-1} m.  \]
To prove the lemma, it is therefore enough to show that the element $\bl(\alpha_{v, i}) \alpha_{v, i}^{-1}$ acting on $M_\bl \otimes_\cO E$ leaves the lattice $M_\bl$ invariant. It even suffices to show that for each embedding $\tau : F_v \hookrightarrow E$, the element $\lambda_\tau(\alpha_{v, i}) \alpha_{v, i}^{-1}$ acting on $V_{\lambda_\tau}(F_v)$ leaves the lattice $V_{\lambda_\tau}(\cO_{F_v})$ invariant. For every weight $\mu$ appearing in $V_{\lambda_\tau}$ and every $t = \diag(t_1, \dots, t_n) \in \mathrm{T}_n(F_v)$, we can write
\[ (\lambda_\tau - \mu)(t) = \prod_{i=1}^{n-1} (t_j/t_{j+1})^{a_j}, \]
where the $a_j$ are non-negative integers (because $\lambda_\tau$ is the highest weight of $V_{\lambda_\tau})$. If $t = \alpha_{v, i}$ then all the elements $t_j/t_{j+1}$ lie in $\cO_{F_v}$, and this implies the desired result.
\end{proof}
We write $\mathbf{U}_{v, \bl}^i = \bl(\alpha_{v, i}) \mathbf{U}_v^i$ for the induced endomorphism of the complex $C_{\bbA, \bullet}(U, M_\bl)$. For each $b \geq 1$, there is a canonical isomorphism 
\begin{equation}\label{eqn_reduction_modulo_r} C_{\bbA, \bullet}(U, M_\bl) \otimes_\cO \cO/\lambda^b \cong C_{\bbA, \bullet}(U, M_\bl \otimes_\cO \cO/\lambda^b), 
\end{equation}
and we use this to define an action of $\mathbf{U}_{v, \bl}^i$ on the right-hand side of (\ref{eqn_reduction_modulo_r}) by reduction modulo $\varpi^b$. We define 
\[ \mathbf{U}_{p, \bl} = \prod_{v \in S_p} \prod_{i=1}^n \mathbf{U}_{v, \bl}^i = \left( \prod_{v \in S_p} \prod_{i=1}^n \bl(\alpha_{v, i}) \right) \cdot \mathbf{U}_p. \]
The operator $\mathbf{U}_{p, \bl}$ therefore acts on the groups $H_\ast(X_U(c, c), M_\bl)$ and $H_\ast(X_U, M_\bl \otimes_\cO \cO/\varpi^b)$ for all $c, b \geq 1$, and we define the ordinary parts of these groups as in Lemma \ref{lem_ordinary_part_for_modules} with respect to this operator. Just as in the previous section, one can prove the following result.
\begin{proposition}\label{prop_complex_control_theorem_variable_weight}
There is a minimal complex $F^\bullet_{\infty, \bl}$ of $\Lambda$-modules, together with morphisms for every $c \geq 1$:
\begin{gather*} g_c : F^\bullet_{\infty, \bl} \otimes_\Lambda \Lambda(1, c)/(\varpi^c) \to C_{\bbA, d-\bullet}(U(c, c), M_\bl \otimes_\cO \cO/\varpi^c), \\
g_c' :  C_{\bbA, d-\bullet}(U(c, c), M_\bl \otimes_\cO \cO/\varpi^c) \to  F^\bullet_{\infty, \bl} \otimes_\Lambda \Lambda(1, c)/(\varpi^c),
\end{gather*}
all satisfying the following conditions:
\begin{enumerate}
\item We have $g'_c g_c = 1$, and $g_c g'_c$ is an idempotent in $\End_{\mathbf{D}(\Lambda(1, c)/(\varpi^c))}(C_{\bbA, d-\bullet}(U(c, c), M_\bl \otimes_\cO \cO/\varpi^c))$.
\item For each $c \geq 1$, the diagram
\[ \xymatrix{F^\bullet_{\infty, \bl} \otimes_\Lambda \Lambda(1, c+1)/(\varpi^{c+1}) \ar[r] \ar[d] & C_{\bbA, d-\bullet}(U(c+1, c+1), M_\bl \otimes_\cO \cO/\varpi^{c+1}) \ar[d] \ar[r] & F^\bullet_{\infty, \bl} \otimes_\Lambda \Lambda(1, c+1)/(\varpi^{c+1})\ar[d] \\
F^\bullet_{\infty, \bl} \otimes_\Lambda \Lambda(1, c)/(\varpi^{c}) \ar[r]  &  C_{\bbA, d-\bullet}(U(c, c), M_\bl \otimes_\cO \cO/\varpi^c)  \ar[r] & F^\bullet_{\infty, \bl} \otimes_\Lambda \Lambda(1, c)/(\varpi^{c}).} \]
is commutative, up to chain homotopy.
\item For each $c \geq 1$, the induced map of $g_c g'_c$ on cohomology is the natural projection
\[ H_\ast(X_{U(c, c)}, M_\bl \otimes_\cO \cO/\varpi^c) \to H_\ast(X_{U(c, c)}, M_\bl \otimes_\cO \cO/\varpi^c)_\text{ord}. \]
\end{enumerate}
The minimal complex $F^\bullet_{\infty, \bl}$ is determined uniquely up to isomorphism in $\mathbf{D}(\Lambda)$ by these properties. Finally, there is a unique homomorphism $f : \cH(G^{\infty, p}, U^p)^\text{op} \to \End_{\mathbf{D}(\Lambda)}(F^\bullet_{\infty, \bl})$ with the property that for all $t \in \cH(G^{\infty, p}, U^p)^\text{op}$, we have $(f(t) \otimes_\Lambda \Lambda(1, c)/(\varpi^c)) = g'_c t g_c$.
\end{proposition}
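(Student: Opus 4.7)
The proof will follow the same template as Proposition \ref{prop_complex_control_theorem}, substituting the trivial coefficient system $\cO/\varpi^c$ by $M_\bl \otimes_\cO \cO/\varpi^c$ throughout and the endomorphism $\mathbf{U}_p$ by $\mathbf{U}_{p, \bl}$. Concretely, the plan is to apply Proposition \ref{prop_limit_and_gluing_ordinary_parts} to the inverse system of complexes $M^\bullet_c = C_{\bbA, d-\bullet}(U(c,c), M_\bl \otimes_\cO \cO/\varpi^c)$ equipped with the endomorphisms $t_c = \mathbf{U}_{p, \bl}$ coming from Lemma \ref{lem_integrality_of_u_p_operator}, and the transition morphisms $f_{c+1}$ induced by reducing coefficients mod $\varpi^c$ and by pushforward along the covering $X_{U(c+1,c+1)} \to X_{U(c,c)}$. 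I first observe that Lemma \ref{lem_existence_of_perfect_complex} goes through with $M_\bl \otimes_\cO \cO/\varpi^c$ in place of $\cO/\varpi^c$ (the center $\mathrm{Z}_n$ acts trivially on $M_\bl$ by our normalization of $\bl$, and $M_\bl$ is a finite free $\cO$-module), so each $M^\bullet_c$ is a perfect complex of $\cO/\varpi^c[T(1,c)]$-modules.

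The only substantive step is to verify the analog of Lemma \ref{lem_hidas_main_lemma}: that for all $c \geq b \geq 1$ and $d \geq 1$ the natural pushforward
\[ H_\ast(X_{U(b, c)}, M_\bl \otimes_\cO \cO/\varpi^d)_{\text{ord}} \longrightarrow H_\ast(X_{U(b, b)}, M_\bl \otimes_\cO \cO/\varpi^d)_{\text{ord}} \]
is an isomorphism, where the ordinary parts are taken with respect to $\mathbf{U}_{p, \bl}$. The proof of Lemma \ref{lem_hidas_main_lemma} applies essentially verbatim: writing $j$ for the pushforward and $r$ for the rescaled lift $\bl(\alpha_p)\cdot[U(b,c)\,\alpha_p\,U(b,c+1)]_\ast$---whose integrality relative to the coefficient system $M_\bl$ is guaranteed by the computation in Lemma \ref{lem_integrality_of_u_p_operator}---one obtains commuting relations $r \circ j = \mathbf{U}_{p, \bl}$ and $j \circ r = \mathbf{U}_{p, \bl}$, forcing $j$ to become an isomorphism after projecting to the ordinary part where $\mathbf{U}_{p, \bl}$ is invertible. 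Topological nilpotence of $\mathbf{U}_{p, \bl}$ on the non-ordinary part is automatic, the cohomology groups in question being finite over an Artinian ring.

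With these inputs in place, Proposition \ref{prop_limit_and_gluing_ordinary_parts} directly produces the minimal complex $F^\bullet_{\infty, \bl}$ together with the morphisms $g_c$ and $g'_c$ and the compatibility properties (1)--(3), as well as their uniqueness up to isomorphism in $\mathbf{D}(\Lambda)$. For the final claim, each operator $t \in \cH(G^{\infty, p}, U^p)^{\text{op}}$ commutes with $\mathbf{U}_{p, \bl}$ (it acts at places disjoint from $p$) and with the transition maps by Lemma \ref{lem_ordinary_hecke_operators}, so the last part of Proposition \ref{prop_limit_and_gluing_ordinary_parts} produces the desired homomorphism $f$. The main obstacle I anticipate is purely notational: carefully tracking the various $\bl(\alpha_{v, i})$ factors arising from Lemma \ref{lem_integrality_of_u_p_operator} and checking they interact correctly with the $T(1, c)$-action on the complexes, but no new conceptual ingredient beyond what appears in the trivial-weight case is required.
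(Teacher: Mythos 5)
Your proposal is correct and follows the same approach the paper intends, which it indicates by writing ``Just as in the previous section, one can prove the following result'' rather than spelling out the proof. You have correctly identified the only two ingredients requiring a weight-$\bl$ adaptation — the perfectness claim in Lemma~\ref{lem_existence_of_perfect_complex} (which goes through since $M_\bl$ is a finite free $\cO$-module with trivial $\mathrm{Z}_n$-action) and the Hida-type Lemma~\ref{lem_hidas_main_lemma} (which goes through verbatim once $r$ is rescaled by $\bl(\alpha_p)$ to preserve the integral lattice, exactly as in Lemma~\ref{lem_integrality_of_u_p_operator}) — before feeding everything into Proposition~\ref{prop_limit_and_gluing_ordinary_parts}.
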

 The following result is the analogue of \cite[Proposition 2.6.1]{Ger09} in our context.
\begin{proposition}\label{prop_independence_of_weight}
Let $\bl \in (\bbZ^n_+)^{\Hom(F, E)}$, and let $c \geq  r$ be integers. Then there is a morphism of complexes 
\begin{equation}\label{eqn_independence_of_weight} \alpha_{\bl, \ast} : C_{\bbA, \bullet}(U(c, c), \cO/\varpi^r) \to C_{\bbA, \bullet}(U(c, c), M_\bl \otimes_\cO \cO/\varpi^r),
\end{equation}
equivariant for the Hecke operators at finite places $v \not\in S_p$, and satisfying the equations 
\[ U^i_{v, \bl} \alpha_{\bl, \ast} = \alpha_{\bl, \ast} U^i_v \text{ and }\alpha_{\bl, \ast} \langle u \rangle = \bl(u) \langle u \rangle \alpha_{\bl, \ast} \]
for all $u \in \mathrm{T}_n(\cO_{F_v})$, $v \in S_p$. After passing to homology, the map (\ref{eqn_independence_of_weight}) induces an isomorphism
\[ \alpha_{\bl, \ast} : H_\ast(X_{U(c, c)}, \cO/\varpi^r)_\text{ord} \to H_\ast(X_{U(c, c)}, M_\bl \otimes_\cO \cO/\varpi^r)_\text{ord}. \]
\end{proposition}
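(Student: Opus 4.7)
The plan is to construct $\alpha_{\bl, \ast}$ at the chain level using a highest weight vector in $M_\bl$, verify the stated equivariance properties by direct computation, and establish the isomorphism on ordinary parts via a weight-filtration argument on $M_\bl$, closely following the method of \cite[Proposition 2.6.1]{Ger09}. Concretely, fix highest weight vectors $v_{\lambda_\tau} \in V_{\lambda_\tau}(\cO)$ for each $\tau : F \hookrightarrow E$, set $v_\bl = \otimes_\tau v_{\lambda_\tau} \in M_\bl$, and define
\[
\alpha_{\bl, \ast} : C_{\bbA, \bullet}(U(c,c), \cO/\varpi^r) \to C_{\bbA, \bullet}(U(c,c), M_\bl \otimes_\cO \cO/\varpi^r)
\]
on elementary tensors by $(h \times \sigma) \otimes m \mapsto (h \times \sigma) \otimes (m \cdot v_\bl)$. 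Well-definedness reduces to the claim that $u \cdot v_\bl \equiv v_\bl \pmod{\varpi^c M_\bl}$ for every $u \in U(c, c)_v = I_v(c, c)$, $v \in S_p$: writing $u = sI + N_0 + \varpi_v^c Z$ with $s \in \cO_{F_v}^\times$, $N_0$ strictly upper triangular, and $Z \in M_n(\cO_{F_v})$, we have $N_0 v_\bl = 0$ (since $v_\bl$ is a highest-weight vector), $sI \cdot v_\bl = v_\bl$ (since $\sum_i \lambda_{\tau, i} = 0$ for each $\tau$), and $\varpi_v^c Z v_\bl \in \varpi^c M_\bl$ because $\tau(\varpi_v) \in \varpi \cO$ for every $\tau$. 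Since $c \geq r$, this gives the desired congruence modulo $\varpi^r$.

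The Hecke equivariance is then a direct coset computation. Choosing coset representatives for $U(c,c) \alpha_{v, i} U(c,c) / U(c,c)$ of the form $\alpha_{v, i} n_j$ with $n_j$ in the upper unipotent radical --- possible by the standard analysis of the Iwahori subgroup as in \cite[Lemma 2.5.2]{Ger09} --- the image of $\mathbf{U}_v^i ((h \times \sigma) \otimes 1)$ under $\alpha_{\bl, \ast}$ equals $\sum_j (h \alpha_{v, i} n_j \times \sigma) \otimes v_\bl$, while
\[
\mathbf{U}_{v, \bl}^i \alpha_{\bl, \ast}((h \times \sigma) \otimes 1) = \bl(\alpha_{v, i}) \sum_j (h \alpha_{v, i} n_j \times \sigma) \otimes n_j^{-1} \alpha_{v, i}^{-1} v_\bl.
\]
These agree because the $n_j \in N$ fix $v_\bl$ and $\alpha_{v, i}$ acts on $v_\bl$ by $\bl(\alpha_{v, i})$. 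The analogous computation for the diamond operators $\langle u \rangle$ with $u \in \mathrm{T}_n(\cO_{F_v})$ uses that $\langle u \rangle$ is a single-coset operator (as $I_v(c, c) \trianglelefteq I_v(1, c)$), producing the twist by $\bl(u)$.

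To show the induced map on ordinary parts of homology is an isomorphism, I would filter $M_\bl \otimes \cO/\varpi^r$ by an increasing chain of $I_v(c, c)$-stable $\cO/\varpi^r$-submodules whose graded pieces are rank-one modules on which $\mathrm{T}_n(\cO_{F_v})$ acts through a single weight $\mu$, with the unique top piece being $\cO v_\bl$ of weight $\bl$. The map $\alpha_{\bl, \ast}$ identifies $C_{\bbA, \bullet}(U(c,c), \cO/\varpi^r)$ with the complex corresponding to this top piece. For any non-top graded piece of weight $\mu \neq \bl$, the normalized operator $\mathbf{U}_{v, \bl}^i = \bl(\alpha_{v, i}) \mathbf{U}_v^i$ contributes a $\mathrm{T}_n$-eigenvalue of $(\bl / \mu)(\alpha_{v, i})$; since $\mu \neq \bl$, there is an index $i$ making this ratio divisible by $\varpi_v$, hence topologically nilpotent, so the ordinary part of that piece's contribution to homology vanishes. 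A long exact sequence argument in homology, iterated along the filtration, then yields the claimed isomorphism.

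The main obstacle is that $I_v(c, c)$ contains lower-triangular entries divisible by $\varpi_v^c$, so the naive weight filtration on $M_\bl$ is not $I_v(c, c)$-stable at the level of $\cO$-modules; only after reduction modulo $\varpi^r$ and a careful interleaving of powers of $\varpi$ with the weight filtration does one obtain $\cO/\varpi^r$-submodules with the required properties. This combinatorial book-keeping, essentially carried out in \cite[Proposition 2.6.1]{Ger09}, is the main technical task; the vanishing input --- topological nilpotence of $(\bl/\mu)(\alpha_{v, i})$ on non-top graded pieces --- is uniform and drives the argument.
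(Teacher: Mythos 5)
Your construction of $\alpha_{\bl,\ast}$ — extension by a highest-weight vector $v_\bl$, well-definedness via the congruence $u v_\bl \equiv v_\bl \bmod \varpi^r$ for $u \in I_v(c,c)$ with $c \geq r$, and verification of the Hecke relations by an explicit coset computation — coincides with the paper's definition ($\alpha_{\bl,\ast} = \mathrm{id} \otimes \alpha_\bl$ with $\alpha_\bl$ the inclusion of the highest-weight line). But your argument that the induced map is an isomorphism on ordinary parts takes a genuinely different route. The paper constructs an explicit map $\varphi$ in the opposite direction, namely $\varphi = \beta_{\bl,\ast} \circ \mathbf{U}_{p,\bl}^r$ where $\beta_\bl : M_\bl \to \cO(\bl)$ is projection onto the highest-weight line; the crucial observation is that $\bl(\alpha_p^r) g_i^{-1}$ (for coset representatives $g_i$ of $U(c,c)\alpha_p^r U(c,c)/U(c,c)$) acts modulo $\varpi^r$ as projection onto the highest-weight line, which makes $\varphi$ well defined. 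The paper then checks the identities $\varphi \circ \alpha_{\bl,\ast} = \mathbf{U}_p^r$ and $\alpha_{\bl,\ast}\circ\varphi = \mathbf{U}_{p,\bl}^r$; since both powers of $\mathbf{U}_p$ are invertible on the ordinary parts, the isomorphism follows immediately. You instead propose a filtration of $M_\bl \otimes \cO/\varpi^r$ by $I_v(c,c)$-stable steps whose graded pieces are $T_n$-isotypic, and argue that each non-top graded piece has vanishing ordinary homology because $(\bl/\mu)(\alpha_{v,i}) \in \varpi\cO$ for some $i$, finishing with a long exact sequence. This also works, and you correctly flag the main subtlety (naive weight filtration not Iwahori-stable integrally; need $c \geq r$ to truncate the lower-unipotent contributions); a minor imprecision is that the graded pieces need not be rank one over $\cO/\varpi^r$, since weight multiplicities in $V_\bl$ exceed $1$ for $\GL_n$ with $n \geq 3$, but this does not affect the argument — one only needs each non-top piece to be supported on a single weight $\mu$ strictly below $\bl$. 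The payoff of the paper's approach is that it entirely sidesteps the bookkeeping you identify as the hard part: no filtration needs to be exhibited, and the commuting triangle with $\mathbf{U}_p^r$ is a one-line diagram chase.
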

\begin{proof}
We recall that by definition, we have 
\[ C_{\bbA, \bullet}(U(c, c), M_\bl \otimes_\cO \cO/\varpi^r) = C_{\bbA, \bullet} \otimes_{\bbZ[G(F) \times U(c, c)]} M_\bl \otimes_\cO \cO/\varpi^r. \]
If $\lambda \in \bbZ^n_+$, let $\alpha_\lambda : \bbZ(\lambda) \to V_\lambda$ be inclusion of the highest weight space, and let $\beta_\lambda : V_\lambda \to \bbZ(\lambda)$ be projection to the highest weight space. If $\bl \in (\bbZ_+^n)$, then we write $\alpha_\bl : \cO(\bl) \to M_\bl$, $\beta_\bl : M_\bl \to \cO(\bl)$ for the maps obtained by taking tensor products and then extending scalars to $\cO$. We observe that if $c \geq r$, then the map $\alpha_\bl$ is $U(c, c)$-equivariant.

 The map 
 \[ \alpha_{\bl, \ast} :C_{\bbA, \bullet}(U(c, c), \cO/\varpi^r) \to C_{\bbA, \bullet}(U(c, c), M_\bl \otimes_\cO \cO/\varpi^r)  \]
  is then given as $\text{id} \otimes \alpha_{\bl}$. The equivariance with respect to Hecke operators away from $p$ is clear from the definitions. To show that the map $\alpha_{\bl, \ast}$ induces an isomorphism on ordinary parts of cohomology, we construct a map in the other direction. To this end, we choose a decomposition $U(c, c) \alpha_p^r U(c, c) = \sqcup_i g_i U(c, c)$, and write
\[ \varphi : C_{\bbA, \bullet}(U(c, c), M_\bl \otimes_\cO \cO/\varpi^r) \to C_{\bbA, \bullet}(U(c, c), \cO/\varpi^r) \]
for the map given by
\[ \varphi(h \times \sigma \otimes m) = \sum_i \beta_\bl( \bl(\alpha_p^r) g_i^{-1} m ) \cdot h g_i \times \sigma. \]
This expression makes sense since, as observed above, the endomorphism $\bl(\alpha_p^r) g_i^{-1}$ preserves the natural integral lattice $M_\bl \subset M_\bl \otimes_\cO E$, so can be reduced modulo $\varpi^r$. In fact, its action modulo $\varpi^r$ is just projection to the highest weight space $M_\bl$. Formally, we have $\varphi = \beta_{\bl, \ast} \circ \mathbf{U}_{p, \bl}^r$. To show that $\varphi$ is well-defined, we check that 
\[ \varphi(hu \times \sigma \otimes m) = \varphi(h \times \sigma \otimes um) \]
for each $u \in U(c, c)$. This follows from the observation that 
\[ \sum_i\beta_\bl( \bl(\alpha_p^r) g_i^{-1} u m ) = \sum_j \beta_\bl(\bl(\alpha_p^r) g_j^{-1} m), \]
as we can write $u^{-1} g_i = g_{j(i)} u_{j(i)}$ for some $u_j \in U(c, c)$, and $U(c, c)$ acts trivally on the image of $\bl(\alpha_p^r) g_j^{-1}$ modulo $\varpi^r$.

We have therefore constructed the arrows in a diagram
\[ \xymatrix{ C_{\bbA, \bullet}(U, \cO/\varpi^r\cO) \ar[r]^-{\alpha_{\bl, \ast}} \ar[d]^{\mathbf{U}_{p}^r} & C_{\bbA, \bullet}(U, M_\bl \otimes_\cO \cO/\varpi^r\cO) \ar[d]^{\mathbf{U}^r_{p, \bl}} \ar[ld]^{\varphi} \\
 C_{\bbA, \bullet}(U, \cO/\varpi^r\cO) \ar[r]^-{\alpha_{\bl, \ast}}  & C_{\bbA, \bullet}(U, M_\bl \otimes_\cO \cO/\varpi^r\cO).} \]
It is now easy to check that this diagram commutes; the rest of the proposition follows from the existence of this diagram.
\end{proof}
We can now deduce a derived version of Hida's theorem on independence of weight (\cite[Theorem 6.1]{Hid98}):
\begin{corollary}\label{cor_complexes_independent_of_weight}
Let $\cO(\bl)$ denote $\cO$ considered as a $\Lambda$-module via the homomorphism 
\[ \bl : \prod_{v \in S_p} \cO_{F_v}^\times(p)^n \to \cO^\times. \]
 Then there is an isomorphism $F^\bullet_{\infty, \bl} \cong F^\bullet_{\infty} \otimes_\cO \cO(\bl^{-1})$ of minimal complexes of $\Lambda$-modules. It is equivariant for the action of $\cH(G^{\infty, p}, U^p)^\text{op}$. In particular, there is a canonical isomorphism of $\cO$-modules
\[ H^\ast_\text{ord}(U) \cong \plim_c H_{d-\ast}(X_U, M_\bl)_\text{ord}. \]
\end{corollary}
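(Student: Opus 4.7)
The plan is to bootstrap from the finite-level comparison in Proposition \ref{prop_independence_of_weight} to an isomorphism of the limiting minimal complexes produced by Propositions \ref{prop_complex_control_theorem} and \ref{prop_complex_control_theorem_variable_weight}, keeping careful track of the $\Lambda$-action, which is where the twist by $\cO(\bl^{-1})$ enters. The point is that the $\Lambda$-structure on the ordinary part of $C_{\bbA,\bullet}(U(c,c),M_\bl\otimes_\cO\cO/\varpi^c)$ is implemented by the diamond operators $\langle u\rangle$ for $u\in\mathrm{T}_n(\cO_{F_v}/\varpi^c)$, $v\in S_p$ (via the isomorphism (\ref{eqn_remove_the_center})), and the intertwining relation $\alpha_{\bl,\ast}\langle u\rangle=\bl(u)\langle u\rangle\alpha_{\bl,\ast}$ in Proposition \ref{prop_independence_of_weight} tells us precisely how these two $\Lambda$-actions differ.

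First I would set up, for each $c\geq 1$, the Hecke-equivariant quasi-isomorphism of ordinary parts
\[ \alpha_{\bl,\ast}:C_{\bbA,d-\bullet}(U(c,c),\cO/\varpi^c)_\text{ord}\xrightarrow{\;\sim\;}C_{\bbA,d-\bullet}(U(c,c),M_\bl\otimes_\cO\cO/\varpi^c)_\text{ord} \]
provided by Proposition \ref{prop_independence_of_weight}, and verify that these maps are compatible with the transition maps obtained by varying $c$. (This compatibility is immediate from the construction of $\alpha_{\bl,\ast}$ as $\mathrm{id}\otimes\alpha_\bl$ and the functoriality of the pushforward in the coefficients, together with Lemma \ref{lem_ordinary_part_for_complexes}.) However, $\alpha_{\bl,\ast}$ is not $\Lambda(1,c)/(\varpi^c)$-linear on the nose: by the second intertwining relation of Proposition \ref{prop_independence_of_weight}, it is linear when we twist the $\Lambda$-action on the source by the character $\bl$, or equivalently untwist the target by $\bl^{-1}$. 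I would record this by rewriting $\alpha_{\bl,\ast}$ as a genuine isomorphism of $\Lambda(1,c)/(\varpi^c)$-complexes
\[ C_{\bbA,d-\bullet}(U(c,c),\cO/\varpi^c)_\text{ord}\otimes_\cO\cO(\bl^{-1})\xrightarrow{\;\sim\;}C_{\bbA,d-\bullet}(U(c,c),M_\bl\otimes_\cO\cO/\varpi^c)_\text{ord}. \]

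Next I would invoke the uniqueness part of Proposition \ref{prop_complex_control_theorem_variable_weight}: the system on the right is glued by $F^\bullet_{\infty,\bl}$, while the system on the left is glued by $F^\bullet_\infty\otimes_\cO\cO(\bl^{-1})$ (which is still a minimal complex of $\Lambda$-modules, since tensoring with a rank-one module over the discrete ring $\cO$ does not disturb minimality, and the gluing of Proposition \ref{prop_complex_control_theorem} goes through unchanged after tensoring by $\cO(\bl^{-1})$). The uniqueness clause then produces an isomorphism $F^\bullet_\infty\otimes_\cO\cO(\bl^{-1})\cong F^\bullet_{\infty,\bl}$ in $\mathbf{D}(\Lambda)$, unique up to chain homotopy, and between minimal complexes this is an honest isomorphism of complexes. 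The $\cH(G^{\infty,p},U^p)^\text{op}$-equivariance is inherited from the Hecke equivariance of $\alpha_{\bl,\ast}$ at finite level, combined with the last sentence of Proposition \ref{prop_complex_control_theorem_variable_weight} characterizing the Hecke action via $t\mapsto g'_ctg_c$.

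Finally, the cohomological statement $H^\ast_\text{ord}(U)\cong\plim_c H_{d-\ast}(X_U,M_\bl)_\text{ord}$ follows by applying Lemma \ref{lem_limit_of_good_complexes}(2) to the system $F^\bullet_{\infty,\bl}\otimes_\Lambda\Lambda(1,c)/(\varpi^c)$ (whose homology at finite level is the ordinary part of $H_{d-\ast}(X_{U(c,c)},M_\bl\otimes_\cO\cO/\varpi^c)$ by construction), combined with the isomorphism just constructed and the observation (as in the proof of Corollary \ref{cor_control_theorem_spectral_sequence}) that twisting by the rank-one $\cO$-module $\cO(\bl^{-1})$ has no effect on the underlying $\cO$-module of the inverse limit of cohomology. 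The main obstacle is purely bookkeeping: making sure the character $\bl$ versus $\bl^{-1}$ is on the correct side, and that the passage from ``$\alpha_{\bl,\ast}$ only commutes with $\langle u\rangle$ up to $\bl(u)$'' to an actual isomorphism of $\Lambda$-module complexes is recorded cleanly. No genuinely new input is needed beyond Propositions \ref{prop_complex_control_theorem}, \ref{prop_complex_control_theorem_variable_weight} and \ref{prop_independence_of_weight}.
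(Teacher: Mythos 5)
Your proposal is correct and follows essentially the same route as the paper's own (one-line) proof: combine Proposition~\ref{prop_independence_of_weight} with the defining/uniqueness properties of the glued minimal complexes from Propositions~\ref{prop_complex_control_theorem} and~\ref{prop_complex_control_theorem_variable_weight}, and then use minimality to upgrade the resulting quasi-isomorphism to an honest isomorphism. One small caution: your parenthetical "twist the source by $\bl$, or equivalently untwist the target by $\bl^{-1}$" is slightly mis-stated (the correct untwist puts $\cO(\bl^{-1})$ on the trivial-weight side, which is what your displayed map correctly shows), but this does not affect the argument.
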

\begin{proof}
It follows immediately from Proposition \ref{prop_independence_of_weight} and the defining properties of the complexes $F^\bullet_\infty$ and $F^\bullet_{\infty, \bl}$ that there is a quasi-isomorphism $f : F^\bullet_{\infty, \bl} \to F^\bullet_{\infty} \otimes_\cO \cO(\bl)$. Since these are minimal complexes, it is an isomorphism.
\end{proof}
\begin{corollary}\label{cor_ordinary_complex_computes_fixed_weight_cohomology}
Let $\wp_\bl \subset \Lambda$ be the kernel of the homomorphism $\Lambda \to \cO = \End_\cO(\cO(\bl^{-1}))$. Then there is a canonical isomorphism
\[ H^\ast(F^\bullet_{\infty} \otimes_\Lambda \Lambda/\wp_\bl) \cong H_{d-\ast}(X_{U(1, 1)}, M_\bl)_\text{ord}, \]
and hence a spectral sequence
\[ \mathrm{Tor}^\Lambda_{-p}(H^q_\text{ord}(U), \Lambda/\wp_\bl) \Rightarrow H_{d - (p+q)}(X_{U(1, 1)}, M_\bl)_\text{ord}. \]
In particular, there is an isomorphism 
\[H^\ast(F^\bullet_{\infty} \otimes_\Lambda \Lambda/\wp_\bl)[1/p] \cong \Hom_E(H^{d-\ast}(X_{U(1, 1)}, M_{-w_0 \bl})_\text{ord} \otimes_\cO E, E). \]
\end{corollary}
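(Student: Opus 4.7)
The first isomorphism and the spectral sequence will follow by combining results already at our disposal. For the first, the plan is to identify $F^\bullet_\infty \otimes_\Lambda \Lambda/\wp_\bl$ with $F^\bullet_{\infty, \bl} \otimes_\Lambda \Lambda(1, 1)$ using the weight-independence isomorphism of Corollary \ref{cor_complexes_independent_of_weight}, which presents $F^\bullet_{\infty, \bl}$ as the twist $F^\bullet_\infty \otimes_\cO \cO(\bl^{-1})$. One then tensors over $\Lambda$ with $\Lambda(1, 1) = \cO$ (via the augmentation map) and tracks how the diagonal $\Lambda$-action on the $\cO$-tensor product transforms: the result is the specialization of $F^\bullet_\infty$ along the character specialization defining $\wp_\bl$. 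Next, the variable-weight analogue of Corollary \ref{cor_control_theorem_spectral_sequence}, proved identically to the zero-weight version by invoking Proposition \ref{prop_complex_control_theorem_variable_weight} in place of Proposition \ref{prop_complex_control_theorem}, supplies a quasi-isomorphism $F^\bullet_{\infty, \bl} \otimes_\Lambda \Lambda(1, 1) \simeq C_{\bbA, d-\bullet}(X_{U(1, 1)}, M_\bl)_\text{ord}$ in $\mathbf{D}(\cO)$. Composing these identifications and taking cohomology yields $H^\ast(F^\bullet_\infty \otimes_\Lambda \Lambda/\wp_\bl) \cong H_{d-\ast}(X_{U(1, 1)}, M_\bl)_\text{ord}$. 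The spectral sequence is then an immediate consequence of Proposition \ref{prop_change_of_coefficients_spectral_sequence}, applied to the bounded-above perfect complex $F^\bullet_\infty$ of $\Lambda$-modules and the $\Lambda$-module $\Lambda/\wp_\bl$.

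For the final identification after inverting $p$, the plan is to combine two dualities. First, the Hecke-equivariant universal-coefficients duality $H^\ast(X_U, L) \cong \Hom_E(H_\ast(X_U, L), L)$ from the corollary preceding Lemma \ref{lem_ordinary_hecke_operators}, applied with $E$-coefficients twisted by a local system, gives that $\Hom_E(H^{d-\ast}(X_{U(1, 1)}, M_{-w_0 \bl})_\text{ord} \otimes_\cO E, E)$ is the ordinary-part dual of the $M_{-w_0 \bl}$-valued cohomology. Second, since $X_{U(1, 1)}$ is a smooth oriented $d$-manifold, Poincaré--Verdier duality furnishes a natural pairing between $H_{d-\ast}(X_{U(1, 1)}, M_\bl \otimes_\cO E)$ and $H^{d-\ast}(X_{U(1, 1)}, M_\bl^\vee \otimes_\cO E)$, and the identification $V_\lambda^\vee \cong V_{-w_0 \lambda}$ of irreducible algebraic $\GL_n$-representations yields termwise $M_\bl^\vee \cong M_{-w_0 \bl}$. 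Combining these with the first part of the corollary (which identifies $H^\ast(F^\bullet_\infty \otimes_\Lambda \Lambda/\wp_\bl)[1/p]$ with $H_{d-\ast}(X_{U(1, 1)}, M_\bl)_\text{ord} \otimes_\cO E$) delivers the claimed isomorphism.

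The main technical obstacle will be the non-compactness of $X_{U(1, 1)}$, which causes Poincaré--Verdier duality in its basic form to relate ordinary cohomology to Borel--Moore homology, or compactly supported cohomology to usual homology, rather than directly matching $H^\ast$ and $H_{d-\ast}$ with the ``swapped-dual'' coefficients. The plan is to argue that after inverting $p$ and passing to ordinary parts, the boundary contributions coming from the Borel--Serre compactification are invisible, so that $H^\ast$ and $H^\ast_c$ may be identified on the ordinary part (the ordinary projector kills the relevant Eisenstein pieces supported on the boundary); this step is likely where the most care is required. A secondary concern is bookkeeping the Hecke equivariance: the Poincaré--Verdier pairing intertwines $[U g U]_\ast$ on homology with its transpose $[U g^{-1} U]^\ast$ on cohomology, so the normalized ordinary operator $\mathbf{U}_{p, \bl}$ on the $M_\bl$-homology side must be matched with the correspondingly normalized $\mathbf{U}_{p, -w_0 \bl}$ on the $M_{-w_0 \bl}$-cohomology side, with the two normalizations arranged so that the ordinary decompositions correspond across the duality.
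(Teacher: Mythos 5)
Your treatment of the first isomorphism and the spectral sequence is correct and is essentially identical to the paper's: combine Corollary \ref{cor_complexes_independent_of_weight} with the variable-weight analogue of Corollary \ref{cor_control_theorem_spectral_sequence} (Proposition \ref{prop_complex_control_theorem_variable_weight} playing the role of Proposition \ref{prop_complex_control_theorem}), then apply Proposition \ref{prop_change_of_coefficients_spectral_sequence}.

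The trouble is in the final identification. Look at the degrees: you must produce an isomorphism between $H_{d-\ast}(X_{U(1,1)}, M_\bl)_\text{ord}[1/p]$ and $\Hom_E(H^{d-\ast}(X_{U(1,1)}, M_{-w_0 \bl})_\text{ord} \otimes_\cO E, E)$. The index $d-\ast$ is the same on both sides. Setting $i = d-\ast$, the claim is $H_i(X_{U(1,1)}, M_\bl \otimes E)_\text{ord} \cong \Hom_E(H^i(X_{U(1,1)}, M_{-w_0\bl} \otimes E)_\text{ord}, E)$ --- a degree-preserving pairing of homology against cohomology. This is nothing but the universal-coefficients duality over the field $E$ (the lemma preceding Lemma \ref{lem_ordinary_hecke_operators} gives $\Hom_E(C_{\bbA,\bullet}(U, M), E) = C_\bbA^\bullet(U, M^\vee)$ on the nose, and dualizing over a field is exact), combined with $V_\lambda^\vee \cong V_{-w_0\lambda}$ so that $M_\bl^\vee \cong M_{-w_0\bl}$. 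No Poincar\'e--Verdier duality enters: that duality would produce a shift by $d$ and pair $H^i_c$ with $H^{d-i}$, neither of which appears in the target statement.

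Because you incorrectly brought in Poincar\'e duality, you then invented a non-compactness obstruction and proposed to resolve it by claiming that ``the ordinary projector kills the relevant Eisenstein pieces supported on the boundary.'' That claim is not correct: the ordinary projector alone does not kill boundary cohomology. In this paper, vanishing of $H^\ast(\partial \overline{X}_{U}, \cdot)$ requires localizing at a \emph{non-Eisenstein} maximal ideal and is exactly the content of Theorem \ref{thm_only_cuspidal_cohomology_survives_localization}, a substantially harder statement that plays no role in the present corollary. So not only is the obstacle you identified a phantom, but the proposed escape route would also fail if the obstacle were real. Finally, a small point on Hecke equivariance: under the universal-coefficients pairing the operator $[UgU]_\ast$ on homology transposes to $[UgU]^\ast$ (the \emph{same} double coset, not $[Ug^{-1}U]^\ast$), and since you are working over $E$ the normalizing factors $\bl(\alpha_p)$ are units, so the ordinary decompositions on the two sides match without further argument.
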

\begin{proof}
This follows immediately from Corollary \ref{cor_complexes_independent_of_weight} and the analogue of Corollary \ref{cor_control_theorem_spectral_sequence} for the complex $F^\bullet_{\infty, \bl}$. (Note that if $\lambda \in \bbZ^n_+$, then $- w_0 \lambda \in \bbZ^n_+$ is the highest weight of the dual representation.)
\end{proof}

\subsection{Hecke algebras}\label{sec_hecke_algebras}

We now introduce the big ordinary Hecke algebras. We fix a choice of good subgroup $U$ such that for each $v \in S_p$, $U_v = I_v(1, 1)$. Let $S$ be a finite set of finite places of $F$, containing $S_p$, such that for all $v \not\in S$, we have $U_v = \GL_n(\cO_{F_v})$. We define $\bbT^{S, \text{univ}}$ to be the polynomial algebra over $\Lambda$ in the infinitely many indeterminates $T_v^i$, $v \not\in S$ and $i = 1, \dots, n$ and the indeterminates $\mathbf{U}_v^i$, $v \in S_p$ and $i = 1, \dots, n$. Following the discussion in \S \ref{sec_hecke_operators}, this algebra acts on the groups $H_\ast(X_{U(c, c)}, A)$ for any integer $c \geq 1$ and $\cO$-module $A$, each element $T_v^i \in \bbT^{S, \text{univ}}$ acting on $H_\ast(X_{U(c, c)}, A)$ by the Hecke operator of the same name. If $C^\bullet$ is a complex of $\Lambda$-modules equipped with a homomorphism $\bbT^{S , \text{univ}} \to \End_{\mathbf{D}(\Lambda)}(C^\bullet)$, then we write $\bbT^S(C^\bullet)$ for the image of this homomorphism. 

An important special case is when the complex $C^\bullet$ is just a $\bbT^{S, \text{univ}}$-module $M$ placed in degree 0; in this case, we have $\bbT^S(C^\bullet) = \bbT^S(M) = \im(\bbT^{S, \text{univ}} \to \End_\Lambda(M))$. Using this notation, we see that for any complex $C^\bullet$ of $\Lambda$-modules equipped with a homomorphism $\bbT^{S , \text{univ}} \to \End_{\mathbf{D}(\Lambda)}(C^\bullet)$, there is a surjective homomorphism
\[ \bbT^S(C^\bullet) \to \bbT^S(H^\ast(C^\bullet)), \]
which need not be injective.
\begin{lemma}
Let $M$ be a $\bbT^{S, \text{univ}}$-module, which is finite as a $\Lambda$-module. 
\begin{enumerate}
\item $\bbT^S(M)$ is a finite $\Lambda$-algebra, with finitely many maximal ideals $\ffrm$. For each maximal ideal $\ffrm \subset \bbT^S(M)$, $\bbT^S(M)/\ffrm$ is a finite extension of $k$.
\item Let $\ffrm' \subset \bbT^{S, \text{univ}}$ be the kernel of a homomorphism $\bbT^{S, \text{univ}} \to k$. Then either $M_{\ffrm'} = 0$, or $\ffrm'$ is the pre-image of a maximal ideal of $\bbT^S(M)$.
\end{enumerate}
\end{lemma}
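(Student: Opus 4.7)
For the first part, I would begin by observing that $\bbT^S(M)$ is a subring of $\End_\Lambda(M)$. Since $M$ is finite over the Noetherian ring $\Lambda$, the module $\End_\Lambda(M)$ is itself finite over $\Lambda$, so $\bbT^S(M)$ (being a $\Lambda$-submodule) is a finite $\Lambda$-algebra. Because $\Lambda$ is a complete Noetherian local ring with residue field $k$, any maximal ideal $\ffrm$ of $\bbT^S(M)$ must contain $\ffrm_\Lambda$; equivalently, the maximal ideals of $\bbT^S(M)$ correspond to the maximal ideals of the finite-dimensional $k$-algebra $\bbT^S(M)/\ffrm_\Lambda\bbT^S(M)$. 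Since this quotient is Artinian it has only finitely many maximal ideals, each with residue field a finite extension of $k$.

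For the second part, let $\pi\colon \bbT^{S,\text{univ}} \twoheadrightarrow \bbT^S(M)$ denote the canonical surjection, and set $J = \ker(\pi)$. Everything hinges on the dichotomy $J \subset \ffrm'$ or $J \not\subset \ffrm'$. In the latter case, choose $x \in J \setminus \ffrm'$; then $x$ acts as zero on $M$, yet becomes a unit in the localization $\bbT^{S,\text{univ}}_{\ffrm'}$. Consequently every element of $M_{\ffrm'}$ is killed by a unit, so $M_{\ffrm'} = 0$. In the former case, the composite $\bbT^{S,\text{univ}} \to k$ with kernel $\ffrm'$ factors uniquely through $\pi$, producing a ring homomorphism $\bbT^S(M) \to k$ whose kernel $\ffrm$ is therefore a maximal ideal of $\bbT^S(M)$, and one checks immediately that $\pi^{-1}(\ffrm) = \ffrm'$.

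No single step here seems to pose a real obstacle: the proof is essentially formal commutative algebra. The only point requiring any care is the uniformity provided by the completeness of $\Lambda$ in the first part (ensuring $\bbT^S(M)$ is semilocal rather than merely having finitely many maximal ideals above $\ffrm_\Lambda$), and the sleight of hand in the second part of inverting an element that acts as zero, which is the standard mechanism forcing $M_{\ffrm'}$ to vanish when the annihilator meets the complement of $\ffrm'$.
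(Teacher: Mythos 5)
Your proof is correct and follows essentially the same route as the paper. For part 1, the paper uses the completeness of $\Lambda$ to produce the decomposition $\bbT^S(M) = \prod_\ffrm \bbT^S(M)_\ffrm$, whereas you instead pass to the Artinian quotient $\bbT^S(M)/\ffrm_\Lambda \bbT^S(M)$; both arguments yield finiteness of the set of maximal ideals and the residue field statement, and in fact yours needs only that $\Lambda$ is Noetherian local (completeness is not required for this part, contrary to your parenthetical remark, since every maximal ideal of a finite algebra over a local ring already lies over $\ffrm_\Lambda$). For part 2, the paper simply cites \cite[Corollary 2.7]{Eis95} (support equals $V(\Ann)$ for finitely generated modules); your explicit dichotomy on whether $J = \ker(\bbT^{S,\text{univ}} \to \End_\Lambda(M))$ is contained in $\ffrm'$ is precisely the content of that citation unpacked, and it is sound.
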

\begin{proof}
For the first part, $\bbT^S(M)$ is a submodule of $\End_\Lambda(M)$, which is a finite $\Lambda$-module. Since $\Lambda$ is a complete local ring, there is a decomposition $\bbT^S(M) = \prod_\ffrm \bbT^S(M)_\ffrm$ over the finitely many maximal ideals of $\bbT^S(M)$. If $\ffrm \subset \bbT^S(M)$ is one of these ideals, then (since $\Lambda \to \bbT^S(M)$ is finite) its pullback to $\Lambda$ equals $\ffrm_\Lambda$, so $\bbT^S(M)/\ffrm$ is a finite $\Lambda/\ffrm_\Lambda = k$-module, hence a finite extension of $k$. 

The second part is easy, using that $M$ is a finite $\bbT^{S, \text{univ}}$-module; see \cite[Corollary 2.7]{Eis95}.
\end{proof}
The most important algebra for us is the big ordinary Hecke algebra $\bbT^{S}_\text{ord}(U) = \bbT^S(F^\bullet_\infty)$.
The kernel $J$ of the homomorphism $\bbT^{S}_\text{ord}(U) \to \bbT^S(H^\ast_\text{ord}(U))$ is nilpotent, by Lemma  \ref{lem_nilpotents_in_derived_category}, and since $F^\bullet_\infty$ is concentrated in degrees $[0, d]$. 
\begin{lemma}\label{lem_maximal_ideal_in_support_of_cohomology}
Every maximal ideal of $\bbT^{S}_\text{ord}(U)$ appears in the support of $H^\ast_\text{ord}(U)$ and $H_\ast(X_U, k)_\text{ord}$.
\end{lemma}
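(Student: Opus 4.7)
Let $\ffrm$ be a maximal ideal of $\bbT^S_{\text{ord}}(U) = \bbT^S(F^\bullet_\infty)$. Since $\bbT^S_{\text{ord}}(U)$ is finite over the complete local ring $\Lambda$, it is semilocal, so there is a non-zero idempotent $e_\ffrm \in \bbT^S_{\text{ord}}(U)$ with $\bbT^S_{\text{ord}}(U)_\ffrm = e_\ffrm \cdot \bbT^S_{\text{ord}}(U)$. Viewing $e_\ffrm$ as an idempotent endomorphism in $\End_{\mathbf{D}(\Lambda)}(F^\bullet_\infty)$, and using that $\mathbf{D}(\Lambda)$ is idempotent complete, we split off a direct summand $F^\bullet_{\infty,\ffrm}$ of $F^\bullet_\infty$ in $\mathbf{D}(\Lambda)$. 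Because $e_\ffrm \neq 0$ in $\End_{\mathbf{D}(\Lambda)}(F^\bullet_\infty)$ (it is the identity on $F^\bullet_{\infty,\ffrm}$), the summand $F^\bullet_{\infty,\ffrm}$ is non-zero in $\mathbf{D}(\Lambda)$. Since it is a direct summand of the perfect complex $F^\bullet_\infty$, it is itself perfect, so by Lemma \ref{lem_perfect_and_minimal_complexes} we may represent it by a minimal complex $M^\bullet$; since $F^\bullet_{\infty,\ffrm} \not\cong 0$ in $\mathbf{D}(\Lambda)$, $M^\bullet$ has at least one non-zero term.

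Localization at $\ffrm$ on any $\bbT^S_{\text{ord}}(U)$-module coincides with multiplication by the idempotent $e_\ffrm$, and is therefore exact and commutes with the formation of cohomology and with tensor products over $\Lambda$. Applied to $H^\ast_{\text{ord}}(U) = H^\ast(F^\bullet_\infty)$, this gives
\[ H^\ast_{\text{ord}}(U)_\ffrm \;\cong\; H^\ast(F^\bullet_{\infty,\ffrm}) \;\cong\; H^\ast(M^\bullet) \;\neq\; 0, \]
the last inequality holding since $M^\bullet \not\cong 0$ in $\mathbf{D}(\Lambda)$. This proves that $\ffrm$ lies in the support of $H^\ast_{\text{ord}}(U)$.

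For the second claim, note that $T(1,1)$ is the trivial group, so $\Lambda(1,1) = \cO$ and $U(1,1) = U$. Hence Proposition \ref{prop_complex_control_theorem} yields a Hecke-equivariant identification
\[ H_{d-\ast}(X_U, k)_{\text{ord}} \;\cong\; H^\ast\bigl(F^\bullet_\infty \otimes_\Lambda \Lambda(1,1)/(\varpi)\bigr) \;\cong\; H^\ast(F^\bullet_\infty \otimes_\Lambda k). \]
Localizing at $\ffrm$ and arguing as above, this becomes $H^\ast(F^\bullet_{\infty,\ffrm} \otimes_\Lambda k) \cong H^\ast(M^\bullet \otimes_\Lambda k)$. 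By minimality of $M^\bullet$, the differentials of $M^\bullet \otimes_\Lambda k$ vanish, so its cohomology is $\bigoplus_i M^i \otimes_\Lambda k$, which is non-zero since some $M^i$ is non-zero (and $M^i$ is finite free over $\Lambda$, hence $M^i \otimes_\Lambda k \neq 0$). Thus $H_\ast(X_U, k)_{\text{ord}, \ffrm} \neq 0$, completing the argument.

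There is no real obstacle: the argument is a formal consequence of the semilocal structure of $\bbT^S_{\text{ord}}(U)$, idempotent completeness of $\mathbf{D}(\Lambda)$, and the existence and basic properties of minimal representatives of perfect complexes established in Lemma \ref{lem_perfect_and_minimal_complexes}. The only point to keep in mind is that passing to a minimal model is essential for concluding that $M^\bullet \otimes_\Lambda k$ has non-trivial cohomology from the mere non-vanishing of $F^\bullet_{\infty,\ffrm}$ in $\mathbf{D}(\Lambda)$.
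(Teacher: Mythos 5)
Your argument is correct, but it takes a genuinely different route from the paper's. The paper handles the first claim by observing (in the paragraph preceding the lemma) that the kernel $J$ of $\bbT^S_\text{ord}(U) \to \bbT^S(H^\ast_\text{ord}(U))$ is nilpotent by Lemma \ref{lem_nilpotents_in_derived_category}, so every maximal ideal contains the annihilator $J$ and hence lies in the support. For the second claim, the paper runs the spectral sequence $\mathrm{Tor}^\Lambda_{-p}(H^q_\text{ord}(U), k) \Rightarrow H_{d-(p+q)}(X_U, k)_\text{ord}$: taking $m$ maximal with $H^m_\text{ord}(U)_\ffrm \neq 0$, one applies Nakayama to see $E_2^{0,m} = H^m_\text{ord}(U)_\ffrm \otimes_\Lambda k \neq 0$, and this corner term survives to $E_\infty$. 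Your approach bypasses both the nilpotence argument and the spectral sequence: you split off the direct summand $F^\bullet_{\infty,\ffrm}$ from the non-vanishing of the idempotent $e_\ffrm$, then use a minimal representative $M^\bullet$ and the observation that minimality kills the differentials of $M^\bullet \otimes_\Lambda k$. This buys you both claims from a single structural fact (nonvanishing of a minimal model), and it has the advantage of foregrounding the complex $F^\bullet_\ffrm$, which the paper itself introduces immediately after this lemma. The trade-off is that you lean on the equivalence ``minimal complex has nonzero cohomology $\iff$ nonzero reduction mod $\ffrm_\Lambda$ $\iff$ nonzero term'' (implicit in Lemma \ref{lem_perfect_and_minimal_complexes} and the subsequent corollary), whereas the paper's spectral-sequence argument is self-contained given Corollary \ref{cor_control_theorem_finite_levels}. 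Both are correct; yours is slightly more economical once the minimal-complex formalism is granted.
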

\begin{proof}
Let $\ffrm$ be a maximal ideal of $\bbT^{S}_\text{ord}(U)$. Then $\ffrm$ contains the nilradical of $\bbT^S_\text{ord}(U)$, so appears in the support of $H^\ast_\text{ord}(U)$, by the above remarks. There is a $\bbT^S_\text{ord}(U)$-equivariant spectral sequence
\[ E_2^{p, q} = \mathrm{Tor}^\Lambda_{-p}(H^q_\text{ord}(U), k) \Rightarrow H_{d-(p + q)}(X_U, k)_\text{ord}. \]
Let $m$ be maximal such that $H^m_\text{ord}(U)_\ffrm \neq 0$. Since $\bbT^S_\text{ord}(U)$ is a finite $\Lambda$-algebra, we have $H^m_\text{ord}(U)_\ffrm \otimes_\Lambda k \neq 0$, and $E_2^{0, m} = E_\infty^{0, m} = H^m_\text{ord}(U)_\ffrm \otimes_\Lambda k$ is a non-zero subquotient of $H_{d-m}(X_U, k)_{\text{ord}, \ffrm}$. This implies that $H_{d-m}(X_U, k)_{\ffrm} \neq 0$, and completes the proof of the lemma.
\end{proof}
In order to go further, we need to assume the existence of Galois representations associated to the ordinary completed cohomology groups constructed so far.
\begin{conjecture}\label{conj_existence_of_galois_combined} Let $\ffrm \subset \bbT^{S}_\text{ord}(U)$ be a maximal ideal with residue field $k$.
\begin{enumerate}
\item There exists a continuous semi-simple representation $\overline{\rho}_\ffrm : G_{F, S} \to \GL_n(\bbT^{S}_\text{ord}(U)/\ffrm)$ satisfying the following condition: for any finite place $v \not\in S$ of $F$, $\overline{\rho}_\ffrm(\Frob_v)$ has characteristic polynomial
\[ X^n - T_v^1 X^{n-1} + \dots + (-1)^{i} q_v^{i(i-1)/2} T_v^i X^{n-i} + \dots + (-1)^n q_v^{n(n-1)/2} T_v^n \in (\bbT^{S}_\text{ord}(U)/\ffrm)[X]. \]
If $\overline{\rho}_\ffrm$ is absolutely reducible, we say that the maximal ideal $\ffrm$ is Eisenstein; otherwise, we say that $\ffrm$ is non-Eisenstein. (By the Chebotarev density theorem, the representation $\overline{\rho}_\ffrm$ is uniquely determined up to isomorphism, if it exists.)
\item Suppose that $\ffrm$ is non-Eisenstein. Then there exists a lifting of $\overline{\rho}_\ffrm$ to a continuous homomorphism $\rho_\ffrm : G_{F, S} \to \GL_n(\bbT^{S}_\text{ord}(U)_\ffrm)$ satisfying the following condition: for any finite place $v \not\in S$ of $F$, $\rho_\ffrm(\Frob_v)$ has characteristic polynomial
\[ X^n - T_v^1 X^{n-1} + \dots + (-1)^{i} q_v^{i(i-1)/2} T_v^i X^{n-i} + \dots + (-1)^n q_v^{n(n-1)/2} T_v^n  \in \bbT^{S}_\text{ord}(U)[X]. \]
\end{enumerate}
\end{conjecture}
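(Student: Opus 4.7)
The final statement is a conjecture, so any proof sketch is necessarily a research proposal rather than a verified argument; I will describe the strategy one would try to follow, indicating where substantive new input is required.

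The plan is to prove part (1) first, then deduce part (2) by pseudo-representation theory. For part (1), the natural approach is to construct $\overline{\rho}_\ffrm$ by passing to the limit in a Scholze-type construction. Concretely, one would use Corollary \ref{cor_ordinary_complex_computes_fixed_weight_cohomology} to realize $H_\ast(X_{U(1,1)}, M_\bl)_\text{ord} \otimes E$ as an inverse system indexed by weights $\bl$ via specialization from $F^\bullet_\infty$. Each such classical ordinary cohomology is built from cuspidal automorphic representations $\pi$ which are $\iota$-ordinary (in the sense fixed in the Notation section), and by the hypothesis that $F$ is CM or totally real one can attach Galois representations $\rho_{\pi, \iota} : G_{F,S} \to \GL_n(\overline{\bbQ}_p)$ via \cite{Sch13} (or using automorphic induction to a unitary group). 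Traces of these representations organize into a continuous pseudo-representation $T : G_{F,S} \to \bbT^{S}_\text{ord}(U) \otimes \bbQ_p$ valued in a ring whose image in $\End(H^\ast(F^\bullet_\infty \otimes \Lambda/\wp_\bl))$ realizes each $\tr \rho_{\pi, \iota}$. A congruence argument (or Chenevier's theory of determinants applied to the pro-$p$, profinite algebra $\bbT^S_\text{ord}(U)/\ffrm$) then produces a determinant, hence by Taylor's theorem a semisimple $\overline{\rho}_\ffrm$ with the required characteristic polynomials of Frobenius.

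For part (2), assume $\ffrm$ is non-Eisenstein. The key point is that absolute irreducibility of $\overline{\rho}_\ffrm$ promotes pseudo-deformations to honest deformations. Specifically, by carrying out the pseudo-representation construction of the previous paragraph at level $\bbT^{S}_\text{ord}(U)_\ffrm$ rather than in characteristic zero — gluing the determinants coming from each arithmetic prime $\wp_\bl$ using the density of such primes in $\Spec \Lambda$ — one obtains a continuous determinant $D : G_{F,S} \to \bbT^{S}_\text{ord}(U)_\ffrm$ of dimension $n$ whose reduction modulo $\ffrm$ is $\det \overline{\rho}_\ffrm$. Since $\overline{\rho}_\ffrm$ is absolutely irreducible, Chenevier's theorem lifts $D$ uniquely (up to conjugation by a matrix in $1 + M_n(\ffrm)$) to the desired homomorphism $\rho_\ffrm : G_{F,S} \to \GL_n(\bbT^{S}_\text{ord}(U)_\ffrm)$; the Frobenius trace formula follows from the analogous identity modulo each $\wp_\bl$ and Zariski density.

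The hard part is not the algebraic pseudo-representation gluing but rather the construction of Galois representations attached to mod $p^N$ classes in the ordinary cohomology in sufficient generality. In the Shimura-variety case ($n=2$ over a totally real field, or self-dual situations over a CM field) one has classical constructions, but for $\GL_n$ over a general CM field the only source is \cite{Sch13}, and extracting continuous families with the expected Frobenius characteristic polynomials requires controlling congruences between ordinary classical forms carefully enough to fill in all torsion. Equally delicate is the verification that the trace coming from $T_v^i$ at $v \not\in S$ matches the prescribed coefficient; at level $U^p(c)$ this follows from classical local-global compatibility, but one needs the compatibility to descend to $\bbT^S_\text{ord}(U)$ after the limit over $c$, which is where local-global compatibility at $p$ (as formulated in Conjecture \ref{conj_local_global_compatibility_at_taylor_wiles_primes}) and boundedness of the ordinary Hecke action (Lemma \ref{lem_integrality_of_u_p_operator}) both enter. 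Without a new input beyond \cite{Sch13}, this conjecture must remain a conjecture for general $F$.
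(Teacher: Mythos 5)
This statement is a \emph{conjecture} in the paper: the authors do not give a proof, only a remark recording what is known. Specifically, the remark following the conjecture notes that when $F$ is imaginary CM or totally real, part (1) is a theorem of Scholze \cite{Sch13}, while part (2) is known only up to replacing $\bbT^{S}_\text{ord}(U)_\ffrm$ by a quotient $\bbT^{S}_\text{ord}(U)_\ffrm/I$ with $I$ a nilpotent ideal of bounded exponent. So there is no ``paper's proof'' to compare against; your sketch has to be judged on its own merits as a description of the expected strategy.

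Your outline for part (1) — pass through specializations at arithmetic weights $\wp_\bl$, attach Galois representations to the ordinary cuspidal constituents via Scholze, assemble the traces into a pseudo-representation/determinant, and apply Taylor's theorem over the residue field — is exactly the approach the paper alludes to and is sound as far as it goes. The real trouble, which you partly identify but do not isolate sharply, is in part (2). Your gluing argument passes from determinants over each $\bbT^{S}_\text{ord}(U)_\ffrm/\wp_\bl$ to a determinant over $\bbT^{S}_\text{ord}(U)_\ffrm$ by invoking Zariski density of arithmetic primes. But density of arithmetic points is a statement about $\Spec$ of the \emph{reduced} ring after inverting $p$; it does \emph{not} imply that the map $\bbT^{S}_\text{ord}(U)_\ffrm \to \prod_\bl \bbT^{S}_\text{ord}(U)_\ffrm/\wp_\bl$ is injective, since nothing in the definition of $\bbT^{S}_\text{ord}(U)$ forces it to be reduced or $\cO$-torsion free. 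This is precisely the obstruction that leaves a nilpotent ideal $I$ in the current state of the art, and any honest proof of part (2) must deal with it directly — by controlling torsion in $H^\ast_\text{ord}(U)_\ffrm$, or by refining Scholze's construction so that the determinant is produced at each finite torsion level $\cO/\varpi^c$ directly rather than by interpolation from characteristic zero. Your concluding paragraph gestures at this (``filling in all torsion''), but the gluing step as written tacitly assumes reducedness, and that assumption is the crux of why the statement remains conjectural.
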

\begin{remark}
\begin{enumerate}
\item If $F$ is an imaginary CM or totally real field, then the first part of the conjecture is known, thanks to work of Scholze \cite{Sch13}. The second part of the conjecture is also known, thanks to the work of Scholze, with the proviso that one obtains at present only a lifting of $\overline{\rho}_\ffrm$ valued in $\bbT^{S}_\text{ord}(U)/I$, for some nilpotent ideal $I \subset \bbT^{S}_\text{ord}(U)$ of bounded exponent. (Thus one expects to be able to show that $I = 0$.)
\item It is clear that one can state the above conjecture with reference only to a given field $F$, open compact subgroup $U \subset \GL_n(\bbA_F^\infty)$, and maximal ideal $\ffrm \subset \bbT^{S}_\text{ord}(U)$. In the rest of this paper, when we ask the reader to (for example) ``assume that Conjecture \ref{conj_existence_of_galois_combined} holds for $F$'', we refer to the above statements for all possible valid choices of $U$ and $\ffrm$ (i.e.\ satisfying the assumptions of this section so far).
\end{enumerate}
\end{remark}
We now assume Conjecture \ref{conj_existence_of_galois_combined} for the field $F$ for the rest of \S \ref{sec_ordinary_completed_cohomology}. 
\begin{lemma}\label{lem_localization_independent_of_finite_set}
Let $T$ be a finite set of finite places containing $S$, let $\ffrm \subset \bbT^S_\text{ord}(U)$ be a maximal ideal, and let $\frn = \ffrm \cap \bbT^T_\text{ord}(U)$. Then the canonical inclusions
\[ H_\ast(X_U, k)_\frn \subset H_\ast(X_U, k)_\ffrm \]
and
\[ H^\ast_\text{ord}(U)_\frn \subset H^\ast_\text{ord}(U)_\ffrm \]
are isomorphisms.
\end{lemma}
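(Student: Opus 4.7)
The plan is to use the Galois representations supplied by Conjecture \ref{conj_existence_of_galois_combined} together with Chebotarev density to identify $\ffrm$ as the unique maximal ideal of $\bbT^S_\text{ord}(U)$ lying above $\frn$. Once this is known, both localizations will canonically coincide.

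First I would note that the containment $\bbT^{T, \text{univ}} \subset \bbT^{S, \text{univ}}$ of polynomial rings induces an injection $\bbT^T_\text{ord}(U) \hookrightarrow \bbT^S_\text{ord}(U)$ of finite $\Lambda$-subalgebras of $\End_{\mathbf{D}(\Lambda)}(F^\bullet_\infty)$. Both rings are complete semi-local, being finite over the complete local ring $\Lambda$, and decompose as products over their finitely many maximal ideals. For any module $M$ over $\bbT^S_\text{ord}(U)$ there is a canonical decomposition
\[ M_\frn = \bigoplus_{\ffrm' \mid \frn} M_{\ffrm'}, \]
the sum running over the finitely many maximal ideals of $\bbT^S_\text{ord}(U)$ contracting to $\frn$. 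Applied to the graded pieces of $M = H_\ast(X_U, k)$ and $M = H^\ast_\text{ord}(U)$, the lemma reduces to the statement that $\ffrm$ is the unique maximal ideal of $\bbT^S_\text{ord}(U)$ above $\frn$.

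To prove this uniqueness, suppose $\ffrm'$ is any maximal ideal of $\bbT^S_\text{ord}(U)$ with $\ffrm' \cap \bbT^T_\text{ord}(U) = \frn$. After embedding the residue fields $\bbT^S_\text{ord}(U)/\ffrm$ and $\bbT^S_\text{ord}(U)/\ffrm'$ into a common algebraic closure $\overline{k}$ of $k$, Conjecture \ref{conj_existence_of_galois_combined}(1) supplies continuous semisimple representations $\overline{\rho}_\ffrm, \overline{\rho}_{\ffrm'} : G_{F, S} \to \GL_n(\overline{k})$ whose Frobenius characteristic polynomials at each finite place $v \notin S$ are determined by the images of $T_v^1, \dots, T_v^n$. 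For $v \notin T$ those operators already lie in $\bbT^T_\text{ord}(U)$, so the assumption on $\ffrm$ and $\ffrm'$ forces the characteristic polynomials of $\overline{\rho}_\ffrm(\Frob_v)$ and $\overline{\rho}_{\ffrm'}(\Frob_v)$ to coincide at every such $v$. The Chebotarev density theorem, applied inside the finite quotient of $G_{F, S}$ through which $\overline{\rho}_\ffrm \oplus \overline{\rho}_{\ffrm'}$ factors, then yields an isomorphism $\overline{\rho}_\ffrm \cong \overline{\rho}_{\ffrm'}$ of semisimple representations, whence the characteristic polynomials also agree at the intermediate places $v \in T \setminus S$. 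The remaining generators of $\bbT^{S, \text{univ}}$, namely the $\mathbf{U}_v^i$ for $v \in S_p$ and the scalars from $\Lambda$, already lie in $\bbT^T_\text{ord}(U)$ and are pinned down by $\frn$. Hence the surjections $\bbT^{S, \text{univ}} \twoheadrightarrow \bbT^S_\text{ord}(U)/\ffrm$ and $\bbT^{S, \text{univ}} \twoheadrightarrow \bbT^S_\text{ord}(U)/\ffrm'$ have identical kernels, so $\ffrm = \ffrm'$.

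The only delicate point is extracting from the match of Hecke eigenvalues at the density-one set $\{v \notin T\}$ a match at the finitely many extra places $v \in T \setminus S$; this is exactly what the Galois representations $\overline{\rho}_\ffrm$ accomplish via Chebotarev. Everything else is formal semi-local commutative algebra over the complete local base $\Lambda$.
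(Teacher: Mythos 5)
Your argument is correct and follows essentially the same route as the paper: reduce to showing that $\ffrm$ is the unique maximal ideal of $\bbT^S_\text{ord}(U)$ over $\frn$, attach Galois representations via Conjecture \ref{conj_existence_of_galois_combined}(1), match Frobenius data at $v \notin T$, and invoke Chebotarev. The only cosmetic difference is that you compare $\overline{\rho}_\ffrm$ and $\overline{\rho}_{\ffrm'}$ directly, whereas the paper routes the comparison through a representation $\overline{\rho}_\frn$ attached to the smaller Hecke algebra $\bbT^T_\text{ord}(U)$; your version is marginally more self-contained, but the underlying mechanism is identical.
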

\begin{proof}
Because $\bbT^S_\text{ord}(U)$ is a finite $\Lambda$-algebra, we have a decomposition $\bbT^S_\text{ord}(U) = \prod_{\ffrm'} \bbT^S_\text{ord}(U)_{\ffrm'}$, the product being over the finitely many maximal ideals of $\bbT^S_\text{ord}(U)$. To prove the lemma, it's enough to show that $\bbT^S_\text{ord}(U)$ has a unique maximal ideal lying above the ideal $\frn$ of $\bbT^T_\text{ord}(U)$. These ideals are in bijection with the maximal ideals of $\bbT^S_\text{ord}(U)$ which are in the support of  the finite-dimensional $k$-vector space $H_\ast(X_U, k)_\frn$. If $\ffrm'$ is any such ideal, then we have $H_\ast(X_U, k)_\frn[\ffrm'] \neq 0$, and there exists a continuous representation $\overline{\rho}_{\ffrm'} : G_{F, S} \to \GL_n(\bbT^S_\text{ord}(U)/\ffrm')$ such that for all $v \not\in S$, we have
\[ \tr \overline{\rho}_{\ffrm'}(\Frob_v) =  T_v^1 \text{ mod } \ffrm'. \]
On the other hand, we have
\[ \tr \overline{\rho}_\frn(\Frob_v) =  T_v^1 \text{ mod }\frn = \tr \overline{\rho}_{\ffrm'}(\Frob_v) \]
for all $v \not\in T$. By the Chebotarev density theorem, we obtain the relation $\overline{\rho}_\ffrm \cong \overline{\rho}_\frn \cong \overline{\rho}_{\ffrm'},$ which then implies $\ffrm = \ffrm'$.
\end{proof}
If $\ffrm \subset \bbT^S_\text{ord}(U)$ is a maximal ideal, then (cf. \S \ref{sec_ordinary_parts}) it determines an idempotent $e_\ffrm \in \End_{\mathbf{D}(\Lambda)}(F^\bullet_\infty)$, and we write $F^\bullet_{\ffrm}$ for a minimal complex representing the direct factor $e_\ffrm F^\bullet_\infty$ in $\mathbf{D}(\Lambda)$, which is well-defined up to quasi-isomorphism. Then there is a canonical identification
\[ \bbT^S_\text{ord}(U)_\ffrm = e_\ffrm \bbT^S_\text{ord}(U) \cong \bbT^S_\text{ord}(F^\bullet_\ffrm). \]
\begin{corollary}\label{cor_localization_independent_of_finite_set}
Let $T$ be a finite set of finite places containing $S$, let $\ffrm \subset \bbT^S_\text{ord}(U)$ be a maximal ideal, and let $\frn = \ffrm \cap \bbT^T_\text{ord}(U)$. Then $e_\ffrm = e_\frn$ in $\End_{\mathbf{D}(\Lambda)}(F^\bullet_\infty)$, and we have $F^\bullet_\ffrm = F^\bullet_\frn$ in $\mathbf{D}(\Lambda)$.
\end{corollary}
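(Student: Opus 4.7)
The plan is to reduce the equality of idempotents to the uniqueness statement embedded in the proof of Lemma \ref{lem_localization_independent_of_finite_set}. The natural inclusion $\bbT^{T, \text{univ}} \hookrightarrow \bbT^{S, \text{univ}}$ is compatible with the two homomorphisms into $\End_{\mathbf{D}(\Lambda)}(F^\bullet_\infty)$, so it induces a finite ring homomorphism $\bbT^T_\text{ord}(U) \to \bbT^S_\text{ord}(U)$ whose image lies inside $\bbT^S_\text{ord}(U)$ as a $\Lambda$-subalgebra. I will regard both idempotents $e_\frn$ and $e_\ffrm$ as elements of $\End_{\mathbf{D}(\Lambda)}(F^\bullet_\infty)$ via these homomorphisms.

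Next I would use that, because $\Lambda$ is complete local and $\bbT^S_\text{ord}(U)$ is finite over $\Lambda$, there is a decomposition $\bbT^S_\text{ord}(U) = \prod_{\ffrm'} \bbT^S_\text{ord}(U)_{\ffrm'}$ over the finitely many maximal ideals, and the image of $e_\frn$ in this product is the sum of the primitive idempotents $e_{\ffrm'}$ attached to those maximal ideals $\ffrm'$ of $\bbT^S_\text{ord}(U)$ lying above $\frn$. The key input is then the observation made in the proof of Lemma \ref{lem_localization_independent_of_finite_set}: using Conjecture \ref{conj_existence_of_galois_combined} together with the Chebotarev density theorem, any two maximal ideals of $\bbT^S_\text{ord}(U)$ with the same contraction to $\bbT^T_\text{ord}(U)$ give rise to isomorphic residual Galois representations and hence agree. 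Thus $\ffrm$ is the only maximal ideal of $\bbT^S_\text{ord}(U)$ above $\frn$, which forces $e_\frn = e_\ffrm$ in $\End_{\mathbf{D}(\Lambda)}(F^\bullet_\infty)$.

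Finally, the complexes $F^\bullet_\ffrm$ and $F^\bullet_\frn$ are by definition minimal representatives of the direct factors $e_\ffrm F^\bullet_\infty$ and $e_\frn F^\bullet_\infty$ in $\mathbf{D}(\Lambda)$, each of which is uniquely determined up to quasi-isomorphism by the associated idempotent (cf. the discussion following Lemma \ref{lem_ordinary_part_in_derived_category} and Lemma \ref{lem_perfect_and_minimal_complexes}). Since the idempotents coincide, the two direct factors coincide, giving the claimed equality $F^\bullet_\ffrm = F^\bullet_\frn$ in $\mathbf{D}(\Lambda)$. The only nontrivial step is the uniqueness of the maximal ideal above $\frn$, but this is already packaged in Lemma \ref{lem_localization_independent_of_finite_set}, so the corollary will follow essentially by unwinding definitions.
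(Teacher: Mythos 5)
Your proposal is correct and is essentially the paper's own argument: the paper likewise reduces to the equality $e_\ffrm = e_\frn$ and invokes Lemma \ref{lem_localization_independent_of_finite_set} to see that the inclusion $\bbT^T_\text{ord}(U) \subset \bbT^S_\text{ord}(U)$ induces a bijection on maximal ideals. You spell out a step the paper leaves implicit — that $e_\frn$, viewed in $\bbT^S_\text{ord}(U)$ via the inclusion, decomposes as the sum of the primitive idempotents $e_{\ffrm'}$ over the maximal ideals $\ffrm'$ lying above $\frn$ — but the route and the key input (Chebotarev plus Conjecture \ref{conj_existence_of_galois_combined}, packaged in the proof of Lemma \ref{lem_localization_independent_of_finite_set}) are the same.
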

\begin{proof}
It is enough to show that $e_\ffrm = e_\frn$. However, Lemma \ref{lem_localization_independent_of_finite_set} shows that the inclusion $\bbT^T_\text{ord}(U) \subset \bbT^S_\text{ord}(U)$ induces a bijection on maximal ideals, and this implies the result.
\end{proof}
\begin{lemma}\label{lem_determinant_of_galois}
Let $\ffrm \subset \bbT^{S}_\text{ord}(U)$ be a non-Eisenstein maximal ideal with residue field $k$. The character 
\[ \det \rho_\ffrm : G_{F, S} \to \bbT^{S}_\text{ord}(U)_\ffrm^\times \]
equals the character
\[ \epsilon^{n(1-n)/2} \eta_\ffrm : G_{F, S} \to \bbT^{S}_\text{ord}(U)_\ffrm^\times, \]
where $\eta_\ffrm : G_{F, S}^\text{ab} \to \bbT^{S}_\text{ord}(U)_\ffrm^\times$ is a finite order character uniquely characterized by the formula $\eta_\ffrm(\Frob_v) = T_v^n$ for all $v \not\in S$.
\end{lemma}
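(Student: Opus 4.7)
The plan is to identify $\eta_\ffrm$ as the character $\det \rho_\ffrm \cdot \epsilon^{n(n-1)/2}$, check the Frobenius values directly from the characteristic polynomial in Conjecture~\ref{conj_existence_of_galois_combined}, and then prove finite order by identifying $T_v^n$ with the action of a central adele, which factors through the finite class group.

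First, I would compare Frobenius values. For $v \notin S$, the characteristic polynomial of $\rho_\ffrm(\Frob_v)$ has constant term $(-1)^n q_v^{n(n-1)/2} T_v^n$, so $\det \rho_\ffrm(\Frob_v) = q_v^{n(n-1)/2} T_v^n$. With the geometric Frobenius normalization, $\epsilon(\Frob_v) = q_v^{-1}$, so $\epsilon^{n(1-n)/2}(\Frob_v) = q_v^{n(n-1)/2}$. Hence if one sets $\eta_\ffrm := \det \rho_\ffrm \cdot \epsilon^{n(n-1)/2}$, then $\eta_\ffrm(\Frob_v) = T_v^n$ for every $v \notin S$. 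Because the target is abelian, $\eta_\ffrm$ factors through $G_{F,S}^{\mathrm{ab}}$, and the Chebotarev density theorem shows that any continuous character of $G_{F,S}^{\mathrm{ab}}$ is determined by its values on these Frobenius elements, which gives uniqueness.

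Next I would show that $\eta_\ffrm$ is of finite order. The key observation is that $\alpha_{v,n} = \varpi_v\cdot I_n$ lies in the center $Z(F_v) \subset G(F_v)$, so $U(c,c)_v \alpha_{v,n} U(c,c)_v = \alpha_{v,n} U(c,c)_v$ is a single coset, and the Hecke operator $T_v^n = [U(c,c)_v \alpha_{v,n} U(c,c)_v]$ acts on $H_\ast(X_{U(c,c)},\cO/\varpi^c\cO)_{\mathrm{ord}}$ simply as multiplication by $\alpha_{v,n} \in Z(\bbA_F^\infty)$. This action of $(\bbA_F^\infty)^\times = Z(\bbA_F^\infty)$ factors through
\[ (\bbA_F^\infty)^\times/\bigl(F^\times \cdot U^Z_{c,c}\bigr), \qquad U^Z_{c,c} := \prod_v \bigl(Z(F_v) \cap U(c,c)_v\bigr). \]
For $v \in S_p$, the Iwahori condition $t_1 \equiv \cdots \equiv t_n \pmod{\varpi_v^c}$ is automatic for scalar matrices, so $Z(F_v)\cap I_v(1,c) = \cO_{F_v}^\times$ independently of $c$; for $v \notin S_p$, $Z(F_v)\cap U_v$ is an open subgroup of $\cO_{F_v}^\times$. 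Thus $U^Z_{c,c}$ is an open subgroup of $\widehat{\cO}_F^\times$, so the quotient $(\bbA_F^\infty)^\times/F^\times U^Z_{c,c}$ is a finite extension of the class group $\mathrm{Cl}(F)$, in particular finite.

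Passing to the inverse limit, the set $\{T_v^n : v \notin S\}$ lies in the image of this finite group in $\bbT^{S}_{\mathrm{ord}}(U)_\ffrm^\times$; call it $\Xi$. The character $\eta_\ffrm$ is continuous and takes values in the finite (hence closed) set $\Xi$ on the dense subset $\{\Frob_v\}_{v \notin S}$, so by continuity $\eta_\ffrm(G_{F,S}) \subset \Xi$, whence $\eta_\ffrm$ has finite order. The main obstacle is the identification of $T_v^n$ with the central adelic action at finite level in a manner compatible with the formation of ordinary completed cohomology; once this bookkeeping is in place (using that $\alpha_{v,n}$ is central, so the double coset collapses to a single coset and commutes with the transition maps in the tower), the rest is a direct application of Chebotarev plus finiteness of the class group.
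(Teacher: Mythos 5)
Your proof is correct and follows essentially the same path as the paper's: both identify $T_v^n$ with the action of the central adele $\varpi_v I_n$ on the cohomology tower, observe that this action factors through the finite group $F^\times \backslash \bbA_F^\times / (Z(\bbA_F)\cap U(1,1))Z_\infty$ (giving $\eta_\ffrm$ its finite order), and then compare Frobenius eigenvalues and invoke Chebotarev. The paper constructs $\eta_\ffrm$ directly from this adelic character via $\Art_F^{-1}$ (so finite order is built in), whereas you define $\eta_\ffrm := \det\rho_\ffrm \cdot \epsilon^{n(n-1)/2}$ and deduce finite order afterward, but this is only a cosmetic difference in ordering.
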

\begin{proof} Let $c \geq 1$. The action of $\bbA_F^\times$ on $X_{U(c, c)}$ factors through the quotient $F^\times \backslash \bbA_F^\times / (U(c, c) \cap Z(\bbA_F) \cdot Z_\infty)$, a finite group which is independent of $c$ (since $U(c, c) \cap Z(\bbA_F)$ is independent of $c$). This action is compatible as $c$ varies, so we can glue to obtain a character $\zeta : F^\times \backslash \bbA_F^\times / (U(1, 1) \cap Z(\bbA_F) \cdot Z_\infty) \to \Aut_{\mathbf{D}(\Lambda)}(F^\bullet_\infty)$. 

We claim that that $\zeta$ in fact takes image in $\bbT^S_\text{ord}(U)^\times \subset \Aut_{\mathbf{D}(\Lambda)}(F^\bullet_\infty)$. This is clear: every element of the finite group $F^\times \backslash \bbA_F^\times / (U(1, 1) \cap Z(\bbA_F) \cdot Z_\infty)$ can be represented by an element of the form $\varpi_v \in F_v^\times \subset \bbA_F^\times$, $v \not\in S$, and we have $\zeta(\varpi_v) = T_v^n$.

We define $\eta_\ffrm = \zeta \circ \Art_F^{-1} : G_{F, S}^\text{ab} \to \bbT^S_\text{ord}(U)_\ffrm^\times$. We see immediately that $\det \rho_\ffrm(\Frob_v) = (\epsilon^{n(1-n)/2}\eta_\ffrm)(\Frob_v)$ for all $v \not\in S$. The Chebotarev density theorem then implies the equality $\det \rho_\ffrm = \epsilon^{n(1-n)/2} \eta_\ffrm$.
\end{proof}
We end this section with a result about cohomology after localization at a non-Eisenstein maximal ideal. The proof of this result only uses the first part of Conjecture \ref{conj_existence_of_galois_combined}, which is known unconditionally when $F$ is an imaginary CM or totally real field, by Scholze's results; it thus holds unconditionally when $F$ has this form.
\begin{theorem}\label{thm_only_cuspidal_cohomology_survives_localization}
Let $\ffrm \subset \bbT^S_\text{ord}(U)$ be a non-Eisenstein maximal ideal with residue field $k$, and let $\bl \in (\bbZ_+^n)^{\Hom(F, E)}$ be a regular dominant weight \(i.e. for each $\tau \in \Hom(F, E)$, $\lambda_\tau$ lies on no root hyperplane of $\GL_n$\). Let $\wp_\bl \subset \Lambda$ be the corresponding ideal.  Then the groups 
\begin{equation}\label{eqn_computation_of_specialized_cohomology} H^i(F^\bullet_\ffrm \otimes_\Lambda \Lambda/\wp_\bl)[1/p] \cong \Hom_\cO(H^{d-i}(X_{U(1, 1)}, M_{-w_0 \bl})_{\ffrm}, E). 
\end{equation}
can be non-zero only if $i \in [q_0, q_0 + l_0]$.
\end{theorem}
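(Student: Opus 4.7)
The plan is to work with the right-hand side of (\ref{eqn_computation_of_specialized_cohomology}) and show that $H^{d-i}(X_{U(1,1)}, M_{-w_0\bl})_\ffrm \otimes_\cO E$ vanishes unless $d - i \in [q_0, q_0 + l_0]$; since $d = 2q_0 + l_0$, this is the same constraint on $i$. The strategy combines the Franke--Borel decomposition of the complex cohomology of $X_{U(1,1)}$ with the Galois-theoretic interpretation of the maximal ideal $\ffrm$ furnished by Conjecture \ref{conj_existence_of_galois_combined}.

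First I would base change along the fixed $\iota : \overline{\bbQ}_p \cong \bbC$ and invoke the $\bbT^{S,\text{univ}}$-equivariant decomposition
\[
H^\ast(X_{U(1,1)}, M_{-w_0\bl}) \otimes_\cO \bbC \;=\; H^\ast_{\text{cusp}} \,\oplus\, H^\ast_{\text{Eis}},
\]
where $H^\ast_{\text{cusp}}$ is the direct sum of $(\frg, K_\infty)$-cohomology of cuspidal automorphic representations of $\GL_n(\bbA_F)$ of weight $\bl$, while $H^\ast_{\text{Eis}}$ is a sum over proper standard Levi subgroups $M \simeq \GL_{n_1} \times \cdots \times \GL_{n_r}$ ($r \geq 2$) of contributions assembled from (pseudo-)Eisenstein series attached to cuspidal data on $M$.

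Second I would show that $H^\ast_{\text{Eis}}$ is killed by localization at $\ffrm$. On each Franke summand indexed by a Levi datum $(\pi_1, \ldots, \pi_r)$, the eigenvalues of the unramified Hecke operators $T_v^i$ at $v \notin S$ match the Satake parameters of the parabolically induced representation $\Ind_P^G(\pi_1 \otimes \cdots \otimes \pi_r)$; in particular the characteristic polynomial of $\Frob_v$ in Conjecture \ref{conj_existence_of_galois_combined}(1) factors as a product of $r \geq 2$ polynomials of degrees $n_i$. Any maximal ideal of $\bbT^{S,\text{univ}}$ in the support of such a summand therefore has a residual representation that is reducible, so none of these summands can lie over the non-Eisenstein ideal $\ffrm$. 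Hence only $H^\ast_{\text{cusp}}$ survives after $\ffrm$-localization.

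Third, for a cuspidal $\pi$ contributing to $H^\ast_{\text{cusp}}$, the infinitesimal character of $\pi_\infty$ is that of $M_{-w_0\bl} \otimes \bbC$, which is regular because $\bl$ is regular. Clozel's purity lemma (see \cite[\S 2]{Clo13}) implies that $\pi_\infty$ is essentially tempered at every archimedean place. The Borel--Wallach and Vogan--Zuckerman classification of cohomological tempered $(\frg, K_\infty)$-modules (they are the $A_\frq(\lambda)$ modules attached to $\theta$-stable Borel subalgebras $\frq$) then shows that $H^j(\frg, K_\infty; \pi_\infty \otimes M_{-w_0\bl} \otimes \bbC) = 0$ unless $j \in [q_0, q_0 + l_0]$. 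Combined with step two, this gives the desired vanishing.

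The main obstacle is step two: one must be confident that every Eisenstein summand in Franke's decomposition really does yield a reducible system of Hecke eigenvalues of the required form. The argument is clean precisely because Conjecture \ref{conj_existence_of_galois_combined} only involves the characteristic polynomials of Frobenii at unramified places, so one only needs to compute Satake parameters of induced representations (a formal operation), rather than constructing Galois representations attached to each $\pi_i$. Some bookkeeping is required to match Franke's description of Eisenstein cohomology to this simple trace-level statement, but no deep input beyond Conjecture \ref{conj_existence_of_galois_combined} is needed; in particular the result is unconditional when $F$ is imaginary CM or totally real, where the residual Galois representation $\overline{\rho}_\ffrm$ is known to exist by \cite{Sch13}.
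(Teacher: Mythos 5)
Your proposal takes a genuinely different route from the paper's proof. The paper argues \emph{integrally}: it runs the long exact sequence of the Borel--Serre compactification, reduces to showing that the boundary cohomology $H^\ast(\partial \overline{X}_{U(1,1)}, -)_\ffrm$ vanishes, further reduces to torsion coefficients via the independence-of-weight isomorphism (Proposition \ref{prop_independence_of_weight}), and then stratifies $\partial \overline{X}_{U(1,1)}$ by nilmanifold covers of Levi symmetric spaces, whose cohomologies carry $n$-dimensional group determinants that split as sums of lower-dimensional determinants and therefore die after localization at the non-Eisenstein ideal $\ffrm$. Once $H^\ast = H^\ast_c$ after $\ffrm$-localization, the paper invokes \cite[\S 5.3, Prop.\ 5.2]{Li04} for the regular-weight cuspidal concentration. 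You instead work directly over $\bbC$ via the decomposition by cuspidal support: kill the Eisenstein part by observing that its unramified Hecke eigensystems are Satake parameters of properly induced representations and therefore give reducible residual representations, then concentrate the cuspidal part via temperedness. Both routes need the existence of residual Galois representations (or determinants) for the smaller groups $\GL_{n_i}$, which you correctly note is covered by \cite{Sch13} when $F$ is CM or totally real; your approach buys you not having to handle torsion coefficients or independence of weight, at the cost of more bookkeeping matching Franke's filtration to the Hecke-equivariant decomposition.

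Two small inaccuracies worth fixing. First, ``Clozel's purity lemma'' (\cite[\S 2]{Clo13}) governs the Weil--Deligne representations at \emph{finite} places and is not the input for archimedean temperedness; the fact you want --- that for a regular algebraic cuspidal $\pi$ on $\GL_n$ the $(\frg, K_\infty)$-cohomology of $\pi_\infty$ is concentrated in $[q_0, q_0 + l_0]$ --- follows directly from the Vogan--Zuckerman classification together with unitarity of $\pi_\infty$, and is exactly what \cite[Prop.\ 5.2]{Li04} records; you could and should just cite that. Second, in your elimination of the Eisenstein part, the factorization of the Frobenius characteristic polynomials does not by itself entail reducibility of the residual representation: you must appeal to the existence (and Chebotarev/Brauer--Nesbitt uniqueness) of Galois representations attached to the Hecke eigensystems of the inducing cuspidal data on the Levi factors, so that the factors of the characteristic polynomial are themselves characteristic polynomials of Galois representations whose direct sum matches $\overline{\rho}_\ffrm$. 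You gesture at this but should make it explicit that Conjecture \ref{conj_existence_of_galois_combined}(1) (or its proven form for CM/totally real fields) is being invoked for the groups $\GL_{n_i}$, not just for $\GL_n$.
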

\begin{proof}
We just sketch the proof; see \cite{Tho15} for a detailed proof.
The existence of the isomorphism (\ref{eqn_computation_of_specialized_cohomology}) follows from the definition of $F^\bullet_\ffrm$ and Corollary \ref{cor_ordinary_complex_computes_fixed_weight_cohomology}. It therefore suffices to show that the groups
\[ H^i(X_{U(1, 1)}, M_{-w_0 \bl})_{\ffrm} \otimes_\cO E \]
can be non-zero only for $i \in [q_0, q_0 + l_0]$. Let $\overline{X}_{U(1, 1)}$ denote the Borel--Serre compactification of $X_{U(1, 1)}$, and let $\partial X_{U(1, 1)} = \overline{X}_{U(1, 1)} - X_{U(1, 1)}$. Then the local system $\cL_{-M_{w_0 \bl}}$ extends naturally to a local system on $\overline{X}_{U(1, 1)}$, and we have a long exact sequence relating cohomology with compactly supported cohomology and boundary cohomology:
\[ \dots \to H^i_c(X_{U(1, 1)}, M_{-w_0 \bl})_\ffrm \otimes_\cO E \to H^i(X_{U(1, 1)}, M_{-w_0 \bl})_\ffrm \otimes_\cO E \to H^i(\partial \overline{X}_{U(1, 1)}, M_{-w_0 \bl})_\ffrm \otimes_\cO E \to \dots \]
We will show that the groups $H^i(\partial \overline{X}_{U(1, 1)}, M_{-w_0 \bl})_{\ffrm}$ all vanish. This will imply the theorem: indeed, it then follows that the maps $H^i_c(X_{U(1, 1)}, M_{-w_0 \bl})_{\ffrm} \otimes_\cO E \to H^i(X_{U(1, 1)}, M_{-w_0 \bl})_{\ffrm}$ are all isomorphisms, and the regularity of the highest weight $\bl$ implies that these groups can be calculated in terms of cuspidal cohomology (see \cite[\S 5.3]{Li04}). The statement of the theorem then follows from \cite[Proposition 5.2]{Li04}. 

It remains to show the vanishing of the groups $H^i(\partial \overline{X}_{U(1, 1)}, M_{-w_0 \bl})_\ffrm$. In fact, it's enough to show that the groups $H^i(\partial \overline{X}_{U(1, 1)}, M_{-w_0 \bl} \otimes_\cO k)_{\ffrm}$ are zero, and by Proposition \ref{prop_independence_of_weight}, enough to show that the groups $H^i(\partial \overline{X}_{U(1, 1)}, k)_{\ffrm}$ are zero. However, the space $\partial \overline{X}_{U(1, 1)}$ can be stratified by (nilmanifold covers of) the symmetric spaces associated to the Levi subgroups $\GL_{n_1} \times \dots \times \GL_{n_k}$ of $G$. By assumption, the compactly supported cohomologies of these spaces admit $n$-dimensional group determinants, which are sums of determinants of dimensions $n_1, n_2, \dots, n_k$. In particular, these cohomologies all vanish after localization at the maximal ideal $\ffrm$, which corresponds to an (irreducible) $n$-dimensional determinant. This completes the proof.
\end{proof}
Similarly, the following corollary holds unconditionally (i.e.\ without assuming Conjecture \ref{conj_existence_of_galois_combined}).
\begin{corollary}\label{cor_cohomology_is_torsion_lambda_module}
Let $F$ be an imaginary CM field. Suppose that $\ffrm \subset \bbT^S_\text{ord}(U)$ be a non-Eisenstein maximal ideal with residue field $k$, and let $\bl \in (\bbZ_+^n)^{\Hom(F, E)}$ be a regular dominant weight. Let $c \in \Aut(F)$ denote complex conjugation, and suppose that there exists $\tau \in \Hom(F, E)$ such that $\lambda_{\tau c} \neq -w_0 \lambda_\tau$. Then the groups 
\[  H^i(F^\bullet_\ffrm \otimes_\Lambda \Lambda/\wp_\bl)[1/p] \cong \Hom_\cO(H^{d-i}(X_{U(1, 1)}, M_{-w_0 \bl})_{\ffrm}, E) \]
are all zero. In particular, $H^\ast_\text{ord}(U)_{\ffrm}[1/p]$ is a torsion $\Lambda[1/p]$-module.
\end{corollary}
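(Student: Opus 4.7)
The plan is to combine Theorem \ref{thm_only_cuspidal_cohomology_survives_localization} with a classical archimedean obstruction to cohomological cuspidal representations over imaginary CM fields, and then propagate the resulting vanishing to the torsion statement via a Zariski density argument.

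First I would establish the vanishing
\[ H^i(F^\bullet_\ffrm \otimes_\Lambda \Lambda/\wp_\bl)[1/p] = 0 \]
for every regular dominant $\bl$ satisfying the asymmetry hypothesis. By Theorem \ref{thm_only_cuspidal_cohomology_survives_localization}, which is unconditional when $F$ is imaginary CM thanks to Scholze's construction of Galois representations, it suffices to show that the dual space $H^{d-i}(X_{U(1,1)}, M_{-w_0 \bl})_\ffrm \otimes_\cO E$ vanishes; moreover the same theorem, via its reduction to cuspidal cohomology at a non-Eisenstein maximal ideal in regular weight, shows that this group decomposes as a direct sum of $\pi$-isotypic pieces over regular algebraic cuspidal automorphic representations $\pi$ of $\GL_n(\bbA_F)$, and a given $\pi$ contributes only when its archimedean component $\pi_\infty$ has nonzero $(\frg, K_\infty)$-cohomology with coefficients $M_{-w_0 \bl}$. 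Since $F$ is imaginary CM, $G_\infty$ is a product of copies of $\GL_n(\bbC)$ indexed by pairs $\{\tau, \tau c\}$; the classification of irreducible unitary cohomological representations of $\GL_n(\bbC)$ (via Speh modules and the Vogan--Zuckerman $A_\frq(\lambda)$ description) pins down the infinitesimal character and forces the infinity type to satisfy $\lambda_{\tau c} = -w_0 \lambda_\tau$ at each such pair. The asymmetry hypothesis thus precludes any contribution, yielding the vanishing.

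Next I would deduce that $H^\ast_\text{ord}(U)_\ffrm[1/p]$ is torsion as a $\Lambda[1/p]$-module by a Zariski density argument. Suppose not; let $m$ be the largest index such that $H^m_\text{ord}(U)_\ffrm[1/p]$ has non-torsion support, and let $C$ be an irreducible component of $\Spec \Lambda[1/p]$ contained in this support. For each $q > m$, the torsion module $H^q_\text{ord}(U)_\ffrm[1/p]$ has proper closed support $Z_q$. The regular dominant $\bl$ satisfying the asymmetry condition determine a Zariski dense subset of $\Spec \Lambda[1/p]$, since the non-regular and symmetric weights lie in a finite union of proper closed subvarieties. I may therefore pick a regular dominant asymmetric $\bl$ with $\wp_\bl \in C \setminus \bigcup_{q > m} Z_q$. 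For such a $\wp_\bl$, the spectral sequence
\[ E_2^{p, q} = \mathrm{Tor}^\Lambda_{-p}(H^q_\text{ord}(U)_\ffrm, \Lambda/\wp_\bl) \Rightarrow H^{p+q}(F^\bullet_\ffrm \otimes_\Lambda \Lambda/\wp_\bl) \]
(the analog of Corollary \ref{cor_ordinary_complex_computes_fixed_weight_cohomology} for the localized complex) has $E_2^{0, m} = H^m_\text{ord}(U)_\ffrm \otimes_\Lambda \Lambda/\wp_\bl$ nonzero after inverting $p$, while each incoming differential into $E_r^{0, m}$ has source $\mathrm{Tor}^\Lambda_r(H^{m+r-1}_\text{ord}(U)_\ffrm, \Lambda/\wp_\bl)$ for some $r \geq 2$, and these groups vanish after inverting $p$ because $\wp_\bl$ avoids the support of each $H^q_\text{ord}(U)_\ffrm[1/p]$ with $q > m$. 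Hence $E_\infty^{0, m}[1/p] \neq 0$, contradicting the vanishing established above.

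The main obstacle is the archimedean input in the first step: extracting the precise constraint $\lambda_{\tau c} = -w_0 \lambda_\tau$ from the existence of a cohomological cuspidal representation of $\GL_n(\bbA_F)$ for imaginary CM $F$. This rests on the classification of unitary $(\frg, K_\infty)$-cohomological representations of $\GL_n(\bbC)$ in terms of Speh modules and the explicit computation of their infinitesimal characters, and in particular on the fact that $\GL_n(\bbC)$ has no discrete series. The subsequent propagation from a single weight to the torsion statement is routine modulo careful tracking of the supports of higher cohomology modules and of the direction of the spectral-sequence differentials.
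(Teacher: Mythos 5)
Your proof follows the same basic strategy as the paper's: establish vanishing of the specialized cohomology at asymmetric regular weights via the archimedean obstruction (the paper cites \cite[Ch.\ II, Proposition~6.12]{Bor00}, i.e.\ the Cartan-involution constraint, which is exactly your Vogan--Zuckerman/Speh module condition on $\GL_n(\bbC)$), and then feed that into the independence-of-weight spectral sequence to get a contradiction. The first part of your argument is essentially identical to the paper's.

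The second part is where you diverge, and your version is in fact more careful than what the paper actually writes. The paper takes $m$ maximal with $H^m_\text{ord}(U)_\ffrm[1/p] \neq 0$ and asserts that the stable term $H^m_\text{ord}(U)_\ffrm \otimes_\Lambda \Lambda/\wp_\bl[1/p]$ contradicts the vanishing of the abutment, having fixed an \emph{arbitrary} asymmetric $\bl$ at the outset. As written this has a small gap: nonvanishing of $H^m[1/p]$ does not by itself guarantee that the chosen $\wp_\bl$ lies in the support of $H^m$, so the stable term could a priori be zero. Your refinement---taking $m$ maximal with \emph{non-torsion} cohomology, fixing a maximal-dimensional component $C$ of its support, and then choosing the classical weight $\wp_\bl$ in $C$ while simultaneously avoiding the finitely many proper closed supports $Z_q$ of the higher (torsion) cohomology groups---supplies precisely the missing bookkeeping. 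It ensures both that the incoming differentials vanish (because $\wp_\bl \notin Z_q$ for $q>m$, so the $\mathrm{Tor}$ groups vanish after localizing at $\wp_\bl$) and that the $E_2^{0,m}$ term is genuinely nonzero. The one point you should make explicit is the density claim: that regular asymmetric classical weight primes $\wp_\bl$ are Zariski dense in each irreducible component of $\Spec\Lambda[1/p]$, so that $C\setminus\bigcup_q Z_q$ does contain one. This is true (the asymmetry and regularity conditions are Zariski open and nonempty on the set of algebraic characters, which is itself dense), but it is the one place in your argument that is asserted rather than verified. With that noted, your proof is correct and can be viewed as a rigorous elaboration of the paper's terse sketch rather than a different method.
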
 
\begin{proof}
The proof of Theorem \ref{thm_only_cuspidal_cohomology_survives_localization} shows that the groups $H^i(X_{U(1, 1)}, M_{-w_0 \bl})_{\ffrm} \otimes_\cO E$ can be computed in terms of the $(\Lie G_\infty/Z_\infty \otimes_\bbR \bbC, K_\infty)$-cohomology of cuspidal automorphic representations of $\GL_n(\bbA_F)$. A well-known vanishing theorem shows that these groups will necessarily be 0 unless $\bl$, viewed as an element of $X^\ast(T_\bbC)$, is fixed by the Cartan involution of $G_\bbC$ associated to the maximal compact subgroup $K_\infty \subset G_\infty$ (see \cite[Ch. II, Proposition 6.12]{Bor00}). This is equivalent to asking that for each $\tau \in \Hom(F, E)$, we have $\lambda_{\tau c} = - w_0 \lambda_\tau$.

To show that $H^\ast_\text{ord}(U)_\ffrm[1/p]$ is a torsion $\Lambda[1/p]$-module, we choose any $\bl \in (\bbZ_+^n)^{\Hom(F, E)}$ which is not fixed by the Cartan involution. Then there is an ``independence of weight'' spectral sequence
\[ \mathrm{Tor}_{-p}^\Lambda(H^q_\text{ord}(U)_\ffrm, \Lambda/\wp_\bl)[1/p] \Rightarrow \Hom_\cO(H^{d-(p + q)}(X_{U(1, 1)}, M_{-w_0 \bl})_{\ffrm}, E). \]
 Suppose for the sake of contradiction that there exists $m$ such that $H^m_\text{ord}(U)_\ffrm[1/p] \neq 0$, and let $m$ be maximal with this property. Then the term $H^m_\text{ord}(U)_\ffrm \otimes_\Lambda \Lambda/\wp_\bl[1/p]$ of the above spectral sequence is stable, which contradicts the fact that abutment is trivial, by the first part of the corollary. This completes the proof.
\end{proof}
\subsection{Auxiliary primes}\label{sec_auxiliary_primes}

We now discuss Taylor--Wiles primes. We adopt a slightly different method to the one used in \cite{Clo08} and \cite{Tho12}, that necessitates the assumption of `enormous image'. We again fix a choice of good subgroup $U$ such that for each $v \in S_p$, $U_v = I_v(1, 1)$. Let $S$ be a finite set of finite places of $F$, containing $S_p$, such that for all $v \not\in S$, we have $U_v = \GL_n(\cO_{F_v})$. Let $\ffrm \subset \bbT^{S}_\text{ord}(U)$ be a non-Eisenstein maximal ideal. We assume that $\ffrm$ has residue field $k$, and that $k$ contains the eigenvalues of all elements of the image of $\overline{\rho}_\ffrm$.

Let $w \not\in S$ be a finite place of $F$ such that $q_w \equiv 1 \text{ mod }p$, and $\overline{\rho}_\ffrm(\Frob_w)$ has $n$ distinct eigenvalues. Let $\cK = \GL_n(\cO_{F_w})$ and $\cB = I_w(0, 1)$, the standard Iwahori subgroup of $G = \GL_n(F_w)$. Let $\cB_1$ denote the smallest subgroup of $\cB$ containing $I_w(1, 1)$ such that $\cB/\cB_1$ has $p$-power order. Then there is an isomorphism $\Delta_w = \cB/\cB_1 \cong (k(w)^\times(p))^{n-1}$, given by the same formula as in (\ref{eqn_remove_the_center}). There is an analogue of Proposition \ref{prop_complex_control_theorem} at level $U^w \cB_1$ which is proved in exactly the same way, using Lemma \ref{lem_existence_of_perfect_complex}; this gives rise to a minimal complex $F^\bullet_{\infty, U^w \cB_1}$ of $\Lambda[\Delta_w]$-modules and a homomorphism $\cH(G^{\infty, p, w}, U^{p, w})^\text{op}[\mathbf{U}_w^1, \dots, \mathbf{U}_w^n] \to \End_{\mathbf{D}(\Lambda[\Delta_w])}(F^\bullet_{\infty, U^w \cB_1})$ with the following properties:
\begin{itemize}
\item There is an isomorphism $F^\bullet_{\infty, U^w \cB_1} \otimes_{\Lambda[\Delta_w]} \Lambda \cong F^\bullet_{\infty, U^w \cB}$ of minimal complex of $\Lambda$-modules, compatible with the two actions of the algebra $\cH(G^{\infty, p, w}, U^{p, w})^\text{op}[\mathbf{U}_w^1, \dots, \mathbf{U}_w^n] $ by endomorphisms in $\mathbf{D}(\Lambda)$.
\item For any $i \geq 0$, there is an isomorphism 
\[ H^i(F^\bullet_{\infty, U^w \cB_1}) \cong \plim_c H_{d - i}(X_{U^w \cB_1(c, c)}, \cO / \varpi^c)_\text{ord} \]
of $\Lambda[\Delta_w]$-modules, again compatible with the action of $\cH(G^{\infty, p, w}, U^{p, w})^\text{op}[\mathbf{U}_w^1, \dots, \mathbf{U}_w^n]$.
\end{itemize}
This complex is characterized up to unique isomorphism in $\mathbf{D}(\Lambda)$ by the existence of additional data as in the statement of Proposition \ref{prop_complex_control_theorem}, which we do not write down explicitly.
\begin{lemma}\label{lem_properties_of_individual_taylor_wiles_prime}
Write $U = U^w \cK$. Then:
\begin{enumerate}
\item For any $\cO$-module $A$, the map $H_\ast(X_{U^w \cB}, A) \to H_\ast(X_{U^w \cK}, A)$ is canonically split as a morphism of $\cH(G^{\infty, w}, U^w)^\text{op}$-modules. Similarly, the morphism $F^\bullet_{\infty, U^w \cB} \to F^\bullet_{\infty, U^w \cK}$ is canonically split in $\mathbf{D}(\Lambda)$. 
\item Let $\bbT^{S \cup \{w \},w}_\text{ord}(U^w\cB)$ be the \(commutative\) $\Lambda$-subalgebra of $\End_{\mathbf{D}(\Lambda)}(F^\bullet_{\infty, U^w \cB})$ generated by the Hecke operators $T_v^i$, $i \not\in S \cup \{ w \}$, $\mathbf{U}_v^i$, $v \in S_p$, and $\mathbf{U}_w^i$, $i = 1, \dots, n$. There is a natural inclusion 
\begin{equation}\label{eqn_inclusion_of_hecke_algebras} \bbT^{S \cup \{w \}}_\text{ord}(U^w \cB) \subset \bbT^{S \cup \{w \},w}_\text{ord}(U^w \cB)
\end{equation}
and a natural surjection
\begin{equation}\label{eqn_surjection_of_hecke_algebras} \bbT^{S \cup \{w \}}_\text{ord}(U^w \cB) \to \bbT^{S \cup \{ w \}}_\text{ord}(U^w \cK). 
\end{equation}
We write $\frn$ for the pullback of $\ffrm$ to $\bbT^{S \cup \{ w \}}_\text{ord}(U^w \cK)$, and $\frn'$ for its pre-image in $\bbT^{S \cup \{w \}}_\text{ord}(U^w \cB)$.
\item For each ordering $\alpha_1, \dots, \alpha_n$ of the eigenvalues of $\overline{\rho}_\ffrm(\Frob_w)$, there is a maximal ideal $\ffrm_\alpha$ of $\bbT^{S \cup \{w \},w}_\text{ord}(U^w \cB)$ above $\frn'$, given by the formula $\ffrm_\alpha = (\frn', \mathbf{U}_w^1 - \alpha_1, \dots, \mathbf{U}_w^n - \alpha_1 \cdots \alpha_n)$.
\item Fix a choice of ordering $\alpha_1, \dots, \alpha_n$. Then there is an isomorphism $F^\bullet_{\ffrm_\alpha} \to F^\bullet_\ffrm$ in $\mathbf{D}(\Lambda)$, equivariant for the action of $\bbT^{S \cup \{ w \}, \text{univ}}$. In particular, there are canonical isomorphisms of $\Lambda$-algebras
\[ \bbT^{S \cup \{ w \}}_\text{ord}(U)_{\frn} \cong \bbT^{S \cup \{ w \}}(F^\bullet_\ffrm) \cong \bbT^{S \cup \{ w \}}(F^\bullet_{\ffrm_\alpha}). \]
\item Let $\bbT^{S \cup \{w \},w}_\text{ord}(U^w\cB_1)$ be the \(commutative\) $\Lambda[\Delta_w]$-subalgebra of $\End_{\mathbf{D}(\Lambda[\Delta_w])}(F^\bullet_{\infty, U^w \cB_1})$ generated by the Hecke operators $T_v^i$, $v \not\in S \cup \{ v \}$, $\mathbf{U}_v^i$, $v \in S_p$, and $\mathbf{U}_w^i$, $i = 1, \dots, n$. Then there is an isomorphism
\[ F^\bullet_{\infty,U^w \cB_1} \otimes_{\Lambda[\Delta_w]} \Lambda \cong F^\bullet_{\infty,U^w \cB}, \]
and a corresponding surjection
\[ \bbT^{S \cup \{w \},w}_\text{ord}(U^w\cB_1) \to \bbT^{S \cup \{w \},w}_\text{ord}(U^w\cB). \]
\item Let $\ffrm_{\alpha, 1}$ denote the pullback of $\ffrm_\alpha$ to a maximal ideal of $\bbT^{S \cup \{w \},w}_\text{ord}(U^w\cB_1)$. Then there is a $\bbT^{S \cup \{ w \}, \text{univ}}$-equivariant isomorphism
\[ F^\bullet_{\ffrm_{\alpha,1}} \otimes_{\Lambda[\Delta_w]} \Lambda \cong F^\bullet_{\ffrm_\alpha}. \]
In particular, if $\bbT^{S \cup \{ w \}}(F^\bullet_{\ffrm_{\alpha,1}})$ denotes the $\Lambda[\Delta_w]$-subalgebra of $\End_{\mathbf{D}(\Lambda[\Delta_w])}(F^\bullet_{\ffrm_{\alpha, 1}})$ generated by the image of $\bbT^{S \cup \{ w \}, \text{univ}}$, then there is a canonical surjection of $\Lambda[\Delta_w]$-algebras
\[ \bbT^{S \cup \{ w \}}(F^\bullet_{\ffrm_{\alpha,1}}) \to \bbT^{S \cup \{w \}}(F^\bullet_{\ffrm_\alpha}) \cong \bbT^{S \cup \{ w \}}(F^\bullet_\ffrm). \]
\end{enumerate}
\end{lemma}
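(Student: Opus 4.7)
The six parts of the lemma fit together in a natural way, and the overall strategy is to first establish the splitting and Hecke-algebra relationships between the $\cB$- and $\cK$-level complexes, then use the Iwahori-Hecke algebra machinery of Section \ref{sec_q_adic_hecke_algebras} to pin down the pieces corresponding to particular orderings of Frobenius eigenvalues. For part (1), the key input is Lemma \ref{lem_splitting_of_inclusion_map_in_taylor_wiles_set_up}: the natural inclusion $M^\cK \hookrightarrow M^\cB$ is canonically split by $[\cK:\cB]^{-1}[\cK]$, and the index $[\cK:\cB] \equiv n! \pmod{p}$ is a unit in $\cO$ since $p > n$. Dualizing gives a splitting of the pushforward in homology, which at each finite level refines to a splitting in $\mathbf{D}(\Lambda)$ via the transfer for the finite covering $X_{U^w \cB(c,c)} \to X_{U^w \cK(c,c)}$. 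By the uniqueness clause of Proposition \ref{prop_complex_control_theorem}, these compatible splittings glue to a splitting of the canonical map $F^\bullet_{\infty, U^w\cB} \to F^\bullet_{\infty, U^w \cK}$ in $\mathbf{D}(\Lambda)$. Part (2) then falls out immediately: the inclusion (\ref{eqn_inclusion_of_hecke_algebras}) is tautological, and the surjection (\ref{eqn_surjection_of_hecke_algebras}) is obtained by restricting endomorphisms of $F^\bullet_{\infty, U^w \cB}$ to the direct summand $F^\bullet_{\infty, U^w \cK}$, noting that all operators in $\bbT^{S \cup \{w\}, \text{univ}}$ commute with the projection idempotent.

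For parts (3) and (4) I plan to exploit the Bernstein presentation of the Iwahori-Hecke algebra. Lemma \ref{lem_isomorphism_of_mod_p_hecke_algebra} identifies $\cH_\cB \otimes_\cO k \cong k[X_\ast(T) \rtimes W]$, with $\cH_\cK \otimes_\cO k$ sitting inside as the $W$-invariants, and under the identification $e_i(t_1, \ldots, t_n) \mapsto q_w^{i(n-i)/2} T_w^i$ a non-Eisenstein maximal ideal $\frn$ with Frobenius characteristic polynomial $\prod_i (X - \alpha_i) \in k[X]$ having $n$ distinct roots produces exactly $n!$ maximal ideals $\ffrm_\alpha = (\frn', \mathbf{U}_w^1 - \alpha_1, \ldots, \mathbf{U}_w^n - \alpha_1\cdots\alpha_n)$. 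This uses the fact that $\mathbf{U}_w^i$ corresponds to $q_w^{i(n-i)/2}\theta_{e_1+\cdots+e_i}$ under the Bernstein embedding, combined with Lemma \ref{lem_unramified_hecke_eigenvalues_determine_eigenspaces}. This gives (3). For (4), the hypothesis of Lemma \ref{lem_hecke_module_with_regular_infinitesimal_character} holds because the polynomial $\sum_{i=0}^n (-1)^i q_w^{i(n-i)/2} e_i(t_1, \ldots, t_n) X^{n-i}$ reduces at each $\ffrm_\alpha$ to $\prod_i(X-\alpha_i)$, which has distinct roots. Consequently at each finite level $c$, the localization at $\ffrm_\alpha$ of the $\cB$-level ordinary cohomology is canonically isomorphic to the $\cK$-level ordinary cohomology localized at $\frn$, via the normalized idempotent $[\cK]$. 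Passing to the inverse limit and applying the uniqueness clause of Proposition \ref{prop_limit_and_gluing_ordinary_parts} yields the desired isomorphism $F^\bullet_{\ffrm_\alpha} \cong F^\bullet_\frn = F^\bullet_\ffrm$ in $\mathbf{D}(\Lambda)$, where the last equality is Corollary \ref{cor_localization_independent_of_finite_set}.

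Parts (5) and (6) are then established by repeating the construction of Proposition \ref{prop_complex_control_theorem} at level $U^w \cB_1$, noting that $\cB_1 \subset \cB$ is a normal subgroup with abelian $p$-group quotient $\Delta_w$. By Lemma \ref{lem_existence_of_perfect_complex}, each chain complex $C_{\bbA, \bullet}(U^w\cB_1(c,c), \cO/\varpi^c)$ is perfect over $\cO/\varpi^c[\Delta_w]$, with reduction modulo the augmentation ideal recovering $C_{\bbA, \bullet}(U^w\cB(c,c), \cO/\varpi^c)$. The ordinary-parts patching of Proposition \ref{prop_limit_and_gluing_ordinary_parts}, applied over $\Lambda[\Delta_w]$, produces $F^\bullet_{\infty, U^w\cB_1}$ with the stated property $F^\bullet_{\infty, U^w\cB_1} \otimes_{\Lambda[\Delta_w]} \Lambda \cong F^\bullet_{\infty, U^w\cB}$ and the corresponding surjection of Hecke algebras; passing to the $\ffrm_{\alpha,1}$-direct summand, and using the compatibility of the idempotent decomposition with change of coefficients (the decomposition at level $\cB$ is pulled back from the one at level $\cB_1$), gives (6). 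The main obstacle throughout is not any single argument but rather the careful bookkeeping needed to reconcile the three constructions---the canonical splitting of part (1), the Iwahori-Hecke localization of parts (3)--(4), and the $\Delta_w$-equivariant lift of parts (5)--(6)---at the level of minimal complexes in $\mathbf{D}(\Lambda)$ rather than just at the level of cohomology; this is precisely what the derived formalism of Section \ref{sec_ordinary_parts} was set up to handle.
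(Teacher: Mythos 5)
Your proposal follows essentially the same route as the paper's own proof: part (1) via the trace splitting of Lemma \ref{lem_splitting_of_inclusion_map_in_taylor_wiles_set_up} glued through the characterization in Proposition \ref{prop_complex_control_theorem}; parts (3) and (4) via the Iwahori--Hecke machinery of \S \ref{sec_q_adic_hecke_algebras} (Lemmas \ref{lem_unramified_hecke_eigenvalues_determine_eigenspaces} and \ref{lem_hecke_module_with_regular_infinitesimal_character}); and parts (5)--(6) via the $\Lambda[\Delta_w]$-level analogue of Proposition \ref{prop_complex_control_theorem}. One small imprecision worth flagging in part (4): you claim, at each finite level $c$, an isomorphism between the $\ffrm_\alpha$-localization of $\cB$-level ordinary homology with $\cO/\varpi^c$-coefficients and the $\frn$-localization of $\cK$-level ordinary homology, and then glue. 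But Lemma \ref{lem_hecke_module_with_regular_infinitesimal_character} as stated applies only to $\cH_\cB$-modules that are finite-dimensional $k$-vector spaces; getting it directly over $\cO/\varpi^c$ requires an additional filtration or Nakayama argument that you don't supply. The paper sidesteps this entirely: it constructs the candidate map as the composite $F^\bullet_{\ffrm_\alpha} \to F^\bullet_{\frn'} \to F^\bullet_\frn \cong F^\bullet_\ffrm$ in $\mathbf{D}(\Lambda)$ and checks it is an isomorphism by reducing once modulo $\ffrm_\Lambda$, invoking Lemma \ref{lem_hecke_module_with_regular_infinitesimal_character} over $k$, and then using that a quasi-isomorphism of minimal complexes detected after $\otimes_\Lambda k$ is an isomorphism. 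That is a slightly cleaner path; if you retain your ``level-by-level'' structure you should state and justify the $\cO/\varpi^c$-coefficient version of the Hecke-module lemma, or simply reduce to $k$-coefficients once as the paper does.
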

\begin{proof}
For the first part, this map on homology groups is induced by the inclusion of complexes $C_{\bbA, \bullet}(U^w \cB, A) \to C_{\bbA, \bullet}(U^w \cK, A)$, split by the trace map, since the index $[\cK:\cB]$ is prime to $p$; see Lemma \ref{lem_splitting_of_inclusion_map_in_taylor_wiles_set_up}. The splitting of the map $F^\bullet_{\infty, U^w \cB} \to F^\bullet_{\infty, U^w \cK}$ is glued from the corresponding maps $C_{\bbA, \bullet}(U^w \cK(c, c), \cO/\varpi^c) \to C_{\bbA, \bullet}(U^w \cB(c, c), \cO/\varpi^c)$. The second part of the lemma follows immediately from the first.

For the third part, we find it most convenient to argue using the cohomology groups $H^\ast(X_U, k)$ (dual to the groups $H_\ast(X_U, k)$). We observe that
\begin{equation}\label{eqn_equality_of_cohomology_groups} [\cK]H^\ast(X_{U^w \cB}, k)_{\text{ord}}[\frn'] = H^\ast(X_{U^w \cK}, k)_\text{ord}[\frn] = H^\ast(X_{U^w \cK}, k)_\text{ord}[\ffrm], 
\end{equation}
by the first part of the lemma and Lemma \ref{lem_localization_independent_of_finite_set}. In the notation of \S \ref{sec_q_adic_hecke_algebras}, we have $\mathbf{U}_w^i = t_1 \cdots t_i$, at least up to powers of  $q^{1/2}$, which we ignore. It follows from (\ref{eqn_equality_of_cohomology_groups}) and the discussion preceding Lemma \ref{lem_unramified_hecke_eigenvalues_determine_eigenspaces} that $e_i(t_1, \dots, t_n)$ acts on $[\cK]H^\ast(X_{U^w \cB}, k)_{\text{ord}}[\frn']$ by $e_i(\alpha_1, \dots, \alpha_n)$. The third part of the lemma now follows from Lemma \ref{lem_unramified_hecke_eigenvalues_determine_eigenspaces}.

The map of the fourth part is the composite
\[ F^\bullet_{\ffrm_\alpha} \to F^\bullet_{\frn'} \to F^\bullet_\frn \cong F^\bullet_\ffrm. \]
To show it is an isomorphism, it is enough to show that the induced map
\[ H_\ast(X_{U^w \cB}, k)_{\ffrm_\alpha} = H_\ast(F^\bullet_{\ffrm_\alpha} \otimes_\Lambda k) \to H_\ast(F^\bullet_{\frn} \otimes_\Lambda k) = H_\ast(X_{U^w \cK}, k)_{\frn} \]
is an isomorphism. This is true, by Lemma \ref{lem_hecke_module_with_regular_infinitesimal_character}. The fifth and sixth parts then follow easily.
\end{proof}
We also give a version of Lemma \ref{lem_properties_of_individual_taylor_wiles_prime} in the presence of a Taylor--Wiles datum, i.e.\ a finite set $Q$ of finite places of $F$ satisfying the following conditions:
\begin{itemize}
\item $Q \cap S = \emptyset$.
\item For each $v \in Q$, $q_v \equiv 1 \text{ mod }p$ and $\overline{\rho}_\ffrm(\Frob_v)$ has $n$ distinct eigenvalues $\gamma_{v, 1}, \dots, \gamma_{v, n} \in k$.
\end{itemize}
We define open compact subgroups subgroups $U_0(Q) = \prod_v U_0(Q)_v$ and $U_1(Q) = \prod_v U_1(Q)_v$, where:
\begin{itemize}
\item If $v \not\in Q$, then $U_0(Q)_v = U_1(Q)_v = U_v$.
\item If $v \in Q$, then $U_0(Q)_v = I_v(0, 1)$ and $U_1(Q)_v$ is the pre-image in $I_v(0, 1)$ of the maximal $p$-power quotient of the group $I_v(0, 1)/I_v(1, 1)$.
\end{itemize}
Let $\Delta_Q = \prod_{v \in Q} U_0(Q)_v/U_1(Q)_v$, so that $\Delta_Q \cong \prod_{v \in Q} k(v)^\times(p)^{n-1}$. Generalizing Proposition \ref{prop_complex_control_theorem} once more, there is a minimal complex $F^\bullet_{\infty, U_1(Q)}$ of $\Lambda[\Delta_Q]$-modules and a homomorphism $\cH(G^{\infty, p, Q}, U^{p, Q})^\text{op}[\{\mathbf{U}_v^i\}_{v \in Q}^{i = 1, \dots, n}] \to \End_{\mathbf{D}(\Lambda[\Delta_Q])}(F^\bullet_{\infty, U_1(Q)})$ with the following properties:
\begin{itemize}
\item There is an isomorphism $F^\bullet_{\infty, U_1(Q)} \otimes_{\Lambda[\Delta_Q]} \Lambda \cong F^\bullet_{\infty, U_0(Q)}$ of minimal complexes of $\Lambda$-modules, compatible with the two actions of the algebra $\cH(G^{\infty, p, Q}, U^{p, Q})^\text{op}[\{\mathbf{U}_v^i\}_{v \in Q}^{i = 1, \dots, n}]$ by endomorphisms in $\mathbf{D}(\Lambda)$.
\item For any $i \geq 0$, there is an isomorphism 
\[ H^i(F^\bullet_{\infty, U_1(Q)}) \cong \plim_c H_{d - i}(X_{U_1(Q)(c, c)}, \cO / \varpi^c)_\text{ord} \]
of $\Lambda[\Delta_Q]$-modules, again compatible with the action of $\cH(G^{\infty, p, Q}, U^{p, Q})^\text{op}[\{\mathbf{U}_v^i\}_{v \in Q}^{i = 1, \dots, n}]$.
\end{itemize}
Again, this complex is characterized by the existence of additional data as in the statement of Proposition \ref{prop_complex_control_theorem}. In this context, we have the following result, which generalizes the previous lemma.
\begin{proposition}\label{prop_hecke_algebras_at_auxiliary_taylor_wiles_level} With notation as above, let $\bbT^{S \cup Q, Q}_\text{ord}(U_0(Q))$ denote the $\Lambda$-subalgebra of $\End_{\mathbf{D}(\Lambda)}(F^\bullet_{\infty, U_0(Q)})$ generated by the unramified Hecke operators $T_v^i$, $i = 1, \dots, n$, $v \not\in S \cup Q$, together with the operators $\mathbf{U}_v^i$, $i = 1, \dots, n$, $v \in S_p \cup Q$. Let $\bbT^{S \cup Q, Q}_\text{ord}(U_1(Q))$ denote the $\Lambda[\Delta_Q]$-subalgebra of $\End_{\mathbf{D}(\Lambda[\Delta_Q])}(F^\bullet_{\infty, U_1(Q)})$ generated by the unramified Hecke operators $T_v^i$, $i = 1, \dots, n$, $v \not\in S \cup Q$, together with the operators $\mathbf{U}_v^i$, $i = 1, \dots, n$, $v \in S_p \cup Q$. Then:
\begin{enumerate}
\item There are natural inclusions
\[ \bbT^{S \cup Q}_{\text{ord}}(U) \subset \bbT^S_\text{ord}(U) \]
and
\[ \bbT^{S \cup Q}_\text{ord}(U_0(Q)) \subset \bbT^{S \cup Q, Q}_\text{ord}(U_0(Q)) \]
and natural surjections
\[ \bbT^{S \cup Q}_\text{ord}(U_0(Q)) \to \bbT^{S \cup Q}_\text{ord}(U) \]
and
\[ \bbT^{S \cup Q}_\text{ord}(U_1(Q)) \to \bbT^{S \cup Q}_\text{ord}(U_0(Q)) \]
and
\[ \bbT^{S \cup Q,Q}_\text{ord}(U_1(Q)) \to \bbT^{S \cup Q,Q}_\text{ord}(U_0(Q)). \]
\item Let $\frn = \ffrm \cap \bbT^{S \cup Q}_\text{ord}(U)$, and let $\frn' \subset \bbT^{S \cup Q}_\text{ord}(U_0(Q))$ denote the pullback of $\frn$, and let $\ffrm_{Q, 0} \subset  \bbT^{S \cup Q, Q}_\text{ord}(U_0(Q))$ denote the ideal generated by $\frn'$ and the elements $\mathbf{U}_v^i - \prod_{j=1}^i \gamma_{v, i}$, $i = 1, \dots, n$ and $v \in Q$. Then $\ffrm_{Q, 0}$ is a maximal ideal. We write $\ffrm_{Q, 1} \subset  \bbT^{S \cup Q, Q}_\text{ord}(U_1(Q))$ for its pullback.
\item There are $\bbT^{S \cup Q, \text{univ}}$-equivariant isomorphisms
\[ F^\bullet_\ffrm \cong F^\bullet_\frn \]
and
\[ F^\bullet_\frn \cong F^\bullet_{\ffrm_{Q, 0}} \]
and
\[ F^\bullet_{\ffrm_{Q, 1}} \otimes_{\Lambda[\Delta_Q]} \Lambda \cong F^\bullet_{\ffrm_{Q, 0}}. \]
\(For clarity, we remark that the complexes in the first isomorphism are direct factors of $F^\bullet_{\infty, U}$; that $F^\bullet_{\ffrm_{Q, 0}}$ is a direct factor of $F^\bullet_{\infty,U_0(Q)}$; and that $F^\bullet_{\ffrm_{Q, 1}}$ is a direct factor of $F^\bullet_{\infty,U_1(Q)}$.\) Consequently, if $\bbT^{S \cup Q}(F^\bullet_{\ffrm_{Q, 1}})$ denotes the $\Lambda[\Delta_Q]$-subalgebra of $\End_{\mathbf{D}(\Lambda[\Delta_Q])}(F^\bullet_{\ffrm_{Q, 1}})$ generated by the image of $\bbT^{S \cup Q, \text{univ}}$, then there are maps of Hecke algebras
\[ \bbT^{S \cup Q}(F^\bullet_{\ffrm_{Q, 1}}) \twoheadrightarrow \bbT^{S \cup Q}(F^\bullet_\ffrm) \subset \bbT^S(F^\bullet_\ffrm) \cong \bbT^S_\text{ord}(U)_\ffrm. \]
\end{enumerate}
\end{proposition}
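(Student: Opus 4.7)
The plan is to reduce everything to the single-prime case already handled in Lemma \ref{lem_properties_of_individual_taylor_wiles_prime} by iterating over $Q$, with the role of the subgroup $\cB_1$ at the single place $w$ taken up by the product subgroups $U_1(Q) \subset U_0(Q)$. The tautological inclusions in part 1 are immediate from the definitions, since enlarging the set of generators enlarges the subalgebra of endomorphisms. For the surjections, the key observation is that the index $[U_v : U_0(Q)_v]$ for $v \in Q$ divides $\#\GL_n(k(v))$ and hence is prime to $p$ (since $p > n$ and $q_v \equiv 1 \bmod p$ forces appropriate coprimality conditions on the orders of parabolics); an elementwise averaging using $[U : U_0(Q)]$ as in Lemma \ref{lem_splitting_of_inclusion_map_in_taylor_wiles_set_up} therefore canonically splits the natural morphism $F^\bullet_{\infty, U_0(Q)} \to F^\bullet_{\infty, U}$ in $\mathbf{D}(\Lambda)$. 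This splitting is equivariant for the prime-to-$Q$ Hecke operators, so the induced map on Hecke algebras is surjective. The surjections coming from $U_1(Q) \twoheadrightarrow U_0(Q)$ follow from the characterizing isomorphism $F^\bullet_{\infty, U_1(Q)} \otimes_{\Lambda[\Delta_Q]} \Lambda \cong F^\bullet_{\infty, U_0(Q)}$ stated prior to the proposition, which is compatible with the action of all Hecke operators away from $Q$ and of the $\mathbf{U}_v^i$.

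For part 2, the maximality of $\ffrm_{Q,0}$ is a place-by-place application of Lemma \ref{lem_unramified_hecke_eigenvalues_determine_eigenspaces}. Let $\cK_v = \GL_n(\cO_{F_v})$ for $v \in Q$; by the splitting above we have an equality of ordinary cohomology modules
\[
[\textstyle\prod_{v\in Q}\cK_v]\, H^\ast(X_{U_0(Q)},k)_{\text{ord}}[\frn'] \;=\; H^\ast(X_U,k)_{\text{ord}}[\frn] \;=\; H^\ast(X_U,k)_{\text{ord}}[\ffrm],
\]
where the second equality is Lemma \ref{lem_localization_independent_of_finite_set}. Since this is non-zero (by non-Eisenstein-ness and Lemma \ref{lem_maximal_ideal_in_support_of_cohomology}), applying Lemma \ref{lem_unramified_hecke_eigenvalues_determine_eigenspaces} at each $v \in Q$ separately produces a maximal ideal of $\bbT^{S\cup Q,Q}_\text{ord}(U_0(Q))$ in the support of this cohomology obtained by adjoining to $\frn'$ the relations $\mathbf{U}_v^i \equiv \prod_{j=1}^i \gamma_{v,j}$ (up to the harmless scalar $q_v^{i(n-i)/2} \in k^\times$, which is $1$ modulo $\ffrm$ since $q_v \equiv 1 \bmod p$). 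This is exactly the ideal $\ffrm_{Q,0}$, proving its maximality. The pullback $\ffrm_{Q,1}$ is then automatically maximal.

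For part 3, the first isomorphism $F^\bullet_\ffrm \cong F^\bullet_\frn$ is Corollary \ref{cor_localization_independent_of_finite_set}. The second isomorphism $F^\bullet_\frn \cong F^\bullet_{\ffrm_{Q,0}}$ is established in the same way as Lemma \ref{lem_properties_of_individual_taylor_wiles_prime}(4): the composite morphism $F^\bullet_{\ffrm_{Q,0}} \to F^\bullet_{\frn'} \to F^\bullet_\frn$ is a morphism of minimal complexes in $\mathbf{D}(\Lambda)$, and it suffices to show that the induced map on homology with $k$-coefficients is an isomorphism. By applying the second part of Lemma \ref{lem_hecke_module_with_regular_infinitesimal_character} at each $v \in Q$ in turn (the regularity hypothesis is exactly the condition that $\overline{\rho}_\ffrm(\Frob_v)$ have distinct eigenvalues), we conclude that the localization at $\ffrm_{Q,0}$ of $H_\ast(X_{U_0(Q)},k)_\text{ord}$ is isomorphic, via the trace maps $[\cK_v]$ for $v \in Q$, to $H_\ast(X_U,k)_{\text{ord},\frn}$. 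The third isomorphism $F^\bullet_{\ffrm_{Q,1}} \otimes_{\Lambda[\Delta_Q]} \Lambda \cong F^\bullet_{\ffrm_{Q,0}}$ then follows by reducing the identity $F^\bullet_{\infty,U_1(Q)} \otimes_{\Lambda[\Delta_Q]} \Lambda \cong F^\bullet_{\infty,U_0(Q)}$ along the idempotent $e_{\ffrm_{Q,1}}$, using that this idempotent lies in the image of $\bbT^{S\cup Q, Q}_\text{ord}(U_1(Q))$ and maps to $e_{\ffrm_{Q,0}}$ under the surjection of part 1. The chain of Hecke algebra maps in the last sentence of the proposition is then induced by functoriality of endomorphism rings under the isomorphisms just established, combined with the inclusion of part 1.

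The main technical obstacle I anticipate is the bookkeeping required to keep track of the difference between the full Hecke algebras $\bbT^{S \cup Q, Q}_\text{ord}(-)$ (which include the operators $\mathbf{U}_v^i$ at $v \in Q$) and the algebras $\bbT^{S \cup Q}_\text{ord}(-)$ (which don't), and in particular to verify that the idempotent cutting out $F^\bullet_{\ffrm_{Q,0}}$ is actually $\Lambda[\Delta_Q]$-linear so that the tensor-product reduction of the third isomorphism above makes sense. Once one has in hand Lemma \ref{lem_properties_of_individual_taylor_wiles_prime} and the characterizing properties of the Hida-type complexes $F^\bullet_{\infty,U_1(Q)}$ and $F^\bullet_{\infty,U_0(Q)}$, this bookkeeping is essentially formal; the genuinely new input—that the residual Hecke eigenvalues at each $v \in Q$ split as unordered collections that faithfully detect the ordering $(\gamma_{v,1},\dots,\gamma_{v,n})$—is provided by Lemmas \ref{lem_unramified_hecke_eigenvalues_determine_eigenspaces} and \ref{lem_hecke_module_with_regular_infinitesimal_character}, which handle each place independently.
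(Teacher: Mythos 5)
Your proposal is correct and takes essentially the same route as the paper, which simply defers to Lemma \ref{lem_properties_of_individual_taylor_wiles_prime} (the case $\#Q = 1$) and asserts that the general case follows by repeating the argument "one prime at a time"; your write-up is a faithful unpacking of that iteration, invoking the right tools (Lemma \ref{lem_splitting_of_inclusion_map_in_taylor_wiles_set_up} for the splitting, Corollary \ref{cor_localization_independent_of_finite_set} for $F^\bullet_\ffrm \cong F^\bullet_\frn$, Lemmas \ref{lem_unramified_hecke_eigenvalues_determine_eigenspaces} and \ref{lem_hecke_module_with_regular_infinitesimal_character} place by place, and the characterizing isomorphism $F^\bullet_{\infty,U_1(Q)}\otimes_{\Lambda[\Delta_Q]}\Lambda \cong F^\bullet_{\infty,U_0(Q)}$).

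One slip worth fixing: you argue that $[U_v : U_0(Q)_v]$ "divides $\#\GL_n(k(v))$ and hence is prime to $p$." This deduction is invalid, because under the standing hypothesis $q_v \equiv 1 \pmod{p}$ the order $\#\GL_n(k(v)) = q_v^{n(n-1)/2}\prod_{i=1}^n(q_v^i-1)$ is divisible by (at least) $p^n$. The correct computation is that the index equals the cardinality of the flag variety, $[\GL_n(k(v)) : B(k(v))] = \prod_{i=2}^n(1 + q_v + \cdots + q_v^{i-1}) \equiv n! \pmod{p}$, which is a unit in $\cO$ since $p > n$; this is exactly the formula $[\cK : \cB] \equiv n! \bmod p$ recorded at the start of \S\ref{sec_q_adic_hecke_algebras}. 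Your conclusion (and hence the splitting argument and everything downstream of it) is unaffected; only the justification needs to be repaired.
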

\begin{proof}
If $\# Q = 1$, then this is just a reformulation of Lemma \ref{lem_properties_of_individual_taylor_wiles_prime}. In general, the proposition follows by repeating this argument `one prime at a time'.
\end{proof}
We then have the following addendum to Conjecture \ref{conj_existence_of_galois_combined}:
\setcounter{theorem}{17}
\begin{conjecture}[bis]
\begin{enumerate}
\item[3.] Let assumptions be as in Proposition \ref{prop_hecke_algebras_at_auxiliary_taylor_wiles_level}. Then there exists a lifting of $\overline{\rho}_\ffrm$ to a continuous homomorphism $\rho_{\ffrm_{Q}} : G_{F, S} \to \GL_n(\bbT^{S \cup Q}(F^\bullet_{\ffrm_{Q, 1}}))$  satisfying the following condition: for any finite place $v \not\in S \cup Q$ of $F$, $\rho_{\ffrm_Q}(\Frob_v)$ has characteristic polynomial
\[ X^n - T_v^1 X^{n-1} + \dots + (-1)^{i} q_v^{i(i-1)/2} T_v^i X^{n-i} + \dots + (-1)^n q_v^{n(n-1)/2} T_v^n  \in \bbT^{S \cup Q}(F^\bullet_{\ffrm_{Q, 1}})[X]. \]
\end{enumerate}
\end{conjecture}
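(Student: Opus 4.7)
The strategy is to reduce this addendum to the second part of Conjecture \ref{conj_existence_of_galois_combined}, applied at the auxiliary level $U_1(Q)$, and then to transport the resulting Galois representation across the natural surjection onto the quotient Hecke algebra $\bbT^{S \cup Q}(F^\bullet_{\ffrm_{Q,1}})$ acting on the direct factor $F^\bullet_{\ffrm_{Q,1}}$ of $F^\bullet_{\infty,U_1(Q)}$.

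First I would set $\frn_1 = \ffrm_{Q,1} \cap \bbT^{S \cup Q}_\text{ord}(U_1(Q))$, namely the pullback of $\ffrm_{Q,1}$ along the inclusion of Proposition \ref{prop_hecke_algebras_at_auxiliary_taylor_wiles_level}(1). Invoking the first part of Conjecture \ref{conj_existence_of_galois_combined} at level $U_1(Q)$ yields a residual representation $\overline{\rho}_{\frn_1}$ determined by its traces at places $v \not\in S \cup Q$; tracing through Proposition \ref{prop_hecke_algebras_at_auxiliary_taylor_wiles_level}(3) shows that $\overline{\rho}_{\frn_1}$ is isomorphic to the pullback of $\overline{\rho}_\ffrm$, hence absolutely irreducible, so that $\frn_1$ is non-Eisenstein. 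Applying the second part of Conjecture \ref{conj_existence_of_galois_combined} at level $U_1(Q)$ then produces a continuous lift
\[ \rho_{\frn_1} : G_{F, S \cup Q} \to \GL_n\bigl(\bbT^{S \cup Q}_\text{ord}(U_1(Q))_{\frn_1}\bigr) \]
with the desired characteristic polynomials. Because $F^\bullet_{\ffrm_{Q,1}}$ is a direct factor of $F^\bullet_{\infty,U_1(Q)}$, the image of the natural action map $\bbT^{S \cup Q}_\text{ord}(U_1(Q))_{\frn_1} \to \End_{\mathbf{D}(\Lambda[\Delta_Q])}(F^\bullet_{\ffrm_{Q,1}})$ is precisely $\bbT^{S \cup Q}(F^\bullet_{\ffrm_{Q,1}})$, and composing $\rho_{\frn_1}$ with this surjection yields $\rho_{\ffrm_Q}$; the characteristic polynomial condition is preserved by quotients.

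The hard part, if this were not being formulated as a further hypothesis, is exactly the same as for Conjecture \ref{conj_existence_of_galois_combined} itself: constructing Galois representations attached to torsion classes in the ordinary completed cohomology at level $U_1(Q)$. Given that input, the remaining content here is essentially bookkeeping about which idempotent in $\End_{\mathbf{D}(\Lambda[\Delta_Q])}(F^\bullet_{\infty,U_1(Q)})$ one is cutting down to. A minor subtlety worth flagging is that $\rho_{\frn_1}$ naturally lives on $G_{F, S \cup Q}$ rather than on $G_{F, S}$: the ramification at places of $Q$ is governed by the Taylor--Wiles local deformation problem $\cD_v^\text{TW}$ of \S \ref{sec_definition_of_taylor_wiles_deformations} and does not affect the characteristic polynomial conditions away from $S \cup Q$, but the conclusion stated in the conjecture should perhaps be read with the enlarged set $S \cup Q$ in mind.
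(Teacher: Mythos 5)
This statement is a \emph{conjecture} in the paper, not a proven result: it is presented as an additional hypothesis (the ``bis'' addendum to Conjecture \ref{conj_existence_of_galois_combined}) to be assumed throughout \S\ref{sec_ordinary_completed_cohomology} along with Conjectures \ref{conj_existence_of_galois_combined} and \ref{conj_local_global_compatibility_at_taylor_wiles_primes}, so there is no proof in the paper to compare your argument against.

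That said, your proposed reduction to parts (1)--(2) of Conjecture \ref{conj_existence_of_galois_combined} applied at level $U_1(Q)$ with $S' = S \cup Q$ is a sensible observation, and the bookkeeping is broadly sound: $\frn_1 = \ffrm_{Q,1} \cap \bbT^{S\cup Q}_\text{ord}(U_1(Q))$ is non-Eisenstein (its residual representation agrees with $\overline{\rho}_\ffrm$ via Chebotarev), the Hecke operators in $\bbT^{S\cup Q,\text{univ}}$ commute with the idempotent $e_{\ffrm_{Q,1}} \in \bbT^{S\cup Q, Q}_\text{ord}(U_1(Q))$, so $x \mapsto e_{\ffrm_{Q,1}} x$ is a ring map whose image lands in $\bbT^{S\cup Q}(F^\bullet_{\ffrm_{Q,1}})$ and which factors through $\bbT^{S\cup Q}_\text{ord}(U_1(Q))_{\frn_1}$ because the target is local over $\frn_1$. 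Your flag about the source group --- that $\rho_{\ffrm_Q}$ naturally lives on $G_{F, S\cup Q}$ rather than $G_{F,S}$ --- is also correct; this is almost certainly a misprint in the conjecture's statement, since later (in \S\ref{sec_taylor_wiles_argument}) the authors explicitly write $\rho_{\ffrm, Q} : G_{F, S\cup Q} \to \GL_n(\bbT_{Q,1})$, and the deformation problem $\cS_Q$ used in Conjecture \ref{conj_local_global_compatibility_at_taylor_wiles_primes} allows ramification at $Q$.

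One point you pass over too lightly: there is a mismatch of coefficient rings between what Conjecture \ref{conj_existence_of_galois_combined}(2) at level $U_1(Q)$ literally gives and what the addendum asks for. The conjecture, as formulated in \S\ref{sec_hecke_algebras}, refers to $\bbT^{S'}_\text{ord}(U')$ as a $\Lambda$-algebra generated by Hecke operators acting on a minimal $\Lambda$-complex; Proposition \ref{prop_hecke_algebras_at_auxiliary_taylor_wiles_level} instead defines $\bbT^{S\cup Q}(F^\bullet_{\ffrm_{Q,1}})$ as a $\Lambda[\Delta_Q]$-subalgebra of $\End_{\mathbf{D}(\Lambda[\Delta_Q])}(F^\bullet_{\ffrm_{Q,1}})$, i.e.\ with the diamond operators at places of $Q$ adjoined. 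The target of your composed map is therefore a priori only the $\Lambda$-subalgebra generated by the genuine Hecke operators, which may be a proper subring; the resulting representation still takes values in $\bbT^{S\cup Q}(F^\bullet_{\ffrm_{Q,1}})$ after composing with the inclusion, so the conclusion survives, but a careful version of this argument should spell out that the conjecture is being asserted for the $\Lambda[\Delta_Q]$-linear minimal complex (which is \emph{not} the minimal $\Lambda$-complex one would get by applying Proposition \ref{prop_complex_control_theorem} verbatim at level $U_1(Q)$). This is plausibly the reason the authors chose to record part (3) as a separate, explicit hypothesis rather than present it as a formal consequence of parts (1)--(2).
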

\setcounter{theorem}{26}
\subsection{The Taylor--Wiles argument}\label{sec_taylor_wiles_argument}

We now specialize the discussion to our case of ultimate interest. We therefore fix a place $a \not\in S_p$ of $F$ which is absolutely unramified and not split in $F(\zeta_p)$. We define an open compact subgroup $U = \prod_v U_v$ of $G^\infty$ as follows:
\begin{itemize}
\item If $v \in S_p$, then $U_v = I_v(1, 1)$.
\item If $v = a$, then $U_v = \ker(\GL_n(\cO_{F_a}) \to \GL_n(k(a)))$ (i.e.\ the principal congruence subgroup).
\item If $v \not\in S_p \cup \{ a \}$, then $U_v = \GL_n(\cO_{F_v})$.
\end{itemize}
Let $S = S_p \cup \{ a \}$. If the residue characteristic of $a$ is sufficiently large, then $U$ is a good subgroup (as the groups $\Gamma_{U, g}$ are neat), and we assume this. We fix a non-Eisenstein maximal ideal $\ffrm \subset \bbT^S_\text{ord}(U)$, and assume that $\overline{\rho}_\ffrm$ satisfies the following conditions:
\begin{itemize}
\item The representation $\overline{\rho}_\ffrm|_{G_{F(\zeta_p)}}$ has enormous image, in the sense of Definition \ref{def_huge_image}.
\item The representation $\overline{\rho}_\ffrm$ is unramified outside $S_p$. 
\item For each $v \in S_p$, $\overline{\rho}_\ffrm|_{G_{F_v}}$ is trivial and $[F_v : \bbQ_p] > 1 + n(n-1)/2$. (We include this assumption as it simplifies the deformation theory at $p$, and allows us to prove Proposition \ref{prop_ordinary_lifting_rings}.)
\item The matrix $\overline{\rho}_\ffrm(\Frob_a)$ is scalar. (It follows that any lift of $\overline{\rho}_\ffrm$ is unramified at $a$.)
\item The residue field of $\ffrm$ is $k$, and contains the eigenvalues of all elements in the image of $\overline{\rho}_\ffrm$. (This can always be arranged by simply enlarging the field $E$ of coefficients.)
\end{itemize} 
Suppose given a Taylor--Wiles datum $(Q; (\gamma_{v, 1}, \dots, \gamma_{v, n})_{v \in Q})$. According to Proposition \ref{prop_hecke_algebras_at_auxiliary_taylor_wiles_level}, we can find a good complex $F^\bullet_{\ffrm_{Q, 1}}$ of $\Lambda[\Delta_Q]$-modules, together with an isomorphism
\begin{equation}\label{eqn_change_level_for_tw_complex} F^\bullet_{\ffrm_{Q, 1}} \otimes_{\Lambda[\Delta_Q]} \Lambda \cong F^\bullet_\ffrm,
\end{equation}
and a homomorphism $\bbT^{S \cup Q, \text{univ}} \to \End_{\mathbf{D}(\Lambda[\Delta_Q])}(F^\bullet_{\ffrm_{Q, 1}})$ with respect to which the isomorphism (\ref{eqn_change_level_for_tw_complex}) is equivariant. We define $\bbT_{Q, 1}$ to be the $\Lambda[\Delta_Q]$-subalgebra of $\End_{\mathbf{D}(\Lambda[\Delta_Q])}(F^\bullet_{\ffrm_{Q, 1}})$ generated by the image of $\bbT^{S \cup Q, \text{univ}}$,  $\bbT_{Q, 0} = \bbT^{S \cup Q}(F^\bullet_{\ffrm})$, and $\bbT_0 = \bbT^S(F^\bullet_\ffrm)$. There is a diagram of $\Lambda[\Delta_Q]$-algebras:
\[ \xymatrix@1{ \bbT_{Q, 1} \ar@{->>}[r] & \bbT_{Q, 0} \ar@{^{ (}->}[r] & \bbT_0, } \]
as well a lifting $\rho_{\ffrm, Q} : G_{F, S \cup Q} \to \GL_n(\bbT_{Q, 1})$ of $\overline{\rho}_\ffrm$, by Conjecture  \ref{conj_existence_of_galois_combined}. (We allow the case $Q = \emptyset$. Then $\bbT_{Q, 1} = \bbT_{Q, 0} = \bbT_0$ and $\rho_{\ffrm, Q} = \rho_\ffrm$.) 

Let $\mu : G_{F, S} \to \cO^\times$ be the unique continuous character such that $\mu \epsilon^{n(n-1)/2}$ equals the Teichm\"uller lift of $\det \overline{\rho}_\ffrm \epsilon^{n(n-1)/2}$. Then the character $\det \rho_{\ffrm, Q} \otimes \mu^{-1}$  has finite $p$-power order, bounded independently of $Q$ (as the group $F^\times \backslash \bbA_F^\times / (U_1(Q) \cap Z(\bbA_F) \cdot Z_\infty)$ has order independent of $Q$; cf. Lemma \ref{lem_determinant_of_galois}), and we write $r_{\ffrm, Q}$ for the unique twist of $\rho_{\ffrm, Q}$ of determinant $\mu$ (which exists, since $p > n$; the twisting character is everywhere unramified, so is trivial if the class group of $F$ is $p$-torsion free).

We can now state a supplement to Conjecture \ref{conj_existence_of_galois_combined}:
\begin{conjecture}\label{conj_local_global_compatibility_at_taylor_wiles_primes}
Define a global deformation problem
\[ \cS_Q = (\overline{\rho}_\ffrm, \mu, \{ \Lambda_v \}_{v \in S} \cup \{ \cO[\Delta_v] \}_{v \in Q}, \{ \cD_v^\triangle \}_{v \in S_p} \cup \{ \cD_v^\text{TW} \}_{v \in Q}, ) \]
where $\Lambda_v = \cO \llbracket \cO_{F_v}^\times(p)^{n-1} \rrbracket$ if $v \in S_p$. \(There is a canonical isomorphism $\Lambda \cong \widehat{\otimes}_{v \in S_p} \Lambda_v$, because of local class field theory; in the notation of \S \ref{sec_galois_deformation_theory}, we therefore have $\Lambda_S = \Lambda$ and $\Lambda_{S \cup Q} = \Lambda[\Delta_Q]$. \) Then the lifting $r_{\ffrm,Q}$ is of type $\cS_Q$.
\end{conjecture}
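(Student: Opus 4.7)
The plan is to reduce to classical specializations of the ordinary Hida family, apply known local-global compatibility results there, and then interpolate. More precisely, let $\wp \subset \bbT_{Q,1}[1/p]$ run over the minimal primes arising from pullbacks of $\wp_\bl \subset \Lambda[\Delta_Q]$ for $\bl \in (\bbZ^n_+)^{\Hom(F,E)}$ regular, and consider the specialization $r_{\ffrm,Q}\otimes_{\bbT_{Q,1}}\bbT_{Q,1}/\wp$. By Corollary \ref{cor_ordinary_complex_computes_fixed_weight_cohomology} and Theorem \ref{thm_only_cuspidal_cohomology_survives_localization}, the specialized cohomology at such points is computed by cuspidal cohomology after localization at $\ffrm$, hence (at least under the Conjecture \ref{conj_existence_of_galois_combined}) corresponds to a finite set of $\iota$-ordinary regular algebraic cuspidal automorphic representations $\Pi$ of $\GL_n(\bbA_F)$ of weight $\bl$. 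The associated Galois representations $r_\Pi$ are known to be of type $\cS_Q$: the ordinary filtration at $v\in S_p$, with inertial characters matching $\chi^i_v$, is provided by the work of Geraghty \cite{Ger09} in the $\iota$-ordinary case and the general machinery of ordinary automorphic Galois representations (cf. \cite{Tho14} for the fixed-determinant reformulation), while at Taylor--Wiles primes $v\in Q$ classical local-global compatibility (Iwahori-level local Langlands together with the identification of the diamond operators with the inertial characters via the Bernstein presentation of \S \ref{sec_q_adic_hecke_algebras}) yields the decomposition $r_\Pi|_{G_{F_v}} \cong C_1 \oplus \dots \oplus C_n$ with the required inertial behaviour.

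Next, I would show that the union of these classical points is Zariski dense in $\Spec \bbT_{Q,1}[1/p]$. This is the analogue of Hida's classicality theorem, and in the derived setup considered here it follows from combining Corollary \ref{cor_control_theorem_spectral_sequence} (control of specializations of $F^\bullet_{\ffrm_{Q,1}}$ along $\wp_\bl$) with the independence-of-weight Corollary \ref{cor_complexes_independent_of_weight}, which together imply that every irreducible component of $\Spec \bbT_{Q,1}[1/p]$ meets the classical locus, since the cohomology in weight $\bl$ at ordinary Iwahori level is spanned by $p$-stabilizations of cuspidal eigenforms for $\bl$ regular. With density in hand, the defining conditions of $\cS_Q$ — which cut out closed subschemes of $\Spec R^\square_v$ for each $v\in S\cup Q$ (Proposition \ref{prop_ordinary_lifting_rings} and the definition of $\cD_v^{\mathrm{TW}}$) — can be checked on a Zariski dense set, and therefore hold on all of $\bbT_{Q,1}$, provided the target ring is $\cO$-flat and reduced after inverting $p$. (If $\bbT_{Q,1}$ contains $p$-torsion or nilpotents, one first applies the construction modulo such an ideal and then uses Chebotarev plus the characterization of $R_v^\triangle$ in Proposition \ref{prop_ordinary_lifting_rings} to descend.)

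The hard part will be the ordinariness assertion at $v \in S_p$ in the genuinely torsion direction, where the above density argument is insufficient: one needs to know that $r_{\ffrm,Q}|_{G_{F_v}}$ itself — not merely its classical specializations — factors through $R_v^\triangle$. When $F$ is CM or totally real, Scholze's construction gives $\rho_{\ffrm,Q}$ only up to a nilpotent ideal $I \subset \bbT_{Q,1}$ of bounded exponent, and it is not automatic that the ordinary filtration lifts to $\bbT_{Q,1}$ rather than to $\bbT_{Q,1}/I$. One would attack this by adapting the arguments used for classical ordinary forms to the pseudo-representation setting: after pseudo-splitting $\overline{\rho}_\ffrm(\Frob_v)$ (which is trivial by assumption on $v$), use the invertibility of $\mathbf{U}_p$ on $F^\bullet_{\ffrm_{Q,1}}$ and the description of $R_v^\triangle$ via ordered filtrations to construct inductively, over quotients of $\bbT_{Q,1}$ of increasing size, a canonical $G_{F_v}$-stable filtration with the prescribed inertial characters $\chi^i_v$. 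The Taylor--Wiles condition at $v \in Q$ is easier, since it only involves unramified behaviour of $\Frob_v$ and the action of the tame inertia generator via $\Delta_v$, both of which are directly encoded in $\bbT_{Q,1}$ through the operators $\mathbf{U}_v^i$ and the diamond action; here Lemma \ref{lem_unramified_hecke_eigenvalues_determine_eigenspaces} together with local-global compatibility at unramified places and Chebotarev should suffice once $\rho_{\ffrm,Q}$ is known to exist genuinely over $\bbT_{Q,1}$.
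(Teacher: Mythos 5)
This statement is a \emph{conjecture} in the paper, not a theorem; the authors give no proof, and in the remark immediately following it they explain that Conjecture \ref{conj_local_global_compatibility_at_taylor_wiles_primes} is a form of local-global compatibility that they are not able to establish. Even the piece they regard as more tractable --- the conditions at the places of $Q$ when $F$ is CM --- is only asserted to ``should follow'' from (at the time forthcoming) work of Varma, and the ordinary condition at $S_p$ is left entirely open. So there is no paper argument for your proposal to be compared against; the right response to being asked to prove this statement is to note that it is postulated, not proved, and then used as a hypothesis throughout \S\ref{sec_taylor_wiles_argument}.

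As for the proposal on its own terms: your first two paragraphs do not get off the ground in the setting of this paper. The whole point of the Calegari--Geraghty framework, and of the non-abelian Leopoldt conjecture (Conjecture \ref{conj_intro_hida_conjecture}), is that when $l_0 > 0$ the ordinary cohomology $H^\ast_\text{ord}(U)_\ffrm$ is expected to be a \emph{torsion} $\Lambda$-module (Corollary \ref{cor_cohomology_is_torsion_lambda_module} makes part of this unconditional for CM $F$). Consequently $\bbT_{Q,1}$ may be entirely $p$-power torsion and $\bbT_{Q,1}[1/p]$ can be zero, in which case your ``classical locus'' is empty and no Zariski density argument applies. The Hida-style classicality input you invoke --- that every component of $\Spec\bbT[1/p]$ meets the classical locus --- is a statement that holds in the $l_0 = 0$ situation (e.g.\ $\GL_2$ over a totally real field) precisely because there the Hecke algebra is a $\Lambda$-torsion-free, finite flat $\Lambda$-module; Corollaries \ref{cor_control_theorem_spectral_sequence} and \ref{cor_complexes_independent_of_weight} here only produce spectral sequences, and in positive defect they do not yield density or even nonemptiness of classical points. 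Your third paragraph correctly identifies the torsion direction as the genuine obstacle, but it is not a residual refinement of the interpolation argument --- it is the entire content of the conjecture, and the deduction of the abelian Leopoldt conjecture in \S\ref{sec_potential_automorphy_and_leopoldt} depends essentially on the existence and ordinarity of $\rho_{\ffrm,Q}$ over the possibly torsion ring $\bbT_{Q,1}$, which no specialization-to-characteristic-zero argument can reach. A further gap, separate from the above, is that you freely assume $\rho_{\ffrm,Q}$ exists over $\bbT_{Q,1}$ rather than over $\bbT_{Q,1}/I$ for a nilpotent ideal $I$; the paper points out that even this is currently only a conjecture (Conjecture \ref{conj_existence_of_galois_combined} part 2), so the ``inductive construction of a $G_{F_v}$-stable filtration over quotients of increasing size'' you propose has no established starting point.
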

\begin{remark}
\begin{enumerate}
\item Conjecture \ref{conj_local_global_compatibility_at_taylor_wiles_primes} is a form of local-global compatibility for the representations $\rho_{\ffrm, Q}$ at the places of $S_p \cup Q$. It seems easier to approach for the places of $Q$ than the places of $S_p$. In particular, in the case where $F$ is a CM field, so the representations $\rho_{\ffrm, Q}$ are almost known to exist, the part of Conjecture \ref{conj_local_global_compatibility_at_taylor_wiles_primes} to do with local-global compatibility at $Q$ should follow from the techniques of forthcoming work of Varma \cite{Var14}.
\item Conjecture \ref{conj_local_global_compatibility_at_taylor_wiles_primes} makes sense for a given choice of $F$, $U$, $\ffrm$, and $Q$, satisfying the assumptions of this section. In what follows, when we ask the reader to (for example) ``assume Conjecture \ref{conj_local_global_compatibility_at_taylor_wiles_primes} for the field $F$'', we mean that the reader should assume that the conjecture holds for all possible choices of $U$, $\ffrm$, and $Q$.
\end{enumerate}
\end{remark}
We now \textbf{assume Conjecture \ref{conj_local_global_compatibility_at_taylor_wiles_primes} for the field $F$ for the rest of \S \ref{sec_ordinary_completed_cohomology}}. (We remind the reader that we have already assumed Conjecture \ref{conj_existence_of_galois_combined}.) We obtain a $\Lambda[\Delta_Q]$-algebra homomorphism $R_{\cS_Q} \to \bbT_{Q, 1}$ classifying the lifting $r_\ffrm$. It need not be surjective if the class group of $F$ contains $p$-torsion, but it is in any case finite (since the class group is finite). If $Q = \emptyset$, then we write $\cS_Q = \cS$, and get a homomorphism $R_\cS \to \bbT_0$.

For any such $Q$, we thus obtain a diagram 
\[ \xymatrix@1{ R_{\cS_Q} \ar[r]\ar[d]   & \bbT_{Q, 1} \ar[d] \ar[r] & \End_{\mathbf{D}(\Lambda[\Delta_Q])}(F^\bullet_{\ffrm_{Q, 1}}) \ar[d] \\
R_\cS \ar[r] & \bbT_0  \ar[r] & \End_{\mathbf{D}(\Lambda)}(F^\bullet_{\ffrm}).} \]
\begin{theorem}\label{thm_r_equals_t}
Let assumptions be as above, and suppose further that $\overline{\rho}_\ffrm$ is totally odd. In particular, we \textbf{assume Conjecture \ref{conj_existence_of_galois_combined} and Conjecture \ref{conj_local_global_compatibility_at_taylor_wiles_primes}}. 
\begin{enumerate}
\item Assume the following hypothesis:
\begin{itemize}
\item[\(CG\):] The groups $H_i(X_{U}, k)_{\ffrm}$ are zero for $i \not\in [q_0, q_0 + l_0]$.
\end{itemize}
Assume moreover that for each $v \in S_p$, $F_v$ contains no non-trivial $p^\text{th}$ roots of unity. Then $R_\cS$ acts nearly faithfully on $H^\ast(F^\bullet_\ffrm)$.
\item Let $\bl \in (\bbZ^n_+)^{\Hom(F, E)}$ be a regular dominant weight, and let $\wp_\bl$ be as in Corollary \ref{cor_ordinary_complex_computes_fixed_weight_cohomology}. Suppose that $H^\ast(X_{U}, M_{-w_0 \bl})_{\ffrm} \otimes_\cO E \neq 0$. Then $R_{\cS,(\wp_\bl)}$ acts nearly faithfully on $H^\ast(F^\bullet_{\ffrm, (\wp_\bl)})$.
\end{enumerate}
\end{theorem}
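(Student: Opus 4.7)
The plan is to carry out the Calegari--Geraghty patching method, using Proposition \ref{prop_main_patching_argument} as the abstract engine. The setup: choose $T = \{a\}$, which is non-empty (crucial for Proposition \ref{prop_presenting_global_deformation_ring}), and apply Lemma \ref{lem_existence_of_taylor_wiles_primes} to produce, for each $N \geq 1$, a Taylor--Wiles datum $(Q_N, (\gamma_{v, i}))$ with $|Q_N| = q := h^1_{\cS, T}(\ad^0 \overline{\rho}_\ffrm(1))$, with $q_v \equiv 1 \pmod{p^N}$ for $v \in Q_N$, and with $h^1_{\cS_{Q_N}, T}(\ad^0 \overline{\rho}_\ffrm(1)) = 0$. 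This step uses the enormous image hypothesis on $\overline{\rho}_\ffrm|_{G_{F(\zeta_p)}}$ essentially. Define $S_\infty = \Lambda \llbracket X_1, \dots, X_{q(n-1)} \rrbracket$, viewed as the completed group algebra of $\Delta_\infty \cong \bbZ_p^{q(n-1)}$; the projection $S_\infty \twoheadrightarrow \cO[\Delta_{Q_N}]$ is compatible with the structure coming from Proposition \ref{prop_hecke_algebras_at_auxiliary_taylor_wiles_level}.

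Next, for each $N$, Proposition \ref{prop_hecke_algebras_at_auxiliary_taylor_wiles_level} furnishes a good complex $F^\bullet_{\ffrm_{Q_N, 1}}$ of $\Lambda[\Delta_{Q_N}]$-modules equipped with the Hecke action of $\bbT^{S \cup Q_N}(F^\bullet_{\ffrm_{Q_N, 1}})$, together with a specialization isomorphism $F^\bullet_{\ffrm_{Q_N, 1}} \otimes_{\Lambda[\Delta_{Q_N}]} \Lambda \cong F^\bullet_\ffrm$ compatible with the Hecke action. By Conjecture \ref{conj_local_global_compatibility_at_taylor_wiles_primes} one obtains a $\Lambda[\Delta_{Q_N}]$-algebra map $R_{\cS_{Q_N}} \to \bbT^{S \cup Q_N}(F^\bullet_{\ffrm_{Q_N, 1}})$; passing to $T$-framed deformations (using Lemma \ref{lem_framed_and_unframed_deformations}) converts this to a map of $S_\infty$-algebras once we absorb framing variables. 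The hypotheses of Proposition \ref{prop_main_patching_argument} are then verified with $C^\bullet_0 = F^\bullet_\ffrm$, $R_0 = R_\cS^T$, and $R_N = R_{\cS_{Q_N}}^T$; Proposition \ref{prop_presenting_global_deformation_ring}(1) gives a presentation with the number of generators $g$ independent of $N$, allowing us to fix a single $R_\infty$ surjecting onto every $R_N$. Applying the patching yields a good complex $C^\bullet_\infty$ over $S_\infty$ concentrated in degrees $[0, d]$ with $C^\bullet_\infty \otimes_{S_\infty} \Lambda \cong F^\bullet_\ffrm$, together with the action $R_\infty \to \End_{\mathbf{D}(S_\infty)}(C^\bullet_\infty)$.

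With the patching in hand, part (1) is proved as follows. Using Proposition \ref{prop_ordinary_lifting_rings} together with the Euler characteristic formula of Proposition \ref{prop_presenting_global_deformation_ring}(2), one shows that $R_\infty$ can be taken as a quotient of a formal power series ring over $\Lambda$ of dimension at least $\dim S_\infty - l_0$, with the bound sharp on each irreducible component provided each local ordinary component at $v\in S_p$ is geometrically irreducible; this is exactly where the hypothesis that $F_v$ contains no nontrivial $p$-th root of unity enters (it pins down the local components to a unique one matching the chosen universal characters). By Lemma \ref{lem_dimension_theory}(3) we have
\[
\dim_{S_\infty} H^\ast(C^\bullet_\infty) = \dim_{R_\infty} H^\ast(C^\bullet_\infty) \leq \dim R_\infty = \dim S_\infty - l_0.
\]
Under hypothesis (CG), $H^i(C^\bullet_\infty \otimes_{S_\infty} k) \cong H_{d-i}(X_U, k)_\ffrm$ (up to the ordinary/framing shift) vanishes outside $[q_0, q_0 + l_0]$, so Lemma \ref{lem_dimension_criterion_for_exactness} forces equality of dimensions, concentrates $H^\ast(C^\bullet_\infty)$ in degree $q_0 + l_0$, and by Auslander--Buchsbaum makes it a free $R_\infty$-module. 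Therefore $R_\infty$ acts faithfully on $H^\ast(C^\bullet_\infty)$; specializing modulo the augmentation ideal $\fra \subset S_\infty$ transfers faithfulness to $R_\cS^T$ acting on $H^\ast(F^\bullet_\ffrm)$, and after removing framing variables (Lemma \ref{lem_framed_and_unframed_deformations}) we conclude that the map $R_\cS \to \End_{\mathbf{D}(\Lambda)}(F^\bullet_\ffrm)$ has image acting faithfully on cohomology, i.e.\ is nearly faithful (its kernel being nilpotent by Lemma \ref{lem_nilpotents_in_derived_category}).

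For part (2), the argument is structurally identical, but (CG) is replaced by Theorem \ref{thm_only_cuspidal_cohomology_survives_localization}: after specializing at a regular dominant weight $\bl$ and inverting $p$, the cohomology of $F^\bullet_\ffrm \otimes_\Lambda \Lambda/\wp_\bl$ is concentrated in $[q_0, q_0 + l_0]$. One performs the same patching and runs the dimension argument after inverting $p$ and localizing the patched picture at the height-$\dim \Lambda - (1 + [F:\bbQ])$ prime corresponding to $\wp_\bl$; the rings become Cohen--Macaulay of the expected dimension component-by-component, and the analogue of Lemma \ref{lem_dimension_criterion_for_exactness} in the localized setting yields near faithfulness of $R_{\cS, (\wp_\bl)}$. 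The main obstacle throughout is the dimension bookkeeping: matching the number of Taylor--Wiles generators $q$ against the excess in $\dim R_{\cS_{Q_N}}^T - \dim A_{\cS_{Q_N}}^T$ so that, after patching, $\dim R_\infty = \dim S_\infty - l_0$ \emph{exactly} (not just at least); this is where the irreducibility of local ordinary components in part (1), and the weaker component-wise analysis after inverting $p$ in part (2), are decisive.
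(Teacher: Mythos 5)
Your overall strategy (Calegari--Geraghty patching via Proposition \ref{prop_main_patching_argument}, Lemma \ref{lem_existence_of_taylor_wiles_primes} for the Taylor--Wiles data, then a dimension count via Lemmas \ref{lem_dimension_theory} and \ref{lem_dimension_criterion_for_exactness}) matches the paper's. However, there are concrete errors in the setup that would break the argument.

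The most serious issue is the choice $T = \{a\}$. Proposition \ref{prop_presenting_global_deformation_ring} and the entire framed-deformation machinery require $\Lambda_v = \cO$ for all $v \in S \setminus T$. In the deformation problem $\cS$ of \S\ref{sec_taylor_wiles_argument}, we have $\Lambda_v = \cO\llbracket \cO_{F_v}^\times(p)^{n-1} \rrbracket \neq \cO$ for $v \in S_p$, so $T$ \emph{must} contain $S_p$. The paper takes $T = S_p$ (and $a \in S \setminus T$ is exactly the place at which $R_a$ is formally smooth, satisfying the hypothesis of the corollary following Proposition \ref{prop_presenting_global_deformation_ring}). With $T = \{a\}$, the map $A_\cS^T \to R_\cS^T$ is not a $\Lambda$-algebra map, the Euler characteristic calculation does not apply, and the local ordinary rings $R_v^\triangle$ would not enter $R_\infty$ at all.

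Two further points. First, your $S_\infty = \Lambda\llbracket X_1, \dots, X_{q(n-1)}\rrbracket$ omits the framing variables; the complexes $C^\bullet_N$ must be $F^\bullet_{\ffrm_{Q_N, 1}} \widehat{\otimes}_\Lambda \cT$ where $\cT$ is the framing ring of Lemma \ref{lem_framed_and_unframed_deformations}, and $S_\infty$ must be $\cT\llbracket \Delta_\infty \rrbracket$ so that the $S_\infty$-algebra structure on $R_\infty$ and the $S_\infty$-module structure on the complexes match up; ``absorbing framing variables later'' does not satisfy the hypotheses of Proposition \ref{prop_main_patching_argument}. Second, the conclusion from $\depth_{R_\infty} H^{q_0+l_0}(C^\bullet_\infty) = \dim R_\infty$ is \emph{not} that the module is free (the ring $R_\infty = A_\cS^T\llbracket X_1,\dots,X_g\rrbracket$ is not regular in general); what one gets is that $\Supp_{R_\infty} H^{q_0+l_0}(C^\bullet_\infty)$ is a union of irreducible components. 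This is why one then needs $R_\infty$ to be a \emph{domain}, which is exactly the role of the hypothesis that each $F_v$ ($v \in S_p$) contains no non-trivial $p$-th roots of unity: this makes $\Lambda$ a power-series ring, hence a domain, and by Proposition \ref{prop_ordinary_lifting_rings}(3) the components of $\Spec R_\infty$ are in bijection with those of $\Spec \Lambda$. Your explanation (``pins down local components to a unique one matching the chosen universal characters'') is not the mechanism. In part (2), the key is likewise that $\Lambda_{(\wp_\bl)}$, hence $R_{\infty, (P_\bl)}$, is a domain even when $\Lambda$ is not; Cohen--Macaulayness alone is not sufficient for near-faithfulness.
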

We remind the reader that a ring $R$ is said to act nearly faithfully on a module $M$ is the ideal $\Ann_R(M)$ is nilpotent. By definition, we have $R_{\cS, (\wp_\bl)} = (\Lambda - \wp_\bl)^{-1} R_\cS$.
\begin{corollary}\label{cor_dimension_of_r_bounded_by_t}
Let $\bl \in (\bbZ^n_+)^{\Hom(F, E)}$ be a regular dominant weight, and suppose that $H^\ast(X_{U}, M_{-w_0 \bl})_{\ffrm} \otimes_\cO E \neq 0$.
\begin{enumerate}
\item We have $\dim R_{\cS,(\wp_\bl)} = \dim_{\Lambda_{(\wp_\bl)}} H^\ast(F^\bullet_{\ffrm, (\wp_\bl)})$.
\item Let $x \in \Spec R_\cS[1/p]$ be a closed point lying above the point $\wp_\bl \in \Spec \Lambda[1/p]$.
Then the Galois representation $\rho_x$ corresponding to $x$ is automorphic, and $\dim R_{\cS, (x)} \leq \dim_{\Lambda[1/p]} H^\ast(F^\bullet_{\ffrm})[1/p]$. 
\end{enumerate}
\end{corollary}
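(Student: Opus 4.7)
My plan is to deduce both parts from Theorem \ref{thm_r_equals_t}(2) via dimension bookkeeping, supplemented in part 2 by the automorphic interpretation of Hecke eigensystems from Corollary \ref{cor_ordinary_complex_computes_fixed_weight_cohomology}.

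For part 1, Theorem \ref{thm_r_equals_t}(2) provides a nilpotent ideal $I \subset R_{\cS,(\wp_\bl)}$ annihilating $H^\ast(F^\bullet_{\ffrm,(\wp_\bl)})$, yielding $\dim R_{\cS,(\wp_\bl)} = \dim R_{\cS,(\wp_\bl)}/I$. The quotient acts faithfully on $H^\ast(F^\bullet_{\ffrm,(\wp_\bl)})$, which is finite over $\Lambda_{(\wp_\bl)}$ because $\bbT_0$ is a finite $\Lambda$-algebra; so $\Lambda_{(\wp_\bl)} \to R_{\cS,(\wp_\bl)}/I$ is a finite ring map. Applying Lemma \ref{lem_dimension_theory}(3) at each maximal ideal of the semi-local ring $R_{\cS,(\wp_\bl)}/I$, I would conclude
\[ \dim_{\Lambda_{(\wp_\bl)}} H^\ast(F^\bullet_{\ffrm,(\wp_\bl)}) \; = \; \dim_{R_{\cS,(\wp_\bl)}/I} H^\ast(F^\bullet_{\ffrm,(\wp_\bl)}) \; = \; \dim R_{\cS,(\wp_\bl)}, \]
the second equality being the faithfulness.

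For part 2, I would introduce $T' := \bbT^S(H^\ast(F^\bullet_\ffrm))$, the image of $\bbT^{S,\mathrm{univ}}$ in $\End_\Lambda(H^\ast(F^\bullet_\ffrm))$. This is a finite $\Lambda$-algebra acting faithfully on $H^\ast(F^\bullet_\ffrm)$, so $\dim T'[1/p] = \dim_{\Lambda[1/p]} H^\ast(F^\bullet_\ffrm)[1/p]$. Let $R''$ denote the image of $R_\cS$ in $T'$; part 1 says $R_{\cS,(\wp_\bl)} \twoheadrightarrow R''_{(\wp_\bl)}$ has nilpotent kernel, and this nilpotence persists under the further localization to $R_{\cS,(x)} \twoheadrightarrow R''_{(\ffrm_x')}$ (where $\ffrm_x' \subset R''$ is the image of $\ffrm_x$), giving $\dim R_{\cS,(x)} = \dim R''_{(\ffrm_x')}$. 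Since $R''[1/p] \hookrightarrow T'[1/p]$ is finite, lying-over produces a maximal ideal $\mathfrak{n}$ of $T'[1/p]$ above $\ffrm_x'$ (hence above $\wp_\bl$), and
\[ \dim R''_{(\ffrm_x')} \; \leq \; \dim T'[1/p] \; = \; \dim_{\Lambda[1/p]} H^\ast(F^\bullet_\ffrm)[1/p]. \]
The ideal $\mathfrak{n}$ corresponds, via Corollary \ref{cor_ordinary_complex_computes_fixed_weight_cohomology}, to a Hecke eigensystem appearing in $H^{d-\ast}(X_{U(1,1)}, M_{-w_0\bl})_\ffrm \otimes_\cO E$; because $\bl$ is regular, this cohomology is (essentially) cuspidal, supplying a regular algebraic cuspidal automorphic representation $\pi$ whose attached Galois representation $\rho_\pi$ shares traces of Frobenius with $\rho_x$. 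By the Chebotarev density theorem, $\rho_\pi \cong \rho_x$, so $\rho_x$ is automorphic.

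I expect the main obstacle to be the careful bookkeeping around the nested localizations (at $\wp_\bl$, after inverting $p$, and at $\ffrm_x$) combined with the semi-local character of $R_{\cS,(\wp_\bl)}$; one must verify that nilpotent-kernel statements survive each stage. A subsidiary point is to confirm that the regularity of $\bl$ genuinely promotes the maximal ideal $\mathfrak{n}$ to an honest cuspidal automorphic representation of weight $\bl$ (rather than only an abstract Hecke eigensystem), so that $\rho_x$ is automorphic in the precise sense of the introduction.
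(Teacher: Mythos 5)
Your argument is correct and follows essentially the paper's route for both parts. For part 1, you invoke Theorem \ref{thm_r_equals_t}(2) to get the nilpotent kernel and then Lemma \ref{lem_dimension_theory}(3) to identify dimensions, which is exactly what the paper does in one line. For part 2, you take a slightly more roundabout path: the paper simply observes that $R_{\cS,(x)}$ is a further localization of $R_{\cS,(\wp_\bl)}$, and $H^\ast(F^\bullet_{\ffrm,(\wp_\bl)})$ is a localization of $H^\ast(F^\bullet_\ffrm)[1/p]$, so both sides of the inequality in part 1 can only drop, giving $\dim R_{\cS,(x)} \leq \dim R_{\cS,(\wp_\bl)} = \dim_{\Lambda_{(\wp_\bl)}} H^\ast(F^\bullet_{\ffrm,(\wp_\bl)}) \leq \dim_{\Lambda[1/p]} H^\ast(F^\bullet_\ffrm)[1/p]$. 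Your construction of $T'$ and $R''$ and appeal to lying-over encode the same information but introduce more machinery than is needed for the dimension bound; lying-over is only genuinely used for the automorphy statement, not the inequality (which is just localization decreasing dimension). On automorphy, your worry that $\frn$ may be merely an abstract Hecke eigensystem rather than a cuspidal one is legitimate in principle, but it is already resolved by Theorem \ref{thm_only_cuspidal_cohomology_survives_localization}: at a non-Eisenstein $\ffrm$ and regular $\bl$, boundary cohomology vanishes after localization, so every characteristic-zero eigensystem appearing in $H^\ast(X_{U(1,1)}, M_{-w_0\bl})_\ffrm \otimes_\cO E$ is genuinely cuspidal. The paper phrases this as ``$x$ is in the image of $\Spec \bbT_0$,'' which packages your lying-over step; your concerns about nested localizations are also benign, since nilpotence of a kernel and finiteness over $\Lambda$ both persist under the localizations used.
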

\begin{proof}
The first part follows from Theorem \ref{thm_r_equals_t} and Lemma \ref{lem_dimension_theory}. For the second point, the point $x$ gives a closed point of $\Spec R_{\cS,(\wp_\bl)}$ which is contained in the support of $H^\ast(F^\bullet_{\ffrm, (\wp_\bl)})$. It is easy to see that this implies $x$ is in the image of $\Spec \bbT_0$, hence corresponds to an automorphic Galois representation. Since $R_{\cS, (x)}$ is a localization of $R_{\cS,(\wp_\bl)}$, the final assertion follows from the first part of the corollary.
\end{proof}
\begin{proof}[Proof of Theorem \ref{thm_r_equals_t}]
We patch, and then prove both parts of the theorem at the same time. Let $T = S_p$, and let $q = h^1_{\cS, T}(\ad^0 \overline{\rho}_\ffrm(1))$ (see \S \ref{sec_galois_cohomology}). By Lemma \ref{lem_existence_of_taylor_wiles_primes}, and the assumption that $\overline{\rho}_\ffrm$ has enormous image, we can find for each $N \geq 1$ a Taylor--Wiles datum $(Q_N, (\gamma_{v, 1}, \dots, \gamma_{v, n})_{v \in Q_N})$ satisfying the following conditions.
\begin{itemize}
\item For each $v \in Q_N$, we have $q_v \equiv 1 \text{ mod }p^N$.
\item The ring $R^T_{\cS_{Q_N}}$ is a quotient $A_\cS^T$-algebra of $R_\infty = A_\cS^T \llbracket X_1, \dots, X_g \rrbracket$, where 
\[ g = (n-1)q - n(n-1)[F : \bbQ]/2 - l_0 - 1 + \# T \]
(and so $\dim R_\infty = \dim S_\infty - l_0$). We fix a choice of surjection $R_\infty \to R^T_{\cS_{Q_N}}$.
\end{itemize}
We fix a choice of $v_0 \in T$, and define $\cT = \Lambda \llbracket \{ Y^{i, j}_v \}_{1 \leq i, j \leq n, v \in T} \rrbracket / (Y^{1, 1}_{v_0})$. Then the choice of $\rho_{\ffrm, Q}$ as a lifting within its strict equivalence class determines an isomorphism $R_{\cS_{Q_N}}^T \cong R_{\cS_{Q_N}} \widehat{\otimes}_\Lambda \cT$. We define $C^\bullet_0 = F^\bullet_{\ffrm} $. Then $C^\bullet_0$ is a minimal complex of $\Lambda$-modules, concentrated in the range $[0, d]$. 

We define $\Delta_\infty = \bbZ_p^{(n-1)q}$, $S_N = \cT[\Delta_{Q_N}]$, and $S_\infty = \cT \llbracket \Delta_\infty \rrbracket$. We fix for each $N \geq 1$ a surjection $\Delta_\infty \to \Delta_{Q_N}$; this determines a corresponding surjection $S_\infty \to S_N$. We write $\fra \subset S_\infty$ for the kernel of the natural map $S_\infty \to \Lambda$.

If $N \geq 1$, then we define $C^\bullet_N = F^\bullet_{\ffrm_{Q_N, 1}} \widehat{\otimes}_\Lambda \cT$. Then $C^\bullet_N$ is a minimal complex of $S_N$-modules, and there is an isomorphism $C^\bullet_N \otimes_{S_N} \Lambda \cong C^\bullet_0$. We obtain commutative diagrams
\[ \xymatrix{R^T_{\cS_{Q_N}} \ar[r] \ar[d] & \bbT_{Q_N, 1} \widehat{\otimes}_\Lambda \cT \ar[d] \ar@{^{ (}->}[r] & \End_{\mathbf{D}(S_N)}(C^\bullet_N) \ar[d]^{- \otimes_{S_N} \Lambda} \\
R_\cS \ar[r] & \bbT_0 \ar@{^{ (}->}[r] & \End_{\mathbf{D}(\Lambda)}(C^\bullet_0).} \]
We now apply Proposition \ref{prop_main_patching_argument} to obtain the following data:
\begin{itemize}
\item A minimal complex $C^\bullet_\infty$ of $S_\infty$-modules, concentrated in degrees $[0, d]$, together with an isomorphism $C^\bullet_\infty \otimes_{S_\infty} \Lambda \cong C^\bullet_0$.
\item A homomorphism $S_\infty \to R_\infty$ of $\Lambda$-algebras and a map $R_\infty \to \End_{\mathbf{D}(S_\infty)}(C^\bullet_\infty)$ of $S_\infty$-algebras.
\item A surjection $R_\infty \to R_\cS$ of $S_\infty$-algebras making the following a commutative diagram of $S_\infty$-algebras:
\begin{equation} \begin{aligned}\label{eqn_taylor_wiles_diagram} \xymatrix{ R_\infty \ar[d] \ar[r] & \End_{\mathbf{D}(S_\infty)}(C^\bullet_\infty) \ar[d]^{- \otimes_{S_\infty} \Lambda}\\
 R_\cS  \ar[r] & \End_{\mathbf{D}(\Lambda)}(C^\bullet_0).}
\end{aligned}
\end{equation}
\end{itemize}
The groups $H^\ast(C^\bullet_\infty)$ are finite $R_\infty$-modules (since they are finite $S_\infty$-modules, by construction). Applying Lemma \ref{lem_dimension_theory}, we thus have
\[ \dim_{S_\infty} H^\ast(C^\bullet_\infty) = \dim_{R_\infty} H^\ast(C^\bullet_\infty) \leq \dim R_\infty = \dim S_\infty - l_0. \]
This implies the first part of the theorem. Indeed, if (CG) holds then the complex $C^\bullet_0 = F^\bullet_\ffrm$ is concentrated in degrees $[q_0, q_0 + l_0]$ (by the definition of a minimal complex), and since $C^\bullet_\infty \otimes_{S_\infty} \Lambda \cong C^\bullet_0$, the same applies to $C^\bullet_\infty$. By Lemma \ref{lem_dimension_criterion_for_exactness}, we must have $\dim_{S_\infty} H^\ast(C^\bullet_\infty) = \dim S_\infty - l_0$, $H^i(C^\bullet_\infty) \neq 0$ if and only if $i = q_0 + l_0$, and $\depth_{S_\infty} H^{q_0 + l_0}(C^\bullet_\infty) = \dim S_\infty - l_0$. In particular, we get 
\[ \depth_{R_\infty} H^{q_0 + l_0}(C^\bullet_\infty) \geq \depth_{S_\infty} H^{q_0 + l_0}(C^\bullet_\infty) \geq \dim R_\infty, \]
implying that equality holds, and $\Supp_{R_\infty} H^{q_0 + l_0}(C^\bullet_\infty)$ is a union of irreducible components of $\Spec R_\infty$.

Since $F_v$ $(v \in S_p)$ is assumed to contain no non-trivial $p^\text{th}$ roots of unity, $R_\infty$ is a domain (its irreducible components are in bijection with those of $\Spec \Lambda$; see Proposition \ref{prop_ordinary_lifting_rings}). We deduce that $R_\infty$ acts nearly faithfully on $H^{q_0 + l_0}(C^\bullet_\infty)$, and hence $R_\infty/(\fra)$ acts nearly faithfully on $H^{q_0 + l_0}(C^\bullet_\infty)/(\fra) \cong H^{q_0 + l_0}(F^\bullet_\ffrm)$. However, this action factors through the homomorphism $R_\infty \to R_\cS$. This completes the proof of the first part of the theorem.

We now come to the second part of the theorem, which is proved in a similar way. Let $P_\bl \subset S_\infty$ denote the pullback of $\wp_\bl$ along the augmentation homomorphism $S_\infty \to \Lambda$. Localizing the diagram (\ref{eqn_taylor_wiles_diagram}) of $S_\infty$-algebras at the ideal $P_\bl$, we obtain a diagram
\[ \xymatrix{S_{\infty, (P_\bl)} \ar[r] \ar[d] & R_{\infty, (P_\bl)} \ar[d] \ar[r] & \End_{\mathbf{D}(S_{\infty,(P_\bl)})}(C^\bullet_{\infty, (P_\bl)}) \ar[d]\\
\Lambda_{(\wp_\bl)} \ar[r] & R_{\cS, (\wp_\bl)}  \ar[r] & \End_{\mathbf{D}(\Lambda_{(\wp_\bl)})}(C^\bullet_{0, (\wp_\bl)})} \]
We observe that the ring $R_{\infty, (P_\bl)}$ is a domain, as it is non-zero, and a localization of a domain. (The ring $\Lambda$ is not a domain if there exists $v \in S_p$ such that $F_v$ contains non-trivial $p^\text{th}$-roots of unity, but the ring $\Lambda_{(\wp_\bl)}$ is a domain, in fact a regular local ring.) Moreover, we have $\dim R_{\infty, (P_\bl)} = \dim R_\infty - 1$. Indeed, our assumption that $H^\ast(X_{U}, M_{-w_0 \bl})_{\ffrm} \otimes_\cO E \neq 0$ implies, after choosing arbitrarily a Hecke eigenclass, the existence of a homomorphism $R_\cS \to \overline{\bbQ}_p$ such that the composite $\Lambda \to R_\cS \to \overline{\bbQ}_p$ has kernel $\wp_\bl$. The kernel of the composite $R_\infty \to R_\cS \to \overline{\bbQ}_p$ gives rise to a prime ideal of $R_{\infty, (P_\bl)}$ of height $\dim R_\infty - 1$, implying $\dim R_{\infty, (P_\bl)} \geq \dim R_\infty - 1$. On the other hand, $R_\infty$ is a local ring, so we have $\dim R_{\infty, (P_\bl)} \leq \dim R_\infty - 1$, and equality holds.

 By Corollary \ref{cor_ordinary_complex_computes_fixed_weight_cohomology}, we see that there is an isomorphism
\[ H^\ast(C^\bullet_{\infty, (P_\bl)} \otimes_{S_{\infty, (P_\bl)}} S_{\infty, (P_\bl)}/P_\bl) \cong H^\ast(C^\bullet_{0, (\wp_\bl)} \otimes_{\Lambda_{(\wp_\bl)}} \Lambda_{(\wp_\bl)}/\wp_\bl) \cong \Hom(H^{d-\ast}(X_{U}, M_{-w_0 \bl})_\ffrm \otimes_\cO E, E), \]
and these groups are non-zero only in the range $[q_0, q_0 + l_0]$, by Theorem \ref{thm_only_cuspidal_cohomology_survives_localization}. On the other hand, we have by Lemma \ref{lem_dimension_theory}
\[ \dim_{S_{\infty, (P_\bl)}} H^\ast(C^\bullet_{\infty, (P_\bl)}) \leq \dim R_{\infty, (P_\bl)} \leq \dim R_\infty - 1 = \dim S_{\infty, (P_\bl)} - l_0. \]
It follows from Lemma \ref{lem_dimension_criterion_for_exactness} that these inequalities are equalities, and the group $H^i(C^\bullet_{\infty, (P_\bl)})$ is non-zero if and only if $i = q_0 + l_0$. The proof now proceeds much as in the previous case. The ring $R_{\infty, (P_\bl)}$ acts nearly faithfully on $H^{q_0 + l_0}(C^\bullet_{\infty, (P_\bl)})$, because
\[ \depth_{R_{\infty, (P_\bl)}} H^{q_0 + l_0}(C^\bullet_{\infty, (P_\bl)}) = \dim R_{\infty, (P_\bl)} \]
and $\Spec R_{\infty, (P_\bl)}$ is irreducible. Since $\fra \subset P_\bl \subset S_\infty$, we can divide out to see that $R_{\infty, (P_\bl)}/(\fra)$ acts nearly faithfully on the group
\[ H^{q_0 + l_0}(C^\bullet_{\infty, (P_\bl)})/( \fra) \cong H^{q_0 + l_0}(C^\bullet_{0, (\wp_\bl)}).\]
This action factors through $R_{\infty, (P_\bl)} \to R_{\cS, (\wp_\bl)}$, so it follows that the map 
\[ R_{\cS, (\wp_\bl)} \to \End_{\mathbf{D}(\Lambda_{(\wp_\bl)})}(F^\bullet_{\ffrm, (\wp_\bl)}) \]
has nilpotent kernel. This completes the proof.
\end{proof}

\section{Potential automorphy and Leopoldt's conjecture}\label{sec_potential_automorphy_and_leopoldt}

Let $K$ be a totally real field, and let $p$ be a prime. We call the following statement the abelian Leopoldt's conjecture for $K$ and $p$:
\begin{conjecture}[$\mathrm{AL}(K, p)$]\label{conj_abelian_leopoldt} Let $\Delta$ be the Galois group of the maximal abelian pro-$p$ extension of $K$, unramified outside $p$. Then $\dim_{\bbQ_p} \Delta[1/p] = 1$.
\end{conjecture}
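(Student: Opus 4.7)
The plan is to apply Corollary \ref{cor_dimension_of_r_bounded_by_t} to a suitable polarized induced Galois representation, combine with Hida's non-abelian Leopoldt conjecture to bound an $n$-dimensional Galois deformation ring, and then translate this bound, via the deformation theory of an underlying character, into the vanishing of the Leopoldt defect of an auxiliary CM extension $LM$ containing $K$; $\mathrm{AL}(K,p)$ then follows by classical descent of Leopoldt along $LM^+/K$.

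First I would fix an imaginary CM field $F/\bbQ$ linearly disjoint from $K(\zeta_p)$, set $M = KF$ (a CM field with $M^+ = K$), and invoke Lemma \ref{lem_existence_of_odd_huge_representations} to produce a character $\overline{\chi} : G_M \to k^\times$ of $l$-power order (for some auxiliary prime $l \neq p$) such that $\overline{\rho} := \Ind_{G_M}^{G_F} \overline{\chi}$ has enormous image and is polarized with multiplier $\epsilon^{1-n}\delta_{F/F^+}^n$. Using Lemma \ref{lem_existence_of_generic_characters} and the Grunwald--Wang theorem, I may further arrange that $\overline{\chi}$ is unramified outside $\{p,\text{one auxiliary place}\}$ and that $\overline{\rho}(\Frob_a)$ is scalar at a chosen auxiliary place $a$. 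Then I apply the potential automorphy theorem of \cite{Bar14} to $\overline{\rho}$ to obtain a Galois CM extension $L/F$, chosen linearly disjoint over $F$ from $M$ and from the field cut out by $\overline{\rho}(\zeta_p)$, such that $\overline{\rho}|_{G_L}$ is $\iota$-ordinary automorphic of some regular weight $\bl$. Iterated soluble CM base changes, together with Lemmas \ref{lem_generic_induced_representations_are_huge} and \ref{lem_existence_of_odd_huge_representations}, ensure all remaining technical hypotheses of \S \ref{sec_taylor_wiles_argument} simultaneously: $\overline{\rho}|_{G_{L(\zeta_p)}}$ has enormous image, $\overline{\rho}|_{G_{L_v}}$ is trivial with $[L_v:\bbQ_p] > 1 + n(n-1)/2$ for each $v|p$, and, crucially, $L \cap M = F$ so that $\overline{\rho}|_{G_L} \cong \Ind_{G_{LM}}^{G_L}\overline{\chi}|_{G_{LM}}$ is again induced, with the conjugates $\overline{\chi}^g|_{G_{LM}}$ pairwise distinct.

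Next I form the global deformation problem $\cS$ for $\overline{\rho}|_{G_L}$ with ordinary local conditions $\cD_v^\triangle$ at $v|p$ (Proposition \ref{prop_ordinary_lifting_rings}) and determinant $\mu$ chosen compatibly with the polarization. The automorphic representation of the previous step contributes a non-zero class to $H^\ast(X_{U(1,1)}, M_{-w_0\bl})_\ffrm \otimes_\cO E$, so Corollary \ref{cor_dimension_of_r_bounded_by_t} gives
\[
\dim R_{\cS,(\wp_\bl)} = \dim_{\Lambda_{(\wp_\bl)}} H^\ast(F^\bullet_{\ffrm,(\wp_\bl)}),
\]
while Conjecture \ref{conj_intro_hida_conjecture} for the CM field $L$ bounds the right-hand side in terms of $\dim\Lambda - l_0$ with $l_0 = (n-1)[L^+:\bbQ]$. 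On the other hand, every character lift $\chi$ of $\overline{\chi}|_{G_{LM}}$ with appropriate local conditions at $p$ gives, after a bounded character twist to fix the determinant, a lift $\Ind_{G_{LM}}^{G_L}\chi$ of $\overline{\rho}|_{G_L}$ of type $\cS$; conversely, the pairwise distinctness of the conjugates $\overline{\chi}^g|_{G_{LM}}$ shows that the induction map from the universal character deformation space of $\overline{\chi}|_{G_{LM}}$ (with compatible ordinary local conditions at $p$) to $\Spec R_\cS$ is finite of generic degree $n$ onto its image. Comparing dimensions using Proposition \ref{prop_presenting_global_deformation_ring} and the Euler characteristic formulae for the character $\overline{\chi}|_{G_{LM}}$ and its Tate dual, one extracts the inequality $\delta_{LM} \leq 0$ for the Leopoldt defect of $LM$; hence $\delta_{LM} = 0$, and since $K \subset LM^+$ with $[LM^+:K]$ prime to $p$ (arranged in the initial choice of $F$, $l$, and $L$), $\mathrm{AL}(K,p)$ follows from the classical descent of Leopoldt along a finite extension of degree prime to $p$.

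The main obstacle will be the final dimension accounting: matching, on the nose, the slack $l_0 = (n-1)[L^+:\bbQ]$ in the Hida bound against the predicted contribution of character deformations to $\dim R_\cS$. This is a careful comparison of two Galois-cohomological Euler characteristics --- one for the $n$-dimensional adjoint $\ad^0\overline{\rho}|_{G_L}$, the other for the character $\overline{\chi}|_{G_{LM}}$ --- in which the polarization, the sign character arising from induction, and the ordinary local conditions at $p$ all enter non-trivially. Arranging all hypotheses of \S \ref{sec_taylor_wiles_argument} simultaneously with potential automorphy forms a parallel, largely formal, but nontrivial technical exercise.
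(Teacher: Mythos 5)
Your overall strategy matches the paper's: choose $\overline{\chi}$ via Lemma \ref{lem_existence_of_odd_huge_representations}, lift to an algebraic ordinary character, apply potential automorphy to pass to a CM extension where the induced representation is automorphic, and then invoke the $R=\bbT$ input (Corollary \ref{cor_dimension_of_r_bounded_by_t}) together with $\mathrm{NAL}$ to bound a Galois deformation ring. But the route you propose for the final, decisive step --- the step you yourself flag as ``the main obstacle'' --- is genuinely different from the paper's, and it is precisely the place where a clean new idea is needed that your sketch does not supply.

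The paper does \emph{not} try to prove Leopoldt for the large CM field (your $LM$, the paper's $BK$) and then descend. Instead it extracts $\delta_K$ \emph{directly}: it constructs the specific $\bbZ_p$-power extension $L = MN$ of $BK$, where $M$ is the Leopoldt extension \emph{of $K$} (not of $BK$) and $N$ is the \emph{minus} part of the maximal abelian pro-$p$ extension of $BK$, so that $\Gal(L/BK) \cong \bbZ_p^{1 + \delta_K + n[B^+:\bbQ]}$. Because the minus part has no Leopoldt defect and the determinant constraint removes exactly the cyclotomic plus anticyclotomic directions inside $L$, the quotient $\overline{R}$ of $\cO\llbracket\Gal(L/BK)\rrbracket$ with fixed determinant of the induction has $\dim\overline{R}[1/p] \geq \delta_K + (n-1)[B^+:\bbQ]$, and the argument closes after observing that $R_\cS \twoheadrightarrow \overline{R}$. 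Your route, which uses the full character deformation space and tries to see $\delta_{LM}$, runs into the following concrete difficulty: fixing $\det(\Ind\chi\Psi)$ constrains the Verlagerung of $\Psi$ to $G_B$, so the naive accounting produces a quantity like $\delta_{BK} - \delta_B + (n-1)[B^+:\bbQ]$, not $\delta_{BK} + (n-1)[B^+:\bbQ]$, and it is not clear how you would isolate $\delta_{LM}$ from the Euler-characteristic comparison you propose. The paper avoids this entirely by its choice of $L$.

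Two further remarks. First, your claim that the induction map from the character deformation space to $\Spec R_\cS$ is ``finite of generic degree $n$ onto its image'' is stronger than what is needed and is not proved in the paper; all the paper requires is the \emph{surjection} $R_\cS \twoheadrightarrow \overline{R}$, which follows from the fact that a deformation of $\overline{\chi}|_{G_{BK}}$ that induces up to the trivial deformation of $\overline{\rho}|_{G_B}$ must itself be trivial (by universality of $R$). Second, the descent step is unnecessary in the paper's argument (and, while correct, adds the extra requirement of arranging $[LM^+:K]$ of suitable degree), which is a sign that your route is taking a detour that the right choice of $L$ avoids. In short: you have the correct framework and correctly identify where the work lies, but you are missing the paper's key device --- taking the Leopoldt extension of $K$ (not of the big CM field) composed with the minus part of the big field --- which is what makes the dimension count come out exactly to $\delta_K + (n-1)[B^+:\bbQ]$ on one side and $(n-1)[B^+:\bbQ]$ on the other.
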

On the other hand, for a general number field $F$, we call the following statement the non-abelian Leopoldt's conjecture for $F$, $p$ and $n$:
\begin{conjecture}[$\mathrm{NAL}(F, p, n)$]\label{conj_non-abelian_leopoldt} Let $U \subset \GL_n(\bbA_F^\infty)$ be a good subgroup, and let $\ffrm \subset \bbT^S_\text{ord}(U)$ be a non-Eisenstein maximal ideal \(cf. \S \ref{sec_hecke_algebras}\). Then $\dim_{\Lambda[1/p]} H^\ast_\text{ord}(U)_\ffrm[1/p] = \dim \Lambda[1/p] - l_0$. 
\end{conjecture}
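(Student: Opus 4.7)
The plan is to reduce Conjecture $\mathrm{NAL}(F, p, n)$ to a dimension calculation on the Galois deformation ring $R_\cS$, via the $R = \bbT$ theorem (Theorem \ref{thm_r_equals_t}). Under Conjecture \ref{conj_existence_of_galois_combined}, the non-Eisenstein maximal ideal $\ffrm$ corresponds to an absolutely irreducible residual representation $\overline{\rho}_\ffrm$. One sets up the global deformation problem $\cS$ with formally smooth ordinary local conditions $\cD_v^\triangle$ at each $v \in S_p$ (Proposition \ref{prop_ordinary_lifting_rings}) and tame level at one auxiliary place as in \S\ref{sec_taylor_wiles_argument}. This requires $\overline{\rho}_\ffrm$ to be totally odd and to have enormous residual image; both can be arranged, for a suitable choice of $\ffrm$, by Lemmas \ref{lem_many_huge_characters} and \ref{lem_existence_of_odd_huge_representations} (possibly after base-changing to a solvable CM extension and using descent).

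First I would show that $\dim R_\cS = \dim \Lambda - l_0$, assuming also Conjecture \ref{conj_local_global_compatibility_at_taylor_wiles_primes}. The lower bound $\dim R_\cS \geq \dim\Lambda - l_0$ is a direct application of Proposition \ref{prop_presenting_global_deformation_ring}: Tate's local and global Euler characteristic formulas, combined with oddness of $\overline{\rho}_\ffrm$ and the fact that the ordinary local rings have the expected dimension, yield the Galois cohomological bound, with $l_0$ appearing precisely through the archimedean contribution $\sum_{v\mid\infty} h^0(F_v, \ad^0 \overline{\rho}_\ffrm)$, matching formula (\ref{eqn_calculation_of_defect}). The matching upper bound $\dim R_\cS \leq \dim\Lambda - l_0$ is extracted from the Taylor--Wiles patching construction inside the proof of Theorem \ref{thm_r_equals_t}: the patched ring $R_\infty$ has Krull dimension exactly $\dim S_\infty - l_0$ by design, and $R_\cS$ is a quotient of $R_\infty/(\fra)$, where $\fra$ is generated by a regular sequence of length $\dim S_\infty - \dim \Lambda$ in $S_\infty$.

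Next, under the Calegari--Geraghty vanishing hypothesis $(\mathrm{CG})$ that $H_i(X_U, k)_\ffrm = 0$ for $i \notin [q_0, q_0 + l_0]$, Theorem \ref{thm_r_equals_t}(1) asserts that $R_\cS$ acts nearly faithfully on $H^\ast(F^\bullet_\ffrm) \cong H^\ast_\text{ord}(U)_\ffrm$. By Lemma \ref{lem_dimension_theory}(3), near-faithfulness then gives the equality
\[ \dim_\Lambda H^\ast_\text{ord}(U)_\ffrm \;=\; \dim R_\cS \;=\; \dim\Lambda - l_0, \]
which implies $\mathrm{NAL}(F,p,n)$ after inverting $p$ (and in fact yields an integral refinement of it).

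The main obstacle is establishing $(\mathrm{CG})$, which is essentially the Calegari--Geraghty conjecture on vanishing of mod $p$ cohomology of arithmetic manifolds outside the middle range after localization at a non-Eisenstein ideal. Over imaginary CM or totally real $F$, Scholze's construction of Galois representations is available, and the boundary of the Borel--Serre compactification can be analyzed exactly as in Theorem \ref{thm_only_cuspidal_cohomology_survives_localization}, reducing $(\mathrm{CG})$ to the vanishing of localized \emph{interior} cohomology outside $[q_0, q_0 + l_0]$; for $\overline{\bbQ}_p$-coefficients this follows from the representation theory of real reductive groups together with the non-Eisenstein hypothesis on $\overline{\rho}_\ffrm$, but the corresponding torsion statement appears beyond current techniques. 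As a partial substitute, a generic-fiber weakening of $\mathrm{NAL}(F, p, n)$ can be extracted from Theorem \ref{thm_r_equals_t}(2) without assuming $(\mathrm{CG})$ by choosing a regular dominant weight $\bl$ with $H^\ast(X_U, M_{-w_0\bl})_\ffrm \otimes_\cO E \neq 0$ (which computes the dimension locally at the classical point $\wp_\bl \in \Spec \Lambda[1/p]$); however, obtaining the full conjecture for arbitrary non-Eisenstein $\ffrm$ will require a genuine advance on torsion vanishing of the type envisioned in \cite{Cal13}.
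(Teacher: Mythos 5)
The statement in question is labeled as a \emph{conjecture} in the paper, and the paper does not prove it --- it is taken as a hypothesis in Theorem \ref{thm_nal_implies_l}/\ref{thm_nal_implies_al}, which goes in the other direction (from $\mathrm{NAL}$ to $\mathrm{AL}$). So there is no ``paper's own proof'' to compare against; you are correctly reading the statement as open, and your proposal is really a conditional reduction to the Calegari--Geraghty vanishing hypothesis $(\mathrm{CG})$, together with Conjectures \ref{conj_existence_of_galois_combined} and \ref{conj_local_global_compatibility_at_taylor_wiles_primes}. Given that reading, the overall structure of your sketch (use Theorem \ref{thm_r_equals_t}(1) to transfer the $\Lambda$-dimension question to $\dim R_\cS$, establish the lower bound from the Euler characteristic computation of Proposition \ref{prop_presenting_global_deformation_ring}, and extract the matching upper bound from the Krull dimension of $R_\infty$ in the patching argument) is sound and is essentially the mechanism the paper itself has in mind when it calls $\mathrm{NAL}$ the ``non-abelian Leopoldt conjecture.''

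There is, however, one genuine gap in your proposal beyond the open status of $(\mathrm{CG})$. You write that enormous image, oddness, and the hypotheses of \S\ref{sec_taylor_wiles_argument} ``can be arranged, for a suitable choice of $\ffrm$, by Lemmas \ref{lem_many_huge_characters} and \ref{lem_existence_of_odd_huge_representations}.'' But $\mathrm{NAL}(F,p,n)$ is a \emph{universal} statement, quantified over all good $U$ and all non-Eisenstein $\ffrm \subset \bbT^S_\text{ord}(U)$, and Theorem \ref{thm_r_equals_t} is only applicable under the stringent conditions laid out in \S\ref{sec_taylor_wiles_argument}: $\overline{\rho}_\ffrm$ must have enormous image, be totally odd and unramified outside $S_p$, be trivial on $G_{F_v}$ with $[F_v:\bbQ_p] > 1 + n(n-1)/2$ for every $v \mid p$, and be scalar at a suitable auxiliary place $a$. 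Lemmas \ref{lem_many_huge_characters} and \ref{lem_existence_of_odd_huge_representations} produce \emph{some} residual representations with these properties --- which is what the paper needs to attack the abelian Leopoldt conjecture via induced representations --- but they do not cover a general non-Eisenstein $\ffrm$, which need not be induced, need not have trivial local restrictions at $p$, and need not have enormous image. Your base-change remark also does not repair this: solvable base change and descent for torsion classes in $H^*_\text{ord}$ is itself not in the paper and is delicate. So even granting $(\mathrm{CG})$ and Conjectures \ref{conj_existence_of_galois_combined} and \ref{conj_local_global_compatibility_at_taylor_wiles_primes}, your argument as written proves $\mathrm{NAL}(F,p,n)$ only for a restricted class of maximal ideals, not the full conjecture. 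Your closing observation --- that Theorem \ref{thm_r_equals_t}(2) gives a local statement at classical points $\wp_\bl$ without $(\mathrm{CG})$ --- is accurate, but you should note explicitly that $\dim_{\Lambda[1/p]}$ in the conjecture is a global invariant of the support, and a dimension equality at the special points $\wp_\bl$ alone does not bound the dimension of the whole support; this is why Corollary \ref{cor_dimension_of_r_bounded_by_t}(2) is phrased as an inequality $\dim R_{\cS,(x)} \leq \dim_{\Lambda[1/p]} H^\ast(F^\bullet_\ffrm)[1/p]$, rather than as a computation of the right-hand side.
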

\begin{remark}
\begin{enumerate}
\item When $F$ is an imaginary CM or totally real field, the first part of Conjecture \ref{conj_existence_of_galois_combined} holds unconditionally, by the work of Scholze; in this case, no additional conjectures are required at least to \emph{state} Conjecture \ref{conj_non-abelian_leopoldt}. However, without the first part of Conjecture \ref{conj_existence_of_galois_combined}, the notion of non-Eisenstein maximal ideal is not defined.
\item The abelian Leopoldt conjecture can also be rephrased in terms of the dimension of a completed cohomology group over an Iwasawa algebra (see \cite[\S 4.3.3]{Hil10}). We have avoided this formulation here in order to emphasize the spaces $X_U$, for which there is no contribution from the center.
\item For completeness, we recall what is known about the Leopoldt conjecture. We thank the anonymous referee for these references. The original conjecture of Leopoldt predicts that the closure of the global units in the $p$-adic local units has the largest possible rank, or equivalently that the $p$-adic regulator is non-zero \cite{Leo62}. For a totally real field, this is equivalent to Conjecture \ref{conj_abelian_leopoldt}, by class field theory.

Leopoldt's conjecture is known for abelian extensions of $\bbQ$, and of imaginary quadratic fields, by work of Brumer \cite{Bru67}. On the other hand, Waldschmidt has proved a bound on the so-called Leopoldt defect $\delta$ which is valid for an arbitrary totally real field \cite{Wal81}. Both of these works use the methods of Diophantine approximation.
\end{enumerate}
\end{remark}
We can now state our main theorem, which shows that the above two conjectures are closely related:
\begin{theorem}\label{thm_nal_implies_al}
Let $K$ be a totally real field, Galois over $\bbQ$, let $n = [K : \bbQ]$, and let $p > n$ be a prime such that $K \cap \bbQ(\zeta_p) = \bbQ$. Assume conjecture $\mathrm{NAL}(F, p, n)$ for all imaginary CM extensions $F$ of $\bbQ$. Then conjecture $\mathrm{AL}(K, p)$ holds.
\end{theorem}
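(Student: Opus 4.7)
The plan is to derive $\mathrm{AL}(K, p)$ from $\mathrm{NAL}(F, p, n)$ for a carefully chosen CM extension $F$ by using potential automorphy of an induced Galois representation, the $R = \bbT$ result of Theorem \ref{thm_r_equals_t}, and a comparison between $\GL_n$-type deformations and character deformations.

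First I would choose an auxiliary imaginary CM field $F/\bbQ$ linearly disjoint from $K(\zeta_p)$, satisfying the local hypotheses of \S\ref{sec_taylor_wiles_argument} at primes above $p$ (in particular, $[F_v : \bbQ_p] > 1 + n(n-1)/2$ and $F_v$ contains no non-trivial $p$-th roots of unity for each $v \mid p$), and set $M := K \cdot F$, a CM Galois extension of $F$ of degree $n$. By Lemma \ref{lem_existence_of_odd_huge_representations} I select a continuous character $\overline{\chi} : G_M \to k^\times$ such that $\overline{\rho} := \Ind_{G_M}^{G_F} \overline{\chi}$ has enormous image in $\GL_n(k)$ and the pair $(\overline{\rho}, \epsilon^{1-n}\delta_{F/F^+}^n)$ is polarized. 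Invoking the potential automorphy theorem of \cite{Bar14}, after replacing $F$ by a further CM extension I may assume $\overline{\rho}$ is ordinarily automorphic of some regular dominant weight $\bl$, arising from a non-Eisenstein maximal ideal $\ffrm \subset \bbT^S_\text{ord}(U)$ at good level $U$ of the shape of \S\ref{sec_taylor_wiles_argument}, with $H^\ast(X_U, M_{-w_0 \bl})_\ffrm \otimes_\cO E \neq 0$. Theorem \ref{thm_r_equals_t}(2) is then applicable, and Corollary \ref{cor_dimension_of_r_bounded_by_t}(2), combined with $\mathrm{NAL}(F, p, n)$, yields for any closed point $x \in \Spec R_\cS[1/p]$ above $\wp_\bl$ the inequality
\[ \dim R_{\cS, (x)} \;\leq\; \dim_{\Lambda[1/p]} H^\ast_\text{ord}(U)_\ffrm[1/p] \;=\; \dim \Lambda[1/p] - l_0. \]

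The crux of the argument is then to push this bound across induction. Let $R_{\overline{\chi}}^\text{pol}$ denote the universal deformation ring for characters $\chi : G_M \to R^\times$ lifting $\overline{\chi}$, subject to the polarization relation $\chi \cdot \chi^c$ equal to the fixed lift used to define $\mu$ and to an ordinary condition at primes above $p$ matching $\cD_v^\triangle$. Induction produces a natural transformation from character deformations to type-$\cS$ deformations of $\overline{\rho}$, and hence a ring homomorphism $R_\cS \to R_{\overline{\chi}}^\text{pol}$. Using that the $n$ residual characters $\overline{\chi}^{g_i}$ are pairwise distinct (by the enormous image assumption), the centralizer of $\overline{\rho}|_{G_M}$ is exactly a maximal torus $T \subset \GL_n$, and I would show that any type-$\cS$ deformation of $\overline{\rho}$ can, up to strict equivalence, be conjugated into $N_{\GL_n}(T)(R)$, hence arises from a character deformation. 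This makes the above ring map surjective after localization at $\wp_\bl$, and hence $\dim R_{\overline{\chi}}^\text{pol}[1/p] \leq \dim R_{\cS, (\wp_\bl)} \leq \dim \Lambda[1/p] - l_0$.

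Finally, by global class field theory the ring $R_{\overline{\chi}}^\text{pol}$ can be computed explicitly in terms of the $c$-antiinvariant part of the Galois group of the maximal abelian pro-$p$ extension of $M$ unramified outside $p$; a direct calculation using Dirichlet's unit theorem gives
\[ \dim R_{\overline{\chi}}^\text{pol}[1/p] \;=\; (\dim \Lambda[1/p] - l_0) + \delta(M^+), \]
where $M^+ = K \cdot F^+$ and $\delta(M^+)$ is the Leopoldt defect of $M^+$. The previous inequality then forces $\delta(K \cdot F^+) = 0$, and since Leopoldt's conjecture for a totally real field descends along inclusions of totally real subfields (via the commutative square relating $\cO_L^\times \otimes \bbZ_p \to (\cO_L \otimes \bbQ_p)^\times$ for $L \subset L'$), we conclude $\delta(K) = 0$, i.e.\ $\mathrm{AL}(K, p)$. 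The principal obstacle is the forcing-into-$N_{\GL_n}(T)$ step: at each place $v \mid p$ one must verify that the ordinary lifting problem $\cD_v^\triangle$, restricted to the locus of deformations near the induced point, is genuinely controlled by its pullback to the character deformation problem. I expect this follows from the pairwise distinctness of the residual characters $\overline{\chi}^{g_i}|_{G_{F_v}}$ (giving uniqueness of the ordinary flag) combined with the explicit description of $R_v^\triangle$ in Proposition \ref{prop_ordinary_lifting_rings}.
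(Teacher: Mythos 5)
Your proposal matches the paper's first half: the choice of auxiliary CM field $F$, the construction of $\overline{\rho} = \Ind_{G_{KF}}^{G_F}\overline{\chi}$ with enormous image via Lemma \ref{lem_existence_of_odd_huge_representations}, the use of \cite{Bar14} to obtain potential automorphy over some $B/F$, and the appeal to Corollary \ref{cor_dimension_of_r_bounded_by_t} together with $\mathrm{NAL}$ to bound $\dim R_{\cS,(\wp_\bl)}$ by $\dim\Lambda[1/p] - l_0$. The divergence comes afterwards, and it opens a genuine gap. You want the strong structural fact that every type-$\cS$ deformation of $\overline{\rho}|_{G_B}$ is induced from a character deformation, so that $R_\cS$ surjects onto a universal polarized ordinary character deformation ring $R_{\overline{\chi}}^{\mathrm{pol}}$, and you propose to control the ordinary condition at $v\mid p$ via ``pairwise distinctness of the residual characters $\overline{\chi}^{g_i}|_{G_{F_v}}$''. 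That local distinctness fails in the setting of \S\ref{sec_taylor_wiles_argument}: a standing hypothesis there (and the hypothesis under which Proposition \ref{prop_ordinary_lifting_rings} is proved) is that $\overline{\rho}_\ffrm|_{G_{F_v}}$ is \emph{trivial} for each $v\in S_p$, so the residual characters at $p$ all coincide and the ordinary flag is not residually determined. The ``principal obstacle'' you flag at the end of the proposal is therefore a real one, and the proposed resolution does not go through.

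The paper avoids any analysis of $\cD_v^\triangle$ near the induced point by a much lighter argument. Rather than showing all deformations are induced, it writes down a specific ring $\overline{R}$, a quotient of $\cO\llbracket\Gal(L/BK)\rrbracket$ cut out by a determinant condition, where $L/BK$ is built from the Leopoldt extension of $K$ itself together with the $c$-antiinvariant pro-$p$ extension of $BK$, and proves the classifying map $R_\cS\to\overline{R}$ is surjective by a Nakayama argument: modulo $\ffrm_{R_\cS}$ the deformation $\rho_{\overline{R}}$ becomes trivial, hence the twisting character $\Psi$ becomes trivial, hence by universality $\overline{R}/(\ffrm_{R_\cS})=k$. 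One needs only that $\Ind(\chi\Psi)$ determines $\Psi$ up to the $\Gal(BK/B)$-conjugates (which follows from global distinctness of $\overline{\chi}^{g_i}$), not any structural classification of all deformations. This choice of $\overline{R}$ also extracts the Leopoldt defect $\delta(K)$ directly, whereas your route bounds $\delta(KB^+)$ and then requires a descent to $\delta(K)$; that descent (using injectivity of $\cO_K^\times\otimes\bbZ_p\to\cO_{KB^+}^\times\otimes\bbZ_p$) is valid but is an extra step the paper bypasses by the judicious choice of $L$.
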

The rest of this section is devoted to the proof of Theorem \ref{thm_nal_implies_al}. We will deduce it from Theorem \ref{thm_r_equals_t} using the techniques of potential automorphy. 
\begin{proof}[Proof of Theorem \ref{thm_nal_implies_al}]
Let $K$ and $p$ be as in the statement of the theorem, and let $F/\bbQ$ be an imaginary CM extension, linearly disjoint from $K(\zeta_p)$, and such that every place of $F$ dividing $p$ splits in the extension $F\cdot K/F$. Let $A = F \cdot K$, let $F^+$ denote the maximal totally real subfield of $F$, and let $c \in G_{F^+}$ be a choice of complex conjugation. By Lemma \ref{lem_existence_of_odd_huge_representations}, we can find a coefficient field $E$ and a continuous character $\overline{\chi} : G_A \to k^\times$ such that $\overline{\chi} \overline{\chi}^c = \epsilon^{1-n}$, $\overline{\rho} = \Ind_{G_A}^{G_F} \overline{\chi}$ has enormous image and the pair $(\overline{\rho}, \epsilon^{1-n} \delta_{F/F^+}^n)$ is polarized. 

By \cite[Lemma A.2.5]{Bar14}, we can find an algebraic character $\chi : G_A \to \cO^\times$ lifting $\overline{\chi}$ (and in turn $\rho = \Ind_{G_A}^{G_F} \chi$ lifting $\overline{\rho}$) such that $\chi \chi^c = \epsilon^{1-n}$  and for each embedding $\tau : F \hookrightarrow \overline{\bbQ}_p$, $\mathrm{HT}_\tau(\rho)$ contains $n$ distinct elements, no pair of which differ by 1. Moreover, we can assume that $\rho$ is ordinary and $\det \rho \cdot \epsilon^{n(n-1)/2}$ is of finite order. In our situation, we have for each place $v$ of $F$ dividing $p$:
\[ \rho|_{G_{F_v}} = \oplus_{w|v} \chi_w, \]
and the assumption that $\rho$ is ordinary just means that we can number the places of $A$ above $v$ as $w_1, \dots, w_n$ so that for each embedding $\tau : F_v \hookrightarrow \overline{\bbQ}_p$, the sequence $\mathrm{HT}_\tau(\chi_{w_i})$ is increasing. The assumption that $\det \rho \cdot \epsilon^{n(n-1)/2}$ is of finite order means that for each embedding $\tau : F_v \hookrightarrow \overline{\bbQ}_p$, $\sum_{i=1}^n \mathrm{HT}_\tau(\chi_{w_i}) = n(n-1)/2$. 

By \cite[Theorem 4.5.1]{Bar14}, we can find an imaginary CM extension $B/F$, linearly disjoint over $F$ from $F(\overline{\rho}, \zeta_p$), a regular algebraic, conjugate self-dual cuspidal automorphic representation $\pi$ of $\GL_n(\bbA_{B})$, and an isomorphism $\iota : \overline{\bbQ}_p \cong \bbC$ such that $\rho|_{G_{B}} \cong r_\iota(\pi)$. Moreover, we can assume that $\rho|_{G_B}$ is crystalline at each place dividing $p$, unramified outside $p$, and for each place $w|p$ of $B$, $\overline{\rho}|_{G_{B_w}}$ is trivial and $[B_w : \bbQ_p] > n(n-1)/2 + 1$. Then $\pi$ is $\iota$-ordinary and unramified at the places dividing $p$, while $\overline{\rho}|_{G_B} \cong \Ind_{G_{BK}}^{G_B} \overline{\chi}|_{G_{BK}}$ still has enormous image (and in particular, is still absolutely irreducible, even after restriction to $G_{B(\zeta_p)}$).

Let $\delta$ denote the Leopoldt defect of $K$, and let $M/K$ be the maximal abelian pro-$p$ extension, unramified outside $p$, with $\Gal(M/K)$ $p$-torsion free. Let $N/BK$ be the maximal abelian pro-$p$ extension, unramified outside $p$, with $\Gal(N/BK)$ $p$-torsion free and such that conjugation by $c$ acts as multiplication by $-1$ on $\Gal(N/BK)$. Let $L = M \cdot N$. Then $\Gal(L/BK) \cong \bbZ_p^{1 + \delta + [ B^+ K : \bbQ]} = \bbZ_p^{1 + \delta + n[B^+ : \bbQ]}$, and we want to show that $\delta = 0$.

Let $\mu : G_B \to \cO^\times$ denote unique continuous character such that $\mu \epsilon^{n(n-1)/2}$ equals the Teichm\"uller lift of $\det \overline{\rho}|_{G_B} \epsilon^{n(n-1)/2}$, and consider the global deformation problem
\[ \cS = (\overline{\rho}|_{G_B}, \mu \epsilon^{n(1-n)/2}, S_p, \{ \Lambda_v \}_{v \in S_p}, \{ \cD_v^\text{ord} \}_{v \in S_p}). \]
Then the global deformation ring $R_\cS$ exists, and the representation $\rho|_{G_{B}}$ defines a point $R_\cS \to \cO$. Let $R = \cO \llbracket \Gal(L/BK) \rrbracket$, and let $\Psi_R : G_{BK} \to R^\times$ be the universal deformation of the trivial character. We consider the representation $\rho_R = \Ind_{G_{BK}}^{G_B} \chi \otimes \Psi_R$. Let $\overline{R}$ denote the largest quotient of $R$ over which $\det \rho_{\overline{R}} = \mu$. We now make the following observations:
\begin{itemize}
\item We have $\dim \overline{R}[1/p] \geq \delta + (n-1)[B^+ : \bbQ]$. In fact, every irreducible component of $\overline{R}[1/p]$ has dimension at least $\delta + (n-1)[B^+ : \bbQ]$.
\item The natural map $R_\cS \to \overline{R}$ is surjective. 
\end{itemize}
For the first point, we note that $\det \rho_R$ arises, up to a finite order character, from the compositum of the anticyclotomic $\bbZ_p$-extension of $B$, which has rank $[B^+ : \bbQ]$, with the cyclotomic $\bbZ_p$-extension of $B$, which has rank 1. This implies that $\dim \overline{R} \geq \dim R - [B^+ : \bbQ] - 1$, hence the ring $\overline{R}[1/p]$ has dimension at least $\delta + (n-1)[B^+ :\bbQ]$, provided it is non-zero. However, we have $\overline{R}[1/p] \neq 0$ because of the existence of $\rho|_{G_B}$. For the second point, we note that in the ring $\overline{R}/(\ffrm_{R_\cS})$, the deformation of $\overline{\rho}|_{G_B}$ is trivial, hence the composite character 
\[ G_{BK} \to R^\times \to \overline{R}^\times \to (\overline{R}/(\ffrm_{R_\cS}))^\times \]
must be trivial. By universality, the quotient map $R \to \overline{R}/(\ffrm_{R_\cS})$ factors through $R \to k$. It is surjective, so we deduce that the map $k \to \overline{R}/(\ffrm_{R_\cS})$ is surjective, and hence that $R_\cS \to \overline{R}$ is surjective, as required.

Fix a subgroup $U$ satisfying the conditions at the beginning of \S \ref{sec_taylor_wiles_argument} and a weight $\bl$ such that $\pi$ contributes to $H^\ast(X_{U(1, 1)}, M_{-w_0 \bl}) \otimes_\cO \overline{\bbQ}_p$. We write $\ffrm \subset \bbT_\text{ord}(U)$ for the corresponding maximal ideal. (The subgroup $U$ is uniquely determined, except that we must choose an auxiliary place $a$ of $B$ such that $\overline{\rho}|_{G_{F_a}}$ is unramified and scalar and $a$ is absolutely unramified and not split in $B(\zeta_p)$. Such a place exists because the image of $\ad \overline{\rho}$ cuts out a field disjoint from $B(\zeta_p)$, by construction. The weight $\bl$ is uniquely determined by $\pi$; $\pi$ contributes to cohomology because of assumption that the character $\det \rho \cdot \epsilon^{n(n-1)/2}$ has finite order.) We finally deduce the inequality
\begin{equation}\label{eqn_leopoldt_defect_lower-bounds_R} \dim_{\Lambda[1/p]} H^\ast_\text{ord}(U)_\ffrm[1/p] \geq \dim R_{\cS, (\wp_\bl)} \geq \delta + (n-1)[B^+ : \bbQ]. 
\end{equation}
Indeed, let $x$ denote the closed point of $R_{\cS}[1/p]$ corresponding to $\rho|_{G_B}$. We have shown that
\[ \dim R_{\cS, (\wp_\bl)} \geq \dim R_{\cS, (x)} \geq \dim \overline{R}_{(x)} \geq \delta + (n-1)[B^+ : \bbQ]. \]
We now apply Theorem \ref{thm_r_equals_t} (or rather its corollary), working over the base field $B$, to deduce that $\dim R_{\cS, (\wp_\bl)} \leq \dim_{\Lambda[1/p]} H^\ast_\text{ord}(U)_\ffrm[1/p]$. (This is the point of the argument which uses Conjecture \ref{conj_existence_of_galois_combined} and Conjecture \ref{conj_local_global_compatibility_at_taylor_wiles_primes}.) On the other hand, Conjecture \ref{conj_non-abelian_leopoldt} implies that 
\begin{equation}\label{eqn_non-abelian_leopoldt_defect_upper-bounds_T} \dim_{\Lambda[1/p]} H^\ast_\text{ord}(U)_\ffrm[1/p] = \dim \Lambda[1/p] - l_0 = (n-1)[B^+ : \bbQ]. 
\end{equation}
Comparing (\ref{eqn_leopoldt_defect_lower-bounds_R}) and (\ref{eqn_non-abelian_leopoldt_defect_upper-bounds_T}), we see the only possibility is $\delta = 0$; in other words, Conjecture \ref{conj_abelian_leopoldt} holds for the pair $(K, p)$. This completes the proof.
\end{proof}


\begin{thebibliography}{BLGGT}

\bibitem[BH93]{Bru93}
Winfried Bruns and J{\"u}rgen Herzog.
\newblock {\em Cohen-{M}acaulay rings}, volume~39 of {\em Cambridge Studies in
  Advanced Mathematics}.
\newblock Cambridge University Press, Cambridge, 1993.

\bibitem[BLGGT]{Bar14}
T.~Barnet-Lamb, T.~Gee, D.~Geraghty, and R.~Taylor.
\newblock Potential automorphy and change of weight.
\newblock To appear in Annals of Math.


\bibitem[Bru67]{Bru67}
A.~Brumer.
\newblock On the units of algebraic number fields.
\newblock {\em Mathematika}, 14:121--124, 1967.  


\bibitem[BW00]{Bor00}
A.~Borel and N.~Wallach.
\newblock {\em Continuous cohomology, discrete subgroups, and representations
  of reductive groups}, volume~67 of {\em Mathematical Surveys and Monographs}.
\newblock American Mathematical Society, Providence, RI, second edition, 2000.

\bibitem[CG]{Cal13}
F.~Calegari and D.~Geraghty.
\newblock Modularity lifting beyond the {T}aylor-{W}iles method.
\newblock Preprint.

\bibitem[CHT08]{Clo08}
Laurent Clozel, Michael Harris, and Richard Taylor.
\newblock Automorphy for some {$l$}-adic lifts of automorphic mod {$l$}
  {G}alois representations.
\newblock {\em Publ. Math. Inst. Hautes \'Etudes Sci.}, (108):1--181, 2008.
\newblock With Appendix A, summarizing unpublished work of Russ Mann, and
  Appendix B by Marie-France Vign{\'e}ras.

\bibitem[CLH]{Car14}
A.~Caraiani and Bao~V. Le~Hung.
\newblock On the image of complex conjugation in certain {G}alois
  representations.
\newblock Preprint.

\bibitem[CT14]{Clo13}
Laurent Clozel and Jack~A. Thorne.
\newblock Level-raising and symmetric power functoriality, {I}.
\newblock {\em Compos. Math.}, 150(5):729--748, 2014.

\bibitem[Eis95]{Eis95}
David Eisenbud.
\newblock {\em Commutative algebra}, volume 150 of {\em Graduate Texts in
  Mathematics}.
\newblock Springer-Verlag, New York, 1995.
\newblock With a view toward algebraic geometry.

\bibitem[Ger]{Ger09}
D.~Geraghty.
\newblock Modularity lifting theorems of ordinary {G}alois representations.
\newblock Preprint.

\bibitem[Gou01]{Gou01}
Fernando~Q. Gouv{\^e}a.
\newblock Deformations of {G}alois representations.
\newblock In {\em Arithmetic algebraic geometry ({P}ark {C}ity, {UT}, 1999)},
  volume~9 of {\em IAS/Park City Math. Ser.}, pages 233--406. Amer. Math. Soc.,
  Providence, RI, 2001.
\newblock Appendix 1 by Mark Dickinson, Appendix 2 by Tom Weston and Appendix 3
  by Matthew Emerton.

\bibitem[Hid95]{Hid95}
Haruzo Hida.
\newblock Control theorems of {$p$}-nearly ordinary cohomology groups for
  {${\rm SL}(n)$}.
\newblock {\em Bull. Soc. Math. France}, 123(3):425--475, 1995.

\bibitem[Hid98]{Hid98}
Haruzo Hida.
\newblock Automorphic induction and {L}eopoldt type conjectures for {${\rm
  GL}(n)$}.
\newblock {\em Asian J. Math.}, 2(4):667--710, 1998.
\newblock Mikio Sato: a great Japanese mathematician of the twentieth century.

\bibitem[Hil10]{Hil10}
Richard Hill.
\newblock On {E}merton's {$p$}-adic {B}anach spaces.
\newblock {\em Int. Math. Res. Not. IMRN}, (18):3588--3632, 2010.

\bibitem[HKP10]{Hai10}
Thomas~J. Haines, Robert~E. Kottwitz, and Amritanshu Prasad.
\newblock Iwahori-{H}ecke algebras.
\newblock {\em J. Ramanujan Math. Soc.}, 25(2):113--145, 2010.

\bibitem[Leo62]{Leo62}
Heinrich-Wolfgang Leopoldt. 
\newblock Zur Arithmetik in abelschen Zahlk\"orpern.
\newblock {\em J. Reine Angew. Math.}, 209:54--71,  1962.

\bibitem[LS04]{Li04}
Jian-Shu Li and Joachim Schwermer.
\newblock On the {E}isenstein cohomology of arithmetic groups.
\newblock {\em Duke Math. J.}, 123(1):141--169, 2004.

\bibitem[Maz89]{Maz89}
B.~Mazur.
\newblock Deforming {G}alois representations.
\newblock In {\em Galois groups over {${\bf Q}$} ({B}erkeley, {CA}, 1987)},
  volume~16 of {\em Math. Sci. Res. Inst. Publ.}, pages 385--437. Springer, New
  York, 1989.

\bibitem[Mil06]{Mil06}
J.~S. Milne.
\newblock {\em Arithmetic duality theorems}.
\newblock BookSurge, LLC, Charleston, SC, second edition, 2006.

\bibitem[Mum08]{Mum08}
David Mumford.
\newblock {\em Abelian varieties}, volume~5 of {\em Tata Institute of
  Fundamental Research Studies in Mathematics}.
\newblock Published for the Tata Institute of Fundamental Research, Bombay,
  2008.
\newblock With appendices by C. P. Ramanujam and Yuri Manin, Corrected reprint
  of the second (1974) edition.

\bibitem[Sch]{Sch13}
P.~Scholze.
\newblock On torsion in the cohomology of locally symmetric varieties.
\newblock Preprint.

\bibitem[NT]{Tho15}
J.~Newton. and J.~Thorne.
\newblock Torsion Galois representations over CM fields and Hecke algebras in the derived category. 
\newblock Preprint.

\bibitem[Tho]{Tho14}
J.~Thorne.
\newblock Automorphy lifting for residually reducible $l$-adic {G}alois
  representations.
\newblock To appear in Journal of the AMS.

\bibitem[Tho12]{Tho12}
Jack Thorne.
\newblock On the automorphy of {$l$}-adic {G}alois representations with small
  residual image.
\newblock {\em J. Inst. Math. Jussieu}, 11(4):855--920, 2012.
\newblock With an appendix by Robert Guralnick, Florian Herzig, Richard Taylor
  and Thorne.

\bibitem[Var]{Var14}
I.~Varma.
\newblock Local-global compatibility for {G}alois representations associated to
  regular algebraic cuspidal automorphic representations when $l \neq p$.
\newblock Preprint.

  
\bibitem[Wal81]{Wal81}
M.~Waldschmidt.
\newblock A lower bound for the p-adic rank of the units of an algebraic number field.
\newblock In {\em Topics in classical number theory, Vol. I, II} (Budapest, 1981), volume~34 of {\em Colloq. Math. Soc. J\'a nos Bolyai}, pages 1617--1650. North-Holland, Amsterdam, 1984. 

\bibitem[Wei94]{Wei94}
Charles~A. Weibel.
\newblock {\em An introduction to homological algebra}, volume~38 of {\em
  Cambridge Studies in Advanced Mathematics}.
\newblock Cambridge University Press, Cambridge, 1994.


\end{thebibliography}

\end{document}